\documentclass[12pt, a4paper, abstracton, bibliography=totoc]{scrartcl}
\pdfoutput=1

\usepackage{amsmath, amsthm, amsfonts, amssymb}
\usepackage[utf8]{inputenc}
\usepackage[T1]{fontenc}
\usepackage{color}
\usepackage{slashed} 
\usepackage[all, cmtip]{xy}
\usepackage{hyperref} 

\let\counterwithout\relax

\usepackage{chngcntr} 

\usepackage{graphicx} 
\usepackage[english]{babel}
\usepackage{microtype}
\usepackage{bm} 
\usepackage{mathtools} 

\newcommand{\IN}{\mathbb{N}}
\newcommand{\IZ}{\mathbb{Z}}

\newcommand{\IR}{\mathbb{R}}
\newcommand{\IC}{\mathbb{C}}
\newcommand{\IH}{\mathcal{H}}

\newcommand{\injrad}{\operatorname{inj-rad}}
\newcommand{\Rm}{\operatorname{Rm}}

\newcommand{\LLip}{L\text{-}\operatorname{Lip}}
\newcommand{\IB}{\mathfrak{B}}
\newcommand{\IK}{\mathfrak{K}}
\newcommand{\frakD}{\mathfrak{D}}
\newcommand{\frakC}{\mathfrak{C}}

\newcommand{\supp}{\operatorname{supp}}
\newcommand{\diam}{\operatorname{diam}}
\newcommand{\Hom}{\operatorname{Hom}}
\newcommand{\Mat}{\operatorname{Mat}}

\newcommand{\id}{\operatorname{id}}

\newcommand{\image}{\operatorname{im}}

\newcommand{\card}{\#}
\newcommand{\Vect}{\operatorname{Vect}}
\newcommand{\Idem}{\operatorname{Idem}}

\newcommand{\closure}{\operatorname{cl}}

\newcommand{\op}{\mathrm{op}}
\newcommand{\pt}{\mathrm{pt}}

\newcommand{\Frechet}{Fr\'{e}chet }
\newcommand{\Poincare}{Poincar\'{e} }
\newcommand{\Folner}{F{\o}lner }
\newcommand{\Spakula}{\v{S}pakula }
\newcommand{\spinc}{spin$^c$ }

\newcommand*{\largecdot}{\raisebox{-0.25ex}{\scalebox{1.2}{$\cdot$}}}

\DeclareMathOperator{\hatotimes}{\hat{\otimes}}

\theoremstyle{plain}
\newtheorem{thm}{Theorem}[section]
\newtheorem*{thm*}{Theorem}
\newtheorem*{mainthm*}{Main Result}
\newtheorem{cor}[thm]{Corollary}
\newtheorem*{cor*}{Corollary}
\newtheorem{lem}[thm]{Lemma}
\newtheorem{prop}[thm]{Proposition}
\newtheorem{conj}[thm]{Conjecture}
\newtheorem{question}[thm]{Question}

\newtheorem{thmintro}{Theorem}

\theoremstyle{definition}
\newtheorem{defn-alt}[thm]{Definition}
\newtheorem{example-alt}[thm]{Example}
\newtheorem{examples-alt}[thm]{Examples}
\newtheorem{rem-alt}[thm]{Remark}
\newtheorem{nota-alt}[thm]{Notation}

\newenvironment{defn}    
{
	\pushQED{\qed}\begin{defn-alt}}
	{\popQED\end{defn-alt}}

\newenvironment{example}    
{
	\pushQED{\qed}\begin{example-alt}}
	{\popQED\end{example-alt}}
	
\newenvironment{examples}    
{
	\pushQED{\qed}\begin{examples-alt}}
	{\popQED\end{examples-alt}}

\newenvironment{rem}    
{
	\pushQED{\qed}\begin{rem-alt}}
	{\popQED\end{rem-alt}}

\numberwithin{equation}{section}

\counterwithout{footnote}{section}

\makeatletter
\def\blfootnote{\gdef\@thefnmark{}\@footnotetext}
\makeatother

\begin{document}

\title{Uniform $K$-theory, and \Poincare duality for uniform $K$-homology}
\author{Alexander Engel}
\date{}
\maketitle

\vspace*{-3.5\baselineskip}
\begin{center}
\footnotesize{
\textit{
Fakult\"{a}t f\"{u}r Mathematik\\
Universit\"{a}t Regensburg\\
93040 Regensburg, GERMANY\\
\href{mailto:alexander.engel@mathematik.uni-regensburg.de}{alexander.engel@mathematik.uni-regensburg.de}
}}
\end{center}

\vspace*{-0.5\baselineskip}
\begin{abstract}
We revisit \v{S}pakula's uniform $K$-homology, construct the external product for it and use this to deduce homotopy invariance of uniform $K$-homology.

We define uniform $K$-theory and on manifolds of bounded geometry we give an interpretation of it via vector bundles of bounded geometry. We further construct a cap product with uniform $K$-homology and prove \Poincare duality between uniform $K$-theory and uniform $K$-homology on \spinc manifolds of bounded geometry.
\end{abstract}

\tableofcontents

\section{Introduction}

$K$-homology is a generalized homology theory (in the sense of Eilenberg--Steenrod) which is an indispensable tool in modern index theory. It is made such that elliptic operators naturally define classes in it, there is a proof of the Atiyah--Singer index theorem utilizing crucially $K$-homology (see the exposition in Higson--Roe \cite[Chaper~11]{higson_roe}), and the $K$-homology of the classifying space $BG$ of a group $G$ is the domain of the analytic assembly map featuring in the strong Novikov conjecture.

Working on non-compact spaces one can use a locally finite version of $K$-homology in order to have a receptacle for the classes of elliptic operators over such spaces. This version of $K$-homology is employed in the coarse Baum--Connes conjecture. Locally finite $K$-homology of non-compact spaces is applied to the study of compact spaces by considering the universal covers of the compact spaces. But this method discards some information: if we lift a cycle from the compact space to its universal cover, then the lifted cycle will not only be locally finite, but even uniformly locally finite. Hence one might try to refine the method by inventing a uniform version of locally finite $K$-homology.

A uniform version of locally finite $K$-homology was proposed by \Spakula \cite{spakula_thesis,spakula_uniform_k_homology}. He showed that if one has a closed spin manifold, then the Dirac operator of the universal cover of the manifold (equipped with the lifted Riemannian metric and spin structure) will naturally define a class in uniform $K$-homology. This was generalized by the author to the fact that symmetric, elliptic uniform pseudodifferential operators over manifolds of bounded geometry define classes in uniform $K$-homology \cite[Theorem~3.39]{engel_indices_UPDO}. \Spakula further also set up a uniform version of the coarse Baum--Connes conjecture.

The first goal of the present paper is to revisit \v{S}pakula's uniform $K$-homology and to prove additional properties of it. Our main technical result is the construction of an external product for uniform $K$-homology.

\begin{thmintro}[Theorem~\ref{thm:external_prod_homology}]
Let $X,Y$ be locally compact and separable metric spaces of jointly bounded geometry. Then there exists an associative product
\[\times\colon K^u_p(X) \otimes K^u_q(Y) \to K^u_{p+q}(X \times Y)\]
having the same properties as the usual external product in $K$-homology of compact spaces.
\end{thmintro}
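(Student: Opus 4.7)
The plan is to imitate the classical Kasparov construction of the external product in $K$-homology at the level of Fredholm modules, and then verify that the uniformity conditions of Špakula's cycles are preserved under the product, invoking the joint bounded geometry hypothesis at the crucial step. Concretely, given a uniform Fredholm module $(H_1, \rho_1, T_1)$ over $X$ representing a class in $K^u_p(X)$ and a uniform Fredholm module $(H_2, \rho_2, T_2)$ over $Y$ representing a class in $K^u_q(Y)$, I would form the $\IZ/2$-graded Hilbert space $H_1 \hatotimes H_2$ together with the representation $\rho_1 \otimes \rho_2$ of the algebraic tensor product $C_0(X) \otimesalg C_0(Y)$ extended to a representation of $C_0(X \times Y)$, and define the product operator by the graded sum $T = T_1 \hatotimes 1 + 1 \hatotimes T_2$. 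In the odd case one either passes through Clifford suspension or uses the standard $\begin{pmatrix}0 & T^\ast \\ T & 0\end{pmatrix}$ doubling trick so that everything is covered by the even construction.

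The main technical step, and the main obstacle, is showing that the resulting triple is indeed a \emph{uniform} Fredholm module over $X \times Y$. That is, for every $\varepsilon > 0$ one has to produce a single scale $R > 0$ (depending only on $\varepsilon$ and on the moduli controlling $(H_i,\rho_i,T_i)$) such that the commutator $[T, \rho(g)]$ and the defect $\rho(g)(T^2 - 1)$ are $\varepsilon$-close, in the appropriate Špakula sense of uniform local compactness, to operators of propagation at most $R$, uniformly for $g$ in the unit ball of $C_0(X\times Y)$ supported in a ball of a fixed radius. For elementary tensors $g = f_1 \otimes f_2$ this reduces via the Leibniz-type identity
\[
[T, \rho_1(f_1) \otimes \rho_2(f_2)] = [T_1, \rho_1(f_1)] \hatotimes \rho_2(f_2) \pm \rho_1(f_1) \hatotimes [T_2, \rho_2(f_2)]
\]
to the uniform conditions on each factor, and similarly for $T^2 - 1 = (T_1^2 - 1) \hatotimes 1 + 1 \hatotimes (T_2^2 - 1)$. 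The genuine work is to pass from the algebraic tensor product to general $g \in C_0(X \times Y)$: here I would approximate $g$ in sup norm, with uniform control on the propagation of the approximants, by finite sums of product functions whose supports and Lipschitz constants are controlled in terms of the bounded geometry of $X \times Y$. The hypothesis of \emph{joint} bounded geometry is exactly what produces a uniform covering of $X \times Y$ by product-like patches with a controlled partition of unity, and it is this input that makes the approximation scheme work with uniform estimates.

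Once the product is defined on cycles, I would check that it descends to $K$-homology by verifying compatibility with the equivalence relations used in Špakula's definition: unitary equivalence is immediate from functoriality of $\hatotimes$; addition of degenerate modules is handled because the product of a degenerate module with any module is still degenerate (direct inspection of the product formula); and operator homotopy is preserved because a norm-continuous path $T_1(t)$ yields a norm-continuous path $T_1(t) \hatotimes 1 + 1 \hatotimes T_2$, with the uniform conditions varying continuously because the estimates from the previous step depend only on the geometry and on norm bounds. Bilinearity is immediate from the additivity of uniform Fredholm modules under direct sum.

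Associativity and the remaining formal properties (functoriality under uniformly continuous coarse maps of bounded geometry, graded commutativity up to the standard sign, compatibility with suspension) I would deduce from the associativity of the graded tensor product of Hilbert spaces and representations, combined with the fact that the verification of the uniform conditions does not interact with the algebraic manipulations; in each case the new cycles are unitarily equivalent via the standard flip and associator maps of graded tensor products, and the unitaries in question are local in $X\times Y\times Z$ so that uniformity is automatic. The hard part remains the bounded-geometry approximation argument for general $g \in C_0(X\times Y)$; everything else is a bookkeeping exercise patterned on the compact case.
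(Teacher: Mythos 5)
There is a fundamental error at the heart of the proposal: the operator $T = T_1 \hatotimes 1 + 1 \hatotimes T_2$ does \emph{not} define a Fredholm module, even in the classical non-uniform setting. Because $T_1 \hatotimes 1$ and $1 \hatotimes T_2$ anticommute, one computes
\[
T^2 - 1 \;=\; T_1^2 \hatotimes 1 + 1 \hatotimes T_2^2 - 1 \;=\; (T_1^2 - 1)\hatotimes 1 + 1 \hatotimes (T_2^2 - 1) + 1,
\]
so your identity $T^2 - 1 = (T_1^2 - 1)\hatotimes 1 + 1 \hatotimes (T_2^2-1)$ drops the extra $+1$. That extra identity summand is never locally compact (e.g.\ if both $T_i$ are involutive then $T^2 - 1 = 1$), so there is no way to make the naive sum work. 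This is precisely the phenomenon that forces Kasparov's Technical Theorem into the construction of the external product, and it is the obstacle your proposal does not address. The paper follows the Higson--Roe ``aligned module'' approach: one first proves a uniform version of Kasparov's Technical Lemma (Lemma~\ref{lem:construction_partition_unity}) producing commuting, even, multigraded, positive operators $N_1, N_2$ of finite propagation with $N_1^2 + N_2^2 = 1$ and a list of uniform almost-commutation and absorption properties; the product operator is then $T := N_1(T_1 \hatotimes 1) + N_2(1 \hatotimes T_2)$, and now $T^2 - 1$ is uniformly locally compact precisely because the $N_i$ partition the identity and swallow the respective defects.

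Relatedly, the role you assign to the joint bounded geometry hypothesis is not where it actually enters. You propose to use it to approximate a general $g \in C_0(X\times Y)$ by controlled sums of elementary tensors; that is a legitimate but subsidiary reduction and not the crux. In the paper, joint bounded geometry is used inside the proof of the uniform technical lemma: it supplies a countable Borel decomposition with an admissible class of totally bounded pieces and a uniformly locally finite cover with a Lipschitz-controlled partition of unity, and it is this structure that lets one build the increasing sequence of finite-propagation projections $u_n$ with \emph{uniform} local rank bounds, then quasicentral approximate units $\nu_n$, and ultimately the operators $N_1, N_2$. Without some version of this (highly nontrivial) uniform Kasparov Technical Lemma, the product simply does not exist, so your remark that ``everything else is a bookkeeping exercise'' is the opposite of the actual situation. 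Your discussion of descent to $K$-homology, bilinearity, associativity, and the flip sign is consistent with the paper's approach once the correct $T$ is in hand, but those points are downstream of the step that is missing.
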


This external product is used to conclude that weakly homotopic uniform Fredholm modules define the same uniform $K$-homology class (Theorem~\ref{thm:weak_homotopy_equivalence_K_hom}). This result has the following consequences:
\begin{itemize}
\item The uniform $K$-homology class of a symmetric, elliptic uniform pseudodifferential operator depends only on the principal symbol (\cite[Proposition~3.40]{engel_indices_UPDO}).
\item Uniform $K$-homology is homotopy invariant for uniformly cobounded and proper Lipschitz maps (Theorem~\ref{thm:homotopy_equivalence_k_hom}). This homotopy invariance is then used to relate the rough Baum--Connes conjecture to the usual Baum--Connes conjecture (see Theorem~\ref{thm:BC_equiv_uniform_coarse}), and it is an important ingredient in the proof of \Poincare duality between uniform $K$-theory and uniform $K$-homology.
\end{itemize}

An important ingredient in the index theory on closed manifolds is that the $K$-homology class of any elliptic operator may be represented by the class of a twisted Dirac operator. In the case of \spinc manifolds this can be proved by establishing \Poincare duality between $K$-theory and $K$-homology since the cap product is given by twisting the Dirac operator of the manifold by the vector bundle representing the $K$-theory class.\footnote{Though note that this approach does not give a concrete formula for how to find this vector bundle. It is an important observation of Atiyah and Singer (and later elaborated upon by Baum and Douglas in their geometric picture for $K$-homology) that if the $K$-homology class is given by an elliptic pseudodifferential operator, then one can use the symbol of the operator to get a representative as the class of a twisted Dirac operator.}

In the second part of this paper we will establishing the analogous statements for uniform $K$-homology. We will first introduce uniform $K$-theory by simply defining
\begin{itemize}
\item $K^\ast_u(X) := K_{-\ast}(C_u(X))$, where $C_u(X)$ is the $C^\ast$-algebra of all bounded, uniformly continuous, complex-valued functions on $X$ and $K_\ast(-)$ is operator $K$-theory.
\end{itemize}
The bulk of Section~\ref{sec:uniform_k_th} is devoted to proving an interpretation of uniform $K$-theory via vector bundles of bounded geometry:

\begin{thmintro}[Theorem~\ref{thm:interpretation_K0u}]
Let $M$ be a Riemannian manifold of bounded geometry and without boundary.

Then every element of $K^0_u(M)$ is of the form $[E] - [F]$, where both $[E]$ and $[F]$ are $C_b^\infty$-isomorphism classes of complex vector bundles of bounded geometry over $M$.

Moreover, every complex vector bundle of bounded geometry over $M$ defines naturally a class in $K^0_u(M)$.
\end{thmintro}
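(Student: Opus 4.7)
The strategy is a uniform analogue of the Serre--Swan theorem: I will set up a bijection between $C_b^\infty$-isomorphism classes of complex vector bundles of bounded geometry over $M$ (after stabilization by trivial bundles) and Murray--von Neumann equivalence classes of smooth projections in $M_\infty(C_b^\infty(M))$, and then show that the inclusion $C_b^\infty(M) \hookrightarrow C_u(M)$ induces a surjection on $K_0$ that is compatible with this picture. The two parts of the theorem then correspond to these two steps.

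For the \emph{moreover} part, given a complex vector bundle $E \to M$ of bounded geometry I construct a smooth projection $p_E \in M_N(C_b^\infty(M))$ whose image realizes $E$. Using the bounded geometry of $M$, I choose a cover by normal coordinate balls of uniform radius, which automatically has bounded multiplicity $c$, together with a subordinate smooth partition of unity whose covariant derivatives are uniformly bounded at every order. On each ball $E$ is smoothly trivial with uniformly controlled transition functions. A coloring argument on the nerve groups the indices into $c$ families of pairwise disjoint balls, on each of which $E$ is globally trivializable; using the partition of unity I glue the local trivializations into a smooth, uniformly bounded fibrewise-injective bundle map $E \hookrightarrow M \times \mathbb{C}^{ck}$, and the orthogonal projection onto its image with respect to a bundle metric of bounded geometry is $p_E$. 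A routine check shows that the class $[p_E] \in K^0_u(M)$ only depends on the $C_b^\infty$-isomorphism class of $E$.

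For the first assertion, let $[p]-[q] \in K^0_u(M)$ be represented by projections $p,q \in M_n(C_u(M))$. I approximate $p$ by a self-adjoint $a \in M_n(C_b^\infty(M))$ with $\|p-a\|<1/4$ by using a uniform mollifier (convolution with a fixed compactly supported kernel in normal coordinates, glued by a bounded-geometry partition of unity). Then $\operatorname{spec}(a) \subset [-1/4,1/4] \cup [3/4,5/4]$, so for any smooth cut-off $\chi\colon\mathbb{R}\to[0,1]$ equal to $0$ near the first interval and $1$ near the second, $\tilde p := \chi(a)$ is a projection homotopic to $p$ in $M_n(C_u(M))$. The key algebraic fact is that $C_b^\infty(M)$ is closed under smooth functional calculus on self-adjoint matrix-valued elements; this follows from the Helffer--Sj\"ostrand formula together with the identity $\chi(a)=\frac{1}{2\pi i}\oint \chi(\zeta)(\zeta-a)^{-1}d\zeta$, which allows covariant derivatives of $\chi(a)$ to be estimated polynomially in those of $a$ and of the resolvent. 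The same procedure applied to $q$ yields $\tilde q$, and $\tilde p,\tilde q$ cut out smooth subbundles $E,F$ of the trivial bundle of bounded geometry with $[p]-[q]=[E]-[F]$.

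The main obstacle is the systematic upgrading of uniform/continuous constructions to smooth uniformly controlled ones; concretely, checking that smooth functional calculus on $M_n(C_b^\infty(M))$ lands in $M_n(C_b^\infty(M))$, and that the subbundle cut out by a smooth projection with bounded derivatives inherits bounded geometry. Both hinge on the fact that on a manifold of bounded geometry the Christoffel symbols, curvature tensor, and all their covariant derivatives are uniformly bounded in normal coordinate charts of uniform radius, so that \emph{local} estimates carried out once translate to \emph{uniform global} estimates. A secondary technical point, the bounded-multiplicity coloring of uniformly locally finite covers, is again provided by bounded geometry.
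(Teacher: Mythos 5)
Your proposal follows essentially the same route as the paper: approximate a $C_u(M)$-valued projection by a self-adjoint $C_b^\infty(M)$-valued matrix via mollifiers on a bounded-geometry cover, pass to an honest $C_b^\infty$ projection by functional calculus, and prove a uniform Serre--Swan statement via a finite coloring of the cover and a partition of unity to embed a bounded-geometry bundle into a trivial one. The paper packages your functional-calculus step abstractly: it shows $C_b^\infty(M)$ is a local $C^\ast$-algebra via Schweitzer's Fr\'echet-algebra criterion (closure under inversion), which then gives $K_0(C_b^\infty(M)) \cong K_0(C_u(M))$ directly, rather than mollifying a given projection by hand. Two small cautions: the formula $\chi(a)=\tfrac{1}{2\pi i}\oint \chi(\zeta)(\zeta-a)^{-1}\,d\zeta$ does not make sense as written since $\chi$ is not holomorphic; you should either use the Riesz projection over a contour enclosing the cluster near $1$, or a genuine Helffer--Sj\"ostrand formula with an almost-analytic extension. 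Also, the step you defer as a ``routine check''---that the subbundle $\image\tilde p$ with its induced metric and connection genuinely has bounded geometry---is the most delicate point in the paper's argument (Proposition~\ref{prop:image_proj_matrix_complemented}, Point~1), requiring an ODE analysis of synchronous framings to control the change-of-basis matrix and its derivatives; it is not an automatic consequence of the bounds on $p$ alone.
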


Finally, in Section~\ref{sec:poincare_duality} we prove the \Poincare duality result:

\begin{thmintro}[Theorem~\ref{thm:Poincare_duality_K}]
Let $M$ be an $m$-dimensional spin$^c$ manifold of bounded geometry and without boundary.

Then the cap product $- \cap [M] \colon K_u^\ast(M) \to K^u_{m-\ast}(M)$ with its uniform $K$-fundamental class $[M] \in K_m^u(M)$ is an isomorphism.
\end{thmintro}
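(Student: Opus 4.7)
The plan is to reduce Poincaré duality on $M$ to a Euclidean base case via a Mayer--Vietoris argument that leverages the bounded geometry hypothesis. The cap product admits a concrete description via the vector bundle interpretation of $K^0_u(M)$ provided by Theorem~\ref{thm:interpretation_K0u}: on a class $[E] \in K^0_u(M)$ represented by a bundle of bounded geometry, $[E] \cap [M]$ is the uniform $K$-homology class of the \spinc Dirac operator of $M$ twisted by $E$, which is again a symmetric elliptic uniform pseudodifferential operator and hence defines a class in $K^u_m(M)$ (cf.\ \cite[Theorem~3.39]{engel_indices_UPDO}). An analogous description handles the odd case via suspension.

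First I would establish Mayer--Vietoris sequences for both $K^\ast_u$ and $K^u_\ast$ with respect to open decompositions $M = U \cup V$ in which $U$, $V$ and $U \cap V$ all have bounded geometry. On the $K$-theory side this reduces to the pullback square $C_u(M) \cong C_u(U) \times_{C_u(U \cap V)} C_u(V)$ together with the Mayer--Vietoris sequence for operator $K$-theory. On the $K$-homology side the boundary map is built from cut-off functions with uniformly bounded first derivatives---such cut-offs exist by bounded geometry---combined with the excision arguments already implicit in \Spakula's framework. Naturality of the cap product then yields a ladder of long exact sequences, and the five lemma reduces the theorem on $M$ to its analogues on $U$, $V$, and $U \cap V$.

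Next I would apply a finite-colouring argument to a uniformly locally finite cover of $M$ by geodesic balls of radius $r < \injrad(M)/2$: bounded geometry yields a uniform bound on the multiplicity of such a cover, so a standard chromatic argument partitions the index set into $N$ colour classes with each class consisting of pairwise disjoint balls. Taking $U_j$ to be the union of the colour-$j$ balls produces an open cover $M = U_1 \cup \cdots \cup U_N$ in which each $U_j$, and every finite intersection $U_{j_1} \cap \cdots \cap U_{j_k}$, is a uniformly disjoint union of sets bi-Lipschitz equivalent (with uniform constants) to Euclidean balls. By the homotopy invariance of Theorem~\ref{thm:homotopy_equivalence_k_hom} combined with the iterated Mayer--Vietoris ladder, the problem reduces to Poincaré duality on a uniformly disjoint union of Euclidean balls, which via the external product of Theorem~\ref{thm:external_prod_homology} further reduces to Poincaré duality on $\mathbb{R}^m$---a direct consequence of Bott periodicity, with cap product by $[\mathbb{R}^m]$ (the Euclidean Dirac class) realising the Bott isomorphism.

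The hard part throughout will be the uniform bookkeeping. In the compact setting the partitions of unity, cut-off functions, and coordinate identifications are chosen finitely many times and automatically enjoy the required bounds; in the uniform setting each such choice must come with bounds independent of position on $M$, and one must verify that these uniform estimates survive the five-lemma chases so that the resulting isomorphism is genuinely implemented by cap product with the global fundamental class $[M]$ rather than only locally on each stratum.
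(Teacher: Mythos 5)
Your overall strategy matches the paper's: build a uniformly locally finite cover, colour it finitely, run a Mayer--Vietoris ladder for $K^\ast_u$ and $K^u_\ast$, and reduce by the five lemma to a base case on a uniformly disjoint union of Euclidean balls. However, there is a genuine gap in how you build the cover, and it is precisely the point where the paper has to work hardest.

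For the Mayer--Vietoris sequence for $K^\ast_u$, you need the pullback square of $C^\ast$-algebras
\[
C_u(U_K \cup U_{k+1}, d) \cong C_u(U_K, d) \times_{C_u(U_K \cap U_{k+1}, d)} C_u(U_{k+1}, d),
\]
with $d$ the subspace metric and with the restriction maps \emph{surjective}. Surjectivity requires an extension theorem for bounded, uniformly continuous functions from an open subset to all of $M$ (the paper proves this via the Samuel compactification, Lemma~\ref{lem:extension_samuel_compactification}). More seriously, for the glued pair to be uniformly continuous on the union one needs quantitative control on how the pieces sit next to each other: the paper's Lemma~\ref{lem:suitable_coloring_cover_M} requires not only that each colour class $U_j$ be a uniformly separated disjoint union of balls, but also that for each $K$ the connected components of the \emph{partial union} $U_K = U_1 \cup \cdots \cup U_k$ be a uniform distance apart. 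A standard chromatic colouring of geodesic balls of radius below $\injrad/2$ gives the first property but not the second: two balls of different colours can be disjoint yet arbitrarily close, and the components of $U_K$ can then fail the uniform separation the pullback square needs (this is exactly the pathology illustrated in Figure~\ref{fig:not_allowed_cover}). The paper avoids this by instead triangulating $M$ as a bounded-geometry simplicial complex (Theorem~\ref{thm:triangulation_bounded_geometry}) and taking balls around the vertices, whence disjoint balls are automatically uniformly apart.

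Two further points are glossed over. On the $K$-homology side, \v{S}pakula's Mayer--Vietoris is stated for \emph{closed} subsets of proper spaces, so it is not directly available here; the paper instead constructs wrong-way maps $K_\ast^u(M) \to K_\ast^u(O \subset M)$ for open $O$ from the inclusions $\LLip_R(O) \subset \LLip_R(M)$, and then builds the excision isomorphism following the Higson--Roe scheme with a uniform Kasparov technical theorem. And the inductive step needs a nested induction: to prove duality for $U_K \cap U_{k+1}$, one writes it as $(U_1 \cap U_{k+1}) \cup \cdots \cup (U_k \cap U_{k+1})$ and runs a separate Mayer--Vietoris induction there; this does not come for free from the outer induction. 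Finally, the base case should be the cap product on an open Euclidean ball (a totally bounded space, where uniform $K$-theory and $K$-homology reduce to the compact theories by Proposition~\ref{prop:compact_space_every_module_uniform}), not on $\IR^m$ itself; the latter is already a nontrivial instance of the theorem.
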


The results in this paper are an important ingredient for developing the index theory of symmetric, elliptic uniform pseudodifferential operators over manifolds of bounded geometry. This is carried out in \cite[Section~5]{engel_indices_UPDO}.

\paragraph{Acknowledgements} This article contains Sections~3 \& 4 of the preprint \cite{engel_indices_UPDO} which was split up for easier publication. It arose out of the Ph.D.\ thesis \cite{engel_phd} of the author written at the University of Augsburg. I thank the referee for his or her comments.

\section{Uniform \texorpdfstring{$K$}{K}-homology}

In this section we will investigate uniform $K$-homology---a version of $K$-homology that incorporates into its definition uniformity estimates that one usually has for, e.g., Dirac operators over manifolds of bounded geometry (see Example~\ref{ex_Dirac_uniform}). Uniform $K$-homology was introduced by \Spakula \cite{spakula_thesis,spakula_uniform_k_homology}.\footnote{But we have changed the definition slightly, see Section~\ref{sec_changes_defn} for how and why.} We will revisit it and we will prove additional properties (existence of the Kasparov product and homotopy invariance) that are crucially needed later. Furthermore, we will use in Section \ref{sec:rough_BC} the homotopy invariace to deduce useful facts about the rough Baum--Connes assembly map.

\subsection{Definition and basic properties of uniform \texorpdfstring{$K$}{K}-homology}

Let us recall the notion of multigraded Hilbert spaces. This material is basically taken from Higson--Roe \cite[Appendix~A]{higson_roe}.

\begin{itemize}
\item A \emph{graded Hilbert space} is a Hilbert space $H$ with a decomposition $H = H^+ \oplus H^-$ into closed, orthogonal subspaces. This is the same as prescribing a \emph{grading operator} $\epsilon$ whose $\pm 1$-eigenspaces are $H^\pm$ and such that $\epsilon$ is selfadjoint and unitary.
\item If $H$ is a graded space, then its \emph{opposite} is the graded space $H^\op$ whose underlying vector space is $H$, but with reversed grading, i.e., $(H^\op)^+ = H^-$ and $(H^\op)^- = H^+$. This is equivalent to setting $\epsilon_{H^\op} := -\epsilon_H$.
\item An operator on a graded space $H$ is called \emph{even} if it maps $H^\pm$ to $H^\pm$, and it is called \emph{odd} if it maps $H^\pm$ to $H^\mp$. Equivalently, an operator is even if it commutes with the grading operator $\epsilon$ of $H$, and it is odd if it anti-commutes with it.
\end{itemize}

\begin{defn}[Multigraded Hilbert spaces and multigraded operators]
Let $p \in \IN_0$.

A \emph{$p$-multigraded Hilbert space} is a graded Hilbert space which is equipped with $p$ odd unitary operators $\epsilon_1, \ldots, \epsilon_p$ such that $\epsilon_i \epsilon_j + \epsilon_j \epsilon_i = 0$ for $i \not= j$, and $\epsilon_j^2 = -1$ for all $j$.\footnote{Note that a $0$-multigraded Hilbert space is just a graded Hilbert space. We make the convention that a $(-1)$-multigraded Hilbert space is an ungraded one.}

If $H$ is a $p$-multigraded Hilbert space, then an operator on $H$ is called \emph{multigraded} if it commutes with the multigrading operators $\epsilon_1, \ldots, \epsilon_p$ of $H$.
\end{defn}

Let us now recall the usual definition of multigraded Fredholm modules, where $X$ is a locally compact, separable metric space:

\begin{defn}[Multigraded Fredholm modules]
Let $p \in \IZ_{\ge -1}$.

A triple $(H,\rho,T)$ consisting of
\begin{itemize}
\item a separable $p$-multigraded Hilbert space $H$,
\item a representation $\rho\colon C_0(X) \to \IB(H)$ by even, multigraded operators, and
\item an odd multigraded operator $T \in \IB(H)$ such that
\begin{itemize}
\item the operators $T^2 - 1$ and $T - T^\ast$ are locally compact and
\item the operator $T$ itself is pseudolocal
\end{itemize}
\end{itemize}
is called a \emph{$p$-multigraded Fredholm module over $X$}.

Here an operator $S$ is called \emph{locally compact}, if for all $f \in C_0(X)$ the operators $\rho(f) S$ and $S \rho(f)$ are compact, and $S$ is called \emph{pseudolocal}, if for all $f \in C_0(X)$ the operator $[S, \rho(f)]$ is compact.
\end{defn}

To define uniform Fredholm modules we will use the following notion:
\begin{defn}[Uniformly approximable collections of operators]\label{defn:uniformly_approximable_collection}
A collection of operators $\mathcal{A} \subset \IK(L^2(E))$ is said to be \emph{uniformly approximable}, if for every $\varepsilon > 0$ there is an $N > 0$ such that for every $T \in \mathcal{A}$ there is a rank-$N$ operator $k$ with $\|T - k\| < \varepsilon$.
\end{defn}

Let us define
\begin{equation*}
\LLip_R(X) := \{ f \in C_c(X) \ | \ f \text{ is }L\text{-Lipschitz}, \diam(\supp f) \le R \text{ and } \|f\|_\infty \le 1\}.
\end{equation*}

\begin{defn}[{\cite[Definition 2.3]{spakula_uniform_k_homology}}]\label{defn:uniform_operators}
Let $T \in \IB(H)$ be an operator on a Hilbert space $H$ and $\rho\colon C_0(X) \to \IB(H)$ a representation.

We say that $T$ is \emph{uniformly locally compact}, if for every $R, L > 0$ the collection
\[\{\rho(f)T, T\rho(f) \ | \ f \in \LLip_R(X)\}\]
is uniformly approximable.

We say that $T$ is \emph{uniformly pseudolocal}, if for every $R, L > 0$ the collection
\[\{[T, \rho(f)] \ | \ f \in \LLip_R(X)\}\]
is uniformly approximable.
\end{defn}

Note that by an approximation argument we get that the above defined collections are still uniformly approximable if we enlargen the definition of $\LLip_R(X)$ from $f \in C_c(X)$ to $f \in C_0(X)$.

The following lemma states that on proper spaces we may drop the $L$-dependence for uniformly locally compact operators.

\begin{lem}[{\cite[Remark 2.5]{spakula_uniform_k_homology}}]\label{lem:l_uniformly_loc_compact_without_l}
Let $X$ be a proper space. If $T$ is uniformly locally compact, then for every $R > 0$ the collection
\[\{\rho(f) T, T \rho(f) \ | \ f \in C_c(X), \diam(\supp f) \le R \text{ and } \|f\|_\infty \le 1\}\]
is also uniformly approximable (i.e., we can drop the $L$-dependence).
\end{lem}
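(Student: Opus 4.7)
The plan is to reduce to the case $L=1$ of the uniform local compactness hypothesis by inserting a $1$-Lipschitz cutoff $\psi_f$ that is identically $1$ on $\supp f$, and then transferring the rank estimate for $\rho(\psi_f) T$ to $\rho(f) T$ via multiplication by $\rho(f)$. In other words, the Lipschitz regularity that the hypothesis demands of the ``test function'' will be supplied by the cutoff, so we never need to control the behaviour of $f$ itself.

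Concretely, for each $f$ in the enlarged collection I would set
\[
\psi_f(x) := \max\bigl(0,\, 1 - d(x, \supp f)\bigr).
\]
This $\psi_f$ is $1$-Lipschitz, satisfies $\|\psi_f\|_\infty = 1$, equals $1$ on $\supp f$, and is supported in the closed $1$-neighbourhood $N_1(\supp f)$, which has diameter at most $R + 2$. Properness of $X$ is what ensures that this closed bounded neighbourhood is compact, so $\psi_f \in C_c(X)$ and hence $\psi_f \in \LLip_{R+2}(X)$ with Lipschitz constant $L = 1$. Since $\psi_f \equiv 1$ on $\supp f$, we have $f \cdot \psi_f = f$, so $\rho(f) T = \rho(f)\,\rho(\psi_f)\, T$.

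Given $\varepsilon > 0$, the uniform local compactness assumption, applied with parameters $L = 1$ and $R' = R + 2$, yields some $N$ independent of $f$, and for each $f$ a rank-$N$ operator $k_f$ with $\|\rho(\psi_f) T - k_f\| < \varepsilon$. Then $\rho(f) k_f$ has rank at most $N$ and
\[
\|\rho(f) T - \rho(f) k_f\| \;\le\; \|\rho(f)\|\cdot\|\rho(\psi_f) T - k_f\| \;<\; \varepsilon,
\]
which proves uniform approximability of $\{\rho(f) T\}$. The claim for $\{T\rho(f)\}$ follows by the symmetric factorisation $T\rho(f) = T\,\rho(\psi_f)\,\rho(f)$ and the corresponding rank-$N$ approximation of $T\rho(\psi_f)$. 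The only delicate step is the appeal to properness to guarantee that $\psi_f$ has compact support; beyond that, the proof is essentially a one-line multiplication trick that absorbs all the Lipschitz regularity into the cutoff.
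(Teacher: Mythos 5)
Your proof is correct: factoring $\rho(f) = \rho(f)\rho(\psi_f)$ through the $1$-Lipschitz cutoff $\psi_f(x) = \max(0,\,1-d(x,\supp f))$, which is identically $1$ on $\supp f$, lies in $\LLip_{R+2}(X)$ with $L=1$, and is compactly supported precisely because $X$ is proper, transfers the rank-$N$ approximation for $\rho(\psi_f)T$ to $\rho(f)T$ since $\|\rho(f)\|\le 1$; the symmetric factorisation handles $T\rho(f)$. The paper itself cites this statement to \v{S}pakula's Remark~2.5 without reproducing an argument, but the cutoff-insertion trick you use is the standard one for this lemma and is complete as written.
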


Note that an analogous lemma for uniformly pseudolocal operators does not hold. We may see this via the following example: if we have an operator $D$ of Dirac type on a manifold $M$ and if $g$ is a smooth function on $M$, then we have the equation $([D,g]u)(x) = \sigma_D(x, dg) u(x)$, where $u$ is a section into the Dirac bundle $S$ on which $D$ acts, $\sigma_D(x, \xi)$ is the symbol of $D$ regarded as an endomorphism of $S_x$ and $\xi \in T^\ast_x M$. So we see that the norm of $[D,g]$ does depend on the first derivative of the function $g$.

\begin{defn}[Uniform Fredholm modules, cf. {\cite[Definition 2.6]{spakula_uniform_k_homology}}]\label{defn:uniform_fredholm_modules}
A Fredholm module $(H, \rho, T)$ is called \emph{uniform}, if $T$ is uniformly pseudolocal and the operators $T^2-1$ and $T - T^\ast$ are uniformly locally compact.
\end{defn}

\begin{example}[{\cite[Theorem 3.1]{spakula_uniform_k_homology}}]
\label{ex_Dirac_uniform}
\Spakula showed that the usual Fredholm module arising from a generalized Dirac operator is uniform if we assume bounded geometry\footnote{A manifold is said to have bounded geometry if its curvature tensor and all its derivatives are uniformly bounded and if its injectivity radius is uniformly positive. A vector bundle equipped with a metric and connection is said to have bounded geometry if its curvature tensor and all its derivatives are uniformly bounded.}: if $D$ is a generalized Dirac operator acting on a Dirac bundle $S$ of bounded geometry over a manifold $M$ of bounded geometry, then the triple $(L^2(S), \rho, \chi(D))$, where $\rho$ is the representation of $C_0(M)$ on $L^2(S)$ by multiplication operators and $\chi$ is a normalizing function, is a uniform Fredholm module.

In \cite[Theorem~3.39]{engel_indices_UPDO} this statement was generalized to symmetric and elliptic uniform pseudodifferential operators over manifolds of bounded geometry.
\end{example}

For a totally bounded metric space uniform Fredholm modules are the same as usual Fredholm modules. Since \v{S}pakula does not give a proof of it, we will do it now:

\begin{prop}\label{prop:compact_space_every_module_uniform}
Let $X$ be a totally bounded metric space. Then every Fredholm module over $X$ is uniform.
\end{prop}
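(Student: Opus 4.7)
The plan is to reduce uniform approximability to a statement about relative compactness of sets of operators in $\IK(H)$, which in turn follows from an Arzelà--Ascoli argument for the set $\LLip_R(X)$ of test functions.

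First I would observe that since $X$ is totally bounded, its metric completion $\overline{X}$ is compact. For any fixed $R, L > 0$, every $f \in \LLip_R(X)$ extends to an $L$-Lipschitz function on $\overline{X}$ of sup-norm at most $1$, and the resulting family is uniformly bounded and equicontinuous. By Arzelà--Ascoli it is relatively compact in $C(\overline{X})$, hence $\LLip_R(X)$ itself is a totally bounded subset of $C_0(X)$ (with respect to the sup norm, via the isometric inclusion $C_0(X) \hookrightarrow C(\overline{X})$).

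Next I would use that $\rho \colon C_0(X) \to \IB(H)$ is a $\ast$-homomorphism and hence norm-contractive, so the maps $f \mapsto \rho(f)T$, $f \mapsto T\rho(f)$, and $f \mapsto [T,\rho(f)]$ are all uniformly continuous from $C_0(X)$ to $\IB(H)$. Applying them to the totally bounded set $\LLip_R(X)$ yields totally bounded (and hence relatively compact) subsets of $\IB(H)$. For the first two, the Fredholm-module hypothesis states that $\rho(f)T$ and $T\rho(f)$ are compact for each $f \in C_0(X)$, so the corresponding collections lie in $\IK(H)$; for the commutator one uses pseudolocality of $T$. Thus all three collections are relatively compact in $\IK(H)$.

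Finally I would verify the easy lemma that any relatively compact subset $\mathcal{A} \subset \IK(H)$ is uniformly approximable: given $\varepsilon > 0$, cover $\mathcal{A}$ by finitely many $\varepsilon/2$-balls centered at operators $K_1, \ldots, K_n \in \IK(H)$, approximate each $K_i$ within $\varepsilon/2$ by a finite-rank operator $k_i$, and take $N := \max_i \operatorname{rank}(k_i)$; then every $T' \in \mathcal{A}$ lies within $\varepsilon$ of some $k_i$, which has rank at most $N$. Applied to the three collections above, this shows that $T^2 - 1$ and $T - T^\ast$ are uniformly locally compact and that $T$ is uniformly pseudolocal, so $(H, \rho, T)$ is uniform. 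The main (and only mildly subtle) obstacle is the Arzelà--Ascoli step, where one must remember that total boundedness of $X$, rather than mere boundedness, is exactly what is needed to make the family of normalized $L$-Lipschitz functions precompact in the sup norm.
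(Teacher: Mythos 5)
Your proof is correct and rests on the same key idea as the paper's, namely applying Arzel\`a--Ascoli to $\LLip_R(X)$ after extending Lipschitz functions to the compact completion $\overline{X}$. The only difference is presentational: the paper runs a proof by contradiction (extracting an accumulation point of a hypothetical bad sequence $f_N$), whereas you argue directly, observing that $f \mapsto \rho(f)T$, $f \mapsto T\rho(f)$, and $f \mapsto [T,\rho(f)]$ are norm-continuous, hence send the totally bounded set $\LLip_R(X)$ to relatively compact subsets of $\IK(H)$, and then using a finite $\varepsilon/2$-net of compact operators to produce the uniform rank bound $N$. This is a slightly cleaner write-up of the same argument.
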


\begin{proof}
Let $(H, \rho, T)$ be a Fredholm module.

First we will show that $T$ is uniformly pseudolocal. We will use the fact that the set $\LLip_R(X) \subset C(X)$ is relatively compact (i.e., its closure is compact) by the Theorem of Arzel\`{a}--Ascoli.\footnote{Since Lipschitz functions are uniformly continuous they have a unique extension to the completion $\overline{X}$ of $X$. Since $\overline{X}$ is compact, Arzel\`{a}--Ascoli applies.} Assume that $T$ is not uniformly pseudolocal. Then there would be $R, L > 0$ and $\varepsilon > 0$, so that for all $N > 0$ we would have an $f_N \in \LLip_R(X)$ such that for all rank-$N$ operators $k$ we have $\|[T, \rho(f_N)] - k\| \ge \varepsilon$. Since $\LLip_R(X)$ is relatively compact, the sequence $f_N$ has an accumulation point $f_\infty \in \LLip_R(X)$. Then we have $\|[T, \rho(f_\infty)] - k\| \ge \varepsilon / 2$ for all finite rank operators $k$, which is a contradiction.

The proofs that $T^2 - 1$ and $T - T^\ast$ are uniformly locally compact are analogous.
\end{proof}

A collection $(H, \rho, T_t)$ of uniform Fredholm modules is called an \emph{operator homotopy} if $t \mapsto T_t \in \IB(H)$ is norm continuous. As in the non-uniform case, we have an analogous lemma about compact perturbations:

\begin{lem}[Compact perturbations, {\cite[Lemma 2.16]{spakula_uniform_k_homology}}]
\label{lem:compact_perturbations}
Let $(H, \rho, T)$ be a uniform Fredholm module and $K \in \IB(H)$ a uniformly locally compact operator.

Then $(H, \rho, T)$ and $(H, \rho, T + K)$ are operator homotopic.
\end{lem}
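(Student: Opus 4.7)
The plan is to define the straight-line path $T_t := T + tK$ for $t \in [0,1]$ and verify that each $(H, \rho, T_t)$ is a uniform Fredholm module; norm continuity in $t$ is immediate since $\|T_t - T_s\| = |t-s|\cdot\|K\|$. One implicitly assumes that $K$ has the same odd/multigraded parity as $T$, so that $T+tK$ remains odd and multigraded. The entire content therefore lies in showing that $T_t$ is uniformly pseudolocal and that $T_t^2 - 1$ and $T_t - T_t^\ast$ are uniformly locally compact.

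Expanding gives
\[
[T_t, \rho(f)] = [T,\rho(f)] + t\,[K,\rho(f)],
\]
\[
T_t^2 - 1 = (T^2-1) + t(TK+KT) + t^2 K^2, \qquad T_t - T_t^\ast = (T-T^\ast) + t(K-K^\ast).
\]
Hence it suffices to establish three closure properties of uniformly locally compact operators: (i) if $A \in \IB(H)$ is bounded and $S$ is uniformly locally compact, then $AS$ and $SA$ are uniformly locally compact; (ii) if $S$ is uniformly locally compact, so is $S^\ast$; and (iii) finite sums of uniformly locally compact operators are uniformly locally compact. Item (i) handles $TK$, $KT$, and $K^2 = K\cdot K$, and together with (iii) also shows that $[K,\rho(f)] = K\rho(f) - \rho(f)K$ is uniformly approximable in $f \in \LLip_R(X)$, which combined with the uniform pseudolocality of $T$ yields that of $T_t$. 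Item (ii) disposes of $K - K^\ast$.

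Each closure property is proved by the same elementary mechanism: uniform approximability by rank-$N$ operators is preserved under left or right multiplication by a fixed bounded operator $A$ (since $\|S-k\|<\varepsilon$ with $\operatorname{rank} k \le N$ gives $\|AS-Ak\| < \varepsilon\|A\|$ and $\operatorname{rank}(Ak)\le N$), under finite sums (with additive blow-up of $N$), and under taking adjoints (rank is invariant under the adjoint). For (ii) one additionally invokes that $\LLip_R(X)$ is closed under complex conjugation, so that $K^\ast\rho(f) = (\rho(\bar f) K)^\ast$ is uniformly approximable whenever $\rho(\bar f)K$ is.

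The main technical obstacle is verifying (i) for the expression $\rho(f)TK$ appearing when testing $T_t^2 - 1$ against $\rho(f)$: one cannot move $\rho(f)$ past $T$ for free. The trick is to rewrite $\rho(f)TK = T\rho(f)K - [T,\rho(f)]\,K$ and exploit both the boundedness of $T$ (to pull it across the already uniformly-approximable $\rho(f)K$) and the uniform pseudolocality of $T$ (to approximate $[T,\rho(f)]$ itself by operators of rank bounded independently of $f \in \LLip_R(X)$, before multiplying by the bounded operator $K$). Once this is in place, summing the uniformly approximable collections with ranks bounded uniformly in $t \in [0,1]$ completes the verification and produces the desired operator homotopy.
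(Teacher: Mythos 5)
Your proposal is correct and follows the standard compact-perturbation argument: the straight-line homotopy $T_t = T + tK$, combined with term-by-term uniform-approximability estimates. The paper itself cites this lemma from \Spakula rather than proving it, and \v{S}pakula's proof proceeds in essentially the same way.

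One small caveat on the exposition. Your closure property (i) --- ``if $A \in \IB(H)$ is bounded and $S$ is uniformly locally compact then $AS$ and $SA$ are uniformly locally compact'' --- is false as a general statement: for an arbitrary bounded $A$ there is no way to control $\rho(f)AS$ (nor $SA\rho(f)$), since one half of the defining condition always requires commuting $A$ past $\rho(f)$. You are clearly aware of this, since you immediately label $\rho(f)TK$ as ``the main technical obstacle'' and resolve it by the rewriting $\rho(f)TK = T\rho(f)K - [T,\rho(f)]K$, which is exactly right and uses the uniform pseudolocality of $T$, not merely its boundedness. The analogous rewriting $KT\rho(f) = K\rho(f)T + K[T,\rho(f)]$ handles $KT$, and $K^2$ is harmless because both factors are uniformly locally compact. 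So the argument is sound; it is only the general form in which (i) is stated that overreaches, and a reader who took it literally would be led astray. I would reformulate (i) as: left or right multiplication by a fixed bounded operator preserves uniform approximability of a \emph{given} collection; the passage from there to uniform local compactness of the product uses the specific structure ($T$ uniformly pseudolocal, $K$ uniformly locally compact) via the commutator trick.
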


The definition of uniform $K$-homology now proceeds as the one for usual $K$-homology:

\begin{defn}[Uniform $K$-homology, {\cite[Definition 2.13]{spakula_uniform_k_homology}}]
We define the \emph{uniform $K$-homology group $K_{p}^u(X)$} of a locally compact and separable metric space $X$ to be the abelian group generated by unitary equivalence classes of $p$-multigraded uniform Fredholm modules with the relations:
\begin{itemize}
\item if $x$ and $y$ are operator homotopic, then $[x] = [y]$, and
\item $[x] + [y] = [x \oplus y]$,
\end{itemize}
where $x$ and $y$ are $p$-multigraded uniform Fredholm modules.
\end{defn}

All the basic properties of usual $K$-homology do also hold for uniform $K$-homology (e.g., that degenerate uniform Fredholm modules represent the zero class, that we have formal $2$-periodicity $K_{p}^u(X) \cong K_{p+2}^u(X)$ for all $p \ge -1$, etc.).

For discussing functoriality of uniform $K$-homology we need the following definition:

\begin{defn}[Uniformly cobounded maps, {\cite[Definition 2.15]{spakula_uniform_k_homology}}]
Let us call a map $g\colon X \to Y$ with the property
\[\sup_{y \in Y} \diam (g^{-1}(B_r(y))) < \infty \text{ for all }r > 0\]
\emph{uniformly cobounded}\footnote{Block and Weinberger call this property \emph{effectively proper} in \cite{block_weinberger_1}. The author called it \emph{uniformly proper} in his thesis \cite{engel_phd}.}.

Note that if $X$ is proper, then every uniformly cobounded map is proper (i.e., preimages of compact subsets are compact).
\end{defn}

The following lemma about functoriality of uniform $K$-homology was proved by \Spakula (see the paragraph directly after \cite[Definition 2.15]{spakula_uniform_k_homology}).

\begin{lem}
Uniform $K$-homology is functorial with respect to uniformly cobounded, proper Lipschitz maps, i.e., if $g\colon X \to Y$ is uniformly cobounded, proper and Lipschitz, then it induces maps $g_\ast\colon K_\ast^u(X) \to K_\ast^u(Y)$ on uniform $K$-homology via
\[g_\ast [(H, \rho, T)] := [(H, \rho \circ g^\ast, T)],\]
where $g^\ast\colon C_0(Y) \to C_0(X)$, $f \mapsto f \circ g$ is the by $g$ induced map on functions.
\end{lem}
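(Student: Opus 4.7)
The plan is to verify, in order, (i) that $(H, \rho\circ g^\ast, T)$ is a uniform Fredholm module over $Y$ whenever $(H,\rho,T)$ is one over $X$, and (ii) that this assignment respects unitary equivalence, operator homotopy, and direct sum, and is therefore a well-defined group homomorphism. For (i), note first that since $g$ is proper, $f \circ g \in C_0(X)$ for every $f \in C_0(Y)$, so $\rho \circ g^\ast \colon C_0(Y) \to \IB(H)$ is a well-defined $\ast$-representation by even, multigraded operators.

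The heart of the argument is the following pullback estimate: if $g$ is $C$-Lipschitz and uniformly cobounded, then for every $R,L > 0$ there are constants $R' = R'(R)$ and $L' = CL$ such that every $L$-Lipschitz $f \in \LLip_R(Y)$ pulls back to an $L'$-Lipschitz function $f \circ g \in \LLip_{R'}(X)$. Indeed, $\|f \circ g\|_\infty \le \|f\|_\infty \le 1$, Lipschitz constants multiply under composition, and choosing any $y_0 \in \supp f$ we have
\[\supp(f \circ g) \subseteq g^{-1}(\supp f) \subseteq g^{-1}(B_R(y_0)),\]
whose diameter is bounded by $R' := \sup_{y \in Y} \diam(g^{-1}(B_R(y))) < \infty$ thanks to uniform coboundedness. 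Given this inclusion, each of the collections $\{\rho(g^\ast f)\, T,\ T\, \rho(g^\ast f),\ [T, \rho(g^\ast f)] : f \in \LLip_R(Y)\text{ is }L\text{-Lipschitz}\}$ is contained in the corresponding collection over $\LLip_{R'}(X)$ with Lipschitz constant $L'$, which is uniformly approximable by hypothesis on $T$. Hence $T^2 - 1$ and $T - T^\ast$ are uniformly locally compact and $T$ is uniformly pseudolocal with respect to $\rho \circ g^\ast$.

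Step (ii) is then routine: unitary equivalence is preserved because $T$ and the Hilbert space structure are unchanged; an operator homotopy $\{T_t\}$ stays norm continuous after replacing $\rho$ by $\rho \circ g^\ast$; direct sums are tautologically compatible. Hence $g_\ast$ descends to a group homomorphism $K_\ast^u(X) \to K_\ast^u(Y)$, and the functoriality identities $\id_\ast = \id$ and $(g_2 \circ g_1)_\ast = (g_2)_\ast \circ (g_1)_\ast$ follow at once from $(g_2 \circ g_1)^\ast = g_1^\ast \circ g_2^\ast$. I do not expect a genuine obstacle; the only substantive point is the pullback estimate, which makes transparent why both hypotheses on $g$ are needed: the Lipschitz condition controls the Lipschitz constant of $f \circ g$, while uniform coboundedness controls its support diameter.
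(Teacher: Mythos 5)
Your argument is correct and is the natural one; note that the paper does not actually prove this lemma but cites \Spakula for it, and the pullback estimate you give (Lipschitz constants multiply under composition, $\|f\circ g\|_\infty\le\|f\|_\infty$, and uniform coboundedness bounds $\diam\supp(f\circ g)$) is exactly what the cited argument amounts to. One cosmetic point: since $\diam(\supp f)\le R$ only gives $\supp f\subseteq \overline{B_R(y_0)}$, you should take $R':=\sup_y\diam(g^{-1}(B_{R'}(y)))$ for some $R''>R$ (e.g.\ $R''=R+1$), but this does not affect anything since uniform coboundedness holds for every radius.
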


Recall that $K$-homology may be normalized in various ways, i.e., we may assume that the Fredholm modules have a certain form or a certain property and that this holds also for all homotopies.

Combining Lemmas 4.5 and 4.6 and Proposition 4.9 from \cite{spakula_uniform_k_homology}, we get the following:

\begin{lem}\label{lem:normalization_involutive}
We can normalize uniform $K$-homology $K_\ast^u(X)$ to involutive modules.\footnote{Recall that a Fredholm module $(H, \rho, T)$ is called involutive if $T = T^\ast$, $\|T\| \le 1$ and $T^2 = 1$.}
\end{lem}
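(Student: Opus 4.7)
The plan is to achieve the three conditions $T = T^*$, $\|T\| \le 1$, and $T^2 = 1$ in three successive stages, preserving the uniform $K$-homology class at each step by either the compact perturbation lemma (Lemma~\ref{lem:compact_perturbations}) or an operator homotopy obtained from functional calculus.

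For the first stage, I replace $T$ by the self-adjoint part $T_s := (T+T^*)/2$. The difference $(T-T^*)/2$ is uniformly locally compact by the definition of a uniform Fredholm module, so Lemma~\ref{lem:compact_perturbations} produces the required operator homotopy from $(H,\rho,T)$ to $(H,\rho,T_s)$. For the second stage, assuming $T = T^*$, I would pass to $\chi(T)$ via continuous functional calculus, where $\chi\colon \IR \to [-1,1]$ is a continuous odd normalizing function with $\chi(x) = x$ for $|x|\le 1$ and $\chi(x) \to \pm 1$ as $x \to \pm\infty$. A norm-continuous family $\{\chi_t\}$ interpolating from the identity (suitably truncated outside larger and larger intervals) to $\chi$ yields a path $t \mapsto \chi_t(T)$ realizing the operator homotopy, provided each $\chi_t(T)$ is a uniform Fredholm module.

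For the third stage, having $T = T^*$ with $\|T\| \le 1$, I employ the standard doubling trick. On $H' := H \oplus H^{\op}$ with representation $\rho' := \rho \oplus 0$, define
\[T_t := \begin{pmatrix} T & t(1-T^2)^{1/2} \\ t(1-T^2)^{1/2} & -T \end{pmatrix}, \qquad t \in [0,1].\]
Then $T_t$ is self-adjoint and odd (using that $(1-T^2)^{1/2}$ is an even function of the odd operator $T$), and a direct computation with $[T,(1-T^2)^{1/2}]=0$ gives $T_t^2 - I = (1-t^2)(T^2 - I)$ on each summand. Hence $T_1 =: \tilde T$ is involutive, while $T_0 = T \oplus (-T)$ is the original module plus a summand $(H^{\op},0,-T)$ with trivial representation, which represents the zero class (it operator-homotopes through $(1-s)(-T)$ to the zero operator, with all uniformity conditions being vacuous since $\rho = 0$). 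Therefore $[(H',\rho',\tilde T)] = [(H,\rho,T)]$ in $K_*^u(X)$.

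The main technical obstacle is verifying that functional calculus preserves the uniform local compactness and uniform pseudolocality needed to keep every intermediate module uniform. Concretely, one must show that for $\varphi \in C_0(\IR)$ vanishing at $\pm 1$---such as $\chi^2 - 1$ (with our choice of $\chi$) and $(1-x^2)^{1/2}$---both collections $\{\rho(f)\varphi(T) : f \in \LLip_R(X)\}$ and $\{[\varphi(T),\rho(f)] : f \in \LLip_R(X)\}$ are uniformly approximable. The strategy is to approximate $\varphi$ uniformly on the spectrum of $T$ by polynomials of the form $p_n(x) = (1-x^2)\,q_n(x)$, whence $\rho(f)\,p_n(T) = \rho(f)(I-T^2)\,q_n(T)$ is a product of a uniformly approximable collection with a uniformly bounded operator, and the commutator $[p_n(T),\rho(f)]$ is expanded via a Leibniz identity in powers of $T$ and controlled by the uniform pseudolocality of $T$. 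Passing to the norm limit preserves uniform approximability, which then verifies the uniform Fredholm module axioms for $\chi_t(T)$ and $T_t$ at every $t$, and the three stages combine to produce an involutive representative of the given class.
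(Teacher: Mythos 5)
Your proof is correct and tracks the standard normalization procedure: the paper itself proves this lemma by direct citation to Lemmas 4.5 and 4.6 and Proposition 4.9 of \v{S}pakula's paper, which carry out precisely your three stages (self-adjointness via the compact perturbation lemma, contractivity via a normalizing function and functional calculus, involutivity via the doubling trick with $\rho \oplus 0$). The key technical point you flag---that continuous functional calculus $\varphi(T)$ preserves the uniform axioms when $\varphi$ vanishes at $\pm 1$, proved by sup-norm approximation with polynomials of the form $(1-x^2)q_n(x)$, whose ideal has closure exactly the functions vanishing at $\pm 1$ on $\sigma(T)$---is indeed what makes everything go through, and your Leibniz argument for uniform pseudolocality of polynomials in $T$ is the right ingredient. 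One small simplification for your Stage 2: rather than constructing the interpolating family $\chi_t(T)$ (the phrase ``suitably truncated'' is unnecessary, since $\chi_t := (1-t)\id + t\chi$ restricted to the compact set $\sigma(T)$ already gives a norm-continuous linear path), you can apply Lemma~\ref{lem:compact_perturbations} directly, because $\chi(T) - T = (\chi - \id)(T)$ is itself uniformly locally compact: $\chi - \id$ vanishes on all of $[-1,1]$, so it lies in the sup-norm closure of $(1-x^2)\cdot\mathrm{Poly}$ on $\sigma(T)$, and your factoring argument gives the claim. With that, Stages 2 and 3 both reduce to the same functional-calculus lemma applied once for $(1-x^2)^{1/2}$ and once for $\chi - \id$.
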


The proof of the following Lemma \ref{lem:normalization_non-degenerate} in the non-uniform case may be found in, e.g., \cite[Lemma 8.3.8]{higson_roe}. The proof in the uniform case is analogous and the arguments similar to the ones in the proofs of \cite[Lemmas 4.5 \& 4.6]{spakula_uniform_k_homology}.

\begin{lem}\label{lem:normalization_non-degenerate}
Uniform $K$-homology $K_\ast^u(X)$ may be normalized to non-degenerate Fredholm modules, i.e., such that all occuring representations $\rho$ are non-degenerate\footnote{This means that $\rho(C_0(X)) H$ is dense in $H$.}.
\end{lem}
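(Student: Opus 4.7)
My plan is to mimic the proof of \cite[Lemma~8.3.8]{higson_roe}, adapting each step to the uniform setting by keeping careful track of how the uniformly approximable families behave under compressions. The starting point will be the canonical orthogonal decomposition $H = H_0 \oplus H_1$ with $H_0 := \overline{\rho(C_0(X))H}$ and $H_1 := H_0^\perp$. Denoting by $P$ the orthogonal projection onto $H_0$, one first observes that for every $v \in H_1$ and every $f \in C_0(X)$ and $u \in H$ one has $\langle \rho(\bar f)v, u\rangle = \langle v, \rho(f)u\rangle = 0$, so that $\rho(f)(1-P) = 0$; taking adjoints yields $(1-P)\rho(f) = 0$. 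In particular $\rho(f)P = \rho(f) = P\rho(f)$ for every $f \in C_0(X)$. Since $\rho$ consists of even, multigraded operators, $H_0$ and $H_1$ are invariant under the grading operator $\epsilon$ and under the multigrading operators $\epsilon_1, \dots, \epsilon_p$, so the decomposition is compatible with the multigrading.

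Next I will modify $T$ so that it preserves the decomposition. Set
\[
T' := PTP + (1-P)T(1-P), \qquad K := T - T' = PT(1-P) + (1-P)TP.
\]
Using the identities $\rho(f) = \rho(f)P = P\rho(f)$, a direct computation gives
\[
\rho(f)K = [\rho(f), T](1-P) \quad\text{and}\quad K\rho(f) = -(1-P)[\rho(f), T],
\]
so that for every $f \in \LLip_R(X)$ the operators $\rho(f)K$ and $K\rho(f)$ are obtained from members of the uniformly approximable family $\{[\rho(f), T] : f \in \LLip_R(X)\}$ by multiplication with the contraction $(1-P)$. Uniform approximability is obviously preserved under composition with a bounded operator on one side (approximate $[\rho(f),T]$ by a rank-$N$ operator $k$; then $k(1-P)$ is of rank at most $N$), so $K$ is uniformly locally compact. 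Lemma~\ref{lem:compact_perturbations} now yields an operator homotopy between $(H,\rho,T)$ and $(H,\rho,T')$ (the lemma implicitly guarantees that $(H,\rho,T')$ is itself a uniform Fredholm module).

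Because $T'$ preserves the decomposition and $\rho$ acts as $0$ on $H_1$, we obtain an identification
\[
(H, \rho, T') = (H_0, \rho|_{H_0}, T'|_{H_0}) \oplus (H_1, 0, T'|_{H_1}),
\]
in which $\rho|_{H_0}$ is non-degenerate by construction. The second summand has zero representation, so every condition of Definition~\ref{defn:uniform_fredholm_modules} is vacuous; the straight-line homotopy $t \mapsto (H_1, 0, (1-t)T'|_{H_1})$ for $t \in [0,1]$ is a norm-continuous path of uniform Fredholm modules connecting it to the zero module, and hence it represents the zero class. This establishes the cycle-level statement. For the homotopy-level statement, note that the projection $P$ is a function of $\rho$ alone, so given a norm-continuous path $t \mapsto T_t$ of uniform Fredholm module structures on the fixed $(H,\rho)$, the assignment $t \mapsto T_t' = PT_tP + (1-P)T_t(1-P)$ is again norm-continuous, and each $T_t - T_t'$ is uniformly locally compact by the same computation, giving a homotopy of non-degenerate modules; the same argument applied fibrewise processes an entire operator homotopy simultaneously.

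The only real obstacle is verifying that the uniform approximability property genuinely transfers to $K$; once the identities $\rho(f)K = [\rho(f), T](1-P)$ and $K\rho(f) = -(1-P)[\rho(f), T]$ are in hand this reduces to the trivial observation that composing a uniformly approximable family with a bounded operator leaves it uniformly approximable, so no real difficulty is expected beyond organizing the bookkeeping for multigraded modules and homotopies.
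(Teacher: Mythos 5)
Your proof is correct, and it is exactly the route the paper intends: the paper does not write out a proof but refers to the non-uniform Higson--Roe Lemma~8.3.8 and notes the uniform case is ``analogous,'' which is precisely what you have done --- compress onto the non-degenerate part via $T' = PTP + (1-P)T(1-P)$, check that $K = T - T'$ is uniformly locally compact using the identities $\rho(f)K = [\rho(f),T](1-P)$ and $K\rho(f) = -(1-P)[\rho(f),T]$, and then invoke Lemma~\ref{lem:compact_perturbations}. One small cosmetic point: you do not actually need the straight-line homotopy in the last step, since $(H_1, 0, T'|_{H_1})$ is already a degenerate uniform Fredholm module (all three vanishing conditions hold \emph{exactly} because the representation is zero) and degenerate modules represent the zero class; calling $(H_1,0,0)$ ``the zero module'' is also a mild abuse of language, but the conclusion is unaffected.
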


Note that in general we can not normalize uniform $K$-homology to be simultaneously involutive and non-degenerate, just as is the case for usual $K$-homology.

Later we will also have to normalize Fredholm modules to finite propagation. But this is not always possible if the underlying metric space $X$ is badly behaved. Therefore we get now to the definition of bounded geometry for metric spaces.

\begin{defn}[Coarsely bounded geometry]
\label{defn:coarsely_bounded_geometry}
Let $X$ be a metric space. We call a subset $\Gamma \subset X$ a \emph{quasi-lattice} if
\begin{itemize}
\item there is a $c > 0$ such that $B_c(\Gamma) = X$ (i.e., $\Gamma$ is \emph{coarsely dense}) and
\item for all $r > 0$ there is a $K_r > 0$ such that $\card(\Gamma \cap B_r(y)) \le K_r$ for all $y \in X$.
\end{itemize}
A metric space is said to have \emph{coarsely bounded geometry}\footnote{Note that most authors call this property just ``bounded geometry''. But since later we will also have the notion of locally bounded geometry, we use for this one the term ``coarsely'' to distinguish them.} if it admits a quasi-lattice.
\end{defn}

Note that if we have a quasi-lattice $\Gamma \subset X$, then there also exists a uniformly discrete quasi-lattice $\Gamma^\prime \subset X$. The proof of this is an easy application of the Lemma of Zorn: given an arbitrary $\delta > 0$ we look at the family $\mathcal{A}$ of all subsets $A \subset \Gamma$ with $d(x,y) > \delta$ for all $x,y \in A$. These subsets are partially ordered under inclusion of sets and every totally ordered chain $A_1 \subset A_2 \subset \ldots \subset \Gamma$ has an upper bound given by the union $\bigcup_i A_i \in \mathcal{A}$. So the Lemma of Zorn provides us with a maximal element $\Gamma^\prime \in \mathcal{A}$. That $\Gamma^\prime$ is a quasi-lattice follows from its maximality.

\begin{examples}\label{ex:coarsely_bounded_geometry}
Every Riemannian manifold $M$ of bounded geometry\footnote{That is to say, the injectivity radius of $M$ is uniformly positive and the curvature tensor and all its derivatives are bounded in sup-norm.} is a metric space of coarsely bounded geometry: any maximal set $\Gamma \subset M$ of points which are at least a fixed distance apart (i.e., there is an $\varepsilon > 0$ such that $d(x, y) \ge \varepsilon$ for all $x \not= y \in \Gamma$) will do the job. We can get such a maximal set by invoking Zorn's lemma. Note that a manifold of bounded geometry will also have locally bounded geometry (this notion will be defined further below), so no confusion can arise by not distinguishing between ``coarsely'' and ``locally'' bounded geometry in the terminology for manifolds.

If $(X,d)$ is an arbitrary metric space that is bounded, i.e., $d(x,x^\prime) < D$ for all $x, x^\prime \in X$ and some $D$, then \emph{any} finite subset of $X$ will constitute a quasi-lattice.

Let $K$ be a simplicial complex of bounded geometry\footnote{That is, the number of simplices in the link of each vertex is uniformly bounded.}. Equipping $K$ with the metric derived from barycentric coordinates the subset of all vertices of the complex $K$ becomes a quasi-lattice in $K$.
\end{examples}

If $X$ has coarsely bounded geometry it will be crucial for us that we can normalize uniform $K$-homology to uniform finite propagation, i.e., such there is an $R > 0$ depending only on $X$ such that every uniform Fredholm module has propagation at most $R$\footnote{This means $\rho(f) T \rho(g) = 0$ if $d(\supp f, \supp g) > R$.}. This was proved by \Spakula in \cite[Proposition 7.4]{spakula_uniform_k_homology}. Note that it is in general not possible to make this common propagation $R$ arbitrarily small. Furthermore, we can combine the normalization to finite propagation with the other normalizations.

\begin{prop}[{\cite[Section 7]{spakula_uniform_k_homology}}]\label{prop:normalization_finite_prop_speed}
If $X$ has coarsely bounded geometry, then there is an $R > 0$ depending only on $X$ such that uniform $K$-homology may be normalized to uniform Fredholm modules that have propagation at most $R$.

Furthermore, we can additionally normalize them to either involutive modules or to non-degenerate ones.
\end{prop}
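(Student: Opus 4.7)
The plan is, for any uniform Fredholm module $(H,\rho,T)$, to build a perturbation $T_R$ with propagation bounded by a constant depending only on $X$, then invoke Lemma~\ref{lem:compact_perturbations} to get the operator homotopy. The first step is to use coarsely bounded geometry to fix a uniformly discrete $c$-dense quasi-lattice $\Gamma=\{x_i\}_{i\in I}\subset X$ (Zorn argument as in the paragraph after Definition~\ref{defn:coarsely_bounded_geometry}) and an associated Lipschitz partition of unity $\{\phi_i\}_{i\in I}$ with $\supp\phi_i\subset B_{2c}(x_i)$, uniformly bounded Lipschitz constants, and uniformly bounded overlap number, all governed by constants depending only on the quasi-lattice data of $X$. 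A concrete such $\phi_i$ is obtained by normalizing $\psi_i(x):=\max(0,1-d(x,x_i)/(2c))$.

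For a constant $R>4c$ to be fixed, I would set
\[T_R := \sum_{d(x_i,x_j)\le R} \rho(\phi_i)\, T\, \rho(\phi_j),\]
which converges strongly to a bounded operator of propagation at most $R+4c$ by the support conditions on the $\phi_i$. The crux is to show that the formal difference
\[K := T-T_R = \sum_{d(x_i,x_j)>R} \rho(\phi_i)\, T\, \rho(\phi_j)\]
is uniformly locally compact. The intended argument uses $\phi_i\phi_j=0$ (forced by $R>4c$) to rewrite the $(i,j)$-summand as $\rho(\phi_i)[T,\rho(\phi_j)]$. For $f\in\LLip_r(X)$ only uniformly finitely many indices $i$ satisfy $f\phi_i\ne 0$, the bound depending only on $r$ and on the overlap data of $X$; and because the whole family $\{\phi_j\}_{j\in I}$ sits inside a single $\LLip_{2c}(X)$, a single application of the uniform pseudolocality of $T$ (Definition~\ref{defn:uniform_operators}) delivers a finite-rank $\varepsilon$-approximation that works simultaneously for every commutator $[T,\rho(\phi_j)]$. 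A partition-of-unity book-keeping argument then upgrades this into uniform approximability of $\rho(f)K$, and symmetrically of $K\rho(f)$, proving that $K$ is uniformly locally compact.

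From $K$ uniformly locally compact, the identities $T_R^2-1=(T^2-1)-2TK+K^2-K$ and $T_R-T_R^\ast=(T-T^\ast)-(K-K^\ast)$ force $T_R$ to be uniformly pseudolocal with uniformly locally compact $T_R^2-1$ and $T_R-T_R^\ast$, hence $(H,\rho,T_R)$ is a uniform Fredholm module, and Lemma~\ref{lem:compact_perturbations} applied to $-K$ yields the desired operator homotopy from $T$ to $T_R$. For the additional refinements I would combine this construction with the normalization arguments of Lemmas~\ref{lem:normalization_involutive} and~\ref{lem:normalization_non-degenerate}: the non-degenerate version is obtained by restricting to $\overline{\rho(C_0(X))H}$, which is invariant under any finite-propagation operator and therefore inherits the propagation bound; the involutive normalization is achieved by a further application of the present cut-off construction combined with the Chebyshev-type functional calculus in the proof of Lemma~\ref{lem:normalization_involutive}, which increases the propagation by a further constant that still depends only on $X$. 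The principal obstacle, and the only place the \emph{uniform} (as opposed to pointwise) hypotheses are essential, is the uniform approximability step for $K$ in the middle paragraph: a single $R$ must serve every uniform Fredholm module at once, which is exactly what Definition~\ref{defn:uniform_operators} provides but plain pseudolocality of each individual commutator does not.
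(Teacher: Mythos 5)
Your overall strategy — a band cut-off $T_R$ against a uniformly Lipschitz partition of unity, a uniform local compactness argument for $K := T - T_R$, and then Lemma~\ref{lem:compact_perturbations} — is a reasonable reconstruction, and using a Lipschitz (rather than Borel) partition is exactly what makes Definition~\ref{defn:uniform_operators} applicable. However, the step establishing that $K$ is uniformly locally compact, as you have written it, does not close. After multiplying by $\rho(f)$ and restricting to the uniformly finitely many $i$ with $f\phi_i\ne 0$, you still face, for each such $i$, the \emph{infinite} sum $\sum_{j:\,d(x_i,x_j)>R}\rho(f\phi_i)[T,\rho(\phi_j)]$. Simultaneous rank-$N$, error-$\varepsilon$ approximants of every $[T,\rho(\phi_j)]$ — which uniform pseudolocality does give — do not assemble into a finite-rank approximant of this $j$-sum: both the rank and the accumulated error grow without bound over $j$. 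The missing move is to collapse the $j$-sum \emph{before} approximating: with $S_i := \sum_{j:\,d(x_i,x_j)\le R}\phi_j$ one has $\phi_i S_i = \phi_i$ (because $R>4c$ and $\sum_j\phi_j=1$), hence
\[
\sum_{j:\,d(x_i,x_j)>R}\rho(f\phi_i)\,T\,\rho(\phi_j) \;=\; \rho(f\phi_i)\,T\,\bigl(1-\rho(S_i)\bigr) \;=\; -\rho(f\phi_i)\,[T,\rho(S_i)],
\]
a \emph{single} commutator. The functions $S_i$ are uniformly Lipschitz, satisfy $\|S_i\|_\infty\le 1$, and have $\diam\supp S_i\le R+4c$, all uniformly in $i$ by coarsely bounded geometry; one application of uniform pseudolocality to the family $\{S_i\}$ then closes the argument. (The collapse uses $\sum_j\rho(\phi_j)=1$ on $\overline{\rho(C_0(X))H}$, so one should split off the degenerate part of the module first.)

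Two further points are underargued. The boundedness and strong convergence of $T_R$ need a line: setting $Wh:=\sum_i\rho(\phi_i)h\otimes e_i\in H\otimes\ell^2(I)$, one has $\|W\|\le 1$ (since $\sum_i\phi_i^2\le 1$) and $T_R=W^\ast(T\otimes Q_R)W$ for the $0/1$-matrix $Q_R$ with $(Q_R)_{ij}=1$ precisely when $d(x_i,x_j)\le R$, whose Schur bound is supplied by coarsely bounded geometry; this also yields the propagation bound $R+4c$ and, since $T\mapsto T_R$ is a norm-bounded linear map of $\IB(H)$, the fact that the cut-off carries operator homotopies to operator homotopies — which you need to say, because the proposition is a statement about the $K$-homology groups, not about single cycles. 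Finally, the involutive refinement is a genuine subtlety you only gesture at: functional calculus does not preserve finite propagation, and one must show that a suitable polynomial truncation of the normalizing function differs from its target by a uniformly locally compact operator while enlarging propagation only by an $X$-controlled amount.
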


Having discussed the normalization to finite propagation modules, we can now compute an easy but important example:

\begin{lem}\label{lem:uniform_k_hom_discrete_space}
Let $Y$ be a uniformly discrete, proper metric space of coarsely bounded geometry. Then $K_0^u(Y)$ is isomorphic to the group $\ell_\IZ^\infty(Y)$ of all bounded, integer-valued sequences indexed by $Y$, and $K_1^u(Y) = 0$.
\end{lem}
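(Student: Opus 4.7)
The plan is to construct mutually inverse homomorphisms $\Phi\colon K_0^u(Y) \to \ell^\infty_\IZ(Y)$ and $\Psi\colon \ell^\infty_\IZ(Y) \to K_0^u(Y)$ via a pointwise Fredholm-index construction, and to deduce $K_1^u(Y) = 0$ by the same deformation technique. First I would invoke Proposition~\ref{prop:normalization_finite_prop_speed} to assume every class is represented by a non-degenerate uniform Fredholm module $(H,\rho,T)$ of propagation at most a fixed $R > 0$ depending only on $Y$. Writing $\delta > 0$ for the discreteness constant of $Y$, each characteristic function $\chi_{\{y\}}$ lies in $\LLip_0(Y)$ with Lipschitz constant $1/\delta$; feeding this family into uniform pseudolocality and uniform local compactness yields an integer $N$ such that $[T,\rho(\chi_{\{y\}})]$, $\rho(\chi_{\{y\}})(T^2-1)$ and $\rho(\chi_{\{y\}})(T-T^\ast)$ are each uniformly within any prescribed $\varepsilon$ of a rank-$N$ operator. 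Non-degeneracy yields the orthogonal decomposition $H = \bigoplus_{y \in Y} H_y$ with $H_y = \rho(\chi_{\{y\}}) H$.

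For $p = 0$, writing $T = \bigl(\begin{smallmatrix} 0 & S \\ V & 0 \end{smallmatrix}\bigr)$, I would compress to $V_y := \rho(\chi_{\{y\}}) V \rho(\chi_{\{y\}})\colon H_y^+ \to H_y^-$. Using $SV - 1, VS - 1$ uniformly locally compact together with uniform pseudolocality, each $V_y$ is Fredholm with $|\ind V_y|$ uniformly bounded (essentially by $2N$), so setting $\Phi[(H,\rho,T)](y) := \ind V_y$ defines a bounded $\IZ$-valued function. Additivity under direct sum and invariance under operator homotopy (being integer-valued and continuous in $T$) yield a homomorphism $\Phi\colon K_0^u(Y) \to \ell^\infty_\IZ(Y)$. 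For the inverse, given $n \in \ell^\infty_\IZ(Y)$, I would place at each $y$ two copies of $H_0 = \ell^2(\IN_0)$ and take $V_y$ to be a shift or co-shift of Fredholm index $n(y)$; the direct sum $V = \bigoplus_y V_y$ has propagation $0$, commutes exactly with $\rho$, and $V^\ast V - 1$, $VV^\ast - 1$ are locally of uniformly bounded rank, so $T = \bigl(\begin{smallmatrix} 0 & V^\ast \\ V & 0 \end{smallmatrix}\bigr)$ yields a uniform Fredholm module whose class $\Psi(n)$ satisfies $\Phi \circ \Psi = \id$ by construction.

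The hard part will be injectivity of $\Phi$. Given a class with all $\ind V_y = 0$, each $V_y$ is a compact perturbation of some unitary $U_y\colon H_y^+ \to H_y^-$, and the uniform approximation of $V_y^\ast V_y - \rho(\chi_{\{y\}})$ by rank-$N$ operators lets me choose these perturbations uniformly in $y$. The key step is to verify that the global difference $V - \bigoplus_y U_y$ is \emph{uniformly} locally compact, not merely locally compact. Its diagonal part is term-wise uniformly finite rank by construction. For its off-diagonal part I would use the identity
\[
\rho(\chi_{\{y\}}) V \rho(\chi_{\{y'\}}) = [\rho(\chi_{\{y\}}), V]\, \rho(\chi_{\{y'\}}) \qquad (y \ne y'),
\]
showing each off-diagonal block is uniformly close to rank $N$; finite propagation of $T$ restricts $y'$ to $B_R(y)$, and coarsely bounded geometry caps the number of non-zero blocks near any point by $K_R$. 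Lemma~\ref{lem:compact_perturbations} then furnishes an operator homotopy from $T$ to the manifestly degenerate operator built from $\bigoplus_y U_y$, so the class vanishes.

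The assertion $K_1^u(Y) = 0$ will follow by the identical argument applied to ungraded modules, where $K_1(\IC) = 0$ means each local block can be deformed to a symmetry commuting with $\rho$ without any pointwise obstruction. The main obstacle throughout is precisely this pasting step: turning pointwise compactness of $V_y - U_y$ into uniform local compactness globally. This is the single point where (i) the uniform rank-$N$ approximation coming from uniform pseudolocality, (ii) finite propagation of $T$, and (iii) coarsely bounded geometry of $Y$ must all be used together.
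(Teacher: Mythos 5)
Your proposal is correct in its essential ideas but takes a genuinely different and substantially heavier route than the paper. The paper's argument rests on a single crucial observation that you do not make: by inspecting the proof of Proposition~\ref{prop:normalization_finite_prop_speed} one can, in the uniformly discrete case, choose the common propagation bound $R$ to be \emph{strictly smaller than the minimal distance} $\delta = \inf_{x \ne y} d(x,y)$. With that choice every off-diagonal block $\rho(\chi_{\{y\}}) T \rho(\chi_{\{y'\}})$, $y \ne y'$, is identically zero, so $T = \bigoplus_y T_y$ \emph{exactly}, and likewise for all homotopies. The decomposition then immediately gives a well-defined map $K_\ast^u(Y) \to \prod_y K_\ast^u(y) = \prod_y K_\ast(\pt)$, and the only remaining work is to identify the image in the $\ast=0$ case with the bounded sequences, which is forced by uniformity. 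You instead retain a general finite propagation $R$ and must therefore control the off-diagonal part of $T$ via the commutator identity $\rho(\chi_{\{y\}}) V \rho(\chi_{\{y'\}}) = [\rho(\chi_{\{y\}}), V]\rho(\chi_{\{y'\}})$, finite propagation, and coarsely bounded geometry; you then need an explicitly uniform version of the polar-decomposition argument showing that an index-zero Fredholm operator $V_y$ is a uniformly-controlled compact perturbation of a unitary $U_y$. Both of those steps can be carried out (for instance, $\dim\ker V_y \le N$ follows from the $\varepsilon$-close rank-$N$ approximation of $S_yV_y - 1$, and $|V_y| - 1 = (V_y^\ast V_y - 1)(|V_y|+1)^{-1}$ inherits the uniform finite-rank approximation), but your write-up glosses over them --- ``term-wise uniformly finite rank by construction'' overstates the situation, since $V_y - U_y$ is uniformly ``finite rank plus small'', not finite rank on the nose --- and the whole machinery is unnecessary once $R < \delta$. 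The trade-off is that the paper's argument depends on the internals of \v{S}pakula's Proposition 7.4, whereas yours uses only its statement; your approach is more robust, the paper's is much cleaner. Both then conclude $K_1^u(Y) = 0$ from $K_1(\pt) = 0$ in the same spirit, and both identify $K_0^u(Y) \cong \ell^\infty_\IZ(Y)$ via boundedness of the pointwise Fredholm indices.
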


\begin{proof}
We use Proposition \ref{prop:normalization_finite_prop_speed} to normalize uniform $K$-homology to operators of finite propagation, i.e., there is an $R > 0$ such that every uniform Fredholm module over $Y$ may be represented by a module $(H, \rho, T)$ where $T$ has propagation no more than $R$ and all homotopies may be also represented by homotopies where the operators have propagation at most~$R$.

Going into the proof of Proposition \ref{prop:normalization_finite_prop_speed}, we see that in our case of a uniformly discrete metric space $Y$ we may choose~$R$ less than the least distance between two different points of $Y$, i.e., $0 < R < \inf_{x \not= y \in Y} d(x,y)$. Given now a module $(H, \rho, T)$ where $T$ has propagation at most this~$R$, the operator $T$ decomposes as a direct sum $T = \bigoplus_{y \in Y} T_y$ with $T_y \colon H_y \to H_y$. The Hilbert space $H_y$ is defined as $H_y := \rho(\chi_y) H$, where $\chi_y$ is the characteristic function of the single point $y \in Y$. Note that $\chi_y$ is a continuous function since the space $Y$ is discrete. Hence $(H, \rho, T) = \bigoplus (H_y, \rho_y, T_y)$ with $\rho_y\colon C_0(Y) \to \IB(H_y)$, $f \mapsto \rho(\chi_y) \rho(f) \rho(\chi_y)$. Now each $(H_y, \rho_y, T_y)$ is a Fredholm module over the point $y$ and so we get a map
\[K_\ast^u(Y) \to \prod_{y \in Y} K_\ast^u(y).\]
Note that we need that the homotopies also all have propagation at most~$R$ so that the above defined decomposition of a uniform Fredholm module descends to the level of uniform $K$-homology.

Since a point $y$ is for itself a compact space, we have $K_\ast^u(y) = K_\ast(y)$, and the latter group is isomorphic to $\IZ$ for $\ast = 0$ and it is $0$ for $\ast = 1$. Since the above map $K_\ast^u(Y) \to \prod_{y \in Y} K_\ast^u(y)$ is injective, we immediately conclude $K_1^u(Y) = 0$.

So it remains to show that the image of this map in the case $\ast = 0$ consists of the \emph{bounded} integer-valued sequences indexed by $Y$. But this follows from the uniformity condition in the definition of uniform $K$-homology: the isomorphism $K_0(y) \cong \IZ$ is given by assigning a module $(H_y, \rho_y, T_y)$ the Fredholm index of $T$ (note that $T_y$ is a Fredholm operator since $(H_y, \rho_y, T_y)$ is a module over a single point). Now since $(H, \rho, T) = \bigoplus (H_y, \rho_y, T_y)$ is a \emph{uniform} Fredholm module, we may conclude that the Fredholm indices of the single operators $T_y$ are bounded with respect to $y$.
\end{proof}

\subsection{Differences to \v{S}pakula's version}\label{sec_changes_defn}
We will discuss now the differences between our version of uniform $K$-homology and \v{S}pakula's version from his Ph.D.\ thesis \cite{spakula_thesis}, resp., his publication \cite{spakula_uniform_k_homology}.

Firstly, our definition of uniform $K$-homology is based on multigraded Fredholm modules and we therefore have groups $K_p^\ast(X)$ for all $p \ge -1$, but \Spakula only defined $K_0^u$ and $K_1^u$. This is not a real restriction since uniform $K$-homology has, analogously as usual $K$-homology, a formal $2$-periodicity. We mention this since if the reader wants to look up the original reference \cite{spakula_thesis} and \cite{spakula_uniform_k_homology}, he has to keep in mind that we work with multigraded modules, but \Spakula does not.

Secondly, \Spakula gives the definition of uniform $K$-homology only for proper\footnote{That means that all closed balls are compact.} metric spaces since certain results of him (Sections 8-9 in \cite{spakula_uniform_k_homology}) only work for such spaces. These results are all connected to the rough assembly map $\mu_u\colon K_\ast^u(X) \to K_\ast(C_u^\ast(Y))$, where $Y \subset X$ is a uniformly discrete quasi-lattice, and this is not surprising: the (uniform) Roe algebra only has on proper spaces nice properties (like its $K$-theory being a coarse invariant) and therefore we expect that results of uniform $K$-homology that connect to the uniform Roe algebra also should need the properness assumption. But we can see by looking into the proofs of \Spakula in all the other sections of \cite{spakula_uniform_k_homology} that all results except the ones in Sections 8-9 also hold for locally compact, separable metric spaces (without assumptions on completeness or properness). Note that this is a very crucial fact for us that uniform $K$-homology does also make sense for non-proper spaces since in the proof of \Poincare duality we will have to consider the uniform $K$-homology of open balls in $\IR^n$.

Thirdly, \v{S}pakula uses the notion ``$L$-continuous'' instead of ``$L$-Lipschitz'' for the definition of $\LLip_R(X)$ (which he also denotes by $C_{R,L}(X)$, i.e., we have also changed the notation), so that he gets slightly differently defined uniform Fredholm modules. But the author was not able to deduce Proposition \ref{prop:compact_space_every_module_uniform} with \v{S}pakula's definition, which is why we have changed it to ``$L$-Lipschitz'' (since the statement of Proposition \ref{prop:compact_space_every_module_uniform} is a very desirable one and, in fact, later we will need it crucially in the proof of \Poincare duality). \Spakula noted that for a geodesic metric space both notions ($L$-continuous and $L$-Lipschitz) coincide, i.e., for probably all spaces which one wants to consider ours and \v{S}pakula's choices coincide. But note that all the results of \Spakula do also hold with our definition of uniform Fredholm modules.

And last, let us get to the most crucial difference between the definitions: to define uniform $K$-homology \Spakula does not use operator homotopy as a relation but a certain weaker form of homotopy (\cite[Definition 2.11]{spakula_uniform_k_homology}). The reasons why we changed this are the following: firstly, the definition of usual $K$-homology uses operator homotopy and it seems desirable to have uniform $K$-homology to be analogously defined. Secondly, \v{S}pakula's proof of \cite[Proposition 4.9]{spakula_uniform_k_homology} seems not to be correct under his notion of homotopy, but it becomes correct if we use operator homotopy as a relation. So by changing the definition we ensure that \cite[Proposition 4.9]{spakula_uniform_k_homology} holds. And thirdly, we will prove in Section~\ref{sec:homotopy_invariance} that we get the same uniform $K$-homology groups if we impose weak homotopy (Definition~\ref{defn:weak_homotopy}) as a relation instead of operator homotopy. Though our notion of weak homotopies is different from \v{S}pakula's notion of homotopies, all the homotopies that he constructs in his paper \cite{spakula_uniform_k_homology} are also weak homotopies, i.e., all the results of him that rely on his notion of homotopy are also true with our definition.

To put it into a nutshell, we changed the definition of uniform $K$-homology in order to make the definition similar to one of usual $K$-homology and to correct \v{S}pakula's proof of \cite[Proposition 4.9]{spakula_uniform_k_homology}. It also seems to be easier to work with our version. Furthermore, all of his results do also hold in our definition. And last, we remark that his results, besides the ones in Sections 8-9 in \cite{spakula_uniform_k_homology}, also hold for non-proper, non-complete spaces.

\subsection{External product}

Now we get to one of the most important technical parts in this article: the construction of the external product for uniform $K$-homology. Its main application will be to deduce homotopy invariance of uniform $K$-homology.

Note that we can construct the product only if the involved metric spaces have jointly bounded geometry (which we will define in a moment). Note that both major classes of spaces on which we want to apply our theory, namely manifolds and simplicial complexes of bounded geometry, do have jointly bounded geometry.

\begin{defn}[Locally bounded geometry, {\cite[Definition 3.1]{spakula_universal_rep}}]\label{defn:locally_bounded_geometry}
A metric space $X$ has \emph{locally bounded geometry}, if it admits a countable Borel decomposition $X = \cup X_i$ such that
\begin{itemize}
\item each $X_i$ has non-empty interior,
\item each $X_i$ is totally bounded, and
\item for all $\varepsilon > 0$ there is an $N > 0$ such that for every $X_i$ there exists an $\varepsilon$-net in $X_i$ of cardinality at most $N$.
\end{itemize}

Note that \Spakula demands in his definition of ``locally bounded geometry'' that the closure of each $X_i$ is compact instead of the total boundedness of them. The reason for this is that he considers only proper spaces, whereas we need a more general notion to encompass also non-complete spaces.
\end{defn}

\begin{defn}[Jointly bounded geometry]\label{defn:jointly_bounded_geometry}
A metric space $X$ has \emph{jointly coarsely and locally bounded geometry}, if
\begin{itemize}
\item it admits a countable Borel decomposition $X = \cup X_i$ satisfying all the properties of the above Definition \ref{defn:locally_bounded_geometry} of locally bounded geometry,
\item it admits a quasi-lattice $\Gamma \subset X$ (i.e., $X$ has coarsely bounded geometry), and
\item for all $r > 0$ we have $\sup_{y \in \Gamma} \card \{i \ | \ B_r(y) \cap X_i \not= \emptyset\} < \infty$.
\end{itemize}
The last property ensures that there is an upper bound on the number of subsets $X_i$ that intersect any ball of radius $r > 0$ in $X$.
\end{defn}

\begin{examples}
Recall from Examples \ref{ex:coarsely_bounded_geometry} that manifolds of bounded geometry and simplicial complexes of bounded geometry (i.e., the number of simplices in the link of each vertex is uniformly bounded) equipped with the metric derived from barycentric coordinates have coarsely bounded geometry. Now a moment of reflection reveals that they even have jointly bounded geometry.

In the next Figure \ref{fig:not_jointly_but_others} we give an example of a space $X$ having coarsely and locally bounded geometry, but where the quasi-lattice $\Gamma$ and the Borel decomposition $X = \cup X_i$ are not compatible with each other, i.e., they do not provide $X$ with the structure of a space with locally bounded geometry.
\end{examples}

\begin{figure}[htbp]
\centering
\includegraphics[scale=0.5]{./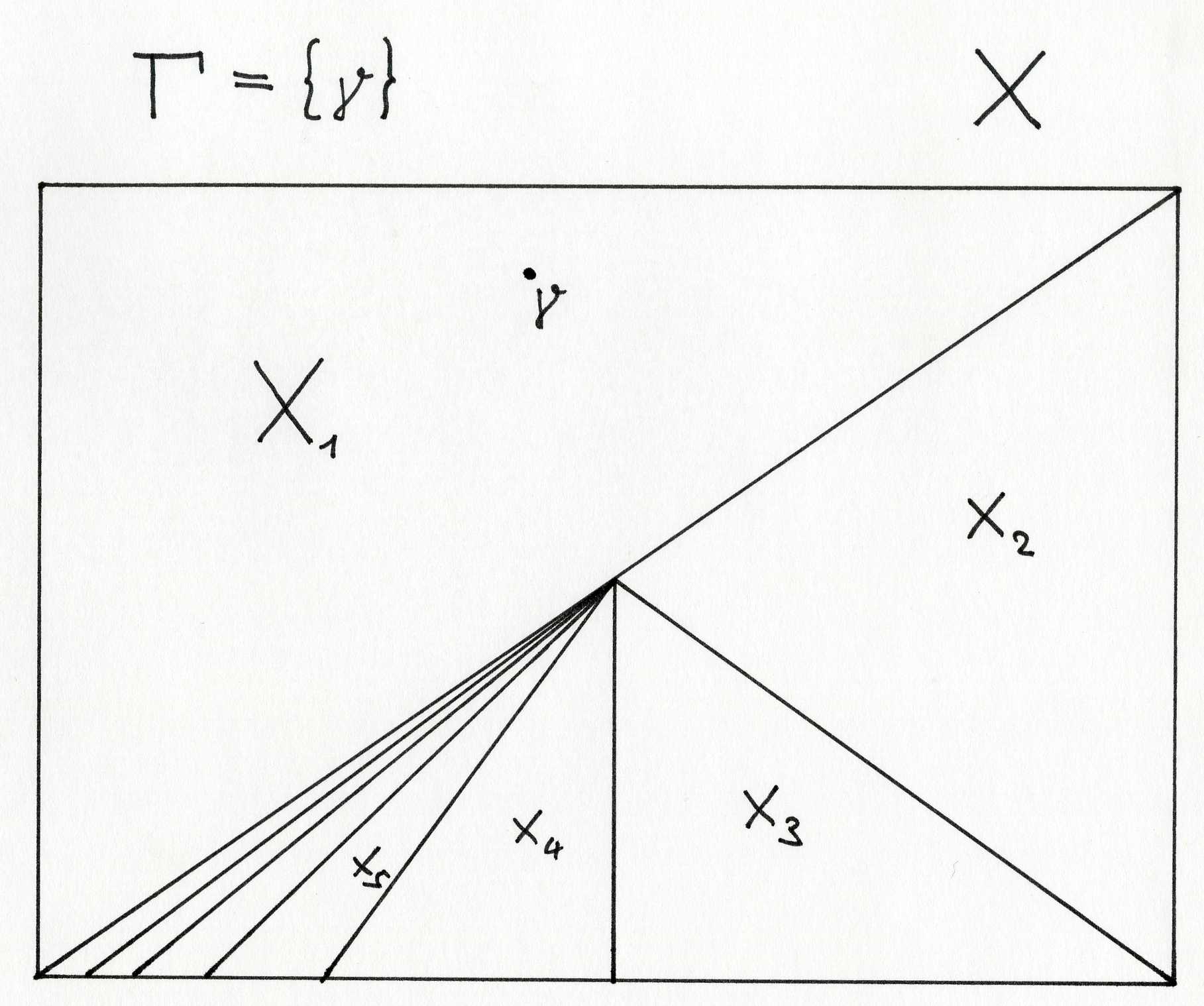}
\caption{Coarsely and locally bounded geometry, but they are not compatible.}
\label{fig:not_jointly_but_others}
\end{figure}

In our construction of the product for uniform $K$-homology we follow the presentation in \cite[Section 9.2]{higson_roe}, where the product is constructed for usual $K$-homology.

Let $X_1$ and $X_2$ be locally compact and separable metric spaces and both having jointly bounded geometry, $(H_1, \rho_1, T_1)$ a $p_1$-multigraded uniform Fredholm module over the space $X_1$ and $(H_2, \rho_2, T_2)$ a $p_2$-multigraded module over $X_2$, and both modules will be assumed to have finite propagation (see Proposition \ref{prop:normalization_finite_prop_speed}).

\begin{defn}[cf. {\cite[Definition 9.2.2]{higson_roe}}]
We define $\rho$ to be the tensor product representation of $C_0(X_1 \times X_2) \cong C_0(X_1) \otimes C_0(X_2)$ on $H := H_1 \hatotimes H_2$, i.e.,
\[\rho(f_1 \otimes f_2) = \rho_1(f_1) \hatotimes \rho_2(f_2) \in \IB(H_1) \hatotimes \IB(H_2)\]
and equip $H_1 \hatotimes H_2$ with the induced $(p_1 + p_2)$-multigrading\footnote{The graded tensor product $H_1 \hatotimes H_2$ is $(p_1 + p_2)$-multigraded if we let the multigrading operators $\epsilon_j$ of $H_1$ act on the tensor product as
\[\epsilon_j(v_1 \otimes v_2) := (-1)^{\deg(v_2)}\epsilon_j(v_1) \otimes v_2\]
for $1 \le j \le p_1$, and for $1 \le j \le p_2$ we let the multigrading operators $\epsilon_{p_1 + j}$ of $H_2$ act as
\[\epsilon_{p_1 + j}(v_1 \otimes v_2) := v_1 \otimes \epsilon_{p_1 + j}(v_2).\]}.

We say that a $(p_1 + p_2)$-multigraded uniform Fredholm module $(H, \rho, T)$ is \emph{aligned} with the modules $(H_1, \rho_1, T_1)$ and $(H_2, \rho_2, T_2)$, if
\begin{itemize}
\item $T$ has finite propagation,
\item for all $f \in C_0(X_1 \times X_2)$ the operators
\[\rho(f) \big( T (T_1 \hatotimes 1) + (T_1 \hatotimes 1) T \big) \rho(\bar f) \text{ and } \rho(f) \big( T (1 \hatotimes T_2) + (1 \hatotimes T_2) T \big) \rho(\bar f)\]
are positive modulo compact operators,\footnote{That is to say, they are positive in the Calkin algebra $\IB(H) / \IK(H)$.} and
\item for all $f\in C_0(X_1 \times X_2)$ the operator $\rho(f) T$ derives $\IK(H_1) \hatotimes \IB(H_2)$, i.e.,
\begin{equation}
\label{eq_derives}
[\rho(f) T, \IK(H_1) \hatotimes \IB(H_2)] \subset \IK(H_1) \hatotimes \IB(H_2).
\end{equation}
\end{itemize}
Since both $H$ and $\rho$ are uniquely determined from $H_1$, $\rho_1$, $H_2$ and $\rho_2$, we will often just say that \emph{$T$ is aligned with $T_1$ and $T_2$}.
\end{defn}

Our major technical lemma is the following one. It is a uniform version of Kasparov's Technical Lemma, which is suitable for our needs.

\begin{lem}\label{lem:construction_partition_unity}
Let $X_1$ and $X_2$ be locally compact and separable metric spaces that have jointly coarsely and locally bounded geometry.

Then there exist commuting, even, multigraded, positive operators $N_1$, $N_2$ of finite propagation on $H := H_1 \hatotimes H_2$ with $N_1^2 + N_2^2 = 1$ and the following properties:

\begin{enumerate}
\item $N_1 \cdot \big\{ (T_1^2 - 1) \rho_1(f) \hatotimes 1 \ | \ f \in \LLip_{R^\prime}(X_1) \big\} \subset \IK(H_1 \hatotimes H_2)$ is uniformly approximable for all $R^\prime, L > 0$ and analogously for $(T^\ast_1 - T_1)\rho_1(f)$ and for $[T_1, \rho_1(f)]$ instead of $(T_1^2 - 1) \rho_1(f)$,
\item $N_2 \cdot \big\{ 1 \hatotimes (T_2^2 - 1) \rho_2(f) \ | \ f \in \LLip_{R^\prime}(X_2) \big\} \subset \IK(H_1 \hatotimes H_2)$ is uniformly approximable for all $R^\prime, L > 0$ and analogously for $(T_2^\ast - T_2) \rho_2(f)$ and for $[T_2, \rho_2(f)]$ instead of $(T_2^2 - 1) \rho_2(f)$,
\item $\{[N_i, T_1 \hatotimes 1]\rho(f), [N_i, 1 \hatotimes T_2]\rho(f) \ | \ f \in \LLip_{R^\prime}(X_1 \times X_2)\}$ is uniformly approximable for all $R^\prime, L > 0$ and both $i=1,2$,
\item $\big\{ [N_i, \rho(f \otimes 1)], [N_i, \rho(1 \otimes g)] \ | \ f \in \LLip_{R^\prime}(X_1), g \in \LLip_{R^\prime}(X_2) \big\}$ is uniformly approximable for all $R^\prime, L > 0$ and both $i = 1,2$, and
\item both $N_1$ and $N_2$ derive $\IK(H_1) \hatotimes \IB(H_2)$.\footnote{see \eqref{eq_derives}}
\end{enumerate}
\end{lem}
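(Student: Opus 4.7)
This lemma is a uniform version of Kasparov's technical theorem, and my plan is to adapt the construction in Higson--Roe~\cite[Lemma~9.2.4]{higson_roe}, upgrading ``compact'' to ``uniformly approximable'' at every step using jointly bounded geometry. The strategy is first to construct a single positive operator $M \in \IB(H)$ with $0 \le M \le 1$ and finite propagation, serving simultaneously as a quasi-central approximate unit for an appropriate ideal relative to the $\ast$-algebra generated by the images of $\rho$, $T_1 \hatotimes 1$ and $1 \hatotimes T_2$, and as an approximate unit for the uniformly approximable families coming from $T_1$ and $T_2$. Then I set $N_1 := \sqrt{1-M}$, $N_2 := \sqrt{M}$ and, using the standard averaging procedure associated with the multigrading operators $\epsilon_1,\ldots,\epsilon_{p_1+p_2}$, arrange $M$ (and hence $N_1, N_2$) to be even and multigraded.

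The operator $M$ itself is built as a norm-convergent sum $M = \sum_n \lambda_n M_n$, where each $M_n$ is assembled from Lipschitz partitions of unity $\{\phi_k^{(1)}\}$ on $X_1$ and $\{\phi_\ell^{(2)}\}$ on $X_2$ subordinate to the Borel decompositions from jointly bounded geometry, with uniform Lipschitz constants, support diameters, and overlap multiplicities; and from finite-rank approximations of the operators $(T_1^2 - 1)\rho_1(\phi_k^{(1)})$, $(T_1^\ast - T_1)\rho_1(\phi_k^{(1)})$, $[T_1, \rho_1(\phi_k^{(1)})]$ and their $T_2$ analogues, whose ranks are uniformly bounded thanks to uniform local compactness and uniform pseudolocality. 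The coefficients $\lambda_n > 0$ with $\sum \lambda_n = 1$ are balanced against shrinking approximation parameters $\varepsilon_n \to 0$ in the Kasparov-style telescoping. Conditions (1)--(5) are then read off from the approximate-unit and quasi-centrality properties of $M$: (1) and (2) from the approximate-unit property transferred to $N_1, N_2$ via the identity $\|N_i K\|^2 = \|K^\ast N_i^2 K\|$; (3) and (4) from the quasi-centrality of $M$, transferred to $N_1, N_2$ through the Lipschitz estimate for $\sqrt{\cdot}$ on $[0,1]$; and (5) from the fact that the building blocks of $M$ live in the relevant multiplier algebra.

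The main obstacle is keeping all estimates \emph{uniform} in $f \in \LLip_{R'}$. Pointwise each commutator and product is compact by the classical Kasparov argument, but uniform approximability forces one to control the number of local building blocks of $M$ that interact nontrivially with any given $\rho(f)$. This is what the bounded-overlap property of the partition of unity and the uniform bound on the number of Borel pieces $X_{i,k}$ meeting any fixed-radius ball --- the third axiom of jointly bounded geometry --- are designed to guarantee, and they reduce the uniform estimates to the uniform approximability hypotheses on $T_1, T_2$.
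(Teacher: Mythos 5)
Your proposal identifies the right skeleton (a uniform Kasparov Technical Theorem, built from Lipschitz partitions of unity subordinate to the jointly bounded geometry decomposition and from finite-rank approximations with uniformly bounded rank), but the central mechanism is wrong, and the error is exactly where the asymmetry of the lemma lives. You propose a single operator $M$ serving ``as an approximate unit for the uniformly approximable families coming from $T_1$ \emph{and} $T_2$'' and then read off (1) and (2) from the identity $\|N_iK\|^2 = \|K^\ast N_i^2 K\|$. Try to unwind this for (2): if $M$ is an approximate unit for $A_2 := 1 \hatotimes (T_2^2-1)\rho_2(g)$, i.e.\ $MA_2 \approx A_2$, then $\|N_2 A_2\|^2 = \|A_2^\ast M A_2\| \approx \|A_2\|^2$, so $N_2 A_2 \approx A_2$ --- which is a non-compact operator of the form $1\hatotimes(\text{compact})$, directly contradicting property (2). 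For (1) the identity only gives you \emph{smallness} of $N_1 A_1$, not membership in $\IK(H_1\hatotimes H_2)$; smallness is cheaper than compactness and does not yield ``$\subset \IK$'' in the statement. The asymmetry cannot be achieved by a convex combination $M=\sum_n\lambda_n M_n$ of symmetric building blocks; it requires the genuine Kasparov sandwich $X=\sum_n \delta_n\nu_n\delta_n$ with two \emph{different} ingredients: the $\nu_n$ act on $H_1\hatotimes 1$ only (so that $1-X=\sum_n\delta_n(1-\nu_n)\delta_n$ kills the $T_1$-side), while the $\delta_n = (w_n - w_{n-1})^{1/2}$ with $w_n = u_{m_{n+1}}\hatotimes p_{n+1}$ are built from projections on \emph{both} factors and carry the compactification in the full tensor product (so that $X$ kills the $T_2$-side). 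One then sets $N_1=(1-X)^{1/2}$, $N_2=X^{1/2}$; a scalar convex combination loses this structure entirely.

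A second, smaller gap: you speak of ``finite-rank approximations\ldots whose ranks are uniformly bounded,'' but over non-compact $X_1, X_2$ the projections $u_n$ and $p_n$ cannot be finite rank, since the images of $(T_1^2-1)\rho_1(f)$ for $f$ ranging over \emph{all} Lipschitz functions in $\LLip_{R'}(X_1)$ involve infinitely many pieces $X_{1,i}$. What you actually need (and what jointly bounded geometry makes possible) is that $u_n$ is only \emph{locally} finite rank, with $\dim\bigl(\rho_1(\chi_{1,i})U_n\bigr)$ bounded independently of $i$, plus a simultaneous control of the commutators $[u_n,\rho_1(f_k^{i,N})]$; achieving both at once forces a non-obvious enlargement step (subdividing characteristic functions $\chi_{1,i}=\sum_j\chi_{1,i}^{j,n}$ and passing from a span $V_n$ to a larger $U_n$ containing $\rho_1(\chi_{1,i}^{j,n})V_n$). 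Without this step, the uniform approximability claimed in (1)--(4) does not follow. So the proposal is pointing in the right general direction, but as written the two key technical devices --- the asymmetric $\sum\delta_n\nu_n\delta_n$ construction and the locally-finite-rank-with-uniform-bound projections --- are missing, and conditions (1) and (2) would fail for the operator you actually build.
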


\begin{proof}
Due to the jointly bounded geometry there is a countable Borel decomposition $\{X_{1,i}\}$ of $X_1$ such that each $X_{1,i}$ has non-empty interior, the completions $\{\overline{X_{1,i}}\}$ form an admissible class\footnote{This means that for every $\varepsilon > 0$ there is an $N > 0$ such that in every $\overline{X_{1,i}}$ exists an $\varepsilon$-net of cardinality at most $N$.} of compact metric spaces and for each $R > 0$ we have
\begin{equation}
\label{eq:bound_jointly_bounded_geom}
\sup_i \card \{j \ | \ B_R(X_{1,i}) \cap X_{1,j} \not= \emptyset\} < \infty.
\end{equation}

The completions of the $1$-balls $B_1(X_{1,i})$ are also an admissible class of compact metric spaces and the collection of these open balls forms a uniformly locally finite open cover of $X_1$. We may find a partition of unity $\varphi_{1,i}$ subordinate to the cover $\{B_1(X_{1,i})\}$ such that every function $\varphi_{1,i}$ is $L_0$-Lipschitz for a fixed $L_0 > 0$ (but we will probably have to enlarge the value of $L_0$ a bit in a moment). The same holds also for a countable Borel decomposition $\{X_{2,i}\}$ of $X_2$ and we choose a partition of unity $\varphi_{2,i}$ subordinate to the cover $\{B_1(X_{2,i})\}$ such that every function $\varphi_{2,i}$ is also $L_0$-Lipschitz (by possibly enlargening $L_0$ so that we have the same Lipschitz constant for both partitions of unity).

Since $\{\overline{B_1(X_{1,i})}\}$ is an admissible class of compact metric spaces, we have for each $\varepsilon > 0$ and $L > 0$ a bound independent of $i$ on the number of functions from
\[\varphi_{1,i} \cdot \LLip_c(X_1) := \{ \varphi_{1,i} \cdot f \ | \ f\text{ is }L\text{-Lipschitz, compactly supported and }\|f\|_\infty \le 1\}\]
to form an $\varepsilon$-net in $\varphi_{1,i} \cdot \LLip_c(X_1)$, and analogously for $X_2$ (this can be proved by a similar construction as the one from \cite[Lemma 2.4]{spakula_universal_rep}). We denote this upper bound by $C_{\varepsilon, L}$.

Now for each $N \in \IN$ and $i \in \IN$ we choose $C_{1/N,N}$ functions $\{f_k^{i,N}\}_{k=1, \ldots, C_{1/N, N}}$ from $\varphi_{1,i} \cdot N\text{-}\operatorname{Lip}_c(X_{1,i})$ constituting an $1/N$-net.\footnote{If we need less functions to get an $1/N$-net, we still choose $C_{1/N,N}$ of them. This makes things easier for us to write down.} Analogously we choose $C_{1/N, N}$ functions $\{g_k^{i,N}\}_{k=1, \ldots, C_{1/N, N}}$ from $\varphi_{2,i} \cdot N\text{-}\operatorname{Lip}_c(X_{2,i})$ that are $1/N$-nets.

We choose a sequence $\{u_n \hatotimes 1\} \subset \IB(H_1) \hatotimes \IB(H_2)$ of operators in the following way: $u_n$ will be a projection operator onto a subspace $U_n$ of $H_1$. To define this subspace, we first consider the operators
\begin{equation}\label{eq:operators_X_1_to_approximate}
(T_1^2 - 1)\rho_1(f), \ (T_1 - T_1^\ast)\rho_1(f), \text{ and } [T_1, \rho_1(f)]
\end{equation}
for suitable functions $f \in C_0(X_1)$ that we will choose in a moment. These operators are elements of $\IK(H_1)$ since $(H_1, \rho_1, T_1)$ is a Fredholm module. So up to an error of $2^{-n}$ they are of finite rank and the span $V_n$ of the images of these finite rank operators will be the building block for the subspace $U_n$ on which the operator $u_n$ projects\footnote{This finite rank operators are of course not unique. Recall that every compact operator on a Hilbert space $H$ may be represented in the form $\sum_{n \ge 1} \lambda_n \langle f_n, - \rangle g_n$, where the values $\lambda_n$ are the singular values of the operator and $\{f_n\}$, $\{g_n\}$ are orthonormal (though not necessarily complete) families in $H$ (but contrary to the $\lambda_n$ they are not unique). Now we choose our finite rank operator to be the operator given by the same sum, but only with the $\lambda_n$ satisfying $\lambda_n \ge 2^{-n}$.} (i.e., we will say in a moment how to enlarge $V_n$ in order to get $U_n$). We choose the functions $f \in C_0(X_1)$ as all the functions from the set $\bigcup \{f_k^{i,N}\}_{k=1, \ldots, C_{1/N,N}}$, where the union ranges over all $i \in \IN$ and $1 \le N \le n$. Note that since the Fredholm module $(H_1, \rho_1, T_1)$ is uniform, the rank of the finite rank operators approximating \eqref{eq:operators_X_1_to_approximate} up to an error of $2^{-n}$ is bounded from above with a bound that depends only on $N$ and $n$, but not on $i$ nor $k$. Since we will have $V_n \subset U_n$, we can already give the first estimate that we will need later:
\begin{equation}\label{eq:Kasparov_estimate_383}
\|(u_n \hatotimes 1)(x \hatotimes 1) - (x \hatotimes 1)\| < 2^{-n},
\end{equation}
where $x$ is one of the operators from \eqref{eq:operators_X_1_to_approximate} for all $f_k^{i,N}$ with $1 \le N \le n$.\footnote{Actually, to have this estimate we would need that $x$ is self-adjoint. We can pass from $x$ to $\tfrac{1}{2}(x + x^\ast)$ and $\tfrac{1}{2i}(x - x^\ast)$, do all the constructions with these self-adjoint operators and get the needed estimates for them, and then we get the same estimates for $x$ but with an additional factor of $2$.} Moreover, denoting by $\chi_{1,i}$ the characteristic function of $B_1(X_{1,i})$, then $\rho_1(\chi_{1,i}) \cdot V_n$ is a subspace of $H_1$ of finite dimension that is bounded independently of $i$.\footnote{We have used here the fact that we may uniquely extend any representation of $C_0(Z)$ to one of the bounded Borel functions $B_b(Z)$ on a space $Z$.} The reason for this is because $T_1$ has finite propagation and the number of functions $f_k^{i,N}$ for fixed $N$ is bounded independently of $i$. For all $n$ we also have $V_n \subset V_{n+1}$ and that the projection operator onto $V_n$ has finite propagation which is bounded independently of $n$.

For each $n \in \IN$ we partition $\chi_{1,i}$ for all $i \in \IN$ into disjoint characteristic functions $\chi_{1,i} = \sum_{j=1}^{J_n} \chi_{1,i}^{j,n}$ such that we may write each function $f_k^{i,N}$ for all $i \in \IN$, $1 \le N \le n$ and $k = 1, \ldots, C_{1/N, N}$ up to an error of $2^{-n-1}$ as a sum $f_k^{i,N} = \sum_{j=1}^{J_n} \alpha_k^{i,N}(j,n) \cdot \chi_{1,i}^{j,n}$ for suitable constants $\alpha_k^{i,N}(j,n)$. Note that since $X_1$ has jointly coarsely and locally bounded geometry, we can choose the upper bounds $J_n$ such that they do not depend on $i$. Now we can finally set $U_n$ as the linear span of $V_n$ and $\rho_1(\chi_{1,i}^{j,n}) \cdot V_n$ for all $i \in \IN$ and $1 \le j \le J_n$. Note that $\rho_1(\chi_{1,i}) \cdot U_n$ is a subspace of $H_1$ of finite dimension that is bounded independently of $i$, that we may choose the characteristic functions $\chi_{1,i}^{j,n}$ such that we have $U_n \subset U_{n+1}$ (by possibly enlargening each $J_n$), and that the projection operator $u_n$ onto $U_n$ has finite propagation which is bounded independently of $n$. Since we have $[u_n, \rho_1(\chi_{1,i}^{j,n})] = 0$ for all $i \in \IN$, $1 \le j \le J_n$ and all $n \in \IN$, we get our second crucial estimate:
\begin{equation}\label{eq:Kasparov_estimate_384}
\|[u_n \hatotimes 1, \rho_1(f_k^{i,N}) \hatotimes 1]\| < 2^{-n}
\end{equation}
for all $i \in \IN$, $k = 1, \ldots, C_{1/N, N}$, $1 \le N \le n$ and all $n \in \IN$.

By an argument similar to the proof of the existence of quasicentral approximate units, we may conclude that for each $n \in \IN$ there exists a finite convex combination $\nu_n$ of the elements $\{u_n, u_{n+1}, \ldots\}$ such that
\begin{equation}\label{eq:Kasparov_estimate_384_2}
\|[\nu_n \hatotimes 1, T_1 \hatotimes 1]\| < 2^{-n} \text{, } \|[\nu_n \hatotimes 1, \epsilon_1 \hatotimes \epsilon_2]\| < 2^{-n} \text{ and } \|[\nu_n \hatotimes 1, \epsilon^j]\| < 2^{-n}
\end{equation}
for all $n \in \IN$, where $\epsilon_1 \hatotimes \epsilon_2$ is the grading operator of $H_1 \hatotimes H_2$ and $\epsilon^j$, $1 \le j \le p_1 + p_2$, are the multigrading operators of $H_1 \hatotimes H_2$. Note that the Estimates \eqref{eq:Kasparov_estimate_383} and \eqref{eq:Kasparov_estimate_384} also hold for $\nu_n$. Note furthermore that we can arrange that the maximal index occuring in the finite convex combination for $\nu_n$ is increasing in $n$.

Now we will construct a sequence $w_n \in \IB(H_1) \hatotimes \IB(H_2)$ with suitable properties. We have that $\nu_n$ is a finite convex combination of the elements $\{u_n, u_{n+1}, \ldots\}$. So for $n \in \IN$ we let $m_n$ denote the maximal occuring index in that combination. Furthermore, we let the projections $p_n \in \IB(H_2)$ be analogously defined as $u_n$, where we consider now the operators
\begin{equation}\label{eq:operators_X_2_to_approximate}
(T_2^2 - 1)\rho_2(g), \ (T_2 - T_2^\ast)\rho_2(g), \text{ and } [T_2, \rho_2(g)]
\end{equation}
for the analogous sets of functions $\bigcup \{g_k^{i,N}\}_{k=1, \ldots, C_{1/N,N}}$ depending on $n \in \IN$. Then we define $w_{n-1} := u_{m_n} \hatotimes  p_n$\footnote{The index is shifted by one so that we get the Estimates \eqref{eq:Kasparov_estimate_386}--\eqref{eq:Kasparov_estimate_388} with $2^{-n}$ and not with $2^{-n+1}$; though this is not necessary for the argument.} and get for all $n \in \IN$ the following:
\begin{equation}
\label{eq:Kasparov_estimate_385_-1}
w_n (\nu_n \hatotimes 1) (1 \hatotimes p_n) = (\nu_n \hatotimes 1) (1 \hatotimes p_n)
\end{equation}
and
\begin{align}
\label{eq:Kasparov_estimate_386}
\| [ w_n, x \hatotimes 1 ] \| & < 2^{-n}\\
\label{eq:Kasparov_estimate_387}
\| [ w_n, 1 \hatotimes y ] \| & < 2^{-n}\\
\label{eq:Kasparov_estimate_388}
\| [ w_n, \rho(f_k^{i,N} \otimes g_k^{i,N}) ] \| & < 2^{-n}
\end{align}
for all $i \in \IN$, $1 \le N \le n$ and $k = 1, \ldots, C_{1/N,N}$, where $x$ is one of the operators from \eqref{eq:operators_X_1_to_approximate} for all $f_k^{i,N}$ and $y$ is one of the operators from \eqref{eq:operators_X_2_to_approximate} for all $g_k^{i,N}$.

Let now $d_n := (w_n - w_{n-1})^{1/2}$. With a suitable index shift we can arrange that firstly, the Estimates \eqref{eq:Kasparov_estimate_386}--\eqref{eq:Kasparov_estimate_388} also hold for $d_n$ instead of $w_n$,\footnote{see \cite[Exercise 3.9.6]{higson_roe}} and that secondly, using Equation \eqref{eq:Kasparov_estimate_385_-1},
\begin{equation}
\label{eq:Kasparov_estimate_385}
\| d_n (\nu_n \hatotimes 1) y \| < 2^{-n},
\end{equation}
where $y$ is again one of the operators from \eqref{eq:operators_X_2_to_approximate} for all $g_k^{i,N}$ and $1 \le N \le n$.

Now as in the same way as we constructed $\nu_n$ out of the $u_n$s, we construct $\delta_n$ as a finite convex combination of the elements $\{d_n, d_{n+1}, \ldots\}$ such that
\begin{equation*}
\|[\delta_n, T_1 \hatotimes 1]\| < 2^{-n} \text{, } \|[\delta_n, 1 \hatotimes T_2]\| < 2^{-n} \text{, } \|[\delta_n, \epsilon_1 \hatotimes \epsilon_2]\| < 2^{-n} \text{ and } \|[\delta_n, \epsilon^j]\| < 2^{-n},\notag
\end{equation*}
where $\epsilon_1 \hatotimes \epsilon_2$ is the grading operator of $H_1 \hatotimes H_2$ and $\epsilon^j$ for $1 \le j \le p_1 + p_2$ are the multigrading operators of $H_1 \hatotimes H_2$. Clearly, all the Estimates \eqref{eq:Kasparov_estimate_386}--\eqref{eq:Kasparov_estimate_385} also hold 
for the operators $\delta_n$.

Define $X := \sum \delta_n \nu_n \delta_n$. It is a positive operator of finite propagation and fulfills the Points 2--4 that $N_2$ should have. The arguments for this are analogous to the ones given at the end of the proof of \cite[Kasparov's Technical Theorem 3.8.1]{higson_roe}, but we have to use all the uniform approximations that we additionally have (to use them, we have to cut functions $f \in \LLip_{R^\prime}(X_1)$ down to the single ``parts'' $X_{1,i}$ of $X_1$ by using the partition of unity $\{\varphi_{1,i}\}$ that we have chosen at the beginning of this proof, and analogously for $X_2$). Furthermore, the operator $1-X$ fulfills the desired Points 1, 3 and 4 that $N_1$ should fulfill. That both $X$ and $1-X$ derive $\IK(H_1) \hatotimes \IB(H_2)$ is clear via construction. Since $X$ commutes modulo compact operators with the grading and multigrading operators, we can average it over them so that it becomes an even and multigraded operator and $X$ and $1-X$ still have all the above mentioned properties.

Finally, we set $N_1 := (1-X)^{1/2}$ and $N_2 := X^{1/2}$.
\end{proof}

Now we will use this technical lemma to construct the external product and to show that it is well-defined on the level of uniform $K$-homology.

\begin{prop}\label{prop:external_prod_exists}
Let $X_1$ and $X_2$ be locally compact and separable metric spaces that have jointly coarsely and locally bounded geometry.

Then there exists a $(p_1 + p_2)$-multigraded uniform Fredholm module $(H, \rho, T)$ which is aligned with the modules $(H_1, \rho_1, T_1)$ and $(H_2, \rho_2, T_2)$.

Furthermore, any two such aligned Fredholm modules are operator homotopic and this operator homotopy class is uniquely determined by the operator homotopy classes of $(H_1, \rho_1, T_1)$ and $(H_2, \rho_2, T_2)$.
\end{prop}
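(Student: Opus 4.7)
The plan is to define
\[ T := N_1 (T_1 \hatotimes 1) + N_2 (1 \hatotimes T_2), \]
with $N_1,N_2$ the operators produced by Lemma~\ref{lem:construction_partition_unity}, mirroring the classical Higson--Roe construction in \cite[Theorem~9.2.4]{higson_roe} but feeding uniform estimates through throughout. The operator $T$ is odd, multigraded and of finite propagation because $N_1,N_2$ are even, multigraded, positive and of finite propagation, while $T_1\hatotimes 1$ and $1\hatotimes T_2$ are odd, multigraded and of finite propagation. Using that $T_1\hatotimes 1$ and $1\hatotimes T_2$ anticommute and that $N_1^2 + N_2^2 = 1$, a direct calculation gives
\[ T^2 - 1 = N_1^2\bigl((T_1^2-1)\hatotimes 1\bigr) + N_2^2\bigl(1\hatotimes(T_2^2-1)\bigr) + R, \]
where $R$ is a sum of products each containing a factor of the form $[N_i,T_1\hatotimes 1]$ or $[N_i,1\hatotimes T_2]$. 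After multiplying by $\rho(f)$ for $f\in\LLip_R(X_1\times X_2)$ and splitting $f$ against the product partition of unity built from the Borel decompositions of $X_1,X_2$ (only a uniformly bounded number of product pieces contribute, by jointly bounded geometry), properties~(1) and~(2) of the lemma make the first two summands uniformly approximable, and property~(3) does the same for $R$. Analogous expansions show that $T-T^\ast$ is uniformly locally compact and, together with property~(4), that $T$ is uniformly pseudolocal.

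For the three alignment conditions, finite propagation is built into the construction. For the positivity of $\{T,T_1\hatotimes 1\}$ modulo compacts after sandwiching by $\rho(f),\rho(\bar f)$, the same anticommutation identity produces
\[ \{T, T_1\hatotimes 1\} = 2N_1 + 2N_1\bigl((T_1^2-1)\hatotimes 1\bigr) + [T_1\hatotimes 1, N_1](T_1\hatotimes 1) + [T_1\hatotimes 1, N_2](1\hatotimes T_2). \]
Flanked by $\rho(f)$ and $\rho(\bar f)$ the last three summands are compact by properties~(1) and~(3), while property~(4) lets one commute $\rho$ past $N_1$ modulo compacts, rewriting the leading term as $2 N_1^{1/2}\rho(|f|^2) N_1^{1/2}\ge 0$. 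The computation for $\{T,1\hatotimes T_2\}$ is symmetric. The derivation condition~\eqref{eq_derives} is inherited from property~(5), together with the fact that $T_1\hatotimes 1$ derives $\IK(H_1)\hatotimes \IB(H_2)$ trivially and that $1\hatotimes T_2$ does so by the structure of the graded tensor product.

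For uniqueness up to operator homotopy, given two aligned modules with operators $T$ and $T'$ the plan is to use the straight-line path $T_s := (1-s)T + sT'$, $s\in[0,1]$. Norm continuity, finite propagation, the derivation property, and the two positivity conditions in the Calkin algebra are all preserved under convex combinations; for the uniform Fredholm module axioms on $T_s$ the identity
\[ T_s^2 - 1 = (1-s)(T^2-1) + s(T'^2 - 1) - s(1-s)(T-T')^2 \]
reduces the matter to controlling the cross-term $(T-T')^2$ after multiplication by $\rho(f)$. I expect this to be the main obstacle: it is extracted by playing the two positivity identities for $T$ and $T'$ against each other to show that $\{T-T', T_1\hatotimes 1\}$ and $\{T-T', 1\hatotimes T_2\}$ are uniformly locally compact, from which a straightforward algebraic manipulation in the spirit of the Kasparov argument (together with uniform pseudolocality) yields the uniform local compactness of $\rho(f)(T-T')^2$. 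Once this is established, the same straight-line argument applied to operator homotopies $T_{1,t}$ and $T_{2,t}$---run by taking the output of Lemma~\ref{lem:construction_partition_unity} uniformly in~$t$, which is possible because the proof of that lemma uses only the uniformity estimates of the input modules and these are robust under norm-continuous homotopies---yields the dependence of the class on $T_1$ and $T_2$ only through their operator homotopy classes.
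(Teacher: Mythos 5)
Your construction of $T$ and the verification that it is a uniform Fredholm module aligned with $T_1$ and $T_2$ match the paper's argument, including the bookkeeping with the product partition of unity and the use of properties (1)--(5) of Lemma~\ref{lem:construction_partition_unity}. The trouble is in the uniqueness half.

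The straight-line homotopy $T_s = (1-s)T + s T'$ between two \emph{arbitrary} aligned operators does not produce a family of Fredholm modules, and the plan you sketch for repairing this does not go through. Your identity $T_s^2 - 1 = (1-s)(T^2-1) + s(T'^2-1) - s(1-s)(T-T')^2$ is correct, so the issue is $\rho(f)(T-T')^2\rho(\bar f)$. You propose to extract (uniform local) compactness of $\{T-T', T_1\hatotimes 1\}$ and $\{T-T', 1\hatotimes T_2\}$ from the two alignment positivity conditions, but positivity of $\rho(f)\{T, T_1\hatotimes 1\}\rho(\bar f)$ and of $\rho(f)\{T', T_1\hatotimes 1\}\rho(\bar f)$ modulo compacts gives no control on the \emph{difference} of these anticommutators: the difference of two positives in the Calkin algebra is just a self-adjoint element, not zero. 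This is exactly why, already in the non-uniform Kasparov/Higson--Roe theory, uniqueness of the product is not proved by linear interpolation.

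The paper instead follows the Connes--Skandalis rotation argument. Given an aligned $T'$, one re-runs Lemma~\ref{lem:construction_partition_unity} with an \emph{additional} constraint forcing $N_1, N_2$ to commute modulo compacts with $\rho(f)T'$; with this choice the constructed operator $T = N_1(T_1\hatotimes 1) + N_2(1\hatotimes T_2)$ satisfies $\rho(f)(TT'+T'T)\rho(\bar f) \ge 0$ modulo compacts, and a uniform version of \cite[Proposition~8.3.16]{higson_roe} then rotates $T'$ into $T$ through uniform Fredholm modules. Finally, all operators of the constructed form are mutually homotopic because the set of admissible $Y = N_2^2$ in the proof of the lemma is convex. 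If you want to salvage the linear-interpolation picture, note that it \emph{is} used at this last stage --- but only after reducing to operators of the special form, never between two arbitrary aligned modules. Your remark about running the lemma uniformly in a homotopy parameter $t$ to get dependence only on the homotopy classes of $T_1$ and $T_2$ is in the right spirit; the paper implements it by requiring $\|[\nu_n\hatotimes 1, T_1(j/n)\hatotimes 1]\| < 2^{-n}$ for $0 \le j \le n$ so that a single pair $N_1, N_2$ works for the whole path $T_1(t)$.
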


\begin{proof}
We invoke the above Lemma \ref{lem:construction_partition_unity} to get operators $N_1$ and $N_2$ and then set
\[T := N_1(T_1 \hatotimes 1) + N_2(1 \hatotimes T_2).\]

To deduce that $(H, \rho, T)$ is a uniform Fredholm module, we have to use the following facts (additionally to the ones that $N_1$ and $N_2$ have): that $T_1$ and $T_2$ have finite propagation and are odd (we need that $(T_1 \hatotimes 1)(1 \hatotimes T_2) + (1 \hatotimes T_2)(T_1 \hatotimes 1) = 0$). To deduce that it is a multigraded module, we need that we constructed $N_1$ and $N_2$ as even and multigraded operators on $H$.

It is easily seen that for all $f \in C_0(X_1 \times X_2)$
\[\rho(f) \big( T (T_1 \hatotimes 1) + (T_1 \hatotimes 1) T \big) \rho(\bar f) \text{ and } \rho(f) \big( T (1 \hatotimes T_2) + (1 \hatotimes T_2) T \big) \rho(\bar f)\]
are positive modulo compact operators and that $\rho(f)T$ derives $\IK(H_1) \hatotimes \IB(H_2)$, i.e., we conclude that $T$ is aligned with $T_1$ and $T_2$.

Since all four operators $T_1$, $T_2$, $N_1$ and $N_2$ have finite propagation, $T$ has also finite propagation.

Suppose that $T^\prime$ is another operator aligned with $T_1$ and $T_2$. We construct again operators $N_1$ and $N_2$ using the above Lemma \ref{lem:construction_partition_unity}, but we additionally enforce
\[\|[w_n, \rho(f^{i,N}_k \otimes g_k^{i,N}) T^\prime]\| < 2^{-n}\]
analogously as we did it there to get Equation \eqref{eq:Kasparov_estimate_388}. So $N_1$ and $N_2$ will commute modulo compact operators with $\rho(f)T^\prime$ for all functions $f \in C_0(X_1 \times X_2)$. Again, we set $T := N_1(T_1 \hatotimes 1) + N_2(1 \hatotimes T_2)$. Since $N_1$ and $N_2$ commute modulo compacts with $\rho(f)T^\prime$ for all $f \in C_0(X_1 \times X_2)$ and since $T^\prime$ is aligned with $T_1$ and $T_2$, we conclude
\[\rho(f)(T T^\prime + T^\prime T) \rho(\bar f) \ge 0\]
modulo compact operators for all functions $f \in C_0(X_1 \times X_2)$. Using a uniform version of \cite[Proposition 8.3.16]{higson_roe} we conclude that $T$ and $T^\prime$ are operator homotopic via multigraded, uniform Fredholm modules. We conclude that every aligned module is operator homotopic to one of the form that we constructed above, i.e., to one of the form $N_1(T_1 \hatotimes 1) + N_2(1 \hatotimes T_2)$. But all such operators are homotopic to one another: they are determined by the operator $Y = N_2^2$ used in the proof of the above lemma and the set of all operators with the same properties as $Y$ is convex.

At last, suppose that one of the operators is varied by an operator homotopy, e.g., $T_1$ by $T_1(t)$. Then, in order to construct $N_1$ and $N_2$, we enforce in Equation \eqref{eq:Kasparov_estimate_384_2} instead of $\|[\nu_n \hatotimes 1, T_1 \hatotimes 1]\| < 2^{-n}$ the following one:
\[\|[\nu_n \hatotimes 1, T_1(j/n) \hatotimes 1]\| < 2^{-n}\]
for $0 \le j \le n$. Now we may define
\[T(t) := N_1(T_1(t) \hatotimes 1) + N_2(1 \hatotimes T_2),\]
i.e., we got operators $N_1$ and $N_2$ which are independent of $t$ but still have all the needed properties. This gives us the desired operator homotopy.
\end{proof}

\begin{defn}[External product]
The \emph{external product} of the multigraded uniform Fredholm modules $(H_1, \rho_1, T_1)$ and $(H_2, \rho_2, T_2)$ is a multigraded uniform Fredholm module $(H, \rho, T)$ which is aligned with $T_1$ and $T_2$. We will use the notation $T := T_1 \times T_2$.

By the above Proposition \ref{prop:external_prod_exists} we know that if the locally compact and separable metric spaces $X_1$ and $X_2$ both have jointly coarsely and locally bounded geometry, then the external product always exists, that it is well-defined up to operator homotopy and that it descends to a well-defined product on the level of uniform $K$-homology:
\[K_{p_1}^u(X_1) \times K_{p_2}^u(X_2) \to K_{p_1+p_2}^u(X_1 \times X_2)\]
for $p_1, p_2 \ge 0$. Furthermore, this product is bilinear.\footnote{To see this, suppose that, e.g., $T_1 = T_1^\prime \oplus T_1^{\prime \prime}$. Then it suffices to show that $T_1^\prime \times T_2 \oplus T_1^{\prime \prime} \times T_2$ is aligned with $T_1$ and $T_2$, which is not hard to do.}
\end{defn}

For the remaining products (i.e., the product of an ungraded and a multigraded module, resp., the product of two ungraded modules) we can appeal to the formal $2$-periodicity.

Associativity of the external product and the other important properties of it may be shown as in the non-uniform case. Let us summarize them in the following theorem:

\begin{thm}[External product for uniform $K$-homology]\label{thm:external_prod_homology}
Let $X_1$ and $X_2$ be locally compact and separable metric spaces of jointly bounded geometry\footnote{see Definition \ref{defn:jointly_bounded_geometry}}.

Then there exists an associative product
\[\times \colon K_{p_1}^u(X_1) \otimes K_{p_2}^u(X_2) \to K^u_{p_1 + p_2}(X_1 \times X_2)\]
for $p_1 , p_2 \ge -1$ with the following properties:
\begin{itemize}
\item for the flip map $\tau\colon X_1 \times X_2 \to X_2 \times X_1$ and all elements $[T_1] \in K_{p_1}^u(X_1)$ and $[T_2] \in K_{p_2}^u(X_2)$ we have
\[\tau_{\ast}[T_1 \times T_2] = (-1)^{p_1 p_2} [T_2 \times T_1],\]
\item we have for $g\colon Y \to Z$ a uniformly cobounded, proper Lipschitz map and elements $[T] \in K_{p_1}^u(X)$ and $[S] \in K_{p_2}^u(Y)$
\[(\id_X \operatorname{\times} g)_\ast [T \times S] = [T] \times g_\ast[S] \in K^u_{p_1 + p_2}(X \times Z),\]
and
\item denoting the generator of $K_0^u(\pt) \cong \IZ$ by $[1]$, we have
\[[T] \times [1] = [T] = [1] \times [T] \in K_\ast^u(X)\]
for all $[T] \in K_\ast^u(X)$.
\end{itemize}
\end{thm}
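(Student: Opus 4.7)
My approach is to leverage the uniqueness half of Proposition~\ref{prop:external_prod_exists}: to prove any identity $[A]=[B]$ among classes built from $\times$, it suffices to exhibit Fredholm modules aligned with the same constituent data and to appeal to the fact that all such modules are operator homotopic. Well-definedness on $K$-homology for $p_1,p_2\ge 0$ is already in Proposition~\ref{prop:external_prod_exists}; bilinearity follows because the direct sum of two aligned modules is aligned with the direct sums of their ingredients, and the $p=-1$ slot is absorbed into the formal $2$-periodicity noted after the definition of $K_p^u$.

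For associativity I would generalize the notion of alignment to three factors: call a $(p_1+p_2+p_3)$-multigraded uniform Fredholm module $(H_1\hatotimes H_2\hatotimes H_3,\rho,T)$ \emph{triple-aligned} with $(T_1,T_2,T_3)$ if $T$ has finite propagation, if $\rho(f)T$ derives both $\IK(H_1)\hatotimes\IB(H_2\hatotimes H_3)$ and $\IK(H_1)\hatotimes\IK(H_2)\hatotimes\IB(H_3)$, and if each of the three anticommutators with $T_i\hatotimes 1\hatotimes 1$, $1\hatotimes T_i\hatotimes 1$, $1\hatotimes 1\hatotimes T_i$ is positive modulo compacts after cutting down by $\rho(f)$. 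Two iterated applications of Lemma~\ref{lem:construction_partition_unity} produce such a $T$ of the form $M_1(N_1(T_1\hatotimes 1)+N_2(1\hatotimes T_2))\hatotimes 1+M_2\cdot 1\hatotimes 1\hatotimes T_3$, and both bracketings $(T_1\times T_2)\times T_3$ and $T_1\times(T_2\times T_3)$ are triple-aligned. A direct adaptation of the convex-path argument at the end of the proof of Proposition~\ref{prop:external_prod_exists}---using that the set of admissible positive operators of type $N_2^2$, $M_2^2$ is convex---then shows any two triple-aligned modules are operator homotopic.

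The flip assertion is handled by the graded flip $\sigma\colon H_1\hatotimes H_2\to H_2\hatotimes H_1$, $v_1\otimes v_2\mapsto(-1)^{\deg v_1\cdot\deg v_2}v_2\otimes v_1$, which intertwines $\rho_1\hatotimes\rho_2$ with $\tau^\ast(\rho_2\hatotimes\rho_1)$ and, after reordering the induced multigrading operators $\epsilon_1,\ldots,\epsilon_{p_1+p_2}$, introduces the sign $(-1)^{p_1 p_2}$; conjugating any representative $T$ aligned with $(T_1,T_2)$ by $\sigma$ then yields a module aligned with $(T_2,T_1)$ up to this sign. Functoriality is immediate from the identity $(\rho_T\hatotimes\rho_S)\circ(\id_X\times g)^\ast=\rho_T\hatotimes(\rho_S\circ g^\ast)$: an aligned representative of $[T]\times[S]$ over $X\times Y$ becomes, after pulling the representation back along $\id_X\times g$, a module aligned with $(T,g_\ast S)$ over $X\times Z$, and the Lipschitz and uniformly cobounded hypotheses on $g$ are exactly what is needed for the uniform-approximability estimates of Definition~\ref{defn:uniform_operators} to survive the pullback (a Lipschitz $g$ sends $\LLip_{R'}$-functions to $L'$-Lipschitz functions of uniformly bounded diameter support). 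For the unit axiom I would represent $[1]\in K_0^u(\pt)$ by the minimal involutive module on $\IC\oplus\IC$ with $T_2=\bigl(\begin{smallmatrix}0&1\\1&0\end{smallmatrix}\bigr)$; then $T\times 1$ is aligned with $(T,T_2)$, and since $T_2^2=1$ the operator $1\hatotimes T_2$ is already invertible, so in the construction we may choose $N_1=1$, $N_2=0$ and obtain the representative $T\hatotimes 1\cong T$ up to operator homotopy. The identity $[1]\times[T]=[T]$ is then the flip with sign $(-1)^{0\cdot p}=1$.

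The main obstacle is associativity: the triple-alignment framework must be engineered so that the nested invocation of Lemma~\ref{lem:construction_partition_unity} preserves all uniform estimates when tested against $\LLip_{R'}(X_1\times X_2\times X_3)$, and in particular so that the convex operator homotopy comparing two triple-aligned modules stays inside the class of \emph{uniform} Fredholm modules rather than merely ordinary Fredholm modules. The other three axioms are, as the author indicates, formally identical to the non-uniform case once one has verified that pullback along Lipschitz uniformly cobounded maps and the graded flip both preserve the uniform-approximability clauses.
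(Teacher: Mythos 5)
Your proposal is correct and follows the same route the paper implicitly prescribes: the paper's ``proof'' of Theorem~\ref{thm:external_prod_homology} consists of the single sentence that associativity and the remaining properties ``may be shown as in the non-uniform case,'' referring the reader to Higson--Roe~\S 9, and your sketch is a careful adaptation of that argument---triple-alignment and a convex homotopy for associativity, the graded flip with the sign coming from the permutation of the multigrading operators, pullback along $\id_X\times g$ for functoriality, and a small finite-dimensional representative for the unit. All of the uniqueness-of-aligned-modules steps are exactly what the paper's Proposition~\ref{prop:external_prod_exists} was set up to supply.

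One small point deserves a sharper formulation. Your representative of $[1]$ is the degenerate-representation involutive module: $H_2 = \IC\oplus\IC$ with $T_2 = \bigl(\begin{smallmatrix}0&1\\1&0\end{smallmatrix}\bigr)$ and $\rho_2(\lambda) = \lambda P$, where $P$ is the projection onto the positively graded summand. Written this way it does represent the generator---direct-sum decomposing $H_1\hatotimes H_2$ along $\IC^{+}\oplus\IC^{-}$ shows $T_1\hatotimes 1$ is $(H_1,\rho_1,T_1)$ plus a degenerate summand. But the justification you give (``since $T_2^2=1$ the operator $1\hatotimes T_2$ is already invertible'') is not the operative reason; if one instead tried $N_1=0$, $N_2=1$ the candidate $1\hatotimes T_2$ would fail to be a Fredholm module over $X_1\times\pt$, since $[1\hatotimes T_2,\rho_1(f)\hatotimes P] = \rho_1(f)\hatotimes[T_2,P]$ is not compact for general $\rho_1(f)$. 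The reason $N_1=1$, $N_2=0$ is admissible is that $T_2$ satisfies the defining identities \emph{exactly} on a finite-dimensional space, so all conditions of Lemma~\ref{lem:construction_partition_unity} involving $T_2$ are vacuous and the alignment conditions for $T_1\hatotimes 1$ reduce to the uniform Fredholm conditions on $T_1$ itself. With that correction the unit argument is sound, and the rest of your write-up is a faithful and usable elaboration of the non-uniform Higson--Roe argument with the uniform estimates carried along.
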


\subsection{Homotopy invariance}\label{sec:homotopy_invariance}

Let $X$ and $Y$ be locally compact, separable metric spaces with jointly bounded geometry and let $g_0, g_1 \colon X \to Y$ be uniformly cobounded, proper and Lipschitz maps which are homotopic in the following sense: there is a uniformly cobounded, proper and Lipschitz map $G\colon X \times [0,1] \to Y$ with $G(x,0) = g_0(x)$ and $G(x,1) = g_1(x)$ for all $x \in X$.

\begin{thm}\label{thm:homotopy_equivalence_k_hom}
If $g_0, g_1\colon X \to Y$ are homotopic in the above sense, then they induce the same maps $(g_0)_\ast = (g_1)_\ast \colon K_\ast^u(X) \to K_\ast^u(Y)$ on uniform $K$-homology.
\end{thm}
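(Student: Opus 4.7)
The strategy is to reduce the claim to the computation of $K_0^u([0,1])$ via the external product and functoriality. Define the inclusions $i_t\colon X \to X \times [0,1]$, $x \mapsto (x,t)$, for $t \in [0,1]$; each is Lipschitz with constant $1$, proper, and uniformly cobounded, so it induces a map $(i_t)_\ast$ on uniform $K$-homology. Since $g_t = G \circ i_t$ for $t \in \{0,1\}$ and $G$ is uniformly cobounded, proper and Lipschitz by hypothesis, functoriality reduces the claim to proving $(i_0)_\ast = (i_1)_\ast$ on $K_\ast^u(X)$.

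Next, let $\iota_t\colon \pt \to [0,1]$ be the inclusion of the point $t$, and set $[\delta_t] := (\iota_t)_\ast[1] \in K_0^u([0,1])$, where $[1]$ is the generator of $K_0^u(\pt) \cong \IZ$. Since $[0,1]$ is totally bounded and $X$ has jointly bounded geometry, $X \times [0,1]$ has jointly bounded geometry as well, so the external product $K_\ast^u(X) \otimes K_0^u([0,1]) \to K_\ast^u(X \times [0,1])$ of Theorem~\ref{thm:external_prod_homology} is available. Combining the unit property $[T] = [T] \times [1]$ with the naturality statement $(\id_X \times \iota_t)_\ast([T] \times [1]) = [T] \times (\iota_t)_\ast[1]$, and observing that $\id_X \times \iota_t$ is precisely $i_t$ under the canonical identification $X \times \pt = X$, I obtain the formula
\[(i_t)_\ast[T] = [T] \times [\delta_t] \in K_\ast^u(X \times [0,1]).\]

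It therefore remains to verify that $[\delta_0] = [\delta_1]$ in $K_0^u([0,1])$. But $[0,1]$ is totally bounded, so Proposition~\ref{prop:compact_space_every_module_uniform} says every Fredholm module over $[0,1]$ is automatically uniform; since moreover any operator homotopy through ordinary Fredholm modules is automatically a homotopy through uniform ones over a totally bounded space, this identifies $K_\ast^u([0,1])$ with the ordinary $K$-homology $K_\ast([0,1])$. Under this identification $[\delta_0]$ and $[\delta_1]$ are the usual point classes in the $K$-homology of a contractible compact space, and these agree. Therefore $(i_0)_\ast[T] = [T]\times[\delta_0] = [T]\times[\delta_1] = (i_1)_\ast[T]$, and applying $G_\ast$ yields $(g_0)_\ast = (g_1)_\ast$, as desired.

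The real content of the argument is already packed into Theorem~\ref{thm:external_prod_homology} (existence and the two naturality bullets) together with Proposition~\ref{prop:compact_space_every_module_uniform}; once these are in hand, the only mild obstacle is the bookkeeping to match the canonical identification $X \times \pt = X$ with the induced maps $i_t$. No new technical machinery is needed, which is the conceptual point of packaging the product as the main tool.
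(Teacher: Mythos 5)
Your proof is correct and follows essentially the same approach the paper indicates: the paper states the proof is ``completely analogous to the non-uniform case and uses the external product,'' which is exactly the cylinder argument you carry out (reduce to $(i_0)_\ast = (i_1)_\ast$, express $(i_t)_\ast[T]$ as $[T]\times[\delta_t]$ via the unit and naturality bullets of Theorem~\ref{thm:external_prod_homology}, and use Proposition~\ref{prop:compact_space_every_module_uniform} to identify $K_0^u([0,1])$ with ordinary $K_0([0,1])$ where $[\delta_0]=[\delta_1]$). One small inaccuracy in the write-up: the hypothesis needed for the external product is that each \emph{factor} ($X$ and $[0,1]$) has jointly bounded geometry, not the product $X\times[0,1]$; this does not affect the argument since both hold.
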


The proof of the above theorem is completely analogous to the non-uniform case and uses the external product. Furthermore, the above theorem is a special case of the following invariance of uniform $K$-homology under weak homotopies: given a uniform Fredholm module $(H, \rho, T)$ over $X$, the push-forward of it under $g_i$ is defined as $(H, \rho \circ g_i^\ast, T)$ and it is easily seen that these modules are weakly homotopic via the map $G$.

\begin{defn}[Weak homotopies]\label{defn:weak_homotopy}
Let a time-parametrized family of uniform Fredholm modules $(H, \rho_t, T_t)$ for $t \in [0,1]$ satisfy the following properties:
\begin{itemize}
\item the family $\rho_t$ is pointwise strong-$^\ast$ operator continuous, i.e., for all $f \in C_0(X)$ we get a path $\rho_t(f)$ in $\IB(H)$ that is continuous in the strong-$^\ast$ operator topology\footnote{Recall that if $H$ is a Hilbert space, then the \emph{strong-$^\ast$ operator topology} on $\IB(H)$ is generated by the family of seminorms $p_v(T) := \|Tv\| + \|T^\ast v\|$ for all $v \in H$, where $T \in \IB(H)$.},
\item the family $T_t$ is continuous in the strong-$^\ast$ operator topology on $\IB(H)$, i.e., for all $v \in H$ we get norm continuous paths $T_t(v)$ and $T_t^\ast(v)$ in $H$, and
\item for all $f \in C_0(X)$ the families of compact operators $[T_t, \rho_t(f)]$, $(T_t^2 - 1)\rho_t(f)$ and $(T_t-T_t^\ast)\rho_t(f)$ are norm continuous.
\end{itemize}
Then we call it a \emph{weak homotopy} between $(H, \rho_0, T_0)$ and $(H, \rho_1, T_1)$.
\end{defn}

\begin{rem}
If $\rho_t$ is pointwise norm continuous and $T_t$ is norm continuous, then the modules are weakly homotopic. So weak homotopy generalizes operator homotopy.
\end{rem}

\begin{thm}\label{thm:weak_homotopy_equivalence_K_hom}
Let $(H, \rho_0, T_0)$ and $(H, \rho_1, T_1)$ be weakly homotopic uniform Fredholm modules over a locally compact and separable metric space $X$ of jointly bounded geometry.

Then they define the same uniform $K$-homology class.
\end{thm}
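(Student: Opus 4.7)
The plan is to construct, from the weak homotopy, a single uniform Fredholm module over $X \times [0,1]$ that represents the pushforwards of both $[(H, \rho_0, T_0)]$ and $[(H, \rho_1, T_1)]$ under the endpoint inclusions $\iota_i \colon X \to X \times [0,1]$, $x \mapsto (x, i)$, and then to invoke Theorem~\ref{thm:homotopy_equivalence_k_hom} together with the projection $\pi \colon X \times [0,1] \to X$ to collapse this identification back to $X$.

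First I would set $\widetilde H := H \hatotimes L^2[0,1]$, equipped with the multigrading and multigrading operators inherited from $H$, and define $\widetilde \rho \colon C_0(X \times [0,1]) \to \IB(\widetilde H)$ and $\widetilde T \in \IB(\widetilde H)$ by the pointwise formulas $(\widetilde \rho(f)\xi)(t) := \rho_t(f(\cdot, t))\, \xi(t)$ and $(\widetilde T \xi)(t) := T_t(\xi(t))$. Strong-$\ast$ continuity of $\rho_t$ and of $T_t$ (plus Banach--Steinhaus for a uniform norm bound on $T_t$) makes these well defined. Verifying that $(\widetilde H, \widetilde \rho, \widetilde T)$ is a uniform multigraded Fredholm module amounts to partitioning $[0,1]$ into $M$ intervals $I_j$ with midpoints $t_j$, using the norm continuity of $[T_t, \rho_t(f)]$, $(T_t^2 - 1)\rho_t(f)$ and $(T_t - T_t^\ast)\rho_t(f)$ in $t$ to approximate in operator norm the corresponding operators $[\widetilde T, \widetilde\rho(f)]$, $(\widetilde T^2 - 1)\widetilde\rho(f)$ and $(\widetilde T - \widetilde T^\ast)\widetilde\rho(f)$ by direct sums over $j$ of the endpoint operators for $(H, \rho_{t_j}, T_{t_j})$ acting on $H \hatotimes L^2(I_j)$, and then invoking the uniformity of each summand plus the finiteness of the number of summands to conclude uniform approximability of the whole. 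The cardinality bounds needed here, uniformly in $f \in \LLip_R(X \times [0,1])$, follow from the jointly bounded geometry of $X \times [0,1]$, inherited from that of $X$ since $[0,1]$ is compact.

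The most technical step is to identify the family class with the endpoint pushforwards, i.e.\ to prove
\[
\bigl[(\widetilde H, \widetilde \rho, \widetilde T)\bigr] = (\iota_i)_\ast \bigl[(H, \rho_i, T_i)\bigr] \quad \text{in } K^u_\ast(X \times [0,1]), \quad i = 0, 1.
\]
For this I would construct an operator homotopy from the family cycle to a module that splits as the sum of $(\iota_i)_\ast(H, \rho_i, T_i)$, realised on a one-dimensional slice $H \hatotimes \IC \cdot e_i$ of $\widetilde H$ where $e_i \in L^2[0,1]$ is a unit vector sharply peaked at $t = i$, and a degenerate summand on the orthogonal complement. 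The homotopy would proceed by conjugation with a norm continuous family of even multigraded unitaries on $L^2[0,1]$ that concentrate mass near $t = i$, combined with a simultaneous Lipschitz reparametrization of the time variable that freezes the family parameter at $i$. The uniform Fredholm module conditions are to be preserved along the homotopy because all norm continuity of the conjugated operators can be extracted from the norm continuous observables; Lemma~\ref{lem:compact_perturbations} then absorbs the remaining discrepancy between the end of the homotopy and the literal module $(\iota_i)_\ast(H, \rho_i, T_i)$ as a uniformly locally compact perturbation.

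Having established the identification for both $i = 0$ and $i = 1$, the proof concludes formally. The inclusions $\iota_0$ and $\iota_1$ are uniformly cobounded, proper Lipschitz maps homotopic via the identity of $X \times [0,1]$, so Theorem~\ref{thm:homotopy_equivalence_k_hom} gives $(\iota_0)_\ast = (\iota_1)_\ast$; applying the pushforward $\pi_\ast$ along the projection $\pi$, which is uniformly cobounded, proper and Lipschitz and satisfies $\pi \circ \iota_i = \id_X$, yields $[(H, \rho_0, T_0)] = [(H, \rho_1, T_1)]$ in $K^u_\ast(X)$. The main obstacle is clearly the construction of the operator homotopy in the identification step: because $\rho_t$ and $T_t$ are only strong-$\ast$ continuous in $t$, the naive time-rescaling fails to be operator-norm continuous, and the homotopy must be engineered so that every quantity required to be norm continuous in the homotopy parameter is driven only by the norm continuous observables, with the non-observable components absorbed into degenerate or compactly-perturbed summands.
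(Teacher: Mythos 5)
Your overall strategy---build a ``family cycle'' over $X \times [0,1]$, push it to both endpoints, and invoke Theorem~\ref{thm:homotopy_equivalence_k_hom} plus the projection $\pi$---is a natural topological picture, and the formal endgame (the inclusions $\iota_i$ are homotopic via $\id_{X\times[0,1]}$, and $\pi \circ \iota_i = \id_X$) is sound. However, the central object of your construction does not exist: the triple $(\widetilde H, \widetilde\rho, \widetilde T)$ with $\widetilde H = H \hatotimes L^2[0,1]$, $(\widetilde\rho(f)\xi)(t) = \rho_t(f(\cdot,t))\xi(t)$, and $(\widetilde T\xi)(t) = T_t(\xi(t))$ is not a Fredholm module over $X \times [0,1]$ at all, uniform or otherwise. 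The commutator $[\widetilde T, \widetilde\rho(f)]$ is the operator $\xi(t) \mapsto K_t\,\xi(t)$ where $K_t := [T_t, \rho_t(f(\cdot,t))]$ is a norm-continuous family of compact operators on $H$; such a multiplication operator on $L^2([0,1], H)$ is essentially never compact (already for $K_t \equiv K$ constant nonzero one gets $K \hatotimes 1_{L^2[0,1]}$, which is not compact since $L^2[0,1]$ is infinite-dimensional). The same objection applies to $(\widetilde T^2 - 1)\widetilde\rho(f)$ and $(\widetilde T - \widetilde T^\ast)\widetilde\rho(f)$. Your interval-subdivision argument does not rescue this: each approximating summand $K_{t_j} \hatotimes 1_{L^2(I_j)}$ is itself non-compact, so no amount of uniformity of the fibers or finiteness of $j$ produces finite-rank approximants of the whole.

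The fix requires incorporating an elliptic operator ``transverse'' to the $[0,1]$ direction, i.e.\ taking a Kasparov product with a Fredholm module over the interval (or the circle), so that the non-compactness along $L^2[0,1]$ is absorbed. This is exactly what the paper does: it builds the operator $N_1(T_t) + N_2(1 \hatotimes T(f))$ on $H \hatotimes (L^2[0,2\pi]\oplus L^2[0,2\pi])$, where $T(f)$ is a Dirac-like Fredholm operator on the $L^2[0,2\pi]$ factor and $N_1, N_2$ is a Kasparov partition of unity obtained from the uniform Technical Lemma~\ref{lem:construction_partition_unity}. Crucially, the paper's module is a module over $X$ (the representation there is of $C_0(X)$, not of $C_0(X\times[0,1])$), and the two endpoint identifications are then extracted by choosing specific functions $f$ that concentrate the $T(f)$-part near $t = 0$, respectively $t = 1$, yielding $\big((H,\rho_0,T_0) \times [1]\big) \oplus (\text{degenerate})$ and $(\text{degenerate}) \oplus \big((H,\rho_1,T_1) \times [1]\big)$ after a further norm-continuous homotopy of the partition of unity. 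Once you repair your construction by the same device, the ``endpoint identification'' you defer to an unspecified operator homotopy becomes precisely this Kasparov argument, so the salvaged version of your proof coincides with the paper's. A secondary caveat: invoking Theorem~\ref{thm:homotopy_equivalence_k_hom} in this proof is only non-circular because the paper asserts an independent external-product proof of that theorem; you should say so explicitly, since the paper also presents Theorem~\ref{thm:homotopy_equivalence_k_hom} as a corollary of the very statement you are trying to prove.
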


\begin{proof}
Let our weakly homotopic family $(H, \rho_t, T_t)$ be parametrized by $t \in [0, 2\pi]$ so that our notation here will coincide with the one in the proof of \cite[Theorem 1 in §6]{kasparov_KK} that we mimic. Furthermore, we assume that $\rho_t$ and $T_t$ are constant in the intervals $[0, 2\pi/3]$ and $[4\pi / 3, 2\pi]$

We consider the graded Hilbert space $\IH := H \hatotimes (L^2[0,2\pi] \oplus L^2[0, 2\pi])$ (where the space $L^2[0,2\pi] \oplus L^2[0, 2\pi]$ is graded by interchanging the summands).

The family $T_t$ maps continuous paths $v_t$ in $H$ again to continuous paths $T_t(v_t)$: since the family $T_t$ is continuous in the strong-$^\ast$ operator topology and since it is defined on the compact interval $[0,1]$, we conclude with the uniform boundedness principle $\sup_t \|T_t\|_{op} < \infty$. Now if $t_n \to t$ is a convergent sequence, we get
\begin{align*}
\|T_{t_n} (v_{t_n}) - T_t (v_t)\| & \le \|T_{t_n} (v_{t_n}) - T_{t_n} (v_t)\| + \|T_{t_n} (v_t) - T_t (v_t)\|\\
& \le \underbrace{\|T_{t_n}\|_{op}}_{< \infty} \cdot \underbrace{\|v_{t_n} - v_t\|}_{\to 0} + \underbrace{\|(T_{t_n} - T_t)(v_t)\|}_{\to 0},
\end{align*}
where the second limit to $0$ holds due to the continuity of $T_t$ in the strong-$^\ast$ operator topology. So the family $T_t$ maps the dense subspace $H \otimes C[0,2\pi]$ of $H \otimes L^2[0,2\pi]$ into itself, and since it is norm bounded from above by $\sup_t \|T_t\|_{op} < \infty$, it defines a bounded operator on $H \otimes L^2[0,2\pi]$. We define an odd operator $\begin{pmatrix} 0 & T_t^\ast \\ T_t & 0\end{pmatrix}$ on $\IH$, which we also denote by $T_t$ (there should arise no confusion by using the same notation here).

Since $\rho_t(f)$ is strong-$^\ast$ continuous in $t$, we can analogously show that it maps continuous paths $v_t$ in $H$ again to continuous paths $\rho_t(f) (v_t)$, and it is norm bounded from above by $\|f\|_\infty$. because we have $\|\rho_t(f)\|_{op} \le \|f\|_\infty$ for all $t \in [0,1]$ since $\rho_t$ are representations of $C^\ast$-algebras. So $\rho_t(f)$ defines a bounded operator on $H \otimes L^2[0,2\pi]$ and we can get a representation $\rho_t \oplus \rho_t$ of $C_0(X)$ on $\IH$ by even operators, that we denote by the symbol $\rho_t$ (again, no confusion should arise by using the same notation).

We consider now the uniform Fredholm module
\[(\IH, \rho_t, N_1(T_t) + N_2(1 \hatotimes T(f)),\]
where $T(f)$ is defined as in the proof of \cite[Theorem 1 in §6]{kasparov_KK} (unfortunately, the overloading of the symbol ``$T$'' is unavoidable here). For the convenience of the reader, we will recall the definition of the operator $T(f)$ in a moment. That we may find a suitable partition of unity $N_1, N_2$ is due to the last bullet point in the definition of weak homotopies, and the construction of $N_1, N_2$ proceeds as in the end of the proof of our Proposition \ref{prop:external_prod_exists}.

To define $T(f)$, we first define an operator $d\colon L^2[0, 2\pi] \to L^2[0, 2\pi]$ using the basis $1, \ldots, \cos nx, \ldots, \sin nx, \ldots$ by the formulas
\[d(1) := 0 \text{, } d(\sin nx) := \cos nx \text{ and } d(\cos nx) := - \sin nx.\]
This operator $d$ is anti-selfadjoint, $d^2 + 1 \in \IK(L^2[0, 2\pi])$, and $d$ commutes modulo compact operators with multiplication by functions from $C[0, 2\pi]$. Let $f \in C[0, 2\pi]$ be a continuous, real-valued function with $|f(x)| \le 1$ for all $x \in [0, 2\pi]$, $f(0) = 1$ and $f(2\pi) = -1$. Then we set $T_1(f) := f - \sqrt{1 - f^2}\cdot d \in \IB(L^2[0, 2\pi])$. This operator $T_1(f)$ is Fredholm with Fredholm index $1$, both $1 - T_1(f) \cdot T_1(f)^\ast$ and $1 - T_1(f)^\ast \cdot T_1(f)$ are compact, and $T_1(f)$ commutes modulo compacts with multiplication by functions from $C[0, 2\pi]$. Furthermore, any two operators of the form $T_1(f)$ (for different $f$) are connected by a norm continuous homotopy consisting of operators having the same form. Finally, we define $T(f) := \begin{pmatrix}0 & T_1(f)^\ast \\ T_1(f) & 0\end{pmatrix} \in \IB(L^2[0,2\pi] \oplus L^2[0, 2\pi])$.

We assume the our homotopies $\rho_t$ and $T_t$ are constant in the intervals $[0, 2\pi/3]$ and $[4\pi / 3, 2\pi]$. Furthermore, we set
\[f(t) := \begin{cases} \cos 3t, & 0 \le t \le \pi / 3,\\ -1, & \pi/3 \le t \le 2 \pi.\end{cases}\]
Then $T_1(f)$ commutes with the projection $P$ onto $L^2[0, 2\pi / 3]$, $P \cdot T_1(f)$ is an operator of index $1$ on $L^2[0, 2\pi / 3]$, and $(1-P) T_1(f) \equiv -1$ on $L^2[2\pi/3, 2\pi]$. We choose $\alpha(t) \in C[0, 2\pi]$ with $0 \le \alpha(t) \le 1$, $\alpha(t) = 0$ for $t \le \pi / 3$, and $\alpha(t) = 1$ for $t \ge 2\pi / 3$. Using a norm continuous homotopy, we replace $N_1$ and $N_2$ by
\[\widetilde{N_1} := \sqrt{1 \hatotimes (1 - \alpha)} \cdot N_1 \cdot \sqrt{1 \hatotimes (1 - \alpha)}\]
and
\[\widetilde{N_2} := 1 \hatotimes \alpha + \sqrt{1 \hatotimes (1 - \alpha)} \cdot N_2 \cdot \sqrt{1 \hatotimes (1 - \alpha)}.\]
The operator $\widetilde{N_1}(T_t) + \widetilde{N_2}(1 \hatotimes T(f))$ commutes with $1 \hatotimes (P \oplus P)$ and we obtain for the decomposition $L^2[0,2\pi] \oplus L^2[0, 2\pi] = \image(P \oplus P) \oplus \image(1 - P \oplus P)$
\[\big( \IH, \rho_t, \widetilde{N_1}(T_t) + \widetilde{N_2}(1 \hatotimes T(f) \big) = \big( (H, \rho_0, T_0) \times [1] \big) \oplus \big(\text{degenerate}\big),\]
where $[1] \in K_0^u(\pt)$ is the multiplicative identity (see the third point of Theorem \ref{thm:external_prod_homology}) and recall that we assumed that $\rho_t$ and $T_t$ are constant in the intervals $[0, 2\pi/3]$ and $[4\pi / 3, 2\pi]$.

Setting
\[f(t) := \begin{cases} 1, & 0 \le t \le 5\pi / 3,\\ -\cos 3t, & 5\pi/3 \le t \le 2 \pi,\end{cases}\]
we get analogously
\[\big( \IH, \rho_t, \overline{N_1}(T_t) + \overline{N_2}(1 \hatotimes T(f) \big) = \big(\text{degenerate}\big) \oplus \big( (H, \rho_1, T_1) \times [1] \big),\]
for suitably defined operators $\overline{N_1}$ and $\overline{N_2}$ (their definition is similar to the one of $\widetilde{N_1}$ and $\widetilde{N_2}$). Putting all the homotopies of this proof together, we get that the modules $\big( (H, \rho_0, T_0) \times [1] \big) \oplus \big(\text{degenerate}\big)$ and $\big( (H, \rho_1, T_1) \times [1] \big) \oplus \big(\text{degenerate}\big)$ are operator homotopic, from which the claim follows.
\end{proof}

\subsection{Rough Baum--Connes conjecture}\label{sec:rough_BC}

\Spakula constructed in \cite[Section 9]{spakula_uniform_k_homology} the \emph{rough\footnote{We could have also called it the \emph{uniform coarse} assembly map, but the uniform coarse category is also called the rough category and therefore we stick to this shorter name.} assembly map}
\[\mu_u \colon K_\ast^u(X) \to K_\ast(C_u^\ast(Y)),\]
where $Y \subset X$ is a uniformly discrete quasi-lattice, $X$ a proper metric space, and $C_u^\ast(Y)$ the uniform Roe algebra of $Y$.\footnote{Recall that one possible model for the uniform Roe algebra $C_u^\ast(Y)$ is the norm closure of the $^\ast$-algebra of all finite propagation operators in $\IB(\ell^2(Y))$ with uniformly bounded coefficients. Another version is the norm closure of the $^\ast$-algebra of all finite propagation, uniformly locally compact operators in $\IB(\ell^2(Y)\otimes H)$ with uniformly bounded coefficients. \v{S}pakula--Willett \cite[Proposition 4.7]{spakula_willett_2} proved that these two versions are strongly Morita equivalent.} In this section we will discuss implications on the rough assembly map following from the properties of uniform $K$-homology that we have proved in the last sections.

Using homotopy invariance of uniform $K$-homology we will strengthen \v{S}pakula's results from \cite[Section 10]{spakula_uniform_k_homology}.

\begin{defn}[Rips complexes]
Let $Y$ be a discrete metric space and let $d \ge 0$. The \emph{Rips complex $P_d(Y)$ of $Y$} is a simplicial complex, where
\begin{itemize}
\item the vertex set of $P_d(Y)$ is $Y$, and
\item vertices $y_0, \ldots, y_q$ span a $q$-simplex if and only if we have $d(y_i, y_j) \le d$ for all $0 \le i, j \le q$.
\end{itemize}
Note that if $Y$ has coarsely bounded geometry, then the Rips complex $P_d(Y)$ is uniformly locally finite and finite dimensional and therefore also, especially, a simplicial complex of bounded geometry (i.e., the number of simplices in the link of each vertex is uniformly bounded). So if we equip $P_d(Y)$ with the metric derived from barycentric coordinates, $Y \subset P_d(Y)$ becomes a quasi-lattice (cf. Examples \ref{ex:coarsely_bounded_geometry}).
\end{defn}

Now we may state the \emph{rough Baum--Connes conjecture}:

\begin{conj}
Let $Y$ be a proper and uniformly discrete metric space with coarsely bounded geometry.

Then
\[\mu_u \colon \lim_{d \to \infty} K_\ast^u(P_d(Y)) \to K_\ast(C_u^\ast(Y))\]
is an isomorphism.
\end{conj}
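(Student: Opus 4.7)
Since this statement is a conjecture rather than a theorem, the plan I sketch is programmatic: the classical coarse Baum--Connes conjecture is itself only partially resolved, so any complete proof of its rough analog would require substantial new ideas. Nonetheless, the properties of uniform $K$-homology established above suggest a fairly clear blueprint.

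The first step would be to set up Mayer--Vietoris sequences on both sides of $\mu_u$ that are compatible with it. On the uniform $K$-homology side such sequences can be built from the external product of Theorem~\ref{thm:external_prod_homology} and the homotopy invariance of Theorem~\ref{thm:homotopy_equivalence_k_hom}, which together let one cut and paste classes with respect to a coarse cover of $Y$ and the induced decomposition of the Rips complexes $P_d(Y)$. On the $K$-theory side one would construct ideals of $C_u^\ast(Y)$ indexed by the pieces of the cover whose six-term sequences intertwine with the uniform $K$-homological ones. An induction on the cardinality of the cover combined with the five lemma would then reduce the conjecture to a base case on ``small'' pieces of $Y$, where both sides can be computed directly; Lemma~\ref{lem:uniform_k_hom_discrete_space} is a prototype of such a computation.

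Second, I would try to exploit the equivalence between the rough Baum--Connes conjecture and the classical Baum--Connes conjecture that is alluded to in the introduction through Theorem~\ref{thm:BC_equiv_uniform_coarse}. When $Y$ is a bounded geometry coarse model of a finitely generated discrete group $G$, the uniform Roe algebra is closely related to $C_r^\ast(G) \otimes \IK$, and one expects $\lim_{d \to \infty} K_\ast^u(P_d(Y))$ to correspond to $K_\ast^G(\underline{E}G)$; for such $Y$ the conjecture would then follow from the known partial results on Baum--Connes. For a general uniformly discrete space of coarsely bounded geometry one would attempt a coarse embedding argument in the style of Yu's theorem.

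The main obstacle will be that the uniform Roe algebra differs substantially from the ordinary Roe algebra: it is not stable under tensoring with $\IK$, and few of its $K$-theoretic tools are presently available. In particular, no Dirac--dual Dirac method is currently known for $C_u^\ast(Y)$, and ensuring uniform control of Fredholm modules throughout the direct limit $\lim_{d \to \infty}$ requires that the homotopies appearing in the argument be realized with propagation compatible with the structure maps $P_d(Y) \hookrightarrow P_{d'}(Y)$. The finite-propagation normalization of Proposition~\ref{prop:normalization_finite_prop_speed} makes this plausible but is not quantitative enough on its own, so a genuinely new analytic input seems unavoidable.
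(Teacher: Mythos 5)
This statement is labeled as a \emph{conjecture} in the paper (the ``rough Baum--Connes conjecture''), and the paper offers no proof of it; it is an open problem, the uniform analogue of the coarse Baum--Connes conjecture. You correctly recognized this and gave a programmatic sketch rather than a proof, which is the appropriate response. Since there is no proof in the paper to compare against, I will only remark that your blueprint is consistent with what the paper does establish: the paper proves Theorem~\ref{thm:BC_equiv_uniform_coarse} (equivalence with Baum--Connes with $\ell^\infty(\Gamma)$ coefficients when $Y = |\Gamma|$, using the homotopy invariance of Theorem~\ref{thm:homotopy_equivalence_k_hom}), which matches the second step of your sketch, and the Mayer--Vietoris and cutting-and-pasting machinery you invoke is indeed developed in the paper for the \Poincare duality argument. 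Your caveats about the uniform Roe algebra lacking stability under $\otimes \IK$ and the absence of a Dirac--dual-Dirac method for $C_u^\ast(Y)$ are well placed; these are genuine obstructions and not addressed in the paper either.
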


Let us relate the conjecture quickly to manifolds of bounded geometry. First we need the following notion:

\begin{defn}[Equicontinuously contractible spaces]
A metric space $X$ is called \emph{equicontinuously contractible}, if for every radius $r > 0$ the collection of balls $\{B_r(x)\}_{x \in X}$ is equicontinuously contractible (i.e., the collection of the contracting homotopies is equicontinuous).\footnote{Equicontinuous contractibility is a slight strengthening of uniform contractibility: a metric space $X$ is called \emph{uniformly contractible}, if for every $r > 0$ there is an $s > 0$ such that every ball $B_r(x)$ can be contracted to a point in the ball $B_s(x)$.}
\end{defn}

\begin{example}
Universal covers of aspherical Riemannian manifolds equipped with the pull-back metric are equicontinuously contractible.
\end{example}

\begin{thm}
Let $M$ be an equicontinuously contractible manifold of bounded geometry and without boundary and let $Y \subset M$ be a uniformly discrete quasi-lattice in $M$.

Then we have a natural isomorphism
\[\lim_{d \to \infty} K^u_\ast(P_d(Y)) \cong K^u_\ast(M).\]
\end{thm}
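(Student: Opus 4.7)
The plan is to construct mutually inverse maps (up to homotopy in the sense of Section~\ref{sec:homotopy_invariance}) between $M$ and $P_d(Y)$ for all sufficiently large $d$, and then conclude with the homotopy invariance of uniform $K$-homology, Theorem~\ref{thm:homotopy_equivalence_k_hom}.

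Fix $c > 0$ with $B_c(Y) = M$ and an $\varepsilon > 0$ with $d(y,y') \ge \varepsilon$ for $y \ne y' \in Y$. For each $d \ge 2c$, I would first build a map $\psi \colon M \to P_d(Y)$ via barycentric coordinates: choose a uniformly locally finite, Lipschitz partition of unity $\{\varphi_y\}_{y \in Y}$ subordinate to the cover $\{B_c(y)\}_{y \in Y}$ (its existence is a standard consequence of the bounded geometry of $M$) and set $\psi(x) := \sum_{y \in Y} \varphi_y(x) \cdot y$, interpreted inside the simplex of $P_d(Y)$ spanned by the finitely many vertices $y$ with $x \in B_c(y)$. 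Using the bounded-geometry bound on the number of such $y$'s in any ball, one verifies that $\psi$ is Lipschitz, proper, and uniformly cobounded.

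Conversely, I would construct a map $\varphi_d \colon P_d(Y) \to M$ by induction over skeleta, starting with the inclusion $Y \hookrightarrow M$ on the $0$-skeleton. To extend over a $q$-simplex $\sigma$ with vertices $y_0, \ldots, y_q$ (pairwise at distance $\le d$ in $M$), the boundary $\partial\sigma$ is already mapped into a ball of some radius $r = r(q,d)$ around $y_0$; equicontinuous contractibility then supplies a nullhomotopy of that ball inside a larger ball of some radius $r' = r'(q,d)$ with modulus depending only on $r$ and $q$, and this yields the extension. Finite-dimensionality of $P_d(Y)$ (from coarsely bounded geometry of $Y$) makes the induction terminate, and the resulting map $\varphi_d$ is Lipschitz, proper, and uniformly cobounded.

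Next I would verify the homotopies $\varphi_d \circ \psi \simeq \mathrm{id}_M$ and $\psi \circ \varphi_d \simeq \iota_{d,d'}$, where $\iota_{d,d'} \colon P_d(Y) \hookrightarrow P_{d'}(Y)$ is the canonical inclusion for some larger $d' \ge d$, through uniformly cobounded, proper, Lipschitz maps $M \times [0,1] \to M$ and $P_d(Y) \times [0,1] \to P_{d'}(Y)$ respectively. For the first, $\varphi_d(\psi(x))$ lies within a uniformly bounded distance of $x$, and equicontinuous contractibility again provides an equicontinuous family of nullhomotopies joining $\varphi_d(\psi(x))$ to $x$. For the second, the two maps agree on vertices and send each $d$-simplex into a common simplex of $P_{d'}(Y)$ (for $d'$ large enough, depending on $c$ and on the modulus of continuity of $\varphi_d$), so a straight-line homotopy inside $P_{d'}(Y)$ suffices. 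Passing to the direct limit and applying Theorem~\ref{thm:homotopy_equivalence_k_hom}, the induced maps on $K^u_\ast$ are mutual inverses, which proves the naturality and bijectivity of $\lim_d (\varphi_d)_\ast \colon \lim_d K^u_\ast(P_d(Y)) \to K^u_\ast(M)$.

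The main obstacle lies in the inductive construction of $\varphi_d$: the extensions must be made over \emph{all} simplices of $P_d(Y)$ simultaneously with a uniform Lipschitz constant and with the image of each simplex having uniformly bounded diameter, so that the resulting map is genuinely Lipschitz and uniformly cobounded. This is precisely the point where equicontinuous contractibility, rather than mere uniform contractibility, is essential: the equicontinuity of the contractions of balls of a fixed radius is what allows the simplex-by-simplex extensions to be glued into a single map with uniform metric control on the whole complex.
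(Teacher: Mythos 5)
Your proposal follows essentially the same route as the paper, which simply cites Yu's argument for $\lim_{d} K_\ast(P_d(Y)) \cong K_\ast(M)$ and notes that the key new ingredient is homotopy invariance of uniform $K$-homology (Theorem~\ref{thm:homotopy_equivalence_k_hom}); you have correctly fleshed out how the barycentric map $\psi$, the skeletal extension $\varphi_d$, and the two homotopies are controlled by equicontinuous contractibility and bounded geometry. One small imprecision: $\psi\circ\varphi_d$ and $\iota_{d,d'}$ need not literally agree on vertices, since $\psi(y)$ is a barycentric combination of the $y'$ with $y\in B_c(y')$ rather than the vertex $y$ itself, but your subsequent claim --- that for $d'$ large both maps send each simplex of $P_d(Y)$ into a common simplex of $P_{d'}(Y)$, so a straight-line homotopy applies --- is what actually matters and is correct.
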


The proof of this theorem is analogous to the corresponding non-uniform statement $\lim_{d \to \infty} K_\ast(P_d(Y)) \cong K_\ast(M)$ from \cite[Theorem 3.2]{yu_coarse_baum_connes_conj} and uses crucially the homotopy invariance of uniform $K$-homology.

Let us relate the rough Baum--Connes conjecture to the usual Baum--Connes conjecture: let $\Gamma$ be a countable, discrete group and denote by $|\Gamma|$ the metric space obtained by endowing $\Gamma$ with a proper, left-invariant metric. Then $|\Gamma|$ becomes a proper, uniformly discrete metric space with coarsely bounded geometry. Note that we can always find such a metric and that any two of such metrics are coarsely equivalent. If $\Gamma$ is finitely generated, an example is the word metric.

\Spakula proved in \cite[Corollary 10.3]{spakula_uniform_k_homology} the following equivalence of the rough Baum--Connes conjecture with the usual one: let $\Gamma$ be a torsion-free, countable, discrete group. Then the rough assembly map
\[\mu_u \colon \lim_{d \to \infty} K_\ast^u(P_d|\Gamma|) \to K_\ast(C_u^\ast|\Gamma|)\]
is an isomorphism if and only if the Baum--Connes assembly map
\[\mu \colon K_\ast^\Gamma(\underline{E}\Gamma; \ell^\infty(\Gamma)) \to K_\ast(C_r^\ast(\Gamma, \ell^\infty(\Gamma)))\]
for $\Gamma$ with coefficients in $\ell^\infty(\Gamma)$ is an isomorphism. For the definition of the Baum--Connes assembly map with coefficients the unfamiliar reader may consult the original paper \cite[Section 9]{baum_connes_higson}. Furthermore, the equivalence of the usual (i.e., non-uniform) coarse Baum--Connes conjecture with the Baum--Connes conjecture with coefficients in $\ell^\infty(\Gamma, \IK)$ was proved by Yu in \cite[Theorem 2.7]{yu_baum_connes_conj_coarse_geom}.

\Spakula mentioned in \cite[Remark 10.4]{spakula_uniform_k_homology} that the above equivalence does probably also hold without any assumptions on the torsion of $\Gamma$, but the proof of this would require some degree of homotopy invariance of uniform $K$-homology. So again we may utilize our proof of the homotopy invariance of uniform $K$-homology and therefore drop the assumption about the torsion of $\Gamma$.

\begin{thm}\label{thm:BC_equiv_uniform_coarse}
Let $\Gamma$ be a countable, discrete group.

Then the rough assembly map
\[\mu_u \colon \lim_{d \to \infty} K_\ast^u(P_d|\Gamma|) \to K_\ast(C_u^\ast|\Gamma|)\]
is an isomorphism if and only if the Baum--Connes assembly map
\[\mu \colon K_\ast^\Gamma(\underline{E}\Gamma; \ell^\infty(\Gamma)) \to K_\ast(C_r^\ast(\Gamma, \ell^\infty(\Gamma)))\]
for $\Gamma$ with coefficients in $\ell^\infty(\Gamma)$ is an isomorphism.
\end{thm}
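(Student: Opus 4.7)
The plan is to follow \Spakula's argument for \cite[Corollary 10.3]{spakula_uniform_k_homology} and to replace, at the one point where torsion-freeness of $\Gamma$ enters (an implicit independence-of-choices statement for uniform $K$-homology classes), by a direct appeal to the homotopy invariance results Theorems~\ref{thm:homotopy_equivalence_k_hom} and~\ref{thm:weak_homotopy_equivalence_K_hom} established in Section~\ref{sec:homotopy_invariance}.

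On the right-hand side of the two assembly maps, the uniform Roe algebra $C_u^\ast|\Gamma|$ is $K$-theoretically identified with the reduced crossed product $\ell^\infty(\Gamma) \rtimes_r \Gamma = C_r^\ast(\Gamma, \ell^\infty(\Gamma))$ via the standard covariant-pair description of the uniform Roe algebra of a group acting on itself by translations; this identification is independent of any torsion assumption, so the targets $K_\ast(C_u^\ast|\Gamma|)$ and $K_\ast(C_r^\ast(\Gamma, \ell^\infty(\Gamma)))$ of $\mu_u$ and $\mu$ are naturally isomorphic.

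On the left-hand side, I would construct for each $d \ge 0$ a natural isomorphism $K_\ast^u(P_d|\Gamma|) \cong K_\ast^\Gamma(P_d|\Gamma|; \ell^\infty(\Gamma))$, compatible with the direct system $d \le d'$, and then pass to the direct limit. Since the $P_d|\Gamma|$ exhaust a model of $\underline{E}\Gamma$, this yields $\lim_{d \to \infty} K_\ast^u(P_d|\Gamma|) \cong K_\ast^\Gamma(\underline{E}\Gamma; \ell^\infty(\Gamma))$. The $\Gamma$-action on each Rips complex $P_d|\Gamma|$ is proper and cocompact; when $\Gamma$ is torsion-free it is also free, and \Spakula then works on the compact quotient $P_d|\Gamma|/\Gamma$ and lifts Fredholm modules back to obtain the required isomorphism. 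In the presence of torsion the stabilizers are nontrivial, and the quotient-and-lift construction must be replaced by equivariant averaging of uniform Fredholm modules directly over the proper action on $P_d|\Gamma|$; the output depends on auxiliary choices (cut-off functions, Hilbert-space embeddings, and convex combinations in the sense of the proof of Proposition~\ref{prop:external_prod_exists}), but any two choices yield weakly homotopic modules in the sense of Definition~\ref{defn:weak_homotopy}. Theorem~\ref{thm:weak_homotopy_equivalence_K_hom} then ensures that the class in $K_\ast^u(P_d|\Gamma|)$ is independent of these choices, so the desired map is well-defined and an isomorphism.

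Finally, a direct naturality check shows that the two vertical isomorphisms just constructed intertwine $\mu_u$ with the Baum--Connes assembly map $\mu$ with coefficients in $\ell^\infty(\Gamma)$; this diagram chase is essentially identical to \Spakula's in the torsion-free case, and the conclusion follows. The main obstacle is precisely the equivariant averaging step of the previous paragraph: producing, in the presence of torsion isotropy, a well-defined map from uniform Fredholm modules on the proper $\Gamma$-space $P_d|\Gamma|$ to equivariant cycles with $\ell^\infty(\Gamma)$-coefficients. This is exactly the point where homotopy invariance of uniform $K$-homology, previously unavailable in \Spakula's setup, becomes the decisive new ingredient and allows the torsion-free hypothesis to be removed.
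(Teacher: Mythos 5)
The paper itself supplies no proof for Theorem~\ref{thm:BC_equiv_uniform_coarse}: it cites \v{S}pakula's \cite[Remark 10.4]{spakula_uniform_k_homology}, which observes that dropping the torsion-free hypothesis from his Corollary 10.3 would only require some degree of homotopy invariance of uniform $K$-homology, and then points at Theorems~\ref{thm:homotopy_equivalence_k_hom} and~\ref{thm:weak_homotopy_equivalence_K_hom} as that ingredient. Your proposal follows the same outline --- identify $K_\ast(C_u^\ast|\Gamma|) \cong K_\ast(C_r^\ast(\Gamma,\ell^\infty(\Gamma)))$ on the right (correct, and torsion plays no role there), reduce the left-hand side to an isomorphism $\lim_d K_\ast^u(P_d|\Gamma|) \cong K_\ast^\Gamma(\underline{E}\Gamma;\ell^\infty(\Gamma))$, and verify commutativity --- and you locate the use of homotopy invariance exactly where the paper implicitly points. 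At that level of resolution the approaches agree.

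The one concrete addition you make beyond the paper's remark is the step you call the ``main obstacle'': replace \v{S}pakula's quotient-and-lift construction (which needs $P_d|\Gamma| \to P_d|\Gamma|/\Gamma$ to be a free covering, hence torsion-freeness) by ``equivariant averaging of uniform Fredholm modules directly over the proper action on $P_d|\Gamma|$,'' and then invoke Theorem~\ref{thm:weak_homotopy_equivalence_K_hom} to wash out dependence on the choices. As written this is a plan, not a proof: it is not defined what averaging over the infinite group $\Gamma$ means on the level of uniform Fredholm modules, and to actually apply Theorem~\ref{thm:weak_homotopy_equivalence_K_hom} one must produce a time-parametrized family $(H,\rho_t,T_t)$ satisfying all three conditions of Definition~\ref{defn:weak_homotopy} (strong-$^\ast$ continuity of $\rho_t$ and $T_t$ together with norm continuity of the three defect families) joining the cycles obtained from two admissible choices of cut-off and stabilization data. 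None of this is carried out. In fairness, the paper carries out none of it either --- so your proposal is faithful to the paper's (equally incomplete) treatment --- but it does not close the hole; it merely names the ingredient the paper also names.
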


\section{Uniform \texorpdfstring{$K$}{K}-theory}
\label{sec:uniform_k_th}

In this section we will define uniform $K$-theory and show that for \spinc manifolds it is \Poincare dual to uniform $K$-homology. The definition of uniform $K$-theory is based on the following observation: we want that it consists of vector bundles such that Dirac operators over manifolds of bounded geometry may be twisted with them (since we want a cap product between uniform $K$-homology and uniform $K$-theory). Hence we have to consider vector bundles of bounded geometry, because otherwise the twisted operator will not be uniform. See Definition~\ref{defn_bounded_geometry_manifolds} for the notion of bounded geometry.

The first guess is to use the algebra $C_b^\infty(M)$ of smooth functions on $M$ whose derivatives are all uniformly bounded, and then to consider its operator $K$-theory. This guess is based on the speculation that the boundedness of the derivatives translates into the boundedness of the Christoffel symbols if one equips the vector bundle with the induced metric and connection coming from the given embedding of the bundle into $\IC^k$ (this embedding is given to us because a projection matrix with entries in $C_b^\infty(M)$ defines a subbundle of $\IC^k$ by considering the image of the projection matrix). To our luck this first guess works out.

Note that other authors have, of course, investigated similar versions of $K$-theory: Kaad investigated in \cite{kaad} Hilbert bundles of bounded geometry over manifolds of bounded geometry (the author thanks Magnus Goffeng for pointing to that publication). Dropping the condition that the bundles must have bounded geometry, there is a general result by Morye contained in \cite{morye} having as a corollary the Serre--Swan theorem for smooth vector bundles over (possibly non-compact) smooth manifolds. If one is only interested in the last mentioned result, there is also the short note \cite{sardanashvily} by Sardanashvily.

\subsection{Definition and basic properties of uniform \texorpdfstring{$K$}{K}-theory}

As we have written above, we will define uniform $K$-theory of a manifold of bounded geometry as the operator $K$-theory of $C_b^\infty(M)$. But since $C_b^\infty(M)$ turns out to be a local $C^\ast$-algebra (see Lemma \ref{lem:C_b_infty_local}), its operator $K$-theory will coincide with the $K$-theory of its closure which is the $C^\ast$-algebra $C_u(M)$ of all bounded, uniformly continuous functions on $M$ (see Lemma \ref{lem:norm_completion_C_b_infty}). Hence we may define uniform $K$-theory for any metric space $X$ as the operator $K$-theory of $C_u(X)$.

\begin{defn}[Uniform $K$-theory]
Let $X$ be a metric space. The \emph{uniform $K$-theory groups of $X$} are defined as
\[K^p_u(X) := K_{-p}(C_u(X)),\]
where $C_u(X)$ is the $C^\ast$-algebra of bounded, uniformly continuous functions on $X$.
\end{defn}

The introduction of the minus sign in the index $-p$ in the above definition is just a convention which ensures that the indices in formulas, like the one for the cap product between uniform $K$-theory and uniform $K$-homology, coincide with the indices from the corresponding formulas for (co-)homology. Since complex $K$-theory is $2$-periodic, the minus sign does not change anything in the formulas.

Denoting by $\overline{X}$ the completion of the metric space $X$, we have $K^\ast_u(\overline{X}) = K^\ast_u(X)$ because every uniformly continuous function on $X$ has a unique extension to $\overline{X}$, i.e., $C_u(\overline{X}) = C_u(X)$. This means that, e.g., the uniform $K$-theories of the spaces $[0,1]$, $[0,1)$ and $(0,1)$ are all equal. Furthermore, since on a compact space $X$ we have $C_u(X) = C(X)$, uniform $K$-theory coincides for compact spaces with usual $K$-theory. Let us state this as a small lemma:

\begin{lem}
If $X$ is totally bounded, then $K^\ast_u(X) = K_u^\ast(\overline{X}) = K^\ast(\overline{X})$.\qed
\end{lem}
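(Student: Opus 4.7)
The plan is to chain three standard identifications that have all been set up in the paragraphs immediately preceding the lemma.

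First, since $X$ is totally bounded, its metric completion $\overline{X}$ is complete and totally bounded, hence compact. Second, restriction along the dense inclusion $X \hookrightarrow \overline{X}$ gives an isomorphism of $C^\ast$-algebras $C_u(\overline{X}) \xrightarrow{\cong} C_u(X)$: injectivity is density of $X$ in $\overline{X}$, and surjectivity is the standard fact (already invoked in the excerpt) that every bounded uniformly continuous function on $X$ admits a unique uniformly continuous extension to $\overline{X}$. Third, on the compact space $\overline{X}$ every continuous function is automatically uniformly continuous, so $C_u(\overline{X}) = C(\overline{X})$.

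Concatenating these identifications and unwinding the definition $K^p_u(-) := K_{-p}(C_u(-))$ gives
\[
K^\ast_u(X) = K_{-\ast}(C_u(X)) \cong K_{-\ast}(C_u(\overline{X})) = K^\ast_u(\overline{X}) = K_{-\ast}(C(\overline{X})) = K^\ast(\overline{X}),
\]
where the last equality is the usual definition of topological $K$-theory of a compact space via operator $K$-theory of its $C^\ast$-algebra of continuous functions. There is no genuine obstacle here: the only substantive input is the compactness of $\overline{X}$, and everything else is purely algebraic bookkeeping applied to facts already collected in the preceding discussion, which is presumably why the author marks the lemma with an inline \texttt{qed}.
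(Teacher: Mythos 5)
Your proposal is correct and matches the paper's own (implicit) argument exactly: the paper records the identifications $C_u(\overline{X}) = C_u(X)$ via unique uniformly continuous extension and $C_u = C$ on compact spaces in the two sentences immediately preceding the lemma, and the inline \qed signals that the lemma is the concatenation of these with the compactness of $\overline{X}$. Nothing to add.
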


\begin{rem}
Note the following difference between uniform $K$-theory and uniform $K$-homology: whereas uniform $K$-theory of $X$ coincides with the uniform $K$-theory of the completion $\overline{X}$, this is in general not true for uniform $K$-homology.

Recall that in Proposition \ref{prop:compact_space_every_module_uniform} we have shown that if $X$ is totally bounded, then the uniform $K$-homology of $X$ coincides with locally finite $K$-homology of $X$. So for, e.g., an open ball $B$ in $\IR^n$ uniform and locally finite $K$-homology coincide and hence $K_m^u(B) \cong \IZ$ for $m=n$ and it vanishes for all other values of $m$. But due to homotopy invariance we have $K_m^u(\overline{B}) \cong K_m^u(*) \cong \IZ$ for $m=0$ and it vanishes for other values of $m$.

In the case of uniform $K$-theory we have $K^m_u(B) \cong K^m_u(\overline{B}) \cong K^m_u(*) \cong \IZ$ for $m=0$ and it vanishes otherwise.
\end{rem}

Recall that in Lemma \ref{lem:uniform_k_hom_discrete_space} we have shown that the uniform $K$-homology group $K_0^u(Y)$ of a uniformly discrete, proper metric space $Y$ of coarsely bounded geometry is isomorphic to the group $\ell^\infty_\IZ(Y)$ of all bounded, integer-valued sequences indexed by $Y$, and that $K_1^u(Y) = 0$. Since we want uniform $K$-theory to be \Poincare dual to uniform $K$-homology, we need the corresponding result for uniform $K$-theory.

\begin{lem}\label{lem:uniform_k_th_discrete_space}
Let $Y$ be a uniformly discrete metric space. Then $K^0_u(Y)$ is isomorphic to $\ell^\infty_\IZ(Y)$ and $K^1_u(Y) = 0$.
\end{lem}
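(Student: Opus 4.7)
\medskip

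\noindent\textbf{Proof plan.} The key reduction is that since $Y$ is uniformly discrete, there exists $\delta > 0$ with $d(x,y) \ge \delta$ for all $x \ne y$ in $Y$; hence \emph{every} bounded function $f \colon Y \to \IC$ is trivially uniformly continuous, so $C_u(Y) = \ell^\infty(Y)$. The problem therefore reduces to computing $K_\ast(\ell^\infty(Y))$, and we do this by a direct rank-function argument.

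For $K_0$, I would construct the homomorphism
\[ \rho \colon K_0(\ell^\infty(Y)) \to \ell^\infty_\IZ(Y), \qquad [p] - [q] \mapsto \bigl(y \mapsto \trace(p(y)) - \trace(q(y))\bigr), \]
noting that a projection $p \in M_n(\ell^\infty(Y))$ is simply a bounded function $Y \to M_n(\IC)$ whose pointwise values are projections of rank at most $n$, so the displayed function is indeed bounded and integer-valued. Surjectivity is immediate: given $f \in \ell^\infty_\IZ(Y)$ with $|f| \le n$, split $f = f^+ - f^-$ and realize $f^\pm$ as the pointwise ranks of the diagonal projections $p^\pm(y) := \diag(1,\dots,1,0,\dots,0) \in M_n(\IC)$ with $f^\pm(y)$ ones. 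For injectivity, two projections $p,q \in M_n(\ell^\infty(Y))$ with $\trace p(y) = \trace q(y)$ for every $y$ are unitarily equivalent inside $M_n(\ell^\infty(Y))$: one picks, for each $y$ independently, a unitary $u(y) \in U(n)$ with $u(y)p(y)u(y)^\ast = q(y)$, and since $\|u(y)\| = 1$ uniformly and $Y$ carries the discrete topology (no regularity beyond boundedness is required of entries of $M_n(\ell^\infty(Y))$), the resulting $u$ lies in $M_n(\ell^\infty(Y))$. Combined with the obvious stability of the construction under the embedding $M_n \hookrightarrow M_{n+k}$, this gives $\rho^{-1}$.

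For $K_1$, I would show that any unitary $u \in M_n(\ell^\infty(Y))$ is connected to the identity. Since $u$ is just a function $Y \to U(n)$, for every $y$ one may pick a Hermitian $H(y) \in M_n(\IC)$ with $\|H(y)\| \le \pi$ and $e^{iH(y)} = u(y)$. Because $\|H\|_\infty \le \pi$, the function $H$ defines a self-adjoint element of $M_n(\ell^\infty(Y))$, and $t \mapsto e^{itH}$ is a norm-continuous path in $M_n(\ell^\infty(Y))$ (using the uniform bound $\|e^{itH} - e^{isH}\| \le \pi|t-s|$) from $1$ to $u$. Hence $K_1(\ell^\infty(Y)) = 0$.

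The only subtle step is the injectivity of $\rho$ and the constructions for $K_1$, where one has to be at ease with the fact that $\ell^\infty(Y)$ is stable under arbitrary pointwise constructions as long as norms stay bounded. Everything else is a routine check, and the whole argument is streamlined by the initial observation that uniform discreteness makes $C_u(Y)$ collapse to $\ell^\infty(Y)$.
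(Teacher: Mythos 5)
Your proof is correct and follows essentially the same route as the paper: reduce to the observation that $C_u(Y)=\ell^\infty(Y)$ for a uniformly discrete $Y$ and then compute $K_\ast(\ell^\infty(Y))$. The paper simply cites \cite[Exercise~7.7.3]{higson_roe} for this last computation, whereas you carry it out explicitly via the pointwise-rank homomorphism and the pointwise logarithm argument; both are valid and your fill-in of the details is accurate.
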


\begin{proof}
The proof is an easy consequence of the fact that $C_u(Y) \cong \prod_{y \in Y} C(y) \cong \prod_{y \in Y} \IC$ for a uniformly discrete space $Y$, where the direct product of $C^\ast$-algebras is equipped with the sup-norm. The computation of the operator $K$-theory of $\prod_{y \in Y} \IC$ is now easily done (cf. \cite[Exercise 7.7.3]{higson_roe}).
\end{proof}

And last, we will give a relation of uniform $K$-theory with amenability. Analogous results for other uniform (co-)homology theories are known (see, e.g., \cite[Section 8]{block_weinberger_large_scale}).

\begin{lem}
Let $M$ be a metric space with amenable fundamental group.

We let $X$ be the universal cover of $M$ and we denote the covering projection by $\pi\colon X \to M$. Then the pull-back map $K^\ast_u(M) \to K^\ast_u(X)$ is injective.
\end{lem}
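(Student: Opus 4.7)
The strategy is to use the amenability of $\Gamma := \pi_1(M)$ to construct a completely positive splitting of $\pi^\ast\colon C_u(M) \to C_u(X)$ and then pass to $K$-theory. The deck action of $\Gamma$ on $X$ is by isometries for the pullback metric, which induces an isometric action $(g \cdot f)(x) := f(g^{-1}x)$ on $C_u(X)$; the fixed subalgebra $C_u(X)^\Gamma$ coincides with $\pi^\ast(C_u(M))$.

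Fix an invariant mean $\mu \in \ell^\infty(\Gamma)^\ast$ (available by amenability). Define the transfer $\tau\colon C_u(X) \to C_u(M)$ by $\tau(f)(m) := \mu\bigl(g \mapsto f(g\tilde m)\bigr)$, where $\tilde m$ is any lift of $m$; left-invariance of $\mu$ makes the definition independent of this choice. Uniform continuity of $\tau(f)$ is inherited from that of $f$: if $\delta$ realizes an $\varepsilon$-modulus of continuity for $f$, then points $m, m' \in M$ with $d(m,m') < \delta$ admit lifts $\tilde m, \tilde m'$ with $d_X(\tilde m, \tilde m') < \delta$, and the isometric deck action preserves this inequality, $d_X(g\tilde m, g\tilde m') < \delta$ uniformly in $g$, forcing $|\tau(f)(m) - \tau(f)(m')| \le \varepsilon$. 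A direct computation then shows that $\tau$ is unital completely positive, is a $C_u(M)$-bimodule map, and satisfies $\tau \circ \pi^\ast = \id_{C_u(M)}$.

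To upgrade this ucp splitting to injectivity on $K$-theory, one invokes $KK$-theoretic machinery. Since both $C_u(M)$ and $C_u(X)$ are commutative, hence nuclear, the ucp map $\tau$ gives rise to a class $[\tau] \in KK(C_u(X), C_u(M))$ via Stinespring dilation (equivalently, via Cuntz's picture of $KK$), and the identity $\tau \circ \pi^\ast = \id$ upgrades to the Kasparov-product identity $[\pi^\ast] \otimes_{C_u(X)} [\tau] = [\id_{C_u(M)}]$ in $KK(C_u(M), C_u(M))$. Pairing with $K_\ast(C_u(M)) = KK_\ast(\IC, C_u(M))$ produces a left inverse of $\pi^\ast\colon K^\ast_u(M) \to K^\ast_u(X)$, so this map is in fact split injective.

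The main obstacle is the final $KK$-theoretic step: a ucp map does not preserve Murray--von Neumann equivalence, so one cannot simply apply $\tau$ entrywise to a matrix witnessing triviality of $\pi^\ast[p]$ in $K_\ast(C_u(X))$ and read off a trivialization of $[p]$ in $K_\ast(C_u(M))$. The $KK$-formulation is the cleanest way around this, though it imports some abstract machinery; an alternative route, available when $M$ is a manifold of bounded geometry, is to reinterpret the classes via bundles of bounded geometry as in Theorem~\ref{thm:interpretation_K0u} and to average transition data for pulled-back bundles against the invariant mean, at the cost of additional geometric hypotheses on $M$.
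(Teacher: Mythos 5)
Your averaging construction of $\tau$ via an invariant mean on $\Gamma$ is, modulo cosmetics, the same as the paper's: the paper picks a fundamental domain and a F{\o}lner sequence $(E_i)$, averages over $E_i \cdot F \cap \pi^{-1}(y)$, and then applies a free-ultrafilter functional $\tau \in (\ell^\infty)^\ast$; for a countable discrete group this is just one concrete way to produce an invariant mean. The extra detail you supply on why $\tau(f)$ is uniformly continuous is welcome (the paper compresses this to one sentence about F{\o}lner discontinuities vanishing in the limit). Where you genuinely add value is in flagging that this map is only unital completely positive, not multiplicative, so it does \emph{not} automatically induce a map on $K$-theory. The paper's proof simply asserts that $p$ is a left inverse to $\pi^\ast$ and stops, treating this as if it gave a splitting on $K$-theory; you are right that this step is unjustified as written.

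Unfortunately your proposed repair does not close the gap. A single ucp map $\tau\colon A \to B$ does not define a class in $KK(A,B)$, nuclearity notwithstanding: Stinespring dilation produces a Hilbert $B$-module representation of $A$ together with a compression, but there is no reason for the commutator of the compression projection with the dilated representation to be compact, so one does not obtain a Kasparov cycle; and Cuntz's picture of $KK$ requires a quasi-homomorphism (a \emph{pair} of $^\ast$-homomorphisms agreeing modulo compacts), not a lone cp map. (A faithful conditional expectation of \emph{finite Watatani index} does give such a class, but for an infinite cover the index is infinite.) In fact the statement being proved is false as written. Take $M = \IR P^2$ (any Riemannian metric) with universal cover $X = S^2$ and $\pi_1(M) = \IZ/2$, which is amenable. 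Both spaces are compact, so by the paper's own Lemma~3.3 uniform $K$-theory is ordinary $K$-theory, and
\[
K^0(\IR P^2) \ \cong\ \IZ \oplus \IZ/2 \ \longrightarrow\ K^0(S^2) \ \cong\ \IZ^2
\]
annihilates the torsion summand because the target is torsion-free (concretely, the complexified tautological line bundle on $\IR P^2$ pulls back to a trivial bundle on $S^2$). The averaging map $p(f)([x]) = \tfrac12\bigl(f(x) + f(-x)\bigr)$ is a ucp left inverse to $\pi^\ast$ here, so the existence of such a splitting at the algebra level provably cannot imply $K$-theory injectivity. Any correct version of the lemma needs additional hypotheses, e.g.\ passing to rational $K$-theory, or a torsion-freeness assumption on $\pi_1(M)$ combined with a genuine $K$-theoretic transfer argument rather than a ucp averaging.
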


\begin{proof}
The projection $\pi$ induces a map $\pi^\ast \colon C_u(M) \to C_u(X)$ which then induces the pull-back map $K^\ast_u(M) \to K^\ast_u(X)$. We will prove the lemma by constructing a left inverse to the above map $\pi^\ast$, i.e., we will construct a map $p \colon C_u(X) \to C_u(M)$ with $p \circ \pi^\ast = \id \colon C_u(M) \to C_u(M)$.

Let $F \subset X$ be a fundamental domain for the action of the deck transformation group on $X$. Since $\pi_1(M)$ is amenable, we choose a \Folner sequence $(E_i)_i \subset \pi_1(M)$ in it. Now given a function $f \in C_u(X)$, we set
\[f_i(y) := \frac{1}{\card E_i} \sum_{x \in \pi^{-1}(y) \cap E_i \cdot F} f(x)\]
for $y \in M$. This gives us a sequence of functions $f_i$ on $M$, but they are in general not even continuous.

Now choosing a functional $\tau \in (\ell^\infty)^\ast$ associated to a free ultrafilter on $\IN$, we define $p(f)(y) := \tau(f_i(y))$. Due to the \Folner condition on $(E_i)_i$ all discontinuities that the functions $f_i$ may have vanish in the limit under $\tau$, and we get a bounded, uniformly continuous function $p(f)$ on $M$.

It is clear that $p$ is a left inverse to $\pi^\ast$.
\end{proof}

\subsection{Interpretation via vector bundles}\label{sec:interpretation_uniform_k_theory}

We will show now that if $M$ is a manifold of bounded geometry then we have a description of the uniform $K$-theory of $M$ via vector bundles of bounded geometry.

Let us first quickly recall the definition of bounded geometry for manifolds and vector bundles and discuss some examples.

\begin{defn}\label{defn_bounded_geometry_manifolds}
We will say that a Riemannian manifold $M$ has \emph{bounded geometry}, if
\begin{itemize}
\item the curvature tensor and all its derivatives are bounded, i.e., $\| \nabla^k \Rm (x) \| < C_k$ for all $x \in M$ and $k \in \IN_0$, and
\item the injectivity radius is uniformly positive, i.e., $\injrad_M(x) > \varepsilon > 0$ for all points $x \in M$ and for a fixed $\varepsilon > 0$.
\end{itemize}
If $E \to M$ is a vector bundle with a metric and compatible connection, we say that \emph{$E$ has bounded geometry}, if the curvature tensor of $E$ and all its derivatives are bounded.
\end{defn}

\begin{examples}
There are plenty of examples of manifolds of bounded geometry. The most important ones are coverings of compact Riemannian manifolds equipped with the pull-back metric, homogeneous manifolds with an invariant metric, and leafs in a foliation of a compact Riemannian manifold (this is proved by Greene in \cite[lemma on page 91 and the paragraph thereafter]{greene}).

For vector bundles, the most important examples are of course again pull-back bundles of bundles over compact manifolds equipped with the pull-back metric and connection, and the tangent bundle of a manifold of bounded geometry.

Furthermore, if $E$ and $F$ are two vector bundles of bounded geometry, then the dual bundle $E^\ast$, the direct sum $E \oplus F$, the tensor product $E \otimes F$ (and so especially also the homomorphism bundle $\Hom(E, F) = F \otimes E^\ast$) and all exterior powers $\Lambda^l E$ are also of bounded geometry. If $E$ is defined over $M$ and $F$ over $N$, then their external tensor product\footnote{The fiber of $E \boxtimes F$ over the point $(x,y) \in M \times N$ is given by $E_x \otimes F_y$.} $E \boxtimes F$ over $M \times N$ is also of bounded geometry.
\end{examples}

Greene proved in \cite[Theorem 2']{greene} that there are no obstructions against admitting a metric of bounded geometry, i.e., every smooth manifold without boundary admits one. On manifolds of bounded geometry there is also no obstruction for a vector bundle to admit a metric and compatible connection of bounded geometry. The construction of the metric and the connection is done in a uniform covering of $M$ by normal coordinate charts and subordinate uniform partition of unity (we will discuss these things in a moment) and we have to use the local characterization of bounded geometry for vector bundles from Lemma \ref{lem:equiv_characterizations_bounded_geom_bundles}.

The first step in showing that uniform $K$-theory has an interpretation via vector bundles of bounded geometry is to show that the operator $K$-theory of $C_u(M)$ coincides with the operator $K$-theory of $C_b^\infty(M)$. This is established via the following two lemmas.

\begin{lem}\label{lem:C_b_infty_local}
Let $M$ be a manifold of bounded geometry.

Then $C_b^\infty(M)$ is a local $C^\ast$-algebra\footnote{That is to say, it and all matrix algebras over it are closed under holomorphic functional calculus and its completion is a $C^\ast$-algebra.}.
\end{lem}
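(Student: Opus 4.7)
The plan is to verify the two ingredients of being a local $C^\ast$-algebra separately: first, that the norm completion is a $C^\ast$-algebra, and second, that $M_n(C_b^\infty(M))$ is closed under holomorphic functional calculus in its completion for every $n$.

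The first point is immediate: any $f \in C_b^\infty(M)$ has uniformly bounded gradient, hence (by integrating along minimizing geodesics, using that bounded geometry makes $M$ a complete length space) is Lipschitz and in particular uniformly continuous. So $C_b^\infty(M)$ embeds as a unital $\ast$-subalgebra of the $C^\ast$-algebra $C_u(M)$, and its norm closure is automatically a $C^\ast$-algebra. Moreover, as a unital $C^\ast$-subalgebra inclusion $\overline{C_b^\infty(M)} \subseteq C_u(M)$, spectra computed in either algebra agree at every level $M_n$.

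For the second point, I would invoke the standard equivalence (Schweitzer, Bost) that closedness of a dense $\ast$-subalgebra under holomorphic functional calculus in every $M_n$ is equivalent to spectral invariance at every $M_n$. It therefore suffices to show: if $A \in M_n(C_b^\infty(M))$ is invertible in $M_n(C_u(M))$, then $A^{-1}$ again lies in $M_n(C_b^\infty(M))$. I would prove this using Cramer's rule, $(A^{-1})_{ij} = (-1)^{i+j} M_{ji}(A)/\det A$. The cofactors $M_{ji}(A)$ are polynomials in the entries of $A$, hence lie in $C_b^\infty(M)$ by Leibniz, since products of smooth functions with uniformly bounded covariant derivatives remain of the same form. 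For the denominator, invertibility of $A$ in $M_n(C_u(M))$ forces $\det A$ to be invertible in $C_u(M)$ with inverse $\det(A^{-1})$, so $|\det A|$ is bounded below uniformly by some $c>0$. Writing $1/\det A = h \circ \det A$ for $h(z)=1/z$, the Faà di Bruno formula expresses $\nabla^k(h \circ \det A)$ as a polynomial in the values $h^{(j)}(\det A) = \pm j!\,(\det A)^{-(j+1)}$, bounded thanks to the lower bound on $|\det A|$, and in the covariant derivatives $\nabla^\ell \det A$ for $\ell \le k$, bounded because $\det A \in C_b^\infty(M)$. Hence $1/\det A \in C_b^\infty(M)$, and therefore $(A^{-1})_{ij} \in C_b^\infty(M)$ as required.

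The only mildly delicate point is making Faà di Bruno rigorous on the manifold rather than in $\mathbb{R}^n$. This is precisely where the bounded geometry hypothesis is used: in any uniform family of normal coordinate charts, all Christoffel symbols and their derivatives are uniformly bounded, so uniform bounds on ordinary partial derivatives translate into uniform bounds on the covariant derivatives $\nabla^k$ and vice versa. This reduces the estimate on $\nabla^k(h \circ \det A)$ to the familiar Euclidean Faà di Bruno expansion, completing the proof.
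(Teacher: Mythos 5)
Your proof is correct and follows the same overall strategy as the paper's (reduce to spectral invariance via Schweitzer), but you supply detail the paper elides and handle the matrix step differently. The paper, after noting that the sup-norm completion is a $C^\ast$-subalgebra of $C_b(M)$, invokes Schweitzer's Corollary~2.3 to lift closedness under holomorphic functional calculus from $n=1$ to all matrix levels, and then Schweitzer's Lemma~1.2 to reduce the scalar case to spectral invariance; at that point it merely remarks this is ``easily seen'' and stops. You instead reduce matrix spectral invariance to scalar spectral invariance by hand via Cramer's rule (a concrete, equally valid alternative to citing Corollary~2.3) and then actually prove scalar spectral invariance via Fa\`{a} di Bruno, using the uniform lower bound $|\det A|\ge c>0$ forced by invertibility in $C_u(M)$. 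Two small remarks. First, your argument only needs $\overline{C_b^\infty(M)}\subseteq C_u(M)$ (not equality, which the paper proves only later), and you correctly use the $C^\ast$-subalgebra spectral permanence to pass to $C_u(M)$ --- good. Second, the Fa\`{a} di Bruno step can be done intrinsically: since $\nabla$ of a scalar is just $d$, the chain rule for covariant derivatives of $h\circ g$ with $h$ holomorphic holds without passing to normal coordinates at all, so the role of bounded geometry here is really just to ensure $M$ is complete (hence bounded gradient implies Lipschitz, giving $C_b^\infty(M)\subset C_u(M)$); the coordinate-chart translation you mention is a valid but dispensable detour.
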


\begin{proof}
Since $C_b^\infty(M)$ is a $^\ast$-subalgebra of the $C^\ast$-algebra $C_b(M)$ of bounded continuous functions on $M$, then norm completion of $C_b^\infty(M)$, i.e., its closure in $C_b(M)$, is surely a $C^\ast$-algebra.

So we have to show that $C_b^\infty(M)$ and all matrix algebras over it are closed under holomorphic functional calculus. Since $C_b^\infty(M)$ is naturally a \Frechet algebra with a \Frechet topology which is finer than the sup-norm topology, by \cite[Corollary 2.3]{schweitzer}\footnote{The corollary states that under the condition that the topology of a \Frechet algebra $A$ is finer than the sup-norm topology we may conclude that if $A$ is closed under holomorphic functional calculus, then this holds also for all matrix algebras over $A$.} it remains to show that $C_b^\infty(M)$ itself is closed under holomorphic functional calculus.

But that $C_b^\infty(M)$ is closed under holomorphic functional calculus is easily seen using \cite[Lemma 1.2]{schweitzer}, which states that a unital \Frechet algebra $A$ with a topology finer than the sup-norm topology is closed under functional calculus if and only if the inverse $a^{-1} \in \overline{A}$ of any invertible element $a \in A$ actually lies in $A$.
\end{proof}

For the proof of Lemma~\ref{lem:norm_completion_C_b_infty} we need the next Lemma~\ref{lem:nice_coverings_partitions_of_unity} about manifolds of bounded geometry. A proof of it may be found in, e.g., \cite[Appendix A1.1]{shubin} (Shubin addresses the first statement about the existence of the covers to the paper \cite{gromov_curvature_diameter_betti_numbers} of Gromov).

\begin{lem}\label{lem:nice_coverings_partitions_of_unity}
Let $M$ be a manifold of bounded geometry.

For every $0 < \varepsilon < \tfrac{\injrad_M}{3}$ there exists a covering of $M$ by normal coordinate charts of radius $\varepsilon$ with the properties that the midpoints of the charts form a uniformly discrete set in $M$ and that the coordinate charts with double radius $2\varepsilon$ form a uniformly locally finite cover of $M$.

Furthermore, there is a subordinate partition of unity $1 = \sum_i \varphi_i$ with $\supp \varphi_i \subset B_{2\varepsilon}(x_i)$, such that in normal coordinates the functions $\varphi_i$ and all their derivatives are uniformly bounded (i.e., the bounds do not depend on $i$).\qed
\end{lem}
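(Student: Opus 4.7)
The plan is to combine a Zorn-type maximality argument with the uniform control on normal coordinate charts that bounded geometry provides. First I would fix $0 < \varepsilon < \injrad_M/3$ and apply Zorn's lemma to the family of subsets $S \subset M$ satisfying $d(x,y) \ge \varepsilon$ for all distinct $x, y \in S$, ordered by inclusion (the existence of upper bounds for chains is immediate). A maximal element $\{x_i\}$ is uniformly discrete with separation $\varepsilon$, and maximality forces $M = \bigcup_i B_\varepsilon(x_i)$, for otherwise we could adjoin another point. Since $\varepsilon < \injrad_M$, each $B_\varepsilon(x_i)$ is the image of a normal coordinate chart at $x_i$.

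Next I would verify that the double-radius cover $\{B_{2\varepsilon}(x_i)\}$ is uniformly locally finite. For any $y \in M$ the points $x_i$ with $y \in B_{2\varepsilon}(x_i)$ all lie in $B_{2\varepsilon}(y)$ and are pairwise at distance $\ge \varepsilon$, so the disjoint balls $B_{\varepsilon/2}(x_i)$ lie inside $B_{5\varepsilon/2}(y)$. Bounded geometry (via Bishop--Gromov-type comparison using the uniform two-sided curvature bounds and the uniform lower injectivity radius bound) gives uniform upper bounds $V^+(R)$ and uniform lower bounds $V^-(R) > 0$ on $\vol B_R(z)$ for every $z \in M$ and $R \le \injrad_M$, so the number of such $x_i$ is bounded by $V^+(5\varepsilon/2)/V^-(\varepsilon/2)$, independent of $y$.

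For the partition of unity, I would first fix a single smooth bump function $\chi\colon [0,\infty) \to [0,1]$ with $\chi \equiv 1$ on $[0,\varepsilon]$ and $\chi \equiv 0$ on $[2\varepsilon, \infty)$, and set $\psi_i(z) := \chi(d(x_i, z))$. Expressed in normal coordinates at $x_i$, the function $\psi_i$ is just $\chi(|v|)$, which has bounded derivatives independent of $i$. To transfer this uniformity to normal coordinates at a different $x_j$ with $B_{2\varepsilon}(x_i) \cap B_{2\varepsilon}(x_j) \ne \emptyset$, the crucial input is that on manifolds of bounded geometry, the transition functions between any two overlapping normal coordinate charts of radius $2\varepsilon$ have all derivatives bounded by constants that depend only on the geometry bounds and $\varepsilon$ (this is the standard synchronous-frame estimate; see e.g.\ the discussion in the appendix of Shubin cited just above). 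Therefore each $\psi_j$ has all derivatives uniformly bounded when expressed in the normal coordinates at $x_i$, and the sum $\Psi := \sum_j \psi_j$ is a locally finite sum (by the previous paragraph, at most $N := V^+(5\varepsilon/2)/V^-(\varepsilon/2)$ nonzero terms at any point) with $\Psi \ge 1$ everywhere. Defining $\varphi_i := \psi_i / \Psi$ yields a partition of unity subordinate to $\{B_{2\varepsilon}(x_i)\}$, and the Leibniz/quotient rule together with the uniform lower bound $\Psi \ge 1$ and uniform upper bounds on the derivatives of $\psi_j$ gives uniform bounds on all derivatives of $\varphi_i$ in any normal chart.

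The main obstacle, and the one place where bounded geometry is genuinely used beyond mere volume comparison, is the uniform control on derivatives of transition maps between overlapping normal coordinate charts; once that is in hand, everything else is soft. The existence of such a uniform bound is classical and is precisely the content of the lemma in Shubin's appendix that the statement refers to, so I would invoke it rather than re-derive it.
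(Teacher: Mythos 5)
Your proof is correct. The paper does not give a proof of this lemma at all---it simply cites Shubin's appendix A1.1 (and Gromov's paper for the covering statement) and marks the lemma \textsc{qed}---and the argument you supply (a maximal $\varepsilon$-separated set via Zorn's lemma, volume comparison from two-sided curvature bounds and a lower injectivity-radius bound for uniform local finiteness, and a fixed radial cutoff $\chi(d(x_i,\cdot))$ normalized by its sum with derivative control transferred between overlapping charts via the standard uniform bound on transition maps in normal coordinates) is exactly the standard proof found in that reference.
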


\begin{lem}\label{lem:norm_completion_C_b_infty}
Let $M$ be a manifold of bounded geometry.

Then the sup-norm completion of $C_b^\infty(M)$ is the $C^\ast$-algebra $C_u(M)$ of bounded, uniformly continuous functions on $M$.
\end{lem}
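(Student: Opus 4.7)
My plan is to prove two containments: first that $C_b^\infty(M) \subseteq C_u(M)$, and then that $C_b^\infty(M)$ is sup-norm dense in $C_u(M)$. Combined with the fact (already used in Lemma~\ref{lem:C_b_infty_local}) that the sup-norm completion of $C_b^\infty(M)$ is a $C^\ast$-subalgebra of $C_b(M)$, this will identify that completion with $C_u(M)$.

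For the first containment, I would argue that any $f \in C_b^\infty(M)$ is Lipschitz with constant $\|df\|_\infty$, since the bounded geometry of $M$ guarantees that any two points at distance less than the injectivity radius are joined by a minimizing geodesic along which one can integrate $df$; points farther apart can be handled trivially using $\|f\|_\infty$. Lipschitz functions are uniformly continuous, so $f \in C_u(M)$.

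The main work is the density statement. Given $f \in C_u(M)$ and $\varepsilon > 0$, choose $\delta > 0$ from uniform continuity so that $d(x,y) < \delta$ implies $|f(x)-f(y)| < \varepsilon$. Apply Lemma~\ref{lem:nice_coverings_partitions_of_unity} with radius parameter small enough that $4\varepsilon' < \min\{\delta, \injrad_M\}$, obtaining a uniformly locally finite cover by normal charts $B_{2\varepsilon'}(x_i)$ and a subordinate partition of unity $1 = \sum_i \varphi_i$ whose derivatives (in normal coordinates) are bounded uniformly in~$i$. Define
\[ f_\varepsilon := \sum_i f(x_i)\, \varphi_i. \]
Using $\sum_i \varphi_i = 1$ and the fact that $\varphi_i(x) \neq 0$ forces $d(x,x_i) < 2\varepsilon' < \delta$, a pointwise estimate gives $\|f - f_\varepsilon\|_\infty \le \varepsilon$. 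It remains to verify $f_\varepsilon \in C_b^\infty(M)$: boundedness is clear from $|f(x_i)| \le \|f\|_\infty$, and for the derivatives, since the cover is uniformly locally finite the sum defining any $\nabla^k f_\varepsilon$ at a point $x$ has a uniformly bounded number of nonzero terms, each of which is $|f(x_i)|$ times a derivative of $\varphi_i$ that is uniformly bounded in normal coordinates.

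The main obstacle is ensuring that the sup-bounds on covariant derivatives of $f_\varepsilon$ are genuinely independent of the point $x$. This is exactly what the bounded geometry and the properties of the partition of unity in Lemma~\ref{lem:nice_coverings_partitions_of_unity} are designed for: since the transition between normal coordinates at neighbouring $x_i$ is controlled uniformly (derivatives of the exponential maps being uniformly bounded because of bounded curvature and positive injectivity radius), the uniform bounds on $\varphi_i$ in each local normal chart translate into uniform bounds on $\nabla^k \varphi_i$ as tensors on~$M$. With this in hand the proof closes immediately.
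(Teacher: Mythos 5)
Your proof is correct, but it takes a genuinely different route from the paper's. The paper writes $f = \sum_i \varphi_i f$, views each $\varphi_i f$ (via the chart) as a compactly supported function on $\IR^n$, and then smooths by convolving with a single mollifier $\psi_\varepsilon$; the key point becomes the uniform equicontinuity of the family $\{\varphi_i f\}_i$, which makes $\varphi_i f \ast \psi_\varepsilon \to \varphi_i f$ uniformly \emph{in $i$}, and the derivative bounds come from $D^\alpha(\varphi_i f \ast \psi_\varepsilon) = \varphi_i f \ast D^\alpha\psi_\varepsilon$. You instead approximate $f$ by the ``sampled'' function $f_\varepsilon = \sum_i f(x_i)\varphi_i$, pushing all the smoothness into the fixed partition of unity. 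Your route is arguably simpler: the $C^\infty_b$ bound on $f_\varepsilon$ is immediate from $|f(x_i)| \le \|f\|_\infty$ together with uniform local finiteness and the uniform derivative bounds on $\varphi_i$ guaranteed by Lemma~\ref{lem:nice_coverings_partitions_of_unity}, with no convolution estimates needed, and the error bound uses only the modulus of continuity of $f$ once $2\varepsilon' < \delta$. One small remark: your Lipschitz argument for the easy containment $C^\infty_b(M) \subseteq C_u(M)$ does not actually need bounded geometry or the injectivity radius at all --- on any Riemannian manifold, $|f(x)-f(y)| \le \|df\|_\infty\, d(x,y)$ follows directly from integrating $df$ along nearly-minimizing paths and taking the infimum over path lengths.
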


\begin{proof}
We surely have $\overline{C_b^\infty(M)} \subset C_u(M)$. To show the converse inclusion, we have to approximate a bounded, uniformly continuous function by a smooth one with bounded derivatives. This can be done by choosing a nice cover of $M$ with corresponding nice subordinate partitions of unity via Lemma \ref{lem:nice_coverings_partitions_of_unity} and then apply in every coordinate chart the same mollifier to the uniformly continous function.

Let us elaborate a bit more on the last sentence of the above paragraph: after choosing the nice cover and cutting a function $f \in C_u(M)$ with the subordinate partition of unity $\{\varphi_i\}$, we have transported the problem to Euclidean space $\IR^n$ and our family of functions $\varphi_i f$ is uniformly equicontinuous (this is due to the uniform continuity of $f$ and will be crucially important at the end of this proof). Now let $\psi$ be a mollifier on $\IR^n$, i.e., a smooth function with $\psi \ge 0$, $\supp \psi \subset B_1(0)$, $\int_{\IR^n} \psi d\lambda = 1$ and $\psi_\varepsilon := \varepsilon^{-n} \psi(\largecdot / \varepsilon) \stackrel{\varepsilon \to 0}\longrightarrow \delta_0$. Since convolution satisfies $D^\alpha (\varphi_i f \ast \psi_\varepsilon) = \varphi_i f \ast D^\alpha \psi_\varepsilon$, where $D^\alpha$ is a directional derivative on $\IR^n$ in the directions of the multi-index $\alpha$ and of order $|\alpha|$, we conclude that every mollified function $\varphi_i f \ast \psi_\varepsilon$ is smooth with bounded derivatives. Furthermore, we know $\| \varphi_i f \ast D^\alpha \psi_\epsilon \|_\infty \le \| \varphi_i f \|_\infty \cdot \| D^\alpha \psi_\varepsilon \|_1$ from which we conclude that the bounds on the derivatives of $\varphi_i f \ast \psi_\varepsilon$ are uniform in $i$, i.e., if we glue the functions $\varphi_i f \ast \psi_\epsilon$ together to a function on the manifold $M$ (note that the functions $\varphi_i f \ast \psi_\epsilon$ are supported in our chosen nice cover since convolution with $\psi_\varepsilon$ enlarges the support at most by $\varepsilon$), we get a function $f_\varepsilon \in C_b^\infty(M)$. It remains to show that $f_\varepsilon$ converges to $f$ in sup-norm, which is equivalent to the statement that $\varphi_i f \ast \psi_\varepsilon$ converges to $\varphi_i f$ in sup-norm and uniformly in the index $i$. But we know that
\[ \big| (\varphi_i f \ast \psi_\epsilon) (x) - (\varphi_i f) (x) \big| \le \sup_{\substack{x \in \supp \varphi_i f\\y \in B_\varepsilon(0)}} \big| (\varphi_i f) (x - y) - (\varphi_i f) (x) \big| \]
from which the claim follows since the family of functions $\varphi_i f$ is uniformly equicontinuous (recall that this followed from the uniform continuity of $f$ and this here is actually the only point in this proof where we need that property of $f$).
\end{proof}

Since $C_b^\infty(M)$ is an $m$-convex \Frechet algebra\footnote{That is to say, a \Frechet algebra such that its topology is given by a countable family of submultiplicative seminorms.}, we can also use the $K$-theory for $m$-convex \Frechet algebras as developed by Phillips in \cite{phillips} to define the $K$-theory groups of $C_b^\infty(M)$. But this produces the same groups as the operator $K$-theory, since $C_b^\infty(M)$ is an $m$-convex \Frechet algebra with a finer topology than the norm topology and therefore its $K$-theory for $m$-convex \Frechet algebras coincides with its operator $K$-theory by \cite[Corollary 7.9]{phillips}.

We summarize this observations in the following lemma:

\begin{lem}\label{lem:equivalent_defns_uniform_k_theory}
Let $M$ be a manifold of bounded geometry.

Then the operator $K$-theory of $C_u(M)$, the operator $K$-theory of $C_b^\infty(M)$ and Phillips $K$-theory for $m$-convex \Frechet algebras of $C_b^\infty(M)$ are all pairwise isomorphic.\qed
\end{lem}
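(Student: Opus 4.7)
The plan is to assemble the three identifications from the preceding results in this subsection and from the standard machinery for holomorphically closed subalgebras. The claim is essentially a bookkeeping exercise, so the main task is to organize the inputs cleanly rather than to introduce new ideas.

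First, I would identify the operator $K$-theory of $C_b^\infty(M)$ with that of $C_u(M)$. By Lemma~\ref{lem:C_b_infty_local}, the algebra $C_b^\infty(M)$ together with all its matrix algebras is closed under holomorphic functional calculus inside its sup-norm completion, and by Lemma~\ref{lem:norm_completion_C_b_infty} this completion is exactly $C_u(M)$. The standard density theorem for holomorphically closed subalgebras then gives that the inclusion $C_b^\infty(M) \hookrightarrow C_u(M)$ induces an isomorphism on operator $K$-theory, since projections and invertibles can be lifted from the completion using holomorphic functional calculus applied to self-adjoint, respectively invertible, approximants. This identifies the first two groups.

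Next, to compare the operator $K$-theory of $C_b^\infty(M)$ with Phillips' $K$-theory for $m$-convex Fréchet algebras, I would verify the hypotheses of \cite[Corollary~7.9]{phillips}. Concretely, $C_b^\infty(M)$ carries its natural Fréchet topology defined by the seminorms $\|f\|_{C^k} := \sup_{x \in M}\sum_{|\alpha| \le k} |\nabla^\alpha f(x)|$ (measured in normal coordinates on a uniform cover as in Lemma~\ref{lem:nice_coverings_partitions_of_unity}); each such seminorm is submultiplicative up to the usual Leibniz constants, so after a harmless rescaling the topology is generated by submultiplicative seminorms and $C_b^\infty(M)$ is $m$-convex. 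Moreover this Fréchet topology is manifestly finer than the sup-norm topology, since $\|f\|_\infty = \|f\|_{C^0}$ is one of the defining seminorms. Together with the fact, already used above, that $C_b^\infty(M)$ is holomorphically closed in its $C^*$-completion, the cited corollary of Phillips directly yields the identification of the Phillips $K$-theory with the operator $K$-theory of $C_b^\infty(M)$.

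There is no genuine obstacle to overcome here: once Lemmas \ref{lem:C_b_infty_local} and \ref{lem:norm_completion_C_b_infty} are in hand, the only subtlety is checking that $C_b^\infty(M)$ really is $m$-convex for the topology used in defining its Fréchet structure, which on a manifold of bounded geometry is immediate from the uniform coordinate charts and bounded partitions of unity provided by Lemma~\ref{lem:nice_coverings_partitions_of_unity}. Chaining the two isomorphisms produces the asserted pairwise identification of all three $K$-theory groups.
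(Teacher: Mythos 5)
Your argument is correct and coincides with the paper's route: identify $K_*(C_b^\infty(M))$ with $K_*(C_u(M))$ via Lemmas~\ref{lem:C_b_infty_local} and~\ref{lem:norm_completion_C_b_infty} together with the standard density theorem for holomorphically closed subalgebras, then invoke Phillips' \cite[Corollary~7.9]{phillips} using that $C_b^\infty(M)$ is $m$-convex with Fréchet topology finer than the sup-norm topology. The only thing you add beyond the paper's exposition is the explicit check that the $C^k$-seminorms can be rescaled to be submultiplicative (the paper simply asserts $m$-convexity), which is a harmless and correct filling-in of detail; note also that your extra appeal to holomorphic closedness in the second step is not needed for Phillips' corollary, but it does no harm.
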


So we have shown $K^\ast_u(M) \cong K_{-\ast}(C_b^\infty(M))$. In order to conclude the description via vector bundles of bounded geometry, we will need to establish the correspondence between vector bundles of bounded geometry and idempotent matrices with entries in $C_b^\infty(M)$. This will be done in the next subsections.

\subsubsection*{Isomorphism classes and complements}

Let $M$ be a manifold of bounded geometry and $E$ and $F$ two complex vector bundles equipped with Hermitian metrics and compatible connections.

\begin{defn}[$C^\infty$-boundedness / $C_b^\infty$-isomorphy of vector bundle homomorphisms]\label{defn:C_infty_bounded}
We will call a vector bundle homomorphism $\varphi\colon E \to F$ \emph{$C^\infty$-bounded}, if with respect to synchronous framings of $E$ and $F$ the matrix entries of $\varphi$ are bounded, as are all their derivatives, and these bounds do not depend on the chosen base points for the framings or the synchronous framings themself.

$E$ and $F$ will be called \emph{$C_b^\infty$-isomorphic}, if there exists an isomorphism $\varphi\colon E \to F$ such that both $\varphi$ and $\varphi^{-1}$ are $C^\infty$-bounded. In that case we will call the map $\varphi$ a $C_b^\infty$-isomorphism. Often we will write $E \cong F$ when no confusion can arise with mistaking it with algebraic isomorphy.
\end{defn}

Using the characterization of bounded geometry via the matrix transition functions from the next Lemma \ref{lem:equiv_characterizations_bounded_geom_bundles}, we immediately see that if $E$ and $F$ are $C_b^\infty$-isomorphic, than $E$ is of bounded geometry if and only if $F$ is. The equivalence of the first two bullet points in the next lemma is stated in, e.g., \cite[Proposition 2.5]{roe_index_1}. Concerning the third bullet point, the author could not find any citable reference in the literature (though Shubin uses in \cite{shubin} this as the actual definition).

\begin{lem}\label{lem:equiv_characterizations_bounded_geom_bundles}
Let $M$ be a manifold of bounded geometry and $E \to M$ a vector bundle. Then the following are equivalent:

\begin{itemize}
\item $E$ has bounded geometry,
\item the Christoffel symbols $\Gamma_{i \alpha}^\beta(y)$ of $E$ with respect to synchronous framings (considered as functions on the domain $B$ of normal coordinates at all points) are bounded, as are all their derivatives, and this bounds are independent of $x \in M$, $y \in \exp_x(B)$ and $i, \alpha, \beta$, and
\item the matrix transition functions between overlapping synchronous framings are uniformly bounded, as are all their derivatives (i.e., the bounds are the same for all transition functions).\qed
\end{itemize}
\end{lem}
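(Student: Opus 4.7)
The plan is to establish a cycle of implications by combining the radial gauge formula (linking curvature bounds to connection-form bounds) with the transformation law for connection forms under change of frame (linking connection-form bounds to transition-function bounds).

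For (1) $\Leftrightarrow$ (2), work over a normal coordinate patch at $x \in M$ with a synchronous framing of $E$. There the curvature reads $R^{(x)} = d\omega^{(x)} + \omega^{(x)} \wedge \omega^{(x)}$, while the radial (Poincar\'e-type) gauge condition inverts this to the explicit formula
\[\omega^{(x)}_i(y) \;=\; \int_0^1 t\, y^j R^{(x)}_{ji}(ty)\, dt.\]
For (1) $\Rightarrow$ (2), translate the invariant bounds $|\nabla^k R| \le C_k$ into bounds on partial derivatives of the components $R^{(x)}_{ji}$ in normal coordinates; this is an induction in $k$ that uses the bounded geometry of $M$ to absorb its Christoffel symbols, and the Christoffel symbols of $E$ only up to order $k-1$ (controlled by the previous step of the induction). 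Differentiating under the integral then bounds $\omega^{(x)}$ in all orders. For (2) $\Rightarrow$ (1), substitute the assumed bounds on $\omega^{(x)}$ into $R^{(x)} = d\omega^{(x)} + \omega^{(x)} \wedge \omega^{(x)}$, differentiate, and reverse the previous translation to recover $|\nabla^k R| \le C_k$.

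For (2) $\Leftrightarrow$ (3), exploit the transformation rule: if $e^{(x)} = e^{(x')} g$ on the overlap of two synchronous framings, then
\[\omega^{(x)} \;=\; g^{-1}\,\omega^{(x')}\,g + g^{-1}\,dg.\]
Since synchronous framings of a Hermitian bundle are orthonormal, $g$ is unitary, so $|g|$ and $|g^{-1}|$ are automatically bounded. Thus (2) $\Rightarrow$ (3) by solving $dg = g\,\omega^{(x)} - \omega^{(x')} g$ for $dg$; higher-order bounds on $D^{\alpha} g$ follow by iterated differentiation. For (3) $\Rightarrow$ (2), evaluate the transformation rule at $z = x'$, where $\omega^{(x')}(x') = 0$, to obtain $\omega^{(x)}(x') = g^{-1}(x')\,dg(x')$, giving a pointwise bound on $\omega^{(x)}$ over $B_\varepsilon(x)$. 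To recover derivatives of $\omega^{(x)}$ at $x'$, feed this pointwise bound together with the Maurer--Cartan-type identity satisfied by $g^{-1}dg$ back into $R^{(x)} = d\omega^{(x)} + \omega^{(x)} \wedge \omega^{(x)}$; this forces (1), and then (1) $\Rightarrow$ (2) gives the full bound on all derivatives.

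The principal obstacle is the \emph{coordinate-invariant versus coordinate-expressed} bookkeeping: (1) speaks of the invariant tensor norm $|\nabla^k R|$, whereas (2) and (3) are formulated through partial derivatives in geodesic normal coordinates and synchronous framings. The passage between these two kinds of derivatives is an induction on the order of differentiation in which each new partial derivative of $\omega^{(x)}$ enters linearly, so it can always be solved for once the lower-order Christoffel symbols of $E$ and all Christoffel symbols of $M$ (bounded by the assumed bounded geometry of the base) are already under control. Once this book-keeping is set up carefully, all three implications become essentially algebraic manipulations of the formulas above.
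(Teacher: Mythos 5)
The paper does not actually prove this lemma: the equivalence of the first two bullets is cited to Roe, and the author explicitly notes that no reference was found for the third, so the only thing I can do is assess your argument on its own. Your treatments of $(1)\Leftrightarrow(2)$ (radial-gauge inversion of $R = d\omega + \omega\wedge\omega$ together with an order-by-order induction trading covariant for partial derivatives) and of $(2)\Rightarrow(3)$ (iterated differentiation of $dg = g\,\omega^{(x)} - \omega^{(x')}g$ using unitarity of $g$) are correct in outline and should go through once the bookkeeping is carried out.

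The gap is in $(3)\Rightarrow(2)$. From a bound on $dg$ you correctly obtain the pointwise bound $\omega^{(x)}(x') = g^{-1}(x')\,dg(x')$, but the step ``feed this pointwise bound together with the Maurer--Cartan-type identity satisfied by $g^{-1}dg$ back into $R^{(x)} = d\omega^{(x)} + \omega^{(x)}\wedge\omega^{(x)}$; this forces (1)'' is unjustified. The Maurer--Cartan identity $d\theta = -\theta\wedge\theta$ holds for $\theta := g^{-1}dg$, which coincides with $\omega^{(x)}$ \emph{only at the single point} $y = x'$; it does not compute $d\omega^{(x)}(x')$, and literally substituting $\theta$ for $\omega^{(x)}$ into the curvature formula would give the false conclusion $R = 0$. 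If instead one writes out everything available at $y = x'$, namely $\partial_n g(x') = g(x')\,\omega^{(x)}_n(x')$, the radial-gauge identity $\partial_m\omega^{(x')}_n(x') = \tfrac12 R^{(x')}_{mn}(x')$, the differentiated transformation law $\partial_m\partial_n g(x') = \partial_m g(x')\,\omega^{(x)}_n(x') + g(x')\,\partial_m\omega^{(x)}_n(x') - \tfrac12 R^{(x')}_{mn}(x')\,g(x')$, and gauge covariance $R^{(x)}(x') = g^{-1}(x')R^{(x')}(x')g(x')$, then eliminating $\partial_m\omega^{(x)}_n(x')$ yields only $0 = 0$: the transformation law and the Maurer--Cartan identity are algebraically consistent with \emph{any} curvature at $x'$, and the same degeneracy persists at every derivative order. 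So the implication does not follow from the information extracted at $y = x'$. To close it one must extract curvature from the transition functions non-tautologically, e.g.\ by evaluating $g^{-1}dg$ at interior points $y$ of the segment from $x$ to $x'$: by the radial-gauge integral formula, $\omega^{(x)}(y) - g^{-1}\omega^{(x')}(y)g$ is a weighted line integral of $R$ over a segment of definite length comparable to $|x - x'|$, so bounds on $dg$ control spatial averages of $R$, and the bounds on higher $D^\alpha g$ then allow a bootstrap from averages to pointwise bounds on $R$ and its derivatives; alternatively, one can interpret $g_{x'x}(y)$ as the holonomy around the geodesic triangle $x, x', y$ and invoke the relation between holonomy and the surface integral of $R$.
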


It is clear that $C_b^\infty$-isomorphy is compatible with direct sums and tensor products, i.e., if $E \cong E^\prime$ and $F \cong F^\prime$ then $E \oplus F \cong E^\prime \oplus F^\prime$ and $E \otimes F \cong E^\prime \otimes F^\prime$.

We will now give a useful global characterization of $C_b^\infty$-isomorphisms if the vector bundles have bounded geometry:

\begin{lem}\label{lem:C_b_infty_Iso_equivalent}
Let $E$ and $F$ have bounded geometry and let $\varphi\colon E \to F$ be an isomorphism. Then $\varphi$ is a $C_b^\infty$-isomorphism if and only if
\begin{itemize}
\item $\varphi$ and $\varphi^{-1}$ are bounded, i.e., $\|\varphi(v)\| \le C \cdot \|v\|$ for all $v \in E$ and a fixed $C > 0$ and analogously for $\varphi^{-1}$, and
\item $\nabla^E - \varphi^\ast \nabla^F$ is bounded and also all its covariant derivatives.
\end{itemize}
\end{lem}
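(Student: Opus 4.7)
The plan is to exploit the identity
\[\nabla^{\Hom(E,F)} \varphi = -\varphi \circ \tau, \qquad \text{where } \tau := \nabla^E - \varphi^\ast \nabla^F \in \Omega^1(\End E),\]
which follows by unwinding $(\varphi^\ast \nabla^F)(s) = \varphi^{-1}(\nabla^F \varphi(s))$. Iterating via the Leibniz rule shows that $\nabla^k \varphi$ is a universal polynomial expression in $\varphi$ and in $\tau, \nabla\tau, \ldots, \nabla^{k-1}\tau$ (for instance $\nabla^2 \varphi = \varphi \circ \tau \circ \tau - \varphi \circ \nabla\tau$), so uniform boundedness of $\varphi$ together with the second bullet point will imply uniform boundedness of each $\nabla^k \varphi$ as a section of $(T^\ast M)^{\otimes k} \otimes \Hom(E,F)$.

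For the forward direction ($\Rightarrow$) I would first note that synchronous framings are orthonormal, since radial parallel transport preserves the Hermitian metrics, so uniform bounds on the matrix entries of $\varphi$ and $\varphi^{-1}$ in all such framings immediately yield uniform bounds on their pointwise operator norms, giving the first bullet point. For the second, compute the components of $\tau$ in synchronous framings: they are polynomial in the entries of $\varphi$, of $\varphi^{-1}$, their partial derivatives, and the Christoffel symbols $\Gamma^E, \Gamma^F$, all of which are uniformly bounded with all derivatives by hypothesis and by Lemma~\ref{lem:equiv_characterizations_bounded_geom_bundles}. The same pattern of expressions, iterated, yields the bounds for all covariant derivatives of $\tau$.

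For the backward direction ($\Leftarrow$) the identity above gives uniform bounds on $\nabla^k \varphi$ for every $k \geq 0$. The remaining step is to pass from covariant to partial derivatives of matrix entries in synchronous framings via
\[\partial_i \varphi^\beta_\alpha = (\nabla \varphi)^\beta_{i\alpha} + \Gamma^{E,\gamma}_{i\alpha} \varphi^\beta_\gamma - \Gamma^{F,\beta}_{i\delta} \varphi^\delta_\alpha,\]
and its higher-order analogues, which express every partial derivative of $\varphi^\beta_\alpha$ as a polynomial in components of $\nabla^j \varphi$ for $j$ up to the order and in partial derivatives of the Christoffel symbols. By Lemma~\ref{lem:equiv_characterizations_bounded_geom_bundles} all the latter are uniformly bounded, and so the inductive bounds on $\nabla^j \varphi$ give uniform bounds on all partial derivatives of $\varphi^\beta_\alpha$. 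Applied to $\varphi^{-1}$, which satisfies the same hypotheses because $\nabla^F - (\varphi^{-1})^\ast \nabla^E$ is the conjugate of $-\tau$ by $\varphi$ and $\varphi^{-1}$ (both bounded with all covariant derivatives), the same argument yields $C^\infty$-boundedness of $\varphi^{-1}$.

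The main obstacle is the bookkeeping for the two inductive identities, namely $\nabla^k \varphi$ in terms of $\varphi, \tau, \ldots, \nabla^{k-1}\tau$, and partial derivatives of matrix entries in terms of covariant derivatives and Christoffel symbols. The essential point is that only products and sums of quantities already known to be uniformly bounded appear at each stage, so no sharp estimates are needed—only a careful inductive setup to confirm that the polynomial structure closes on the classes of bounded quantities.
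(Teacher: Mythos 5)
Your proof is correct in substance and takes a slightly more invariant route than the paper, but it has one small circularity you should tidy up.

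The paper works entirely in normal coordinates and synchronous framings: it derives the component identity
\[(\nabla^E - \varphi^\ast \nabla^F)_X e = \Gamma^E(X) \cdot e - \varphi^{-1} \cdot X(\varphi) \cdot e - \varphi^{-1} \cdot \Gamma^F(X) \cdot \varphi \cdot e,\]
solves this algebraically for $X(\varphi)$ in the backward direction, and bootstraps bounds on the successive partial derivatives of the matrix entries of $\varphi$ directly; for $\varphi^{-1}$ it invokes Cramer's rule once $\varphi$ is known to be $C^\infty$-bounded. Your version lifts this to the invariant identity $\nabla^{\Hom(E,F)}\varphi = -\varphi\circ\tau$, iterates Leibniz to control $\nabla^k\varphi$ in $\Hom(E,F)$-norm, and then translates to matrix entries through the Christoffel symbols. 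That is genuinely a cleaner bookkeeping scheme — the universal polynomial structure is explicit — and your observation that synchronous framings are orthonormal (so entry bounds give operator-norm bounds) is a useful clarification that the paper leaves implicit. Both routes hinge on the same identity and the same use of Lemma~\ref{lem:equiv_characterizations_bounded_geom_bundles}, so the difference is one of packaging rather than content.

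The one gap is in your handling of $\varphi^{-1}$. You say you apply the argument to $\varphi^{-1}$ because $\tau' := \nabla^F - (\varphi^{-1})^\ast\nabla^E = -\varphi\,\tau\,\varphi^{-1}$ is a conjugate of $-\tau$ by $\varphi$ and $\varphi^{-1}$, ``both bounded with all covariant derivatives''; but the covariant derivatives of $\varphi^{-1}$ are exactly what you have not yet bounded at that point in the argument. Expanding $\nabla\tau'$ produces a $\varphi\,\tau\,(\nabla\varphi^{-1})$ term, so as written the step assumes its conclusion. This is easily repaired: differentiate $\varphi^{-1}\varphi=\id$ to get $\nabla\varphi^{-1} = -\varphi^{-1}(\nabla\varphi)\varphi^{-1}$ (equivalently $\nabla\varphi^{-1} = \tau\circ\varphi^{-1}$), which together with the already-established bounds on $\varphi^{-1}$, $\nabla^k\varphi$, and $\nabla^k\tau$ gives bounds on $\nabla^k\varphi^{-1}$ by induction; then pass to partial derivatives exactly as for $\varphi$. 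The paper sidesteps this entirely by using Cramer's rule on the matrix entries, which is shorter once $\varphi$ is $C^\infty$-bounded and $\det\varphi$ is bounded away from zero (the latter following from the operator-norm bound on $\varphi^{-1}$); you may prefer that.
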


\begin{proof}
For a point $p \in M$ let $B \subset M$ be a geodesic ball centered at $p$, $\{ x_i \}$ the corresponding  normal coordinates of $B$, and let $\{ E_\alpha(y) \}$, $y \in B$, be a framing for $E$. Then we may write every vector field $X$ on $B$ as $X = X^i \frac{\partial}{\partial x_i} = (X^1 , \ldots, X^n)^T$ and every section $e$ of $E$ as $e = e^\alpha E_\alpha = (e^1, \ldots, e^k)^T$, where we assume the Einstein summation convention and where $\largecdot^T$ stands for the transpose of the vector (i.e., the vectors are actually column vectors). Furthermore, after also choosing a framing for $F$, $\varphi$ becomes a matrix for every $y \in B$ and $\varphi(e)$ is then just the matrix multiplication $\varphi(e) = \varphi \cdot e$. Finally, $\nabla^E_X e$ is locally given by
\[\nabla^E_X e = X(e) + \Gamma^E(X)\cdot e,\]
where $X(e)$ is the column vector that we get after taking the derivative of every entry $e^j$ of $e$ in the direction of $X$ and $\Gamma^E$ is a matrix of $1$-forms (i.e., $\Gamma^E(X)$ is then a usual matrix that we multiply with the vector $e$). The entries of $\Gamma^E$ are called the connection $1$-forms.

Since $\varphi$ is an isomorphism, the pull-back connection $\varphi^\ast \nabla^F$ is given by\footnote{Note that $\varphi$ is a morphism of vector bundles, i.e., the following diagram commutes:
\[\xymatrix{E \ar[rr]^\varphi \ar[dr] & & F \ar[dl]\\ & M &}\]
This means that $\varphi$ descends to the identity on $M$, i.e., in Equation \eqref{eq:vect_iso} the vector field $X$ occurs on both the left and the right hand side (since actually we have $(\varphi^{-1})^\ast X$ on the right hand side).}
\begin{equation}
\label{eq:vect_iso}
(\varphi^\ast \nabla^F)_X e = \varphi^\ast (\nabla^F_X (\varphi^{-1})^\ast e),
\end{equation}
so that locally we get
\[(\varphi^\ast \nabla^F)_X e = \varphi^{-1}\cdot \big( X(\varphi \cdot e) + \Gamma^F(X) \cdot \varphi \cdot e\big).\]
Using the product rule we may rewrite $X(\varphi \cdot e) = X(\varphi) \cdot e + \varphi \cdot X(e)$, where $X(\varphi)$ is the application of $X$ to every entry of $\varphi$. So at the end we get for the difference $\nabla^E - \varphi^\ast \nabla^F$ in local coordinates and with respect to framings of $E$ and $F$
\begin{equation}\label{eq:difference_connections_local}
(\nabla^E - \varphi^\ast \nabla^F)_X e = \Gamma^E(X) \cdot e - \varphi^{-1} \cdot X(\varphi) \cdot e - \varphi^{-1} \cdot \Gamma^F(X) \cdot \varphi \cdot e.
\end{equation}

Since $E$ and $F$ have bounded geometry, by Lemma \ref{lem:equiv_characterizations_bounded_geom_bundles} the Christoffel symbols of them with respect to synchronous framings are bounded and also all their derivatives, and these bounds are independent of the point $p \in M$ around that we choose the normal coordinates and the framings. Assuming that $\varphi$ is a $C_b^\infty$-isomorphism, the same holds for the matrix entries of $\varphi$ and $\varphi^{-1}$ and we conclude with the above Equation \eqref{eq:difference_connections_local} that the difference $\nabla^E - \varphi^\ast \nabla^F$ is bounded and also all its covariant derivatives (here we also need to consult the local formula for covariant derivatives of tensor fields).

Conversely, assume that $\varphi$ and $\varphi^{-1}$ are bounded and that the difference $\nabla^E - \varphi^\ast \nabla^F$ is bounded and also all its covariant derivatives. If we denote by $\Gamma^{\text{diff}}$ the matrix of $1$-forms given by
\[\Gamma^{\text{diff}}(X) = \Gamma^E(X) - \varphi^{-1} \cdot X(\varphi) - \varphi^{-1} \cdot \Gamma^F(X) \cdot \varphi,\]
we get from Equation \eqref{eq:difference_connections_local}
\[X(\varphi) = \varphi \cdot (\Gamma^E(X) - \Gamma^{\text{diff}}(X)) - \Gamma^F(X) \cdot \varphi.\]
Since we assumed that $\varphi$ is bounded, its matrix entries must be bounded. From the above equation we then conclude that also the first derivatives of these matrix entries are bounded. But now that we know that the entries and also their first derivatives are bounded, we can differentiate the above equation once more to conclude that also the second derivatives of the matrix entries of $\varphi$ are bounded, on so on. This shows that $\varphi$ is $C^\infty$-bounded. At last, it remains to see that the matrix entries of $\varphi^{-1}$ and also all their derivatives are bounded. But since locally $\varphi^{-1}$ is the inverse matrix of $\varphi$, we just have to use Cramer's rule.
\end{proof}

An important property of vector bundles over compact spaces is that they are always complemented, i.e., for every bundle $E$ there is a bundle $F$ such that $E \oplus F$ is isomorphic to the trivial bundle. Note that this fails in general for non-compact spaces. So our important task is now to show that we have an analogous proposition for vector bundles of bounded geometry, i.e., that they are always complemented (in a suitable way).

\begin{defn}[$C_b^\infty$-complemented vector bundles]
A vector bundle $E$ will be called \emph{$C_b^\infty$-complemented}, if there is some vector bundle $E^\perp$ such that $E \oplus E^\perp$ is $C_b^\infty$-isomorphic to a trivial bundle with the flat connection.
\end{defn}

Since a bundle with a flat connection is trivially of bounded geometry, we get that $E \oplus E^\perp$ is of bounded geometry. And since a direct sum $E \oplus E^\perp$ of vector bundles is of bounded geometry if and only if both vector bundles $E$ and $E^\perp$ are of bounded geometry, we conclude that if $E$ is $C_b^\infty$-complemented, then both $E$ and its complement $E^\perp$ are of bounded geometry. It is also clear that if $E$ is $C_b^\infty$-complemented and $F \cong E$, then $F$ is also $C_b^\infty$-complemented.

We will now prove the crucial fact that every vector bundle of bounded geometry is $C_b^\infty$-complemented. The proof is just the usual one for vector bundles over compact Hausdorff spaces, but we additionally have to take care of the needed uniform estimates. As a source for this usual proof the author used \cite[Proposition 1.4]{hatcher_VB}. But first we will need a technical lemma.

\begin{lem}\label{lem:coloring_graph}
Let a covering $\{U_\alpha\}$ of $M$ with finite multiplicity be given. Then there exists a coloring of the subsets $U_\alpha$ with finitely many colors such that no two intersecting subsets have the same color.
\end{lem}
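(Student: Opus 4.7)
I would reformulate the statement as a graph coloring problem. Form the intersection graph $G$ with vertex set $\{U_\alpha\}$ and an edge between $U_\alpha$ and $U_\beta$ precisely when $U_\alpha \cap U_\beta \neq \emptyset$; a coloring of the cover as required by the lemma is exactly a proper vertex coloring of $G$. It therefore suffices to prove that the chromatic number of $G$ is finite.

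The main step is to bound the vertex degrees of $G$ uniformly. In the setting where this lemma will be applied---covers of the type provided by Lemma~\ref{lem:nice_coverings_partitions_of_unity}, consisting of balls $U_i = B_{2\varepsilon}(x_i)$ of a fixed radius around a uniformly discrete net $\{x_i\}$ on a manifold of bounded geometry---two such balls intersect iff $d(x_i, x_j) < 4\varepsilon$, and a standard packing estimate (uniform discreteness of $\{x_i\}$ combined with the volume bounds coming from bounded geometry) yields a uniform upper bound $D$ on the number of $x_j$ within $4\varepsilon$ of any given $x_i$. More abstractly, finite multiplicity together with a uniform bound on the diameters of the $U_\alpha$ forces a uniform bound $D$ on the number of $U_\beta$ meeting any single $U_\alpha$.

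Since $M$ is separable the cover is countable, so I would enumerate $\{U_\alpha\} = \{U_1, U_2, \ldots\}$ and greedily assign $U_n$ the least colour in $\{1,\ldots,D+1\}$ not used by any of its (at most $D$) already-coloured neighbours; this produces a proper $(D+1)$-coloring. If one prefers to sidestep the enumeration, the De Bruijn--Erd\H{o}s theorem yields the same conclusion, since every finite subgraph of $G$ has maximum degree $\le D$ and is therefore $(D+1)$-colorable by the finite greedy algorithm. The one nontrivial input is really the degree bound: bare finite multiplicity is not enough (a single ``long'' set could in principle meet unboundedly many others), so the bounded-diameter character of the cover---present in all intended applications---is essential to the argument.
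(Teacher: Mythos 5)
Your argument follows exactly the paper's route: form the intersection graph $G$, reduce to finite subgraphs via de Bruijn--Erd\H{o}s (or, as you note, enumerate directly since separability together with finite multiplicity makes the cover countable), and color each piece with $D+1$ colors once a uniform degree bound $D$ is in hand. Where you are more careful than the paper is precisely the degree bound. The paper simply asserts that finite multiplicity forces a uniform bound on the vertex degrees of $G$, and you are right to be suspicious: that assertion is false, and in fact the lemma as stated is false. A universal set $U_0 = M$ meets every other $U_\alpha$ while contributing only $1$ to the multiplicity, and more seriously, Burling's triangle-free families of open axis-parallel boxes in $\IR^3$ — which have multiplicity $2$ by Helly's theorem for boxes, yet arbitrarily large chromatic number — can be packed into disjoint regions and unioned with $\IR^3$ itself to produce an open cover of $\IR^3$ of multiplicity $3$ whose intersection graph admits no finite proper coloring. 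So an extra hypothesis on the cover is genuinely needed for the conclusion, and the paper's proof has a gap at exactly the point you flagged.

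You supply the correct repair in the concrete case: every time the paper invokes this lemma it does so for covers by balls $B_\varepsilon(x_i)$ of a fixed radius around a uniformly discrete net on a manifold of bounded geometry, and then two balls meet only when $d(x_i,x_j) < 2\varepsilon$, which a packing estimate bounds uniformly. Do be a little careful with your ``more abstract'' paraphrase, though: finite multiplicity together with a uniform \emph{upper} bound on the diameters is still not sufficient, since arbitrarily many arbitrarily small sets can crowd the boundary of a single $U_\alpha$ without ever stacking three deep at any point. It is the uniform \emph{lower} bound on the size of the pieces — equivalently the uniform discreteness of the net of centers, combined with the volume bounds from bounded geometry — that actually controls the degree, and that is exactly the structure that Lemma~\ref{lem:nice_coverings_partitions_of_unity} supplies in the applications.
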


\begin{proof}
Construct a graph whose vertices are the subsets $U_\alpha$ and two vertices are connected by an edge if the corresponding subsets intersect. We have to find a coloring of this graph with only finitely many colors where connected vertices do have different colors.

To do this, we firstly use the theorem of de Bruijin--Erd\"{o}s stating that an infinite graph may be colored by $k$ colors if and only if every of its finite subgraphs may be colored by $k$ colors (one can use the Lemma of Zorn to prove this).

Secondly, since the covering has finite multiplicity it follows that the number of edges attached to each vertex in our graph is uniformly bounded from above, i.e., the maximum vertex degree of our graph is finite. But this also holds for every subgraph of our graph, with the maximum vertex degree possibly only decreasing by passing to a subgraph. Now a simple greedy algorithm shows that every finite graph may be colored with one more color than its maximum vertex degree: just start by coloring a vertex with some color, go to the next vertex and use an admissible color for it, and so on.
\end{proof}

\begin{prop}\label{prop:every_bundle_complemented}
Let $M$ be a manifold of bounded geometry and let $E \to M$ be a vector bundle of bounded geometry.

Then $E$ is $C_b^\infty$-complemented.
\end{prop}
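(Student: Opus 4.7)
The plan is to adapt the classical proof that vector bundles over compact Hausdorff spaces are complemented, replacing the finite trivializing cover by a uniformly locally finite cover of $M$ by normal coordinate charts, and using the coloring Lemma~\ref{lem:coloring_graph} to compress it into finitely many pairwise disjoint trivializations.

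First, by Lemma~\ref{lem:nice_coverings_partitions_of_unity} choose a uniform cover of $M$ by normal charts $\{B_\varepsilon(x_i)\}$ whose doubles $\{B_{2\varepsilon}(x_i)\}$ form a uniformly locally finite cover, together with a subordinate partition of unity $\{\varphi_i\}$ whose elements are $C^\infty$-bounded uniformly in $i$. On each $B_{2\varepsilon}(x_i)$, the bounded geometry of $E$ yields (via Lemma~\ref{lem:equiv_characterizations_bounded_geom_bundles}) a synchronous framing based at $x_i$, producing a fiberwise isometric trivialization $h_i\colon E|_{B_{2\varepsilon}(x_i)} \to B_{2\varepsilon}(x_i) \times \IC^k$ whose transition functions $h_j \circ h_i^{-1}$ are $C_b^\infty$-bounded uniformly in $i,j$. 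By Lemma~\ref{lem:coloring_graph}, applied to the intersection pattern of the uniformly locally finite family $\{B_{2\varepsilon}(x_i)\}$, one may color the indices with $N<\infty$ colors such that same-colored supports are pairwise disjoint.

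Next, define a bundle map $\Phi\colon E \to M \times \IC^{kN}$ by
\[
\Phi(v) := \bigoplus_{c=1}^N \; \sum_{\mathrm{color}(i) = c} \sqrt{\varphi_i(p(v))}\, h_i(v),
\]
where each inner sum has at most one nonzero term since same-colored supports are disjoint. Because $\sum_i \varphi_i \equiv 1$ and each $h_i$ is a fiberwise isometry, $\Phi$ is itself a fiberwise isometry. Writing its matrix entries in synchronous framings on $E$ and the standard framing on $M \times \IC^{kN}$ yields finite sums of products of $\sqrt{\varphi_i}$ with transition matrices between synchronous framings of $E$, hence $C^\infty$-bounded uniformly by Lemmas~\ref{lem:nice_coverings_partitions_of_unity} and~\ref{lem:equiv_characterizations_bounded_geom_bundles}.

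Finally, let $P := \Phi \circ \Phi^\ast \in \End(M \times \IC^{kN})$ be the orthogonal projection onto the image of $\Phi$, and set $E^\perp := (1-P)(M \times \IC^{kN})$, equipped with the metric and connection inherited from the trivial flat bundle. Since the matrix entries of $P$ are $C_b^\infty$-bounded, the Gauss equation expresses the curvature of $E^\perp$ and all its covariant derivatives in bounded terms in $P$ and its derivatives, so $E^\perp$ has bounded geometry. The map $\Psi := \Phi \oplus \mathrm{incl}\colon E \oplus E^\perp \to M \times \IC^{kN}$ is then a fiberwise isometric bundle isomorphism with $C^\infty$-bounded matrix entries, and applying Lemma~\ref{lem:C_b_infty_Iso_equivalent} shows $\Psi$ is a $C_b^\infty$-isomorphism. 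This yields the desired trivialization $E \oplus E^\perp \cong M \times \IC^{kN}$, proving that $E$ is $C_b^\infty$-complemented. The main obstacle is the second bullet of Lemma~\ref{lem:C_b_infty_Iso_equivalent}, i.e.\ controlling $\nabla^{E \oplus E^\perp} - \Psi^\ast d$; in local synchronous framings this difference is built from $\Psi^{-1} d\Psi$ (second fundamental form of $\Phi(E)$) together with the Christoffel symbols of $E$ and $E^\perp$, each of which is uniformly $C_b^\infty$-bounded by the preceding construction, so the difference and all its covariant derivatives are bounded as required.
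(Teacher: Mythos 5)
Your overall strategy is essentially the paper's: take a uniform normal-chart cover with a subordinate $C^\infty$-bounded partition of unity (Lemma~\ref{lem:nice_coverings_partitions_of_unity}), compress to finitely many disjoint families with the coloring Lemma~\ref{lem:coloring_graph}, embed $E$ into a trivial bundle $M\times\IC^{Nk}$, and take the orthogonal complement. There is, however, a genuine gap in the step where you multiply by $\sqrt{\varphi_i}$ to make $\Phi$ a fiberwise isometry. For a generic smooth nonnegative function $\varphi_i$, the square root $\sqrt{\varphi_i}$ need \emph{not} be smooth at the zeros of $\varphi_i$ (if $\varphi_i$ vanishes like $x^2$, then $\sqrt{\varphi_i}\sim|x|$), and Lemma~\ref{lem:nice_coverings_partitions_of_unity} gives no guarantee that the $\varphi_i$ are squares of smooth functions. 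Without this, $\Phi$ is not a $C^\infty$-bounded bundle map and the rest of your argument does not apply. This is repairable --- e.g.\ replace $\{\varphi_i\}$ by $\{\psi_i^2/\sum_j\psi_j^2\}$ for a suitable $\{\psi_i\}$ and check that the uniform derivative bounds persist --- but it must be addressed.

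The paper avoids the issue altogether by using $\varphi_i$ rather than $\sqrt{\varphi_i}$, giving a map $G$ that is only fiberwise \emph{injective} rather than isometric, and then delegating both the comparison of the induced metric/connection with the original one and the bounded geometry of the complement to Proposition~\ref{prop:image_proj_matrix_complemented}(1). Your closing appeal to the Gauss equation and to Lemma~\ref{lem:C_b_infty_Iso_equivalent} is in effect a partial re-derivation of that proposition; once $e\in\Idem(C_b^\infty(M))$ is established, you can simply invoke Proposition~\ref{prop:image_proj_matrix_complemented}(1) as the paper does and dispense with that work.
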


\begin{proof}
Since $M$ and $E$ have bounded geometry, we can find a uniformly locally finite cover of $M$ by normal coordinate balls of a fixed radius together with a subordinate partition of unity whose derivatives are all uniformly bounded and such that over each coordinate ball $E$ is trivialized via a synchronous framing. This follows basically from Lemma \ref{lem:nice_coverings_partitions_of_unity}.

Now we the above Lemma \ref{lem:coloring_graph} to color the coordinate balls with finitely many colors so that no two balls with the same color do intersect. This gives a partition of the coordinate balls into $N$ families $U_1, \ldots, U_N$ such that every $U_i$ is a collection of disjoint balls, and we get a corresponding subordinate partition of unity $1 = \varphi_1 + \ldots + \varphi_N$ with uniformly bounded derivatives (each $\varphi_i$ is the sum of all the partition of unity functions of the coordinate balls of $U_i$). Furthermore, $E$ is trivial over each $U_i$ and we denote these trivializations coming from the synchronous framings by $h_i \colon p^{-1}(U_i) \to U_i \times \IC^k$, where $p\colon E \to M$ is the projection.

Now we set
\[g_i\colon E \to \IC^k, \ g_i(v) := \varphi_i(p(v)) \cdot \pi_i (h_i (v)),\]
where $\pi_i \colon U_i \times \IC^k \to \IC^k$ is the projection. Each $g_i$ is a linear injection on each fiber over $\varphi_i^{-1}(0,1]$ and so, if we define
\[g \colon E \to \IC^{Nk}, \ g(v) := (g_1(v), \ldots, g_N(v)),\]
we get a map $g$ that is a linear injection on each fiber of $E$. Finally, we define a map
\[G\colon E \to M \times \IC^{Nk}, \ G(v) := (p(v), g(v)).\]
This establishes $E$ as a subbundle of a trivial bundle.

If we equip $M \times \IC^{Nk}$ with a constant metric and the flat connection, we get that the induced metric and connection on $E$ is $C_b^\infty$-isomorphic to the original metric and connection on $E$ (this is due to our choice of $G$). Now let us denote by $e$ the projection matrix of the trivial bundle $\IC^{Nk}$ onto the subbundle $G(E)$ of it, i.e., $e$ is an $Nk \times Nk$-matrix with functions on $M$ as entries and $\image e = E$. Now, again due to our choice of $G$, we can conclude that these entries of $e$ are bounded functions with all derivatives of them also bounded, i.e., $e \in \Idem_{Nk \times Nk}(C_b^\infty(M))$. Now the claim follows with the Proposition \ref{prop:image_proj_matrix_complemented} which establishes the orthogonal complement $E^\perp$ of $E$ in $\IC^{Nk}$ with the induced metric and connection as a $C_b^\infty$-complement to $E$.
\end{proof}

We have seen in the above proposition that every vector bundle of bounded geometry is $C_b^\infty$-complemented. Now if we have a manifold of bounded geometry $M$, then its tangent bundle $TM$ is of bounded geometry and so we know that it is $C_b^\infty$-complemented (although $TM$ is real and not a complex bundle, the above proof of course also holds for real vector bundles). But in this case we usually want the complement bundle to be given by the normal bundle $NM$ coming from an embedding $M \hookrightarrow \IR^N$. We will prove this now under the assumption that the embedding of $M$ into $\IR^N$ is ``nice'':\footnote{See \cite{MO_iso_embedding_bounded_second_form} for a discussion of existence of ``nice'' embeddings.}

\begin{cor}\label{cor:tangent_bundle_complemented}
Let $M$ be a manifold of bounded geometry and let it be isometrically embedded into $\IR^N$ such that the second fundamental form is $C^\infty$-bounded.

Then its tangent bundle $TM$ is $C_b^\infty$-complemented by the normal bundle $NM$ corresponding to this embedding $M \hookrightarrow \IR^N$, equipped with the induced metric and connection.
\end{cor}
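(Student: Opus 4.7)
My plan is to exhibit the tautological vector bundle isomorphism $\varphi\colon TM \oplus NM \to M \times \IR^N$ coming from the embedding $M \hookrightarrow \IR^N$ and then verify, via Lemma~\ref{lem:C_b_infty_Iso_equivalent}, that it is a $C_b^\infty$-isomorphism when $M \times \IR^N$ is equipped with the flat connection and the constant Euclidean metric while $TM \oplus NM$ carries the direct sum of the Levi-Civita connection on $TM$ (coming from the induced Riemannian metric) and the normal connection $\nabla^\perp$ on $NM$ together with the direct sum of the induced metrics. Fiberwise $\varphi$ is an orthogonal splitting of the trivial bundle into tangent and normal parts, so $\varphi$ and $\varphi^{-1}$ are pointwise isometries and therefore bounded with bound $1$; this disposes of the first bullet of Lemma~\ref{lem:C_b_infty_Iso_equivalent}.

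For the second bullet, I would invoke the Gauss--Weingarten equations: for $X$ tangent to $M$, $Y$ a tangent vector field and $\nu$ a normal vector field, the ambient flat connection $D$ on $M\times\IR^N$ decomposes as
\[
D_X Y = \nabla_X Y + \Pi(X,Y), \qquad D_X \nu = -A_\nu X + \nabla^\perp_X \nu,
\]
where $\Pi$ is the second fundamental form and $A_\nu$ is the corresponding shape operator, related to $\Pi$ by $\langle A_\nu X, Y\rangle = \langle \Pi(X,Y), \nu\rangle$. Writing $\nabla^{TM\oplus NM} := \nabla \oplus \nabla^\perp$, this yields the pointwise identity
\[
\bigl(\nabla^{TM\oplus NM} - \varphi^\ast D\bigr)(X\oplus\nu) = -\Pi(\,\cdot\,,X) + A_\nu(\,\cdot\,),
\]
so the tensorial difference of connections is literally built out of $\Pi$ (and its metric dual). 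The hypothesis that $\Pi$ is $C^\infty$-bounded therefore implies that this difference is bounded together with all of its covariant derivatives, once one checks that the covariant derivatives in question can be computed in terms of iterated covariant derivatives of $\Pi$ with respect to the Levi-Civita and normal connections; this is a standard tensor-calculus observation, using only that $M$ has bounded geometry (so that the intrinsic Christoffel symbols are $C^\infty$-bounded in synchronous frames by Lemma~\ref{lem:equiv_characterizations_bounded_geom_bundles}).

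With both bullets of Lemma~\ref{lem:C_b_infty_Iso_equivalent} verified, I would conclude that $\varphi\colon TM\oplus NM \to M\times\IR^N$ is a $C_b^\infty$-isomorphism onto the trivial flat bundle, which is precisely the statement that $NM$ is a $C_b^\infty$-complement of $TM$. As a byproduct $NM$ inherits bounded geometry (either directly because its curvature is expressible via $\Pi$ through the Ricci equation, or because a $C_b^\infty$-summand of a flat trivial bundle automatically has bounded geometry, as noted in the discussion following the definition of $C_b^\infty$-complemented bundles).

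The only non-routine point I anticipate is bookkeeping the passage from ``$\Pi$ is $C^\infty$-bounded'' (a statement about $\Pi$ and its iterated covariant derivatives using $\nabla$ and $\nabla^\perp$) to ``all covariant derivatives of $\nabla^{TM\oplus NM} - \varphi^\ast D$ are bounded'' in the sense required by Lemma~\ref{lem:C_b_infty_Iso_equivalent}, i.e.\ with respect to synchronous framings and the local formulas of that lemma. This is essentially an exercise in comparing the connection on $\mathrm{End}(TM\oplus NM)$ induced by $\nabla^{TM\oplus NM}$ with the ambient flat one, and I expect no genuine difficulty beyond notation once Lemma~\ref{lem:nice_coverings_partitions_of_unity} provides the normal-coordinate framework.
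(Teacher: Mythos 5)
Your argument is correct and reaches the same conclusion, but it takes a genuinely different route from the paper's own proof. The paper shows that the orthogonal projection $\pi\colon M\times\IR^N \to TM$, expressed in the standard global frame of $\IR^N$ using Gram--Schmidt--corrected synchronous frames on $TM$, has entries in $C_b^\infty(M)$ (the Gauss formula and $C^\infty$-boundedness of $\operatorname{I\!\!\;I}$ enter when one differentiates $\pi_{ij}$), and then applies Proposition~\ref{prop:image_proj_matrix_complemented}(1), which hands over the complement $\image(1-\pi)=NM$ with the induced data for free. You instead work directly with the tautological isomorphism $\varphi\colon TM\oplus NM \to M\times\IR^N$ and verify the two bullet points of the connection-difference criterion, Lemma~\ref{lem:C_b_infty_Iso_equivalent}, using the Gauss--Weingarten equations to identify $\nabla^{TM\oplus NM} - \varphi^\ast D$ with (the metric dual of) $\operatorname{I\!\!\;I}$. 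Both approaches have the same geometric input (Gauss--Weingarten), but your version makes the role of the hypothesis on $\operatorname{I\!\!\;I}$ transparent because it \emph{is} the connection difference, whereas the paper hides it in a matrix-entry computation; conversely, the paper's route avoids the need to establish bounded geometry of $NM$ as a separate prior step.

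One caveat about the ordering of your argument: Lemma~\ref{lem:C_b_infty_Iso_equivalent} has as a standing hypothesis that \emph{both} $E=TM\oplus NM$ and $F=M\times\IR^N$ already have bounded geometry (the proof of that lemma uses boundedness of the Christoffel symbols $\Gamma^E$ and $\Gamma^F$). So bounded geometry of $NM$ is not a byproduct but a prerequisite. Of the two alternatives you offer in the final parenthesis, the second one (``a $C_b^\infty$-summand of a flat trivial bundle automatically has bounded geometry'') is circular in your setup, since it presupposes the $C_b^\infty$-isomorphism you are trying to establish. The first alternative works and should be promoted to the start of the proof: use the Ricci equation to express the normal curvature and its covariant derivatives in terms of $\operatorname{I\!\!\;I}$ and its covariant derivatives, which are bounded by hypothesis, and conclude that $NM$ (hence $TM\oplus NM$) has bounded geometry before invoking Lemma~\ref{lem:C_b_infty_Iso_equivalent}.
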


\begin{proof}
Let $M$ be isometrically embedded in $\IR^N$. Then its tangent bundle $TM$ is a subbundle of $T\IR^N$ and we denote the projection onto it by $\pi\colon T\IR^N \to TM$. Because of Point 1 of the following Proposition \ref{prop:image_proj_matrix_complemented} it suffices to show that the entries of $\pi$ are $C^\infty$-bounded functions.

Let $\{v_i\}$ be the standard basis of $\IR^N$ and let $\{E_\alpha(y)\}$ be the orthonormal frame of $TM$ arising out of normal coordinates $\{\partial_k\}$ of $M$ via the Gram-Schmidt process. Then the entries of the projection matrix $\pi$ with respect to the basis $\{v_i\}$ are given by
\[\pi_{ij}(y) = \sum_\alpha \langle E_\alpha(y), v_j\rangle \langle E_\alpha(y), v_i\rangle.\]

Let $\widetilde{\nabla}$ denote the flat connection on $\IR^N$. Since $\widetilde{\nabla}_{\partial_k} v_i = 0$ we get
\[\partial_k \pi_{ij} (y) = \sum_\alpha \langle \widetilde{\nabla}_{\partial_k} E_\alpha (y), v_j\rangle \langle E_\alpha(y), v_i\rangle + \langle E_\alpha(y), v_j\rangle \langle \widetilde{\nabla}_{\partial_k} E_\alpha (y), v_i\rangle.\]
Now if we denote by $\nabla^M$ the connection on $M$, we get
\[\widetilde{\nabla}_{\partial_k} E_\alpha(y) = \nabla^{M}_{\partial_k} E_\alpha(y) + \operatorname{I\!\!\;I}(\partial_k, E_\alpha),\]
where $\operatorname{I\!\!\;I}$ is the second fundamental form. So to show that $\pi_{ij}$ is $C^\infty$-bounded, we must show that $E_\alpha(y)$ are $C^\infty$-bounded sections of $TM$ (since by assumption the second fundamental form is a $C^\infty$-bounded tensor field). But that these $E_\alpha(y)$ are $C^\infty$-bounded sections of $TM$ follows from their construction (i.e., applying Gram-Schmidt to the normal coordinate fields $\partial_k$) and because $M$ has bounded geometry.
\end{proof}

\subsubsection*{Interpretation of \texorpdfstring{$K^0_u(M)$}{even uniform K(M)}}

Recall for the understanding of the following proposition the fact that if a vector bundle is $C_b^\infty$-complemented, then it is of bounded geometry. Furthermore, this proposition is the crucial one that gives us the description of uniform $K$-theory via vector bundles of bounded geometry.

\begin{prop}\label{prop:image_proj_matrix_complemented}
Let $M$ be a manifold of bounded geometry.
\begin{enumerate}
\item Let $e \in \Idem_{N \times N}(C_b^\infty(M))$ be an idempotent matrix.

Then the vector bundle $E := \image e$, equipped with the induced metric and connection, is $C_b^\infty$-complemented.

\item Let $E$ be a $C_b^\infty$-complemented vector bundle, i.e., there is a vector bundle $E^\perp$ such that $E \oplus E^\perp$ is $C_b^\infty$-isomorphic to the trivial $N$-dimensional bundle $\IC^N \to M$.

Then all entries of the projection matrix $e$ onto the subspace $E \oplus 0 \subset \IC^N$ with respect to a global synchronous framing of $\IC^N$ are $C^\infty$-bounded, i.e., we have $e \in \Idem_{N \times N}(C_b^\infty(M))$.
\end{enumerate}
\end{prop}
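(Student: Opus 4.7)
The plan is to dispense with (2) first, which is essentially a matrix computation, and then to tackle (1) by reducing to the case of a self-adjoint idempotent and taking the orthogonal complement of $E$ inside the ambient trivial bundle. Throughout I would use Lemma~\ref{lem:C_b_infty_Iso_equivalent} (to certify $C_b^\infty$-isomorphisms) and Lemma~\ref{lem:equiv_characterizations_bounded_geom_bundles} (to certify bounded geometry via Christoffel symbols in synchronous framings).

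For (2), I would write $e = \varphi \circ \pi \circ \varphi^{-1}$, where $\varphi \colon E \oplus E^\perp \to \IC^N$ is the given $C_b^\infty$-isomorphism and $\pi \colon E \oplus E^\perp \to E \oplus E^\perp$ is the block projection onto $E$. Over a normal coordinate ball, local synchronous framings of $E$ and of $E^\perp$ combine to a local synchronous framing of $E \oplus E^\perp$ in which $\pi$ has constant matrix $\operatorname{diag}(I_k, 0)$, while the standard basis of $\IC^N$ is itself synchronous for the flat connection. By the definition of $C_b^\infty$-isomorphism, the matrix entries of $\varphi$ and $\varphi^{-1}$ between these two framings are $C^\infty$-bounded uniformly in the base point, and hence so are the entries of the matrix product $\varphi \cdot \pi \cdot \varphi^{-1} = e$.

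For (1), I would first replace $e$ by a self-adjoint idempotent with the same image. The matrix algebra $M_N(C_b^\infty(M))$ is a local $C^\ast$-algebra by Lemma~\ref{lem:C_b_infty_local}, and in any such algebra an idempotent can be conjugated by an invertible element to a self-adjoint idempotent $p$ with $\image p = \image e$. The relevant standard construction uses only $e$, $e^\ast$, sums, products, and one inversion of an explicitly positive element; by Cramer's rule (as at the end of the proof of Lemma~\ref{lem:C_b_infty_Iso_equivalent}) that inverse stays in $M_N(C_b^\infty(M))$, so $p \in M_N(C_b^\infty(M))$. The induced metric and connection on $E$ are unaffected by this replacement. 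Then I would set $E^\perp := \image(1 - p)$, the fiberwise orthogonal complement of $E$ inside $\IC^N$, and take $\varphi \colon E \oplus E^\perp \to \IC^N$ to be the fiberwise sum map, which is fiberwise unitary.

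To close (1), I would verify the hypotheses of Lemma~\ref{lem:C_b_infty_Iso_equivalent} for this $\varphi$ and check bounded geometry of $E \oplus E^\perp$. A direct computation (using $ps = s$ for $s$ a section of $E$, and $p(1-p) = 0$) shows that $\nabla^E s = p\, ds$ and that the second fundamental form of $E$ in $\IC^N$ equals $(1-p)(dp) s$; the analogous statement holds on $E^\perp$. Consequently the tensor $\varphi^\ast \nabla^{\IC^N} - \nabla^{E \oplus E^\perp}$ has matrix entries which are universal polynomials in $p$ and its partials in normal coordinates, and the same holds for all of its iterated covariant derivatives; because $p \in M_N(C_b^\infty(M))$, they are uniformly $C^\infty$-bounded. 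A parallel computation shows that the Christoffel symbols of $\nabla^E$ and of $\nabla^{E^\perp}$ in synchronous framings are polynomial in $p$ and its partials, so Lemma~\ref{lem:equiv_characterizations_bounded_geom_bundles} yields bounded geometry of $E$ and of $E^\perp$, hence of their direct sum. The main hurdle is precisely this bookkeeping: writing out the explicit formulas for the second fundamental form, Christoffel symbols, and their iterated covariant derivatives in terms of $p$, and observing that the polynomial-in-$p$-and-its-partials structure survives iterated covariant differentiation. Once that is in place, $p \in C_b^\infty$ immediately supplies every uniform bound required by Lemmas~\ref{lem:C_b_infty_Iso_equivalent} and~\ref{lem:equiv_characterizations_bounded_geom_bundles}.
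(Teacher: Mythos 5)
Your argument for Point~2 is essentially the paper's: conjugate the constant block projection by the matrix of $\varphi$ between synchronous framings. Fine.

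Point~1 takes a genuinely different route. The paper works directly with the (possibly non-self-adjoint) idempotent $e$, chooses synchronous framings of $E$, $E^\perp$ and $\IC^N$, and shows the change-of-basis matrix has $C^\infty$-bounded entries by \emph{explicitly setting up and bounding the radial ODE that defines the synchronous frame}; the Christoffel symbols appear as the ODE coefficients, expressed in the auxiliary, non-synchronous frame $\{e(y)E_\alpha\}$, and there they are indeed $C^\infty$-bounded because they are algebraic in $e$ and $\partial e$. You instead reduce to a self-adjoint $p$ and try to certify everything through Lemma~\ref{lem:C_b_infty_Iso_equivalent}. Two remarks on that: first, replacing $e$ by $p$ does change the induced connection ($e\circ d$ versus $p\circ d$); since the difference $(e-p)\circ d$ vanishes on sections of $E$ it is tensorial and $C^\infty$-bounded, so the $C_b^\infty$-isomorphism class of $(E,\nabla^E)$ is indeed unchanged, but that deserves a sentence rather than the flat statement that the connection is ``unaffected.''

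The genuine gap is the claim that ``the Christoffel symbols of $\nabla^E$ and of $\nabla^{E^\perp}$ in \emph{synchronous} framings are polynomial in $p$ and its partials.'' That is false. The synchronous frame $\{E_\alpha(y)\}$ is obtained by parallel transport along radial geodesics, i.e.\ by solving the ODE $\tfrac{d}{dt}E_\alpha(\gamma(t)) = (\partial_{\dot\gamma}p)\,E_\alpha(\gamma(t))$, and in radial gauge the connection coefficients are given by an \emph{integral} of the curvature $\Gamma_i(x) = \int_0^1 t\,x^j R^E_{ji}(tx)\,dt$, not by a polynomial in $p$ and its partials. This is precisely the point where the paper has to invoke ODE theory, and it is not ``mere bookkeeping.'' Your plan can be repaired by abandoning the Christoffel-symbol characterization and using the first bullet of Lemma~\ref{lem:equiv_characterizations_bounded_geom_bundles} instead: the curvature of the projected connection is $R^E = p\,[dp,dp]\,p$, which \emph{is} a universal polynomial in $p$ and $\partial p$, and its iterated covariant derivatives (with respect to $p\circ d$ and the Levi--Civita connection of $M$) remain polynomial in $p$, its partials, and the Christoffel symbols of $M$, hence are uniformly bounded. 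With that substitution, and noting that the rest of your computation (the formula for $\varphi^\ast\nabla^{\IC^N}-\nabla^{E\oplus E^\perp}$ as $(s,t)\mapsto(-p(\partial p)t,\,(\partial p)s)$, and the polynomial structure of its covariant derivatives) is correct, the argument closes and gives a somewhat cleaner alternative to the paper's ODE-based proof.
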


\begin{proof}[Proof of point 1]
We denote by $E$ the vector bundle $E := \image e$ and by $E^\perp$ its complement $E^\perp := \image (1-e)$ and equip them with the induced metric and connection. So we have to show that $E \oplus E^\perp$ is $C_b^\infty$-isomorphic to the trivial bundle $\IC^N \to M$.

Let $\varphi\colon E \oplus E^\perp \to \IC^N$ be the canonical algebraic isomorphism $\varphi(v,w) := v + w$. We have to show that both $\varphi$ and $\varphi^{-1}$ are $C^\infty$-bounded. 

Let $p \in M$. Let $\{ E_\alpha \}$ be an orthonormal basis of the vector space $E_p$ and $\{ E^\perp_\beta \}$ an orthonormal basis of $E^\perp_p$. Then the set $\{ E_\alpha, E^\perp_\beta \}$ is an orthonormal basis for $\IC_p^N$. We extend $\{E_\alpha\}$ to a synchronous framing $\{E_\alpha(y)\}$ of $E$ and $\{E^\perp_\beta\}$ to a synchronous framing $\{E^\perp_\beta(y)\}$ of $E^\perp$. Since $\IC^N$ is equipped with the flat connection, the set $\{ E_\alpha, E^\perp_\beta \}$ forms a synchronous framing for $\IC^N$ at all points of the normal coordinate chart. Then $\varphi(y)$ is the change-of-basis matrix from the basis $\{E_\alpha(y), E_\beta^\perp(y)\}$ to the basis $\{ E_\alpha, E^\perp_\beta \}$ and vice versa for $\varphi^{-1}(y)$; see Figure \ref{fig:frames}:

\begin{figure}[htbp]
\centering
\includegraphics[scale=0.7]{./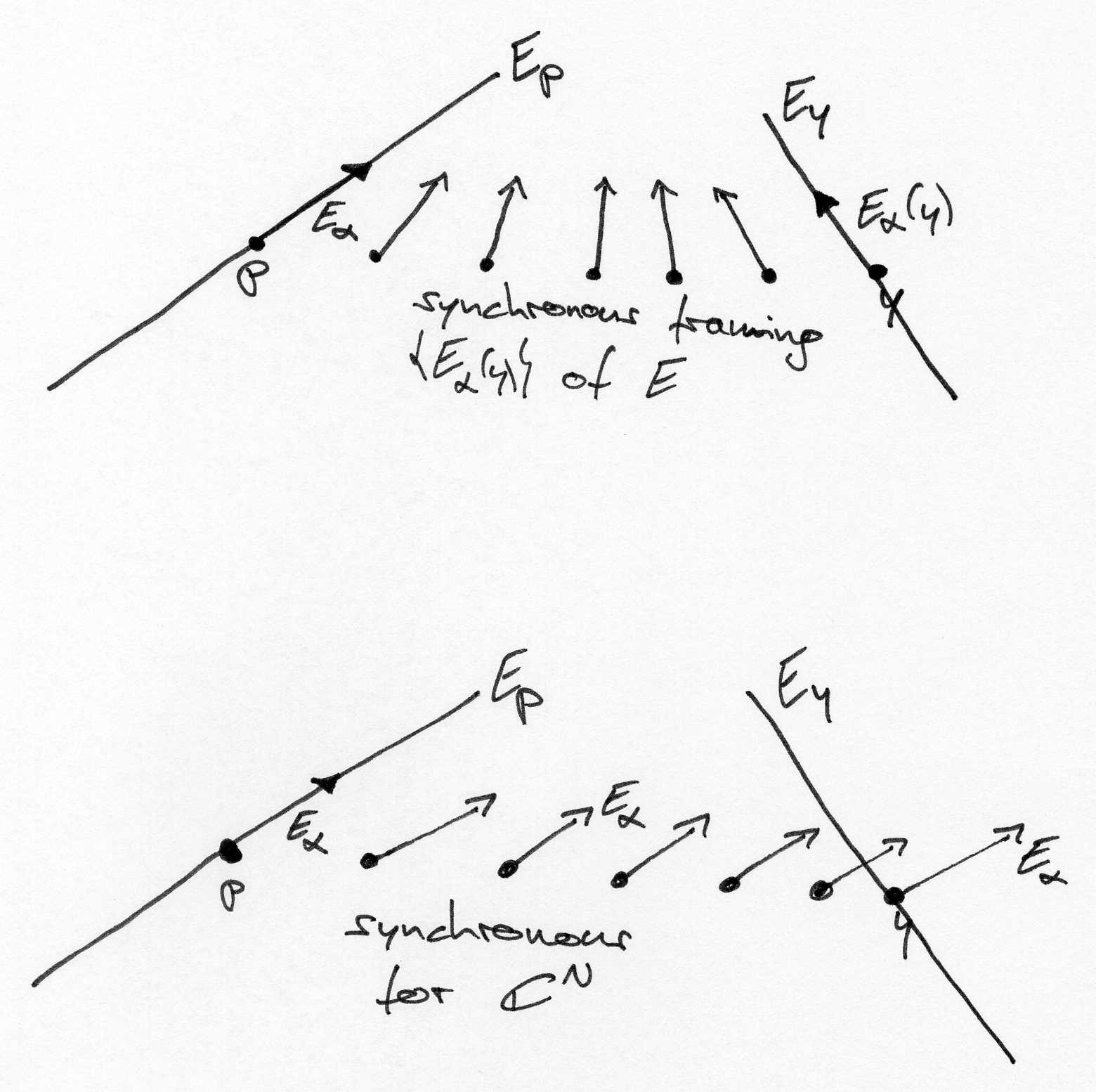}
\caption{The frames $\{E_\alpha(y)\}$ and $\{E_\alpha\}$.}
\label{fig:frames}
\end{figure}

We have $e(p)(E_\alpha) = E_\alpha$. Since the entries of $e$ are $C^\infty$-bounded and the rank of a matrix is a lower semi-continuous function of the entries, there is some geodesic ball $B$ around $p$ such that $\{ e(y)(E_\alpha) \}$ forms a basis of $E_y$ for all $y \in B$ and the diameter of the ball $B$ is bounded from below independently of $p \in M$. We denote by $\Gamma_{i \nu}^\mu(y)$ the Christoffel symbols of $E$ with respect to the frame $\{ e(y)(E_\alpha) \}$. Let $\gamma(t)$ be a radial geodesic in $M$ with $\gamma(0) = p$. If we now let $E_\alpha(\gamma(t))^\mu$ denote the $\mu$th entry of the vector $E_\alpha(\gamma(t))$ represented in the basis $\{ e(\gamma(t))(E_\alpha) \}$, then (since it is a synchronous frame) it satisfies the ODE
\[ \tfrac{d}{dt} E_\alpha(\gamma(t))^\mu = -\sum_{i,\nu} E_\alpha(\gamma(t))^\nu \cdot \tfrac{d}{dt}{\gamma_i}(t) \cdot \Gamma_{i \nu}^\mu(\gamma(t)),\]
where $\{\gamma_i\}$ is the coordinate representation of $\gamma$ in normal coordinates $\{x_i\}$. Since $\gamma$ is a radial geodesic, its representation in normal coordinates is $\gamma_i(t) = t \cdot \gamma_i(0)$ and so the above formula simplifies to
\begin{equation}\label{eq:ODE_synchronous_frame}
\tfrac{d}{dt} E_\alpha(\gamma(t))^\mu = -\sum_{i,\nu} E_\alpha(\gamma(t))^\nu \cdot \gamma_i(0) \cdot \Gamma_{i \nu}^\mu(\gamma(t)).
\end{equation}

Since $\Gamma_{i \nu}^\mu(y)$ are the Christoffel symbols with respect to the frame $\{ e(y)(E_\alpha) \}$, we get the equation
\begin{equation}\label{eq:christoffel_symbols_representation}
\sum_{\mu} \Gamma_{i \nu}^\mu(y) \cdot e(y)(E_\mu) = \nabla^E_{\partial_i} e(y)(E_\nu).
\end{equation}
Now using that $\nabla^E$ is induced by the flat connection, we get
\[\nabla^E_{\partial_i} e(y)(E_\nu) = e (\partial_i(e(y)(E_\nu))) = e((\partial_i e)(y)(E_\nu)),\]
i.e., $e((\partial_i e)(y)(E_\nu))$ is the representation of $\nabla^E_{\partial_i} e(y)(E_\nu)$ with respect to the frame $\{E_\alpha, E_\beta^\perp\}$. Since the entries of $e$ are $C^\infty$-bounded, the entries of this representation $e((\partial_i e)(y)(E_\nu))$ are also $C^\infty$-bounded. From Equation \eqref{eq:christoffel_symbols_representation} we see that $\Gamma_{i \nu}^\mu(y)$ is the representation of $\nabla^E_{\partial_i} e(y)(E_\nu)$ in the frame $\{e(y)(E_\mu)\}$. So we conclude that the Christoffel symbols $\Gamma_{i \nu}^\mu(y)$ are $C^\infty$-bounded functions.

Equation \eqref{eq:ODE_synchronous_frame} and the theory of ODEs now tell us that the functions $E_\alpha(y)^\mu$ are $C^\infty$-bounded. Since these are the representations of the vectors $E_\alpha(y)$ in the basis $\{ e(y)(E_\alpha) \}$, we can conclude that the entries of the representations of the vectors $E_\alpha(y)$ in the basis $\{E_\alpha, E_\beta^\perp\}$ are $C^\infty$-bounded. But now these entries are exactly the first $(\dim E)$ columns of the change-of-basis matrix $\varphi(y)$.

Arguing analogously for the complement $E^\perp$, we get that the other columns of $\varphi(y)$ are also $C^\infty$-bounded, i.e., $\varphi$ itself is $C^\infty$-bounded.

It remains to show that the inverse homomorphism $\varphi^{-1}$ is $C^\infty$-bounded. But since pointwise it is given by the inverse matrix, i.e., $\varphi^{-1}(y) = \varphi(y)^{-1}$, this claim follows immediately from Cramer's rule, because we already know that $\varphi$ is $C^\infty$-bounded.
\end{proof}

\begin{proof}[Proof of point 2]
Let $\{E_\alpha(y)\}$ be a synchronous framing for $E$ and $\{E_\beta^\perp(y)\}$ one for $E^\perp$. Then $\{E_\alpha(y), E_\beta^\perp(y)\}$ is one for $E \oplus E^\perp$. Furthermore, let $\{v_i(y)\}$ be a synchronous framing for the trivial bundle $\IC^N$ and let $\varphi\colon E \oplus E^\perp \to \IC^N$ be the $C_b^\infty$-isomorphism.

Then projection matrix $e \in \Idem_{N \times N}(C^\infty(M))$ onto the subspace $E \oplus 0$ is given with respect to the basis $\{E_\alpha(y), E_\beta^\perp(y)\}$ of $E \oplus E^\perp$ and of $\IC^N$ by the usual projection matrix onto the first $(\dim E)$ vectors, i.e., its entries are clearly $C^\infty$-bounded since they are constant. Now changing the basis to $\{v_i(y)\}$, the representation of $e(y)$ with respect to this new basis is given by $\varphi^{-1}(y) \cdot e \cdot \varphi(y)$, i.e., $e \in \Idem_{N \times N}(C_b^\infty(M))$.
\end{proof}

If we have a $C_b^\infty$-complemented vector bundle $E$, then different choices of complements and different choices of isomorphisms with the trivial bundle lead to similar projection matrices. The proof of this is analogous to the corresponding proof in the usual case of vector bundles over compact Hausdorff spaces. We also get that $C_b^\infty$-isomorphic vector bundles produce similar projection matrices. Of course this also works the other way round, i.e., similar idempotent matrices give us $C_b^\infty$-isomorphic vector bundles. Again, the proof of this is the same as the one in the topological category.

\begin{defn}
Let $M$ be a manifold of bounded geometry. We define
\begin{itemize}
\item $\Vect_u(M)/_\sim$ as the abelian monoid of all complex vector bundles of bounded geometry over $M$ modulo $C_b^\infty$-isomorphism (the addition is given by the direct sum $[E] + [F] := [E \oplus F]$) and
\item $\Idem(C_b^\infty(M))/_\sim$ as the abelian monoid of idempotent matrizes of arbitrary size over the \Frechet algebra $C_b^\infty(M)$ modulo similarity (with addition defined as $[e] + [f] := \left[\begin{pmatrix}e&0\\0&f\end{pmatrix}\right]$).
\end{itemize}
These abelian monoids will be identified with each other in the following corollary.
\end{defn}

Let $f\colon M \to N$ be a $C^\infty$-bounded map\footnote{We use covers of $M$ and $N$ via normal coordinate charts of a fixed radius and demand that locally in this charts the derivatives of $f$ are all bounded and these bounds are independent of the chart used.} and $E$ a vector bundle of bounded geometry over $N$. Then it is clear that the pullback bundle $f^\ast E$ equipped with the pullback metric and connection is a vector bundle of bounded geometry over $M$.

The above discussion together with Proposition \ref{prop:image_proj_matrix_complemented} prove the following corollary:

\begin{cor}\label{cor:two_monoids_isomorphic}
The monoids $\Vect_u(M)/_\sim$ and $\Idem(C_b^\infty(M))/_\sim$ are isomorphic and this isomorphism is natural with respect to $C^\infty$-bounded maps between manifolds.\qed
\end{cor}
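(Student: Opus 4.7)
The plan is to define two mutually inverse monoid homomorphisms between $\Vect_u(M)/_\sim$ and $\Idem(C_b^\infty(M))/_\sim$ and then verify naturality. First I would define a map
\[\Phi\colon \Idem(C_b^\infty(M))/_\sim \to \Vect_u(M)/_\sim, \quad [e] \mapsto [\image e],\]
where $\image e$ is equipped with the metric and connection induced from the flat bundle $\IC^N \to M$. That this lands in $\Vect_u(M)/_\sim$ (and not just in vector bundles without any bounded geometry control) is the content of Proposition~\ref{prop:image_proj_matrix_complemented}(1): $\image e$ is $C_b^\infty$-complemented, and in particular has bounded geometry. In the other direction, I would define
\[\Psi\colon \Vect_u(M)/_\sim \to \Idem(C_b^\infty(M))/_\sim, \quad [E] \mapsto [e_E],\]
where $e_E$ is the projection matrix, written in a global synchronous framing of a trivial bundle $\IC^N \supset E \oplus E^\perp$, onto the first summand. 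Here Proposition~\ref{prop:every_bundle_complemented} guarantees that $E$ admits some such $C_b^\infty$-complement, and Proposition~\ref{prop:image_proj_matrix_complemented}(2) guarantees that the resulting $e_E$ has entries in $C_b^\infty(M)$, so $\Psi$ indeed takes values in $\Idem(C_b^\infty(M))/_\sim$.

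The second step is to verify well-definedness, i.e.\ that similar idempotents give $C_b^\infty$-isomorphic bundles, and that $C_b^\infty$-isomorphic bundles give similar projection matrices regardless of the choice of complement and of the trivialization of $E \oplus E^\perp$. For this I would follow the standard argument in the compact Hausdorff case (for which see the paragraph preceding the corollary, where the author indicates that the reasoning transfers verbatim), with the single extra remark that each step — constructing the conjugating matrix from the data of the bundle isomorphism or, conversely, the bundle isomorphism from a similarity $u e u^{-1} = f$ — automatically stays inside $C_b^\infty$ because the bundle isomorphism is a $C_b^\infty$-isomorphism and Lemma~\ref{lem:C_b_infty_Iso_equivalent} characterises this precisely by the $C^\infty$-boundedness needed to conclude membership in $\Idem(C_b^\infty(M))$.

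The third step is routine: $\Phi\circ\Psi = \id$ follows because the projection $e_E$ has $\image e_E = E$ by construction, and $\Psi\circ\Phi = \id$ follows because the two natural $C_b^\infty$-complements $\image(1-e)$ and any other complement of $\image e$ produce projection matrices differing by the similarity implementing the corresponding $C_b^\infty$-isomorphism of complements, which is again within $C_b^\infty$ by Proposition~\ref{prop:image_proj_matrix_complemented}(2). Compatibility of $\Phi$ and $\Psi$ with the monoid operations is immediate from the block-diagonal form of a direct sum of idempotents and a direct sum of projections.

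Finally, for naturality with respect to a $C^\infty$-bounded map $f\colon M \to N$, I would note that the induced algebra homomorphism $f^\ast\colon C_b^\infty(N) \to C_b^\infty(M)$ applied entrywise to a matrix $e$ yields the projection matrix $e \circ f$, whose image is precisely the pull-back bundle $f^\ast(\image e)$, and these correspond to each other under $\Phi$; diagram-chasing shows the square involving $f^\ast$ on both sides commutes. The only nontrivial point here is that the pull-back bundle $f^\ast E$ of a bundle of bounded geometry $E \to N$ under a $C^\infty$-bounded $f$ is again of bounded geometry (which was observed just before the statement of the corollary) so that $\Vect_u$ and $\Idem \circ C_b^\infty$ are functorial on the same category of $C^\infty$-bounded maps. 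I do not anticipate any serious obstacle; the substantive content has already been placed, in Propositions~\ref{prop:every_bundle_complemented} and~\ref{prop:image_proj_matrix_complemented}, into the two well-definedness statements.
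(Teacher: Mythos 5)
Your proposal is correct and follows essentially the same route as the paper: the paper itself gives no explicit proof, simply remarking that "the above discussion together with Proposition~\ref{prop:image_proj_matrix_complemented} prove the following corollary," where the "above discussion" is precisely the well-definedness statements (similar idempotents give $C_b^\infty$-isomorphic bundles and conversely) and the pullback observation for naturality that you spell out. Your write-up is a faithful elaboration of what the author leaves implicit, using the same key inputs (Propositions~\ref{prop:every_bundle_complemented} and~\ref{prop:image_proj_matrix_complemented}) and adding a helpful reference to Lemma~\ref{lem:C_b_infty_Iso_equivalent} for the well-definedness step.
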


From this Corollary \ref{cor:two_monoids_isomorphic}, Lemma \ref{lem:C_b_infty_local} and Proposition \ref{prop:every_bundle_complemented} we immediately get the following interpretation of the $0$th uniform $K$-theory group $K^0_u(M)$ of a manifold of bounded geometry:

\begin{thm}[Interpretation of $K^0_u(M)$]\label{thm:interpretation_K0u}
Let $M$ be a Riemannian manifold of bounded geometry and without boundary.

Then every element of $K^0_u(M)$ is of the form $[E] - [F]$, where both $[E]$ and $[F]$ are $C_b^\infty$-isomorphism classes of complex vector bundles of bounded geometry over $M$.

Moreover, every complex vector bundle of bounded geometry over $M$ defines naturally a class in $K^0_u(M)$.\qed
\end{thm}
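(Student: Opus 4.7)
The plan is to assemble the preceding results. By definition $K^0_u(M) = K_0(C_u(M))$, and Lemma~\ref{lem:equivalent_defns_uniform_k_theory} identifies this with $K_0(C_b^\infty(M))$ computed as operator $K$-theory. Since $C_b^\infty(M)$ is a local $C^\ast$-algebra by Lemma~\ref{lem:C_b_infty_local}, the standard algebraic description of $K_0$ applies: every element of $K_0(C_b^\infty(M))$ is represented by a formal difference $[e] - [f]$ for idempotents $e,f \in \Idem_{N \times N}(C_b^\infty(M))$ (for some $N$), with the relation of similarity being exactly what is used to form the monoid $\Idem(C_b^\infty(M))/_\sim$ before Grothendieck completion.

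Next I would invoke Corollary~\ref{cor:two_monoids_isomorphic} to translate idempotent similarity classes into $C_b^\infty$-isomorphism classes of complex vector bundles of bounded geometry: under that natural isomorphism of monoids, $[e]$ corresponds to the $C_b^\infty$-isomorphism class $[\image e]$. Applying Grothendieck completion to both sides then shows that every element of $K^0_u(M)$ has the form $[E] - [F]$ with $E, F$ complex vector bundles of bounded geometry over $M$, proving the first assertion.

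For the second assertion, let $E \to M$ be any complex vector bundle of bounded geometry. By Proposition~\ref{prop:every_bundle_complemented}, $E$ is $C_b^\infty$-complemented: there exists $E^\perp$ and a $C_b^\infty$-isomorphism $E \oplus E^\perp \xrightarrow{\sim} M \times \IC^N$. Then Proposition~\ref{prop:image_proj_matrix_complemented}(2) ensures that the projection onto the $E$-summand is an idempotent matrix $e \in \Idem_{N \times N}(C_b^\infty(M))$. Its class in $K_0(C_b^\infty(M)) = K^0_u(M)$ is independent of the chosen complement and trivialization, since two such choices produce similar idempotents by the discussion following the proof of Proposition~\ref{prop:image_proj_matrix_complemented}, and $C_b^\infty$-isomorphic bundles likewise give similar idempotents. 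This gives the well-defined class $[E] \in K^0_u(M)$.

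No real obstacle remains at this stage, since every nontrivial ingredient has already been proved; the theorem is a direct packaging of Lemma~\ref{lem:C_b_infty_local}, Lemma~\ref{lem:equivalent_defns_uniform_k_theory}, Proposition~\ref{prop:every_bundle_complemented}, Proposition~\ref{prop:image_proj_matrix_complemented}, and Corollary~\ref{cor:two_monoids_isomorphic}. The only thing to be slightly careful about is that the algebraic $K_0$-description via idempotents applies because $C_b^\infty(M)$ is local (matrix algebras are closed under holomorphic functional calculus), which is exactly the content of Lemma~\ref{lem:C_b_infty_local}; this is what legitimizes writing an arbitrary class as $[e]-[f]$ with $e,f$ idempotents over $C_b^\infty(M)$ rather than only over its $C^\ast$-completion $C_u(M)$.
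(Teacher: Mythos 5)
Your proposal is correct and follows essentially the same route the paper takes: the theorem is stated with a \qed as an immediate consequence of Corollary~\ref{cor:two_monoids_isomorphic}, Lemma~\ref{lem:C_b_infty_local}, Proposition~\ref{prop:every_bundle_complemented}, and the identification $K_0(C_u(M)) \cong K_0(C_b^\infty(M))$ from Lemma~\ref{lem:equivalent_defns_uniform_k_theory}, exactly as you assemble them. You merely make explicit the role of Proposition~\ref{prop:image_proj_matrix_complemented} and the well-definedness of the class $[E]$, which the paper treats as part of the surrounding discussion.
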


Note that the last statement in the above theorem is not trivial since it relies on the Proposition \ref{prop:every_bundle_complemented}.

\subsubsection*{Interpretation of \texorpdfstring{$K^1_u(M)$}{odd uniform K(M)}}

For the interpretation of $K^1_u(M)$ we will make use of suspensions of algebras. The suspension isomorphism theorem for operator $K$-theory states that we have an isomorphism $K_1(C_u(M)) \cong K_0(S C_u(M))$, where $S C_u(M)$ is the suspension of $C_u(M)$:
\begin{align*}
S C_u(M) & := \{ f \colon S^1 \to C_u(M) \ | \ f \text{ continuous and } f(1) = 0\}\\
& \cong \{ f \in C_u(S^1 \times M) \ | \ f(1, x) = 0 \text{ for all }x \in M\}.
\end{align*}
Equipped with the sup-norm this is again a $C^\ast$-algebra. Since functions $f \in S C_u(M)$ are uniformly continuous, the condition $f(1, x) = 0$ for all $x \in M$ is equivalent to $\lim_{t \to 1} f(t, x) = 0 \text{ uniformly in } x$.

Now in order to interpret $K_0(S C_u(M))$ via vector bundles of bounded geometry over $S^1 \times M$, we will need to find a suitable \Frechet subalgebra of $S C_u(M)$ so that we can again use Proposition \ref{prop:image_proj_matrix_complemented}. Luckily, this was already done by Phillips in \cite{phillips}:

\begin{defn}[Smooth suspension of a \Frechet algebras, {\cite[Definition 4.7]{phillips}}]\label{defn:smooth_suspension_algebra}
Let $A$ be a \Frechet algebra. Then the \emph{smooth suspension $S_\infty A$ of $A$} is defined as the \Frechet algebra
\[ S_\infty A := \{ f\colon S^1 \to A \ | \ f \text{ smooth and } f(1) = 0\}\]
equipped with the topology of uniform convergence of every derivative in every seminorm of $A$.
\end{defn}

For a manifold $M$ we have
\begin{align*}
S_\infty C_b^\infty(M) & \cong \{ f \in C_b^\infty(S^1 \times M) \ | \ f(1, x) = 0 \text{ for all }x \in M\} \\
& = \{ f \in C_b^\infty(S^1 \times M) \ | \ \forall k \in \IN_0 \colon \lim_{t \to 1} \nabla^k_x f(t, x) = 0 \text{ uniformly in } x\}.
\end{align*}

The proof of the following lemma is analogous to the proof of the Lemma \ref{lem:C_b_infty_local}:

\begin{lem}
Let $M$ be a manifold of bounded geometry.

Then the sup-norm completion of $S_\infty C_b^\infty(M)$ is $S C_u(M)$ and $S_\infty C_b^\infty(M)$ is a local $C^\ast$-algebra.\qed
\end{lem}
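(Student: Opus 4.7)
The plan is to mimic the proofs of Lemma~\ref{lem:C_b_infty_local} and Lemma~\ref{lem:norm_completion_C_b_infty}, now working on the product manifold $S^1 \times M$---which is again a manifold of bounded geometry---and keeping track of the boundary condition at $\{1\} \times M$. Throughout I would use the identification, already recorded above, of $S_\infty C_b^\infty(M)$ with the subspace of $f \in C_b^\infty(S^1 \times M)$ such that $\lim_{t \to 1} \nabla_x^k f(t,x) = 0$ uniformly in $x$ for every $k \in \IN_0$; this description together with the Leibniz rule immediately shows that $S_\infty C_b^\infty(M)$ is an ideal in $C_b^\infty(S^1 \times M)$.

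For the sup-norm completion statement, the inclusion $\overline{S_\infty C_b^\infty(M)} \subseteq SC_u(M)$ is automatic. For the reverse inclusion, given $f \in SC_u(M)$ I would first multiply by a smooth cutoff $\chi_\varepsilon \in C^\infty(S^1)$ that vanishes on a small neighborhood $U_\varepsilon$ of $1 \in S^1$ and equals $1$ outside a slightly larger one; uniform continuity of $f$ combined with $f(1,x) = 0$ yields $\|\chi_\varepsilon f - f\|_\infty \to 0$ as $\varepsilon \to 0$. Since $\chi_\varepsilon f$ vanishes identically on $U_\varepsilon \times M$, applying the mollification procedure from the proof of Lemma~\ref{lem:norm_completion_C_b_infty} on $S^1 \times M$ with a mollifier of radius strictly smaller than the $t$-width of $U_\varepsilon$ produces a smooth, uniformly $C^\infty$-bounded approximation which still vanishes in a slightly smaller neighborhood of $\{1\} \times M$ and hence lies in $S_\infty C_b^\infty(M)$.

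For the local $C^\ast$-algebra claim I would apply Schweitzer's criteria \cite[Lemma~1.2, Corollary~2.3]{schweitzer} to the unitization $(S_\infty C_b^\infty(M))^+$, which is a unital $m$-convex \Frechet algebra whose topology is finer than the sup-norm. It then suffices to show that whenever $g = \lambda + a$, with $a \in S_\infty C_b^\infty(M)$ and $\lambda \in \IC$, is invertible in $(SC_u(M))^+$, its inverse lies in $(S_\infty C_b^\infty(M))^+$. Invertibility forces $|g| \geq c > 0$ on $S^1 \times M$, so the reciprocation argument from the proof of Lemma~\ref{lem:C_b_infty_local} (applied on $S^1 \times M$) gives $g^{-1} \in C_b^\infty(S^1 \times M)$. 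The identity $g^{-1} - \lambda^{-1} = -a \cdot (\lambda g)^{-1}$, together with the ideal property recorded in the first paragraph, then places $g^{-1} - \lambda^{-1}$ inside $S_\infty C_b^\infty(M)$, as required.

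The only obstacle I expect is the mild bookkeeping around the boundary condition at $t = 1$: the cutoff ensures that this condition survives the mollification, and the ideal property ensures that it survives the inversion. Everything else reduces to invocations of the already-proven statements for $C_b^\infty$ and $C_u$ on the product manifold $S^1 \times M$.
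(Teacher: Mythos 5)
Your proof is correct and essentially matches the approach the paper indicates (the paper offers no details, simply declaring the argument analogous to Lemma~\ref{lem:C_b_infty_local}). You have correctly supplied the one point that a naive analogy would miss: since $S_\infty C_b^\infty(M)$ is non-unital (unlike $C_b^\infty(M)$), one must pass to the unitization, verify invertibility via Lemma~\ref{lem:C_b_infty_local} applied to $C_b^\infty(S^1 \times M)$, and then use the identity $g^{-1}-\lambda^{-1}=-a(\lambda g)^{-1}$ together with the ideal property of $S_\infty C_b^\infty(M)$ in $C_b^\infty(S^1\times M)$ to land back in $(S_\infty C_b^\infty(M))^+$; similarly, the cutoff near $\{1\}\times M$ is exactly what is needed to make the mollification argument from Lemma~\ref{lem:norm_completion_C_b_infty} compatible with the boundary condition.
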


Putting it all together, we get $K^1_u(M) = K_0(S_\infty C_b^\infty(M))$, and Proposition \ref{prop:image_proj_matrix_complemented}, adapted to our case here, gives us the following interpretation of the $1$st uniform $K$-theory group $K^1_u(M)$ of a manifold of bounded geometry:

\begin{thm}[Interpretation of $K^1_u(M)$]\label{thm:interpretation_K1u}
Let $M$ be a Riemannian manifold of bounded geometry and without boundary.

Then every elements of $K^1_u(M)$ is of the form $[E] - [F]$, where both $[E]$ and $[F]$ are $C_b^\infty$-isomorphism classes of complex vector bundles of bounded geometry over $S^1 \times M$ with the following property: there is some neighbourhood $U \subset S^1$ of $1$ such that $[E|_{U \times M}]$ and $[F|_{U \times M}]$ are $C_b^\infty$-isomorphic to a trivial vector bundle with the flat connection (the dimension of the trivial bundle is the same for both $[E|_{U \times M}]$ and $[F|_{U \times M}]$).

Moreover, every pair of complex vector bundles $E$ and $F$ of bounded geometry and with the above properties define a class $[E] - [F]$ in $K_u^1(M)$.\qed
\end{thm}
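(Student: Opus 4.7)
The plan is to combine the isomorphism $K^1_u(M) \cong K_0(S_\infty C_b^\infty(M))$ with the standard picture of $K_0$ of a non-unital local $C^\ast$-algebra, and then apply Proposition~\ref{prop:image_proj_matrix_complemented} (extended to the manifold $S^1 \times M$, which again has bounded geometry) to interpret idempotents as bundles. Concretely, the unitization $(S_\infty C_b^\infty(M))^+$ may be identified with the \Frechet subalgebra of $C_b^\infty(S^1 \times M)$ consisting of functions $F$ with $F(1, x)$ constant in $x$; the scalar map $s\colon (S_\infty C_b^\infty(M))^+ \to \IC$ sends $F$ to this constant value. Elements of $K_0(S_\infty C_b^\infty(M))$ are represented by formal differences $[e]-[f]$ of idempotents $e,f \in \Mat_{N \times N}((S_\infty C_b^\infty(M))^+)$ with $s(e) = s(f) =: p \in \Mat_{N \times N}(\IC)$, a constant scalar idempotent of rank equal to the common trivial-bundle dimension.

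Viewing $e$ as an idempotent matrix with entries in $C_b^\infty(S^1 \times M)$, the adaptation of Proposition~\ref{prop:image_proj_matrix_complemented} produces a $C_b^\infty$-complemented (hence bounded geometry) vector bundle $E := \image e$ over $S^1 \times M$, and similarly $F := \image f$. Writing $e(t,x) = p + g(t,x)$ with $g \in \Mat_{N \times N}(S_\infty C_b^\infty(M))$, the defining property of $S_\infty C_b^\infty(M)$ ensures that $g(t,x)$ and each of its $x$-derivatives tends to $0$ uniformly in $x$ as $t \to 1$. Hence on some neighbourhood $U \subset S^1$ of $1$ we have $\|g(t,x)\| < 1/2$ uniformly; the standard formula
\[ u(t,x) := p\,e(t,x) + (1-p)(1-e(t,x)) \]
then yields an invertible matrix on $U \times M$ conjugating $e(t,x)$ to the constant $p$. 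Since $e$ and $p$ are $C^\infty$-bounded on $U \times M$, so is $u$; Cramer's rule together with the uniform bound on $\|u - 1\|$ gives $C^\infty$-boundedness of $u^{-1}$. This $u$ is precisely a $C_b^\infty$-isomorphism between $E|_{U \times M}$ and the trivial bundle $U \times \image p$ with its flat connection; the analogous argument for $f$ (intersecting the two neighbourhoods of $1$) shows that $E|_{U \times M}$ and $F|_{U \times M}$ are both $C_b^\infty$-isomorphic to the same trivial bundle of rank $\operatorname{rank}(p)$.

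Conversely, given a pair $(E,F)$ of vector bundles of bounded geometry over $S^1 \times M$ with a common neighbourhood $U$ of $1$ on which both are $C_b^\infty$-isomorphic to a trivial bundle of rank $k$ with flat connection, we apply Proposition~\ref{prop:every_bundle_complemented} to realize $E$ and $F$ as images of idempotent matrices $e_0, f_0 \in \Mat_{N \times N}(C_b^\infty(S^1 \times M))$. The flat trivializations on $U \times M$ allow us to conjugate $e_0$ and $f_0$, smoothly on $S^1 \times M$ and via a $C^\infty$-bounded similarity supported on a slightly smaller neighbourhood of $1$, so that their values on $\{1\} \times M$ become the same constant scalar idempotent of rank $k$ (this uses the standard trick of multiplying by a smooth cutoff and the $C^\infty$-boundedness of the given trivializations). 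The resulting idempotents lie in $\Mat_{N \times N}((S_\infty C_b^\infty(M))^+)$, have equal scalar parts, and by construction represent the class $[E] - [F]$ under the identification above.

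The main obstacle is the passage from pointwise triviality at $\{1\}\times M$ to $C_b^\infty$-isomorphism with a flat trivial bundle over a genuine neighbourhood $U$; this rests on the fact that the vanishing in $S_\infty C_b^\infty(M)$ is a uniform statement in every $C^k$-seminorm in the $M$-direction, which is exactly what is needed for the similarity $u$ and its inverse to be $C^\infty$-bounded on $U \times M$ with bounds independent of $x$. The same uniformity is the technical point in the converse direction when one trivializes the bundles near $\{1\}\times M$; all remaining verifications (bilinearity, well-definedness up to similarity, and the translation between idempotents in matrix algebras of arbitrary size) are formal and proceed exactly as in the proof of Theorem~\ref{thm:interpretation_K0u}.
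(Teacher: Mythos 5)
Your proof follows the same approach as the paper, which simply invokes $K^1_u(M) \cong K_0(S_\infty C_b^\infty(M))$ and then cites Proposition~\ref{prop:image_proj_matrix_complemented} ``adapted to our case here''; your unpacking of the translation between idempotents in $\Mat_{N\times N}((S_\infty C_b^\infty(M))^+)$ and bundles over $S^1 \times M$ trivialized near $\{1\}\times M$ — in particular the conjugation $u = pe + (1-p)(1-e)$ and the observation that $C_b^\infty(S^1\times M)$-regularity forces the $x$-derivatives of $e$ to vanish uniformly as $t\to 1$ — is correct and rather more detailed than the paper's one-line argument. The only place that might be spelled out further is the rotation-homotopy needed to turn the local trivialization into a similarity that is the identity away from a neighbourhood of $\{1\}\times M$, but the paper also leaves this implicit.
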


Note that the last statement in the above theorem is not trivial since it relies on the Proposition \ref{prop:every_bundle_complemented}.

\subsection{Cap product}\label{sec:cap_product}

In this section we will define the cap product $\cap \colon K_u^p(X) \otimes K_q^u(X) \to K_{q-p}^u(X)$ for a locally compact and separable metric space $X$ of jointly bounded geometry\footnote{see Definition \ref{defn:jointly_bounded_geometry}}.

Recall that we have
\begin{equation*}
\LLip_R(X) := \{ f \in C_c(X) \ | \ f \text{ is }L\text{-Lipschitz}, \diam(\supp f) \le R \text{ and } \|f\|_\infty \le 1\}.
\end{equation*}

Let us first describe the cap product of $K_u^0(X)$ with $K^u_\ast(X)$ on the level of uniform Fredholm modules. The general definition of it will be given via dual algebras.

\begin{lem}\label{lem:proj_again_uniform_fredholm_module}
Let $P$ be a projection in $\Mat_{n \times n}(C_u(X))$ and let $(H, \rho, T)$ be a uniform Fredholm module.

We set $H_n := H \otimes \IC^n$, $\rho_n(-) := \rho(-) \otimes \id_{\IC^n}$, $T_n := T \otimes \id_{\IC^n}$ and by $\pi$ we denote the matrix $\pi_{ij} := \rho(P_{ij}) \in \Mat_{n \times n}(\IB(H)) = \IB(H_n)$.

Then $(\pi H_n, \pi \rho_n \pi, \pi T_n \pi)$ is a uniform Fredholm module, with an induced (multi-)grading if $(H, \rho, T)$ was (multi-)graded.
\end{lem}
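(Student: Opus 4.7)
The plan is to verify the algebraic conditions on $(\pi H_n, \pi\rho_n\pi, \pi T_n\pi)$ and then reduce each of the three uniform conditions for the new Fredholm module to the corresponding conditions on $(H,\rho,T)$. First I would normalize $(H,\rho,T)$ to be non-degenerate via Lemma~\ref{lem:normalization_non-degenerate}, so that $\rho$ extends uniquely to a representation of the multiplier algebra $C_b(X)\supset C_u(X)$; this makes $\pi_{ij}:=\rho(P_{ij})$ unambiguously defined. A short matrix computation using $P^2=P$ and $P=P^\ast$ yields $\pi^2=\pi$ and $\pi^\ast=\pi$, hence $\pi$ is a self-adjoint projection on $H_n$. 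Because the grading and multigrading operators act trivially on the $\IC^n$-factor and $\pi_{ij}$ is even multigraded, $\pi$ commutes with them, so $\pi H_n$ is naturally $p$-multigraded and $\pi T_n\pi$ is an odd multigraded operator on it. Since $C_0(X)\cdot C_u(X)\subset C_0(X)$ and these functions commute pointwise, $[\pi,\rho_n(f)]=0$ for all $f\in C_0(X)$, so $\pi\rho_n(-)\pi=\rho_n(-)\pi$ defines a $^\ast$-representation on $\pi H_n$.

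For the uniform conditions, fix $f\in\LLip_R(X)$. Using $[\pi,\rho_n(f)]=0$, a direct computation gives
\[
[\pi T_n\pi,\pi\rho_n(f)\pi]=\pi[T_n,\rho_n(f)]\pi,\qquad
(\pi T_n\pi)-(\pi T_n\pi)^\ast=\pi(T_n-T_n^\ast)\pi,
\]
and, using $\pi T_n=T_n\pi+[\pi,T_n]$ and $\pi^2=\pi$,
\[
(\pi T_n\pi)^2-\pi=\pi(T_n^2-1)\pi-\pi[\pi,T_n]\,T_n\pi.
\]
The commutator identity and both $(T_n-T_n^\ast)$-terms and the first summand $\pi(T_n^2-1)\pi$ reduce immediately to the uniform conditions on $T$: tensoring an approximant of $[T,\rho(f)]$, $(T-T^\ast)\rho(f)$, or $(T^2-1)\rho(f)$ with $\id_{\IC^n}$ yields an approximant of rank $nN$, and sandwiching by $\pi$ preserves uniform approximability since $\|\pi\|\le1$.

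The only genuinely new point is the second summand $\pi\rho_n(f)[\pi,T_n]T_n\pi$. Its matrix entries expand to sums of terms of the form $\rho(P_{ik}f)[\rho(P_{kl}),T]\,T\,\rho(P_{lj})$. The Leibniz-type identity
\[
\rho(g_1)[\rho(g_2),T]=[\rho(g_1g_2),T]-[\rho(g_1),T]\rho(g_2)
\]
rewrites each such term as a sum of commutators $[\rho(h),T]$ with $h=P_{ik}fP_{kl}$ or $h=P_{ik}f$, composed with bounded operators. Each such $h$ is supported in $\supp f$ (hence of diameter $\le R$) and uniformly bounded, but only uniformly continuous, not Lipschitz. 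The main obstacle is precisely this: the entries $P_{ij}$ lie in $C_u(X)$ rather than in a Lipschitz algebra.

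To overcome it I would approximate: for any $\eta>0$, use the uniform continuity of the finitely many functions $P_{ij}$ to find $L_\eta>0$ and $L_\eta$-Lipschitz functions $Q_{ij}^{(\eta)}$ with $\|P_{ij}-Q_{ij}^{(\eta)}\|_\infty<\eta$ (via inf-convolution, for instance). Then $Q_{ik}^{(\eta)}fQ_{kl}^{(\eta)}$ is supported in $\supp f$, bounded by $(\|P\|_\infty+\eta)^2$, and Lipschitz with constant controlled by $L_\eta$, $L$ and $\|P\|_\infty$; after rescaling it lies in $\tilde L\text{-}\operatorname{Lip}_R(X)$ for some $\tilde L=\tilde L(L,L_\eta,\|P\|_\infty)$ independent of $f$. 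Uniform pseudolocality of $T$ applied to $\tilde L\text{-}\operatorname{Lip}_R(X)$ yields that $\{[T,\rho(Q_{ik}^{(\eta)}fQ_{kl}^{(\eta)})]:f\in\LLip_R(X)\}$ is uniformly approximable, and the error $[T,\rho(P_{ik}fP_{kl}-Q_{ik}^{(\eta)}fQ_{kl}^{(\eta)})]$ has norm $O(\eta\|T\|)$ uniformly in $f$. Letting $\eta\to0$ and composing with the bounded operators $T\rho(P_{lj})$, $\rho(P_{kl})T\rho(P_{lj})$ (which preserve uniform approximability) completes the proof of uniform local compactness of $(\pi T_n\pi)^2-\pi$. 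The argument on the right-hand side $[(\pi T_n\pi)^2-\pi]\rho_n(f)$ is symmetric.
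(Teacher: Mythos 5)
Your proof is correct, and it follows the same overall strategy as the paper: sandwich by $\pi$, reduce everything to the uniform conditions on $T$, and handle the fact that the entries $P_{ij}$ lie only in $C_u(X)$ by approximating them uniformly by Lipschitz functions. Two points of execution differ, both to your advantage. For uniform pseudolocality the paper works with $\pi[T_n,\pi\rho_n(f)]\pi$ and expands $[T_n,\pi\rho_n(f)]$ entrywise as $[T,\rho(P_{ij}f)]$, which forces the Lipschitz approximation of $P_{ij}$ already at this stage. You instead note the equivalent and simpler identity $[\pi T_n\pi,\pi\rho_n(f)\pi]=\pi\bigl([T,\rho(f)]\otimes\id_{\IC^n}\bigr)\pi$, so that uniform pseudolocality of the new module falls out directly from that of $T$ by tensoring with $\id_{\IC^n}$ and sandwiching; no approximation of $P_{ij}$ is needed there. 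For the uniform local compactness of $(\pi T_n\pi)^2-\pi$ the paper only says ``analogously'' and remarks that pseudolocality lets one commute operators past $\rho(P_{ij}^\varepsilon f)$; you make this explicit with the identity $(\pi T_n\pi)^2-\pi=\pi(T_n^2-1)\pi-\pi[\pi,T_n]T_n\pi$, isolate the commutator summand as the only place the Lipschitz approximation of $P_{ij}$ is genuinely needed, expand via a Leibniz identity, and run the $\varepsilon/3$ estimate. One small remark: rather than appealing to Lemma~\ref{lem:normalization_non-degenerate} to make $\rho$ non-degenerate, it would be more natural to simply fix the canonical extension of $\rho$ to the multiplier algebra $C_b(X)\supset C_u(X)$ (acting as $0$ on the orthogonal complement of the essential subspace) — this is the convention implicit in the paper and the one compatible with the degenerate representations $\rho\oplus 0$ used in the Paschke-duality picture later in Section~\ref{sec:cap_product}. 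But this does not affect the validity of your argument.
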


\begin{proof}
Let us first show that the operator $\pi T_n \pi$ is a uniformly pseudolocal one. Let $R, L > 0$ be given and we have to show that $\{[\pi T_n \pi, \pi \rho_n(f) \pi] \ | \ f \in \LLip_R(X)\}$ is uniformly approximable. This means that we must show that for every $\varepsilon > 0$ there exists an $N > 0$ such that for every $[\pi T_n \pi, \pi \rho_n(f) \pi]$ with $f \in \LLip_R(X)$ there is a rank-$N$ operator $k$ with $\|[\pi T_n \pi, \pi \rho_n(f) \pi] - k\| < \varepsilon$.

We have
\[[\pi T_n \pi, \pi \rho_n(f) \pi] = \pi [T_n, \pi \rho_n(f)] \pi,\]
because $\pi^2 = \pi$ and $\pi$ commutes with $\rho_n(f)$. So since $(\pi \rho_n(f))_{ij} = \rho(P_{ij} f) \in \IB(H)$, we get for the matrix entries of the commutator
\[([T_n, \pi \rho_n(f)])_{ij} = [T, \rho(P_{ij} f)].\]

Since the $P_{ij}$ are bounded and uniformly continuous, they can be uniformly approximated by Lipschitz functions, i.e., there are $P_{ij}^\varepsilon$ with
\[\|P_{ij} - P_{ij}^\varepsilon\|_\infty < \varepsilon / (4n^2 \|T\|).\]
Note that we have $P_{ij}^\varepsilon f \in {L_{ij}\text{-}\operatorname{Lip}}_{R}(X)$, where $L_{ij}$ depends only on $L$ and $P_{ij}^\varepsilon$. We define $L^\prime := \max\{L_{ij}\}$.

Now we apply the uniform pseudolocality of $T$, i.e., we get a maximum rank $N^\prime$ corresponding to $R, L^\prime$ and $\varepsilon / 2n^2$. So let $k_{ij}^\varepsilon$ be the rank-$N^\prime$ operators corresponding to the functions $P_{ij}^\varepsilon f$, i.e.,
\[\|[T, \rho(P_{ij}^\varepsilon f)] - k_{ij}^\varepsilon\| < \varepsilon / 2n^2.\]

We set $k := \pi (k_{ij}^\varepsilon) \pi$, where $(k_{ij}^\varepsilon)$ is viewed as a matrix of operators. Then $k$ has rank at most $N := n^2 N^\prime$. Then we compute
\begin{align*}
\|[\pi T_n & \pi, \pi \rho_n(f) \pi] - k\|\\
& = \|\pi[T_n, \pi \rho_n(f)]\pi - \pi (k_{ij}^\varepsilon) \pi\|\\
& \le \|\pi\|^2 \cdot n^2 \cdot \max_{i,j}\{\|[T, \rho(P_{ij} f)] - k_{ij}^\varepsilon\|\}\\
& \le \|\pi\|^2 \cdot n^2 \cdot \max_{i,j}\{\underbrace{\|[T, \rho(P_{ij} f)] - [T, \rho(P_{ij}^\varepsilon f)]\|}_{=\|[T, \rho(P_{ij} - P_{ij}^\varepsilon)\rho(f)]\|} + \underbrace{\|[T, \rho(P_{ij}^\varepsilon f)] - k_{ij}^\varepsilon\|\}}_{\le \varepsilon / 2n^2}\\
& \le \|\pi\|^2 \cdot n^2 \cdot \max_{i,j}\{2 \|T\| \cdot \underbrace{\|\rho(P_{ij} - P_{ij}^\varepsilon)\| \cdot \|\rho(f)\|}_{\le \varepsilon/(4n^2 \|T\|)} + \varepsilon / 2n^2\}\\
& \le \|\pi\|^2 \cdot \varepsilon,
\end{align*}
which concludes the proof of the uniform pseudolocality of $\pi T_n \pi$.

That $(\pi T_n \pi)^2 - 1$ and $\pi T_n \pi - (\pi T_n \pi)^\ast$ are uniformly locally compact can be shown analogously. Note that because $T$ is uniformly pseudolocal we may interchange the order of the operators $T_n$ and $\rho(P_{ij}^\varepsilon f)$ in formulas (since for fixed $R$ and $L$ the subset $\{[T_n, \rho(P_{ij}^\varepsilon f)] \ | \ f \in \LLip_R(X) \} \subset \IB(H_n)$ is uniformly approximable).

We have shown that $(\pi H_n, \pi \rho_n \pi, \pi T_n \pi)$ is a uniform Fredholm module. That it inherits a (multi-)grading from $(H, \rho, T)$ is clear and this completes the proof.
\end{proof}

That the construction from the above lemma is compatible with the relations defining $K$-theory and uniform $K$-homology and that it is bilinear is quickly deduced and completely analogous to the non-uniform case. So we get a well-defined pairing
\[\cap\colon K_u^0(X) \otimes K_\ast^u(X) \to K_\ast^u(X)\]
which exhibits $K_\ast^u(X)$ as a module over the ring $K_u^0(X)$.\footnote{Compatibility with the internal product on $K_u^0(X)$, i.e., $(P \otimes Q) \cap T = P \cap (Q \cap T)$, is easily deduced. It mainly uses the fact that the isomorphism $\Mat_{n \times n}(\IC) \otimes \Mat_{m \times m}(\IC) \cong \Mat_{nm \times nm}(\IC)$ is canonical up to the ordering of basis elements. But different choices of orderings result in isomorphisms that differ by inner automorphisms, which makes no difference at the level of $K$-theory.}

To define the cap product in its general form, we will use the dual algebra picture of uniform $K$-homology, i.e., Paschke duality:

\begin{defn}[{\cite[Definition 4.1]{spakula_uniform_k_homology}}]\label{defn:frakD_frakC}
Let $H$ be a separable Hilbert space and $\rho \colon C_0(X) \to \IB(H)$ a representation.

We denote by $\frakD^u_{\rho \oplus 0}(X) \subset \IB(H \oplus H)$ the $C^\ast$-algebra of all uniformly pseudolocal operators with respect to the representation $\rho \oplus 0$ of $C_0(X)$ on the space $H \oplus H$ and by $\frakC^u_{\rho \oplus 0}(X) \subset \IB(H \oplus H)$ the $C^\ast$-algebra of all uniformly locally compact operators.
\end{defn}

That the algebras $\frakD^u_{\rho \oplus 0}(X)$ and $\frakC^u_{\rho \oplus 0}(X)$ are indeed $C^\ast$-algebras was shown by \Spakula in \cite[Lemma 4.2]{spakula_uniform_k_homology}. There it was also shown that $\frakC^u_{\rho \oplus 0}(X) \subset \frakD^u_{\rho \oplus 0}(X)$ is a closed, two-sided $^\ast$-ideal.

\begin{defn}
The groups $K_{-1}^u(X; {\rho \oplus 0})$ are analogously defined as $K_{-1}^u(X)$, except that we consider only uniform Fredholm modules whose Hilbert spaces and representations are (finite or countably infinite) direct sums of $H \oplus H$ and $\rho \oplus 0$.

For $K_0^u(X; {\rho \oplus 0})$ we consider only uniform Fredholm modules modeled on $H^\prime \oplus H^\prime$ with the representation $\rho^\prime \oplus \rho^\prime$, where $H^\prime$ is a finite or countably infinite direct sum of $H \oplus H$ and $\rho^\prime$ analogously a direct sum of finitely or infinitely many $\rho \oplus 0$, and the grading is given by interchanging the two summands in $H^\prime \oplus H^\prime$. Such Fredholm modules are called \emph{balanced} in \cite[Definition 8.3.10]{higson_roe}.
\end{defn}

\begin{prop}[{\cite[Proposition 4.3]{spakula_uniform_k_homology}}]\label{prop:paschke_duality}
The maps
\[\varphi_\ast \colon K_{1+\ast}(\frakD^u_{\rho \oplus 0}(X)) \to K_\ast^u(X; {\rho \oplus 0})\]
for $\ast = -1, 0$ are isomorphisms.
\end{prop}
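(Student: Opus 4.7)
The strategy is to adapt the proof of non-uniform Paschke duality (see Higson--Roe Chapter 5) to our setting by carrying uniform approximability estimates along at every step. First I would describe the maps $\varphi_\ast$ explicitly. For $\ast = -1$, a projection $P$ in (a matrix algebra over) the unitization of $\frakD^u_{\rho\oplus 0}(X)$ is sent to the ungraded uniform Fredholm module with operator $T = 2P - 1$; since $P$ is a self-adjoint idempotent we have $T^\ast = T$ and $T^2 - 1 = 0$ on the nose, while uniform pseudolocality of $T$ is inherited from $P \in \frakD^u_{\rho\oplus 0}(X)$. For $\ast = 0$, a unitary $U$ in the unitization is sent to a graded Fredholm module on $(H\oplus H) \oplus (H\oplus H)$ whose odd operator is $T = \bigl(\begin{smallmatrix} 0 & U^\ast \\ U & 0\end{smallmatrix}\bigr)$; unitarity of $U$ forces $T^2 - 1 = 0$ and $T - T^\ast = 0$ exactly, while uniform pseudolocality of $T$ follows because $U-1$ and $U^\ast-1$ lie in (matrices over) $\frakD^u_{\rho\oplus 0}(X)$. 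Homotopies of projections and unitaries translate directly into operator homotopies of the resulting uniform Fredholm modules, and stabilization in $K$-theory corresponds to adding degenerate summands, so $\varphi_\ast$ descends to a well-defined group homomorphism.

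For the inverse I would apply Lemma~\ref{lem:normalization_involutive} and Lemma~\ref{lem:normalization_non-degenerate} to represent any uniform $K$-homology class by an involutive, balanced uniform Fredholm module with the prescribed Hilbert space and representation $\rho\oplus 0$. For $\ast = -1$, set $P := \tfrac{1}{2}(T + 1)$; because $T = T^\ast$ and $T^2 = 1$ the operator $P$ is an honest projection, and it lies in $\frakD^u_{\rho\oplus 0}(X)^+$ since $T$ does. After subtracting a fixed reference projection coming from the standard degenerate module, this assigns a well-defined class in $K_0(\frakD^u_{\rho\oplus 0}(X))$. For $\ast = 0$, the off-diagonal piece $V$ of the involutive graded operator is a unitary on the relevant copy of the ambient Hilbert space, and inherits uniform pseudolocality, producing a class in $K_1(\frakD^u_{\rho\oplus 0}(X))$. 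Checking that $\varphi_\ast$ and this assignment are mutually inverse is then formal: starting from an involutive module, extracting $P = \tfrac{1}{2}(T+1)$ and applying $\varphi$ returns $2P - 1 = T$, and starting from a projection $P$, building $T = 2P - 1$ and cutting by $\tfrac{1}{2}(T+1)$ returns $P$. Operator homotopies of involutive modules correspond exactly to homotopies of the associated projections (respectively unitaries), which takes care of injectivity.

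The main technical obstacle is verifying that each step of this classical argument stays inside the \emph{uniform} world. Concretely I would need to check that (i) the normalizations of Lemmas~\ref{lem:normalization_involutive} and~\ref{lem:normalization_non-degenerate} can be carried out without destroying the uniform approximability of $[T,\rho(f)]$, $T^2 - 1$ and $T - T^\ast$ on $\LLip_R(X)$, and that the homotopies implementing them consist of uniform Fredholm modules; (ii) the compact perturbations used to interpolate between involutive and non-involutive pictures are in fact uniformly locally compact, so that Lemma~\ref{lem:compact_perturbations} genuinely applies; and (iii) the reference projections and degenerate modules used to handle stabilization are themselves elements of $\frakD^u_{\rho\oplus 0}(X)$, with their orthogonal complements again in $\frakD^u_{\rho\oplus 0}(X)$. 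Once these uniformity bookkeeping statements are in place the rest of the argument is just the non-uniform proof of Paschke duality applied verbatim, and the hardest single ingredient is already provided by the uniform normalization results.
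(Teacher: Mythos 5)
The paper does not prove this proposition itself; it is stated purely as a citation of \cite[Proposition~4.3]{spakula_uniform_k_homology}, so there is no in-paper argument to compare against. Your outline is the standard Paschke-duality template transported to the uniform setting (projections and unitaries in $\frakD^u_{\rho\oplus 0}(X)$ traded against involutive, balanced uniform Fredholm modules via $T = 2P-1$ respectively $T = \bigl(\begin{smallmatrix}0&U^\ast\\U&0\end{smallmatrix}\bigr)$, with Lemma~\ref{lem:normalization_involutive} supplying the inverse), and this is almost certainly close to \Spakula's own proof.

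One point in your inverse construction is genuinely off, though. You propose to apply \emph{both} Lemma~\ref{lem:normalization_involutive} and Lemma~\ref{lem:normalization_non-degenerate} to reach an ``involutive, balanced'' representative. The paper explicitly warns, just after Lemma~\ref{lem:normalization_non-degenerate}, that the involutive and non-degenerate normalizations cannot be performed simultaneously, and here non-degeneracy is not merely unavailable but actively false: the representation $\rho\oplus 0$ annihilates the second copy of $H$ by construction, and modules modeled on countable direct sums of $(H\oplus H,\,\rho\oplus 0)$ are degenerate-by-definition members of $K_\ast^u(X;\rho\oplus 0)$. The restriction to this fixed representation is part of the \emph{definition} of $K_\ast^u(X;\rho\oplus 0)$, not something one achieves by a normalization lemma. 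So you should drop the appeal to Lemma~\ref{lem:normalization_non-degenerate} entirely, and instead verify the separate (and nontrivial) point that the involutive normalization of Lemma~\ref{lem:normalization_involutive} can be carried out \emph{without leaving} the restricted class of modules modeled on direct sums of $(H\oplus H,\,\rho\oplus 0)$ — that is, that the stabilizations and operator homotopies used there can all be chosen with Hilbert space and representation of the prescribed form. This, together with the bookkeeping about which countable direct sums of $\rho\oplus 0$ can be re-identified with $\rho\oplus 0$ on $H\oplus H$ so that the resulting projection or unitary really does live in (matrices over) $\frakD^u_{\rho\oplus 0}(X)$, is where the actual work of the proposition lies; the rest of your sketch is the expected formal reduction.
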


Combining the above proposition with the following uniform version of Voiculescu's Theorem, we get the needed uniform version of Paschke duality.

\begin{thm}[{\cite[Corollary 3.6]{spakula_universal_rep}}]\label{thm:paschke_universal}
Let $X$ be a locally compact and separable metric space of jointly bounded geometry and $\rho\colon C_0(X) \to \IB(H)$ an ample representation, i.e., $\rho$ is non-degenerate and $\rho(f) \in \IK(H)$ implies $f \equiv 0$.

Then we have
\[K_\ast^u(X; \rho \oplus 0) \cong K_\ast^u(X)\]
for both $\ast = -1,0$.
\end{thm}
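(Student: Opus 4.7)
The plan is to prove this by establishing a uniform absorption (Voiculescu-type) theorem and then carrying out the standard Paschke-duality-style argument in the uniform setting. The natural candidate for the isomorphism is the forgetful map, which sends a balanced uniform Fredholm module modeled on $(H \oplus H)^\infty$ with representation a direct sum of $\rho \oplus 0$'s to its class as a general uniform Fredholm module over $X$. This is well-defined because operator homotopies, unitary equivalences, and direct sums within the restricted class are admissible operations in $K_\ast^u(X)$ as well.

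First I would formulate a uniform Voiculescu absorption statement: for any representation $\pi\colon C_0(X) \to \IB(H')$ occurring in a uniform Fredholm module, there is an isometry $V \colon H' \to H^\infty$ (into a countable direct sum of copies of $H \oplus H$ carrying the representation $(\rho \oplus 0)^\infty$) such that for every $R, L > 0$ the set $\{V \pi(f) - (\rho \oplus 0)^\infty(f) V \mid f \in \LLip_R(X)\}$ is uniformly approximable by finite rank operators, with the rank bound depending only on $R, L$. This is the main technical ingredient and, as indicated in the text, is what \cite[Theorem~3.5]{spakula_universal_rep} provides; it is proved by building the isometry piecewise using the jointly bounded geometry decomposition $X = \cup X_i$ and gluing with a uniformly Lipschitz partition of unity, so that the local approximation bounds on each $X_i$ (which follow from the non-uniform Voiculescu theorem on totally bounded pieces) assemble into a uniform global bound.

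Given this absorption result, surjectivity of the forgetful map follows as in the classical case: given a uniform Fredholm module $(H', \pi, T)$, augment it by the degenerate module $(H^\infty, (\rho \oplus 0)^\infty, 0)$, which does not change the uniform $K$-homology class. Conjugating by the unitary $U$ built from the absorption isometry $V$ (using $V$ and its orthogonal complement into $H^\infty$) produces a unitarily equivalent module whose representation is of the required form $(\rho \oplus 0)^\infty$. The crucial point is that conjugation by such a $U$ preserves uniform pseudolocality and uniform local compactness, since the commutators with the representation are controlled by the uniform approximability furnished by the absorption theorem, combined with Lemma~\ref{lem:proj_again_uniform_fredholm_module}-style manipulations.

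For injectivity I would run the same argument on a homotopy: a norm-continuous path of uniform Fredholm modules in $K_\ast^u(X)$ connecting two modules of the restricted form can be absorbed fiberwise, producing a homotopy within the restricted category up to a further degenerate direct summand. The formal $2$-periodicity together with the graded versus ungraded distinction for $\ast = 0$ and $\ast = -1$ is handled uniformly, since absorption is unchanged by the multigrading. The main obstacle will be verifying that all the uniformity bounds are preserved under the conjugation by $U$ and under interpolation along the homotopy parameter; this is where the joint bounded geometry hypothesis enters decisively, ensuring that the rank bounds in the definition of uniformly approximable collections remain independent of the location in $X$ and of the homotopy parameter.
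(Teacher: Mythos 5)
The paper offers no proof of this statement: it is imported directly from \cite{spakula_universal_rep} (the theorem header itself cites Corollary~3.6 of that paper, and no argument is given in the text). So there is no ``paper's own proof'' to compare against; the comparison must be with the cited source. Your outline correctly reconstructs the strategy followed there: the forgetful map is the candidate isomorphism, the key technical input is a uniform Voiculescu absorption theorem (\cite[Theorem~3.5]{spakula_universal_rep}), and then surjectivity and injectivity follow by absorbing arbitrary uniform Fredholm modules and their homotopies into modules of the restricted form after adjoining a degenerate direct summand, exactly as in the classical Paschke-duality argument. Since you explicitly attribute the absorption theorem to the reference rather than claiming a new proof, the structure of your argument is sound.

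The one place where your sketch is genuinely thinner than the source is the absorption theorem itself, which is where essentially all of the work lives. ``Build the isometry piecewise over the decomposition $X = \cup X_i$ and glue with a uniformly Lipschitz partition of unity'' is not enough: a partition-of-unity average of local isometries yields only a contraction, and the real content of \cite[Theorem~3.5]{spakula_universal_rep} is correcting this to a global isometry $V$ while keeping the rank bounds in the uniform approximability of $\{V\pi(f) - (\rho\oplus 0)^\infty(f)V \mid f \in \LLip_R(X)\}$ independent of the block index $i$. That is where the jointly bounded geometry hypothesis (uniform bound on the number of overlapping $X_i$ meeting any ball, and uniformly sized $\varepsilon$-nets) is actually consumed. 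Likewise, the remark that ``formal $2$-periodicity handles the graded versus ungraded distinction'' elides the fact that for $\ast = 0$ the modules are balanced (grading swaps the summands), so the absorbing isometry has to be taken of the form $V \oplus V$ to respect the grading; this is routine but should be stated. As an outline these are acceptable elisions given that the theorem is anyway a citation, but if the goal were a self-contained proof they would need to be filled in.
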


The following lemma is a uniform analog of the fact \cite[Lemma 5.4.1]{higson_roe} and is essentially proven in \cite[Lemma 5.3]{spakula_uniform_k_homology} (by ``setting $Z := \emptyset$'' in that lemma).

\begin{lem}\label{lem:K_theory_frakC_zero}
We have
\[K_\ast(\frakC^u_{\rho \oplus 0}(X)) = 0\]
and so the quotient map $\frakD^u_{\rho \oplus 0}(X) \to \frakD^u_{\rho \oplus 0}(X) / \frakC^u_{\rho \oplus 0}(X)$ induces an isomorphism
\begin{equation}\label{eq:K_theory_frakC_zero}
K_\ast(\frakD^u_{\rho \oplus 0}(X)) \cong K_\ast(\frakD^u_{\rho \oplus 0}(X) / \frakC^u_{\rho \oplus 0}(X))
\end{equation}
due to the $6$-term exact sequence for $K$-theory.\qed
\end{lem}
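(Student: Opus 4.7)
The plan is to prove the vanishing $K_\ast(\frakC^u_{\rho \oplus 0}(X)) = 0$ via an Eilenberg swindle that exploits the zero summand of the representation $\rho \oplus 0$ as an infinite absorbing room, and then to deduce the isomorphism~\eqref{eq:K_theory_frakC_zero} from the six-term exact sequence in operator $K$-theory applied to
\[0 \to \frakC^u_{\rho \oplus 0}(X) \to \frakD^u_{\rho \oplus 0}(X) \to \frakD^u_{\rho \oplus 0}(X)/\frakC^u_{\rho \oplus 0}(X) \to 0.\]

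The crucial observation underpinning the swindle is that any operator $T \in \IB(H \oplus H)$ supported in the second summand (i.e.\ vanishing on $H \oplus 0$ and with range contained in $0 \oplus H$) automatically satisfies $(\rho \oplus 0)(f)\,T = T(\rho \oplus 0)(f) = 0$ for every $f \in C_0(X)$, since $(\rho \oplus 0)(f) = \rho(f) \oplus 0$ vanishes on $0 \oplus H$. Such operators therefore lie vacuously in $\frakC^u_{\rho \oplus 0}(X)$. Using that the second copy of $H$ is infinite dimensional, I would fix a sequence of isometries $V_n \colon H \oplus H \to H \oplus H$ with mutually orthogonal ranges all contained in $0 \oplus H$, and define $\Phi(a) := \sum_{n \ge 1} V_n a V_n^\ast$ with strong-operator convergence. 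By the observation, $\Phi(a) \in \frakC^u_{\rho \oplus 0}(X)$, and $\Phi$ is a $\ast$-homomorphism, which one promotes in the evident way to all matrix algebras over $\frakC^u_{\rho \oplus 0}(X)$.

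The swindle then runs as in \cite[Section~5.1]{higson_roe}: each $V_n$ makes $V_n a V_n^\ast$ Murray--von Neumann equivalent to $a$, so $[V_n a V_n^\ast] = [a]$ in $K_\ast$. Splitting
\[\Phi(a) = V_1 a V_1^\ast \,+\, \sum_{n \ge 2} V_n a V_n^\ast,\]
the two summands have orthogonal supports, and the second is unitarily conjugate to $\Phi(a)$ itself via the index shift $n \mapsto n+1$. Hence $[\Phi(a)] = [a] + [\Phi(a)]$ in $K_\ast$, forcing $[a] = 0$ for every class; the analogous argument using unitaries in place of projections handles $K_1$. The main technical point — the one that distinguishes this setting from, say, $\IK(H)$ where no such swindle is available — is verifying that the sum defining $\Phi(a)$ genuinely lies in $\frakC^u_{\rho \oplus 0}(X)$ rather than merely in $\IB(H \oplus H)$, and that the same persists for matrix algebras; this is precisely what the supported-in-the-zero-summand observation provides. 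The isomorphism~\eqref{eq:K_theory_frakC_zero} is then a formal consequence of the exactness of the six-term sequence, since both $K$-groups of the ideal vanish.
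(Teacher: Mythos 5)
You correctly identify both the mechanism (the degenerate summand $0 \oplus H$ of $\rho \oplus 0$ makes an Eilenberg swindle available, since operators supported there are vacuously uniformly locally compact) and the need to verify that $\Phi(a)$ genuinely lies in $\frakC^u_{\rho \oplus 0}(X)$; this is indeed the approach taken in the reference the paper defers to (\cite[Lemma~5.3]{spakula_uniform_k_homology}). However, the step ``each $V_n$ makes $V_n a V_n^\ast$ Murray--von Neumann equivalent to $a$, so $[V_n a V_n^\ast] = [a]$ in $K_\ast$'' has a genuine gap. The usual $2 \times 2$ rotation argument behind this fact requires the isometry $V_n$ to be a multiplier of $\frakC^u_{\rho \oplus 0}(X)$, and your $V_n$ cannot be: any isometry $V$ with range contained in $0 \oplus H$ satisfies $qV = V$, where $q := 0 \oplus 1_H$ lies in $\frakC^u_{\rho \oplus 0}(X)$ by the very same vacuous argument you invoke, so if $V$ multiplied $\frakC^u_{\rho \oplus 0}(X)$ into itself it would lie in $\frakC^u_{\rho \oplus 0}(X)$, and then $1 = V^\ast V \in \frakC^u_{\rho \oplus 0}(X)$, which is false whenever $X$ is unbounded and $\rho$ is ample. (Concretely: $V(\rho \oplus 0)(f)$ has the same, generally infinite, rank as $\rho(f)$, so $V$ is not even uniformly pseudolocal.)

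For a projection $p$ in a matrix algebra over $\frakC^u_{\rho \oplus 0}(X)$ itself the Murray--von Neumann equivalence is fine, since $V_n p$ lands back in the algebra; but $K_0$ and $K_1$ are built from matrix algebras over the \emph{unitization}, and once $p$ has a nonzero scalar part $V_n p$ no longer does. This is exactly where the swindle requires a more careful set-up than the proposal provides: by the argument above, every isometry that is a multiplier of $\frakC^u_{\rho \oplus 0}(X)$ is forced to act as the identity on the $H \oplus 0$ summand, so one cannot absorb the whole of $K_\ast(\frakC^u_{\rho \oplus 0}(X))$ by naive conjugation with isometries whose range lies entirely in $0 \oplus H$. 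Either the absorbing operators must be arranged differently so that the implementing partial isometries stay in the multiplier algebra, or one needs an independent argument that conjugation by your $V_n$ nevertheless induces the identity on $K_\ast$; as written the proposal asserts the key identity $[V_n a V_n^\ast] = [a]$ without justification, and the obvious justification fails.
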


The last ingredient to construct the cap product is the inclusion
\begin{equation}\label{eq:commutator_C_u_with_frakD}
[C_u(X), \frakD^u_{\rho \oplus 0}(X)] \subset \frakC^u_{\rho \oplus 0}(X).
\end{equation}
It is proven in the following way: let $\varphi \in C_u(X)$ and $T \in \frakD^u_{\rho \oplus 0}(X)$. We have to show that $[\varphi, T] \in \frakC^u_{\rho \oplus 0}(X)$. By approximating $\varphi$ uniformly by Lipschitz functions we may without loss of generality assume that $\varphi$ itself is already Lipschitz. Now the claim follows immediately from $f[\varphi, T] = [f \varphi, T] - [f,T]\varphi$ since $T$ is uniformly pseudolocal.

Now we are able to define the cap product. Consider the map
\[ \sigma \colon C_u(X) \otimes \frakD^u_{\rho \oplus 0}(X) \to \frakD^u_{\rho \oplus 0}(X) / \frakC^u_{\rho \oplus 0}(X), \ f \otimes T \mapsto [fT].\]
It is a multiplicative $^\ast$-homomorphism due to the above Equation \eqref{eq:commutator_C_u_with_frakD} and hence induces a map on $K$-theory
\begin{equation*}
\sigma_\ast \colon K_\ast(C_u(X) \otimes \frakD^u_{\rho \oplus 0}(X)) \to K_\ast(\frakD^u_{\rho \oplus 0}(X) / \frakC^u_{\rho \oplus 0}(X)).
\end{equation*}
Using Paschke duality we may define the cap product as the composition
\begin{align*}
K_u^p(X) \otimes K_q^u(X; \rho \oplus 0) & \ = \ K_{-p}(C_u(X)) \otimes K_{1+q}(\frakD^u_{\rho \oplus 0}(X))\\
& \ \to \ \! K_{-p+1+q}(C_u(X) \otimes \frakD^u_{\rho \oplus 0}(X))\\
& \ \stackrel{\sigma_\ast}\to \ \! K_{-p+1+q}(\frakD^u_{\rho \oplus 0}(X) / \frakC^u_{\rho \oplus 0}(X))\\
& \stackrel{\eqref{eq:K_theory_frakC_zero}}\cong K_{-p+1+q}(\frakD^u_{\rho \oplus 0}(X))\\
& \ = \ K_{q-p}^u(X; \rho \oplus 0),
\end{align*}
where the first arrow is the external product on $K$-theory. So we get the cap product
\[\cap \colon K_u^p(X) \otimes K_q^u(X) \to K_{q-p}^u(X).\]

Let us state in a proposition some properties of it that we will need. The proofs of these properties are analogous to the non-uniform case.

\begin{prop}\label{prop:properties_general_cap_product}
The cap product has the following properties:
\begin{itemize}
\item the pairing of $K_u^0(X)$ with $K_\ast^u(X)$ coincides with the one in Lemma \ref{lem:proj_again_uniform_fredholm_module},
\item the fact that $K_\ast^u(X)$ is a module over $K_u^0(X)$ generalizes to
\begin{equation}\label{eq:general_cap_compatibility_module}
(P \otimes Q) \cap T = P \cap (Q \cap T)
\end{equation}
for all elements $P, Q \in K_u^\ast(X)$ and $T \in K_\ast^u(X)$, where $\otimes$ is the internal product on uniform $K$-theory,
\item if $X$ and $Y$ have jointly bounded geometry, then we have the following compatibility with the external products:
\begin{equation}\label{eq:compatibility_cap_external}
(P \times Q) \cap (S \times T) = (-1)^{qs} (P \cap S) \times (Q \cap T),
\end{equation}
where $P \in K_u^p(X)$, $Q \in K_u^q(Y)$ and $S \in K^u_s(X)$, $T \in K^u_t(Y)$, and
\item if we have a manifold of bounded geometry $M$, a vector bundle of bounded geometry $E \to M$ and an operator $D$ of Dirac type, then
\begin{equation}\label{eq:cap_twisted_Dirac}
[E] \cap [D] = [D_E] \in K_\ast^u(M),
\end{equation}
where $D_E$ is the twisted operator.\qed
\end{itemize}
\end{prop}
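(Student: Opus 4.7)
The plan is to follow the corresponding non-uniform arguments of Higson--Roe \cite[Chapter~8]{higson_roe}, replacing ``locally compact'' by ``uniformly locally compact'' throughout and checking that the uniform estimates built into Definition \ref{defn:uniform_operators} survive each step. The key technical inputs in every case are the inclusion \eqref{eq:commutator_C_u_with_frakD} and the vanishing $K_\ast(\frakC^u_{\rho \oplus 0}(X)) = 0$ of Lemma \ref{lem:K_theory_frakC_zero}, which together make the cap product well-defined in the first place.

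For the first bullet point I would chase a projection $P \in \Mat_{n \times n}(C_u(X))$ and a uniform Fredholm module $(H, \rho, T)$ through the composite definition of the cap product. Under Paschke duality the class $[T]$ corresponds to a $K_{\ast+1}$-class of $\frakD^u_{\rho \oplus 0}(X)$ represented by $T$ itself; the external product with $[P]$ gives the matrix with entries $P_{ij} T$ in $C_u(X) \otimes \frakD^u_{\rho \oplus 0}(X)$; applying $\sigma_\ast$ turns this into the class of $(P_{ij} T)$ modulo $\frakC^u_{\rho \oplus 0}(X)$; and pulling back through Paschke duality recovers precisely the compressed module $(\pi H_n, \pi \rho_n \pi, \pi T_n \pi)$ of Lemma \ref{lem:proj_again_uniform_fredholm_module}. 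The second bullet point then follows from associativity of the external product on operator $K$-theory together with the identity $\sigma(f \otimes \sigma(g \otimes T)) \equiv \sigma(fg \otimes T) \pmod{\frakC^u_{\rho \oplus 0}(X)}$, which is itself a consequence of \eqref{eq:commutator_C_u_with_frakD} after approximating $f, g$ uniformly by Lipschitz functions.

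The third bullet point requires checking that the external product of Theorem \ref{thm:external_prod_homology} is compatible, through Paschke duality, with a natural homomorphism $\frakD^u_{\rho_1 \oplus 0}(X_1) \hatotimes \frakD^u_{\rho_2 \oplus 0}(X_2) \to \frakD^u_{\rho \oplus 0}(X_1 \times X_2)$ built from the aligned representative $T_1 \times T_2$. Once this is in place, the identity \eqref{eq:compatibility_cap_external} reduces to the graded commutation of $Q$ past $S$ in the tensor product of dual algebras, producing the Koszul sign $(-1)^{qs}$. This mirrors the proof in the compact case; the only new content is uniform approximability of the various commutators involved, which is precisely what the partition-of-unity construction in Lemma \ref{lem:construction_partition_unity} was designed to provide.

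The fourth bullet point is the genuinely geometric statement and is where I expect the real work. Using Theorem \ref{thm:interpretation_K0u} I would represent $[E]$ by a projection $e \in \Mat_{n \times n}(C_b^\infty(M))$ with $\image e \cong E$ as bundles of bounded geometry. By the first bullet point, $[E] \cap [D]$ is then represented by $(\pi L^2(S)^n, \pi \rho_n \pi, \pi \chi(D)^n \pi)$ where $\pi$ is multiplication by $e$. Identifying $\pi L^2(S)^n \cong L^2(S \otimes E)$ through the inclusion $E \hookrightarrow \IC^n$, and observing that the twisted connection $\nabla^{S \otimes E}$ agrees under this identification with $\pi \circ \nabla^{S \otimes \IC^n} \circ \pi$ (this is essentially the definition of the induced connection on a subbundle), one sees that $D_E = \pi D^n \pi$ on smooth sections. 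It therefore remains to show that $\pi \chi(D)^n \pi - \chi(\pi D^n \pi)$ is uniformly locally compact, at which point Lemma \ref{lem:compact_perturbations} produces the desired operator homotopy. The main obstacle is precisely this uniform local compactness: it will follow from uniform pseudolocality of $\chi(D^n)$ (Example \ref{ex_Dirac_uniform}) combined with the $C_b^\infty$-bound on $e$, which permits uniform approximation of the matrix entries $e_{ij}$ by Lipschitz functions with controlled Lipschitz constant and thereby lets us apply the defining estimate of Definition \ref{defn:uniform_operators} to every commutator $[\pi, \chi(D)^n]$ that arises.
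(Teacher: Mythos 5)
Your proposal follows exactly the route the paper itself indicates: the paper gives no detailed proof but states that ``the proofs of these properties are analogous to the non-uniform case,'' pointing implicitly at Higson--Roe, and your plan is to run the Higson--Roe arguments while checking that the uniform estimates survive. The first two bullet points are fine as you present them. Two places in the remaining bullet points need a flag.

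In the third bullet you assert a natural homomorphism $\frakD^u_{\rho_1 \oplus 0}(X_1) \hatotimes \frakD^u_{\rho_2 \oplus 0}(X_2) \to \frakD^u_{\rho \oplus 0}(X_1 \times X_2)$. No such algebra map exists: for $T_2 \in \frakD^u_{\rho_2 \oplus 0}(X_2)$ the commutator $[1 \hatotimes T_2, \rho_1(f_1) \otimes \rho_2(f_2)] = \rho_1(f_1) \otimes [T_2, \rho_2(f_2)]$ is not compact unless $\rho_1(f_1)$ is, so $1 \hatotimes T_2$ is not pseudolocal for $C_0(X_1 \times X_2)$. This is precisely why Kasparov's Technical Theorem (here Lemma \ref{lem:construction_partition_unity}) is needed to build the aligned operator $T_1 \times T_2$ at all. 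The correct shape of the argument is to work with the explicit aligned representative $N_1(T_1 \hatotimes 1) + N_2(1 \hatotimes T_2)$ and track the cap product directly on that cycle; the Koszul sign then does come out of the graded commutation as you say, but there is no intermediate tensor-product algebra through which to factor.

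In the fourth bullet, the reduction to showing that $\pi\chi(D)^n\pi - \chi(D_E)$ is uniformly locally compact (under the identification $\pi L^2(S)^n \cong L^2(S \otimes E)$ and $\pi D^n \pi = D_E$) is exactly the right step. But your closing sentence gives the wrong reason: uniform pseudolocality of $\chi(D)^n$ controls commutators with functions in $\LLip_R(M)$, i.e.\ compactly supported ones, whereas the matrix entries $e_{ij} \in C_b^\infty(M)$ are not compactly supported. The inclusion \eqref{eq:commutator_C_u_with_frakD} does give that $[\chi(D)^n, \pi]$ is uniformly locally compact, but this alone does not yield $\pi\chi(D)^n\pi - \chi(D_E) \in \frakC^u$: one must compare the normalization $\chi(D_E)$ of the compressed operator with the compression of the normalization $\chi(D)^n$, and for that one needs the uniform pseudodifferential calculus (both are uniform $\Psi$DOs of order zero with the same principal symbol, so the difference has negative order), or an equivalent resolvent estimate. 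That input is available in the framework the paper cites (Example \ref{ex_Dirac_uniform} and \cite[Theorem~3.39]{engel_indices_UPDO}) but it is not a one-line consequence of the commutator estimate you invoke.
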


\subsection{Uniform \texorpdfstring{$K$-\Poincare}{K-Poincare }duality}\label{sec:poincare_duality}

We will prove in this section that uniform $K$-theory is \Poincare dual theory to uniform $K$-homology. This will be accomplished by a suitable Mayer--Vietoris induction.

\begin{thm}[Uniform $K$-\Poincare duality]
\label{thm:Poincare_duality_K}
Let $M$ be an $m$-dimensional spin$^c$ manifold of bounded geometry and without boundary.

Then the cap product $- \cap [M] \colon K_u^\ast(M) \to K^u_{m-\ast}(M)$ with its uniform $K$-fundamental class $[M] \in K_m^u(M)$ is an isomorphism.
\end{thm}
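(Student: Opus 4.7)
The plan is to prove the theorem by a Mayer--Vietoris induction over a uniformly locally finite cover of $M$ by normal coordinate balls, assembled in finitely many coloring steps. The base case is the duality on totally bounded open subsets: for an open normal coordinate ball $B \subset M$ of radius less than $\injrad_M/3$, the space $B$ is totally bounded, so by the discussion in Section~\ref{sec:uniform_k_th} we have $K_u^\ast(B) \cong K^\ast(\overline{B})$, while by Proposition~\ref{prop:compact_space_every_module_uniform} every Fredholm module over $B$ is uniform and hence $K^u_\ast(B)$ is ordinary locally finite $K$-homology of an open Euclidean ball. The cap product with the restricted fundamental class $[M]|_B$ reduces to classical $\spinc$ \Poincare duality for $B$, which is a well-known isomorphism. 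The same argument handles finite intersections of such balls.

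For the inductive step I would first establish Mayer--Vietoris six-term exact sequences, for uniform $K$-theory via the pullback square of $C^\ast$-algebras $C_u(U\cup V), C_u(U), C_u(V), C_u(U\cap V)$, and for uniform $K$-homology via the excision property of the dual algebras $\frakD^u, \frakC^u$ from Definition~\ref{defn:frakD_frakC}, combined with the Paschke duality of Proposition~\ref{prop:paschke_duality} and the uniform Voiculescu theorem~\ref{thm:paschke_universal}. Naturality of the cap product with $[M]$ yields a ladder to which the five lemma applies. I would then pick, via Lemma~\ref{lem:nice_coverings_partitions_of_unity}, a cover $\{U_i\}$ of $M$ by normal coordinate balls of a fixed radius $\varepsilon < \injrad_M/3$ whose doubled balls are uniformly locally finite, and apply Lemma~\ref{lem:coloring_graph} to color the indices with finitely many colors $A_1,\ldots,A_N$ so that within each color the balls $U_i$ are mutually separated by a uniformly positive distance. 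Setting $V_k := \bigsqcup_{i\in A_k} U_i$, each $V_k$ is a uniform disjoint union of small balls, over which every bounded uniformly continuous function and every uniform Fredholm module decomposes as a bounded $\ell^\infty$-sum along the pieces; combined with the base case this gives the duality on each $V_k$. Setting $W_1 := V_1$ and $W_{k+1} := W_k \cup V_{k+1}$, the intersection $W_k \cap V_{k+1}$ is again a uniform disjoint union of totally bounded open pieces of bounded geometry, so the duality holds there as well, and $N-1$ applications of the Mayer--Vietoris ladder yield the duality on $W_N = M$.

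The main obstacle is the second step itself: producing a Mayer--Vietoris sequence for uniform $K$-homology which is compatible with the cap product. This requires reinterpreting the cap product via the dual algebra map $\sigma$ of Section~\ref{sec:cap_product} and checking that the connecting homomorphisms coming from the short exact sequence of dual algebras associated to an open decomposition correspond correctly, under $-\cap[M]$, to those arising from the pullback square on the $K$-theory side. A secondary technicality is that the $\ell^\infty$-decomposition of uniform $K$-theory and uniform $K$-homology along uniformly separated pieces must preserve the uniform pseudolocality and uniform local compactness conditions of Definition~\ref{defn:uniform_operators}; this is where the bounded-geometry hypothesis on $M$, through Lemmas~\ref{lem:nice_coverings_partitions_of_unity} and~\ref{lem:coloring_graph}, enters crucially and controls all the uniformity constants uniformly along the induction.
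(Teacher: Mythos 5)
Your plan follows the same overall strategy as the paper's proof: a finitely-colored uniformly locally finite cover by small normal coordinate balls, Mayer--Vietoris sequences for both uniform $K$-theory and uniform $K$-homology compatible with the cap product via Paschke duality, a base case of disjoint uniformly separated totally bounded pieces, and a five-lemma induction. There are, however, two concrete gaps.

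The first, and more serious, concerns how you produce the cover and coloring. You propose to combine Lemma~\ref{lem:nice_coverings_partitions_of_unity} with the graph-coloring Lemma~\ref{lem:coloring_graph} and assert that same-colored balls are ``mutually separated by a uniformly positive distance.'' But Lemma~\ref{lem:coloring_graph} only guarantees that same-colored balls are disjoint; with a generic uniformly discrete choice of centers, two disjoint balls $B_\varepsilon(x_i)$, $B_\varepsilon(x_j)$ can be arbitrarily close (nearly tangent). This ruins the Mayer--Vietoris sequence for uniform $K$-theory: in the pullback square underlying Lemma~\ref{lem:MV_uniform_K} one needs that a compatible pair of uniformly continuous functions on $W_k$ and $V_{k+1}$ glues to a uniformly continuous function on $W_{k+1}$, and this fails exactly when different pieces can get close without having a nearby overlap through which one can estimate the modulus of continuity. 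The paper's Lemma~\ref{lem:suitable_coloring_cover_M} is stronger than Lemma~\ref{lem:nice_coverings_partitions_of_unity}~+~Lemma~\ref{lem:coloring_graph}: it arranges (via Attie's bounded-geometry triangulation, Theorem~\ref{thm:triangulation_bounded_geometry}, with $\varepsilon$ tuned to the edge length) that balls either overlap uniformly or are uniformly far apart, and in addition that the connected components of each cumulative union $U_1\cup\dots\cup U_k$ are themselves uniformly separated. Without that stronger statement, your induction has no Mayer--Vietoris ladder to run on.

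The second gap is in the inductive step. You claim that $W_k\cap V_{k+1}$ is ``a uniform disjoint union of totally bounded open pieces of bounded geometry, so the duality holds there as well.'' The pieces $B_i\cap W_k$ for $i\in A_{k+1}$ are indeed totally bounded and uniformly separated, but each one is a union of up to $k$ overlapping intersections of balls and so is in general not contractible or geodesically convex; it is not a ball nor a finite intersection of balls, so it is not covered by your stated base case. The paper handles this point explicitly by decomposing $W_k\cap V_{k+1} = (V_1\cap V_{k+1})\cup\dots\cup(V_k\cap V_{k+1})$, noting that each term is a collection of geodesically convex open sets, and running a separate Mayer--Vietoris sub-induction on this decomposition to establish duality for $W_k\cap V_{k+1}$ before invoking the five lemma on $W_{k+1}$. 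You need that nested induction to close the argument.
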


The proof of this theorem will occupy the whole subsection. We will first have to prove some auxiliary results before we will start on Page \pageref{page:proof_Poincare_duality} to assemble them into a proof of uniform $K$-\Poincare duality.

We will need the following Theorem \ref{thm:triangulation_bounded_geometry} about manifolds of bounded geometry. To state it, we have to recall some notions:

\begin{defn}[Bounded geometry simplicial complexes]\label{defn:simplicial_complex_bounded_geometry}
A simplicial complex has \emph{bounded geometry} if there is a uniform bound on the number of simplices in the link of each vertex.

A subdivision of a simplicial complex of bounded geometry with the properties that
\begin{itemize}
\item each simplex is subdivided a uniformly bounded number of times on its $n$-skeleton, where the $n$-skeleton is the union of the $n$-dimensional sub-simplices of the simplex, and that
\item the distortion $\operatorname{length}(e) + \operatorname{length}(e)^{-1}$ of each edge $e$ of the subdivided complex is uniformly bounded in the metric given by barycentric coordinates of the original complex,
\end{itemize}
is called a \emph{uniform subdivision}.
\end{defn}

\begin{defn}[Bi-Lipschitz equivalences]
Two metric spaces $X$ and $Y$ are said to be \emph{bi-Lipschitz equivalent} if there is a homeomorphism $f\colon X \to Y$ with
\[\tfrac{1}{C} d_X(x,x^\prime) \le d_Y(f(x), f(x^\prime)) \le C d_X(x,x^\prime)\]
for all $x,x^\prime \in X$ and some constant $C > 0$.
\end{defn}

\begin{thm}[{\cite[Theorem 1.14]{attie_classification}}]\label{thm:triangulation_bounded_geometry}
Let $M$ be a manifold of bounded geometry and without boundary.

Then $M$ admits a triangulation as a simplicial complex of bounded geometry whose metric given by barycentric coordinates is bi-Lipschitz equivalent to the metric on $M$ induced by the Riemannian structure. This triangulation is unique up to uniform subdivision.

Conversely, if $M$ is a simplicial complex of bounded geometry which is a triangulation of a smooth manifold, then this smooth manifold admits a metric of bounded geometry with respect to which it is bi-Lipschitz equivalent to $M$.
\end{thm}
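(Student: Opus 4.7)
The plan is to prove the forward direction by combining a standard Whitney-type triangulation construction with the uniform local structure that bounded geometry provides. First I would invoke Lemma \ref{lem:nice_coverings_partitions_of_unity} to get a uniformly locally finite cover of $M$ by normal coordinate charts $\{B_{2\varepsilon}(x_i)\}$ whose centers form a uniformly discrete quasi-lattice $\Gamma \subset M$ (for some fixed $\varepsilon$ smaller than a fraction of $\injrad_M$). In each coordinate ball, the Riemannian metric is $C^\infty$-close to the Euclidean metric with bounds that do not depend on $i$. I would then take the Voronoi decomposition associated to $\Gamma$ and its dual Delaunay complex; bounded geometry gives uniform lower bounds on distances between distinct points of $\Gamma$ and uniform upper bounds on $\card(\Gamma \cap B_r(x))$, which together yield uniform bounds on the number of Voronoi cells meeting any one cell, and hence on the valence of each vertex in the Delaunay complex. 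A small perturbation (or use of a secondary net with a general-position argument) ensures that the Voronoi diagram is simplicial.

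Next I would check that this simplicial complex has bounded geometry in the sense of Definition \ref{defn:simplicial_complex_bounded_geometry} and that its barycentric metric is bi-Lipschitz equivalent to $d_M$. The key observation is that every simplex sits inside a single coordinate chart where the metric is uniformly close to Euclidean, so uniform bounds on Euclidean diameters, inradii, and dihedral angles (which come from the uniform lower bound on the separation of $\Gamma$ combined with the uniform upper bound on its density) transfer to uniform bounds on the Riemannian geometry of each simplex. The bi-Lipschitz estimate $\tfrac{1}{C} d_M \le d_{\mathrm{bar}} \le C\, d_M$ then follows locally on each simplex and globally by chaining along geodesic paths, using that a geodesic of length $L$ in $M$ can only pass through a uniformly bounded number of simplices per unit length. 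For uniqueness up to uniform subdivision, given two such triangulations $T_1, T_2$ I would construct a common refinement by intersecting simplices; bounded geometry of both ensures that each simplex of $T_1$ meets only a uniformly bounded number of simplices of $T_2$, hence is subdivided a uniformly bounded number of times, and the distortion of the edges of the refinement is controlled because both barycentric metrics are bi-Lipschitz to $d_M$.

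For the converse, given a simplicial complex $K$ of bounded geometry triangulating a smooth manifold $N$, I would build a Riemannian metric by gluing. In the open star of each vertex I fix a smooth chart and a smooth Riemannian metric $g_v$ that is comparable to the Euclidean metric on each simplex of the star. Using a smooth partition of unity $\{\varphi_v\}$ subordinate to the open stars, I set $g := \sum_v \varphi_v g_v$. The combinatorial bounded geometry of $K$ ensures that at every point only a uniformly bounded number of $\varphi_v$ are nonzero, and that the chart transitions are controlled; hence the curvature tensor of $g$ and all its covariant derivatives are uniformly bounded, and the injectivity radius is uniformly positive. The bi-Lipschitz equivalence between $g$ and the barycentric metric of $K$ is then immediate from the local comparison on each simplex.

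The main obstacle I expect is the geometric control of the Delaunay (or Whitney) simplices in the forward direction: one needs simultaneously a uniform upper bound on diameters and a uniform \emph{lower} bound on the inradii (equivalently, a uniform lower bound on dihedral angles), because without this the barycentric metric degenerates and the bi-Lipschitz equivalence breaks. Ensuring non-degeneracy of the simplices built from a generic quasi-lattice requires either a careful perturbation argument or replacing the Delaunay construction by a smoothed version à la Whitney. Once this ``uniform non-degeneracy'' is in hand, the rest of the argument is routine bookkeeping controlled by the bounded-geometry constants of $M$.
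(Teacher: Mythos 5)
The paper does not give a proof of this theorem at all: it is a direct citation of Attie, \cite[Theorem~1.14]{attie_classification}, and the only original content in the paper around this statement is Remark~\ref{rem:attie_regularity}, which observes that Attie's construction in the converse direction actually yields a metric of bounded geometry in the strong sense (bounded derivatives of curvature) even though Attie's hypothesis is the weaker one (bounded curvature only). So there is nothing in the paper to compare your argument against line by line; what I can do is assess your sketch as a proof attempt of Attie's theorem.

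Your outline for the forward direction takes a genuinely different route from Attie's. Attie's argument is in the Whitney tradition: one embeds $M$ with uniformly controlled second fundamental form, takes a uniform cubulation of the ambient Euclidean space, and passes to a standard (Coxeter--Freudenthal--Kuhn) simplicial subdivision of the cubes, which by construction produces simplices of a fixed finite list of shapes. You instead propose a geodesic Voronoi/Delaunay construction built on a quasi-lattice $\Gamma$. The difficulty you name at the end --- obtaining a uniform \emph{lower} bound on the inradii/dihedral angles of the Delaunay simplices --- is in fact the decisive gap, and it is not a bookkeeping issue. Even for generic uniformly separated and uniformly dense point sets in $\IR^n$, $n\ge 3$, the Delaunay triangulation can contain ``slivers'': simplices whose edge lengths are all comparable to the net scale but whose volume is arbitrarily small. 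A generic perturbation gives you simpliciality (no $n+2$ points on a common sphere) but does \emph{not} rule out slivers; eliminating them requires an additional sliver-removal or weighting step (in the spirit of Cheng--Dey--Edelsbrunner--Facello--Teng, or the manifold versions by Boissonnat--Dyer--Ghosh), which is a substantial piece of work and is precisely what the cubulation-plus-standard-subdivision route avoids by fiat. Without resolving this, the barycentric metric of your complex need not be bi-Lipschitz to $d_M$, and the argument for the forward direction does not close. Two smaller points: (i) on a manifold, even after perturbing to general position, one must separately verify that the (geodesic) Delaunay complex is a triangulation of $M$ and not merely a simplicial complex mapping to $M$; this is true for nets fine relative to curvature and injectivity-radius bounds, but needs to be cited or proved. (ii) Your uniqueness argument by ``intersecting simplices'' produces a polyhedral, not simplicial, common refinement; turning it into a simplicial common subdivision with uniformly bounded combinatorics is itself nontrivial and should not be waved away.

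The converse direction of your sketch (partition-of-unity gluing of model metrics on stars) is essentially Attie's and is fine in outline, though you should make explicit that the uniform control on chart transitions comes from the bounded-geometry hypothesis on the complex, and you should observe --- as the paper's Remark~\ref{rem:attie_regularity} does --- that this construction gives the strong form of bounded geometry with all derivative bounds, since that is what the rest of the paper needs.
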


\begin{rem}\label{rem:attie_regularity}
Attie uses in \cite{attie_classification} a weaker notion of bounded geometry as we do: additionally to a uniformly positive injectivity radius he only requires the sectional curvatures to be bounded in absolute value (i.e., the curvature tensor is bounded in norm), but he assumes nothing about the derivatives (see \cite[Definition 1.4]{attie_classification}). But going into his proof of \cite[Theorem 1.14]{attie_classification}, we see that the Riemannian metric constructed for the second statement of the theorem is actually of bounded geometry in our strong sense (i.e., also with bounds on the derivatives of the curvature tensor).

As a corollary we get that for any manifold of bounded geometry in Attie's weak sense there is another Riemannian metric of bounded geometry in our strong sense that is bi-Lipschitz equivalent the original one (in fact, this bi-Lipschitz equivalence is just the identity map of the manifold, as can be seen from the proof).
\end{rem}

\begin{lem}\label{lem:suitable_coloring_cover_M}
Let $M$ be a manifold of bounded geometry.

Then there is an $\varepsilon > 0$ and a countable collection of uniformly discretely distributed points $\{x_i\} \subset M$ such that $\{B_{\varepsilon}(x_i)\}$ is a uniformly locally finite cover of $M$.

Furthermore, it is possible to partition $\IN$ into a finite amount of subsets $I_1, \ldots, I_N$ such that for each $1 \le j \le N$ the subset $U_j := \bigcup_{i \in I_j} B_{\varepsilon}(x_i)$ is a disjoint union of balls that are a uniform distance apart from each other, and such that for each $1 \le K \le N$ the connected components of $U_K := U_1 \cup \ldots \cup U_k$ are also a uniform distance apart from each other (see Figure \ref{fig:not_allowed_cover}).
\end{lem}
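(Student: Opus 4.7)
The lemma splits into two essentially independent parts, and the first is standard. The plan is to fix $0 < \varepsilon < \injrad(M)/3$ and apply Zorn's lemma to the poset of $\varepsilon$-separated subsets of $M$, ordered by inclusion, to produce a maximal such set $\{x_i\}_{i \in \IN}$ (countable since $M$ is separable). Maximality forces $\varepsilon$-density, so $\{B_\varepsilon(x_i)\}$ covers $M$, and bounded geometry --- via volume comparison in normal coordinates --- supplies a uniform bound, depending only on $r$ and the geometric constants of $M$, on the number of $\varepsilon$-separated points in any ball of radius $r$. This yields uniform local finiteness.

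For the partition I intend to use a hierarchical maximal-net construction. Choose $N \ge 1$ and scales $s_1 > s_2 > \cdots > s_N > 2\varepsilon$. Set $\Lambda_N := \{x_i\}$ and, for $k = N-1, N-2, \ldots, 1$, use Zorn's lemma inside $\Lambda_{k+1}$ to extract $\Lambda_k \subset \Lambda_{k+1}$ maximal subject to being $s_k$-separated. Define $I_k := \Lambda_k \setminus \Lambda_{k-1}$ with the convention $\Lambda_0 := \emptyset$; this partitions $\IN$ into $N$ (possibly empty) classes.

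To verify the claims, note that each $I_k \subset \Lambda_k$ inherits $s_k$-separation, so the balls $\{B_\varepsilon(x_i)\}_{i \in I_k}$ are pairwise disjoint with pairwise distance $\ge s_k - 2\varepsilon > 0$; this handles every $U_j$. For the hereditary condition the crucial identity is
\[
I_1 \cup \cdots \cup I_K \;=\; \Lambda_K,
\]
which is $s_K$-separated by construction, so $U_K = \bigsqcup_{i \in \Lambda_K} B_\varepsilon(x_i)$ is itself a disjoint union of balls pairwise at distance $\ge s_K - 2\varepsilon$, and hence its connected components are just these individual balls, uniformly spaced. I do not anticipate a genuine obstacle: the only real idea is to nest the maximal-net choices, which is exactly what converts within-color separation into separation of the components of every partial union simultaneously. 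Bounded geometry enters only in the first part; the coloring itself is a purely metric construction.
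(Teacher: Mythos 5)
The first half of your argument (Zorn plus bounded geometry to produce a uniformly locally finite cover of $\varepsilon$-balls) is fine and is essentially what the paper does, but the hierarchical maximal-net construction for the coloring has two genuine gaps.

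The first gap is at the top level $k=N$. You set $\Lambda_N := \{x_i\}$, which is only $\varepsilon$-separated, and you extract $s_{N-1}$-separated nets $\Lambda_{N-1} \subset \Lambda_N$ and so on; but $I_N = \Lambda_N \setminus \Lambda_{N-1}$ inherits no separation better than $\varepsilon$. Concretely, if $\{x_i\} = \varepsilon\IZ \subset \IR$ and $\Lambda_{N-1} = 3\varepsilon\IZ$ (a perfectly valid maximal $3\varepsilon$-net), then $\varepsilon, 2\varepsilon \in I_N$ with $d(\varepsilon,2\varepsilon)=\varepsilon < 2\varepsilon$, so $B_\varepsilon(\varepsilon)$ and $B_\varepsilon(2\varepsilon)$ overlap and $U_N$ is not a disjoint union of balls. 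Nesting the nets controls $I_1,\dots,I_{N-1}$, but the remainder $I_N$ absorbs all the points that every net had to reject, and these can be arbitrarily close to one another (down to the floor $\varepsilon$). The construction cannot be rescued by enlarging $N$ for the same reason; the fix is a genuine finite graph coloring of the intersection graph of the $\varepsilon$-balls, which is what the paper's Lemma~\ref{lem:coloring_graph} (de Bruijn--Erd\H{o}s plus a greedy argument, valid because bounded geometry makes the graph of bounded degree) provides.

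The second gap concerns the hereditary condition on $U_K$, and it would remain even after replacing the nested nets by such a coloring. The connected components of $U_K$ are in general unions of several mutually intersecting balls of different colors, not single balls, so you cannot conclude uniform separation from within-color separation. What is needed is a metric \emph{dichotomy}: either two balls intersect, or they are at distance $\ge \delta$ for some fixed $\delta > 0$. A plain maximal $\varepsilon$-net gives no such gap --- two net points can be at distance $2\varepsilon + 10^{-100}$, giving disjoint balls that are essentially touching. This is precisely where the paper uses Attie's Theorem~\ref{thm:triangulation_bounded_geometry}: triangulating $M$ by a bounded-geometry complex bi-Lipschitz to $M$ and taking $\{x_i\}$ to be the vertices and $\varepsilon$ proportional to the edge length, one gets that $B_\varepsilon(x_i) \cap B_\varepsilon(x_j) \ne \emptyset$ iff $x_i, x_j$ are adjacent vertices, and non-adjacent vertices are at a uniform distance. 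With that dichotomy in hand, two distinct connected components of $U_K$ consist only of balls belonging to pairwise non-adjacent vertices, hence are uniformly apart. Without some structure of this kind the hereditary claim simply is not true for an arbitrary maximal net, so bounded geometry enters the coloring step after all, via the triangulation theorem.
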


\begin{figure}[htbp]
\centering
\includegraphics[scale=0.5]{./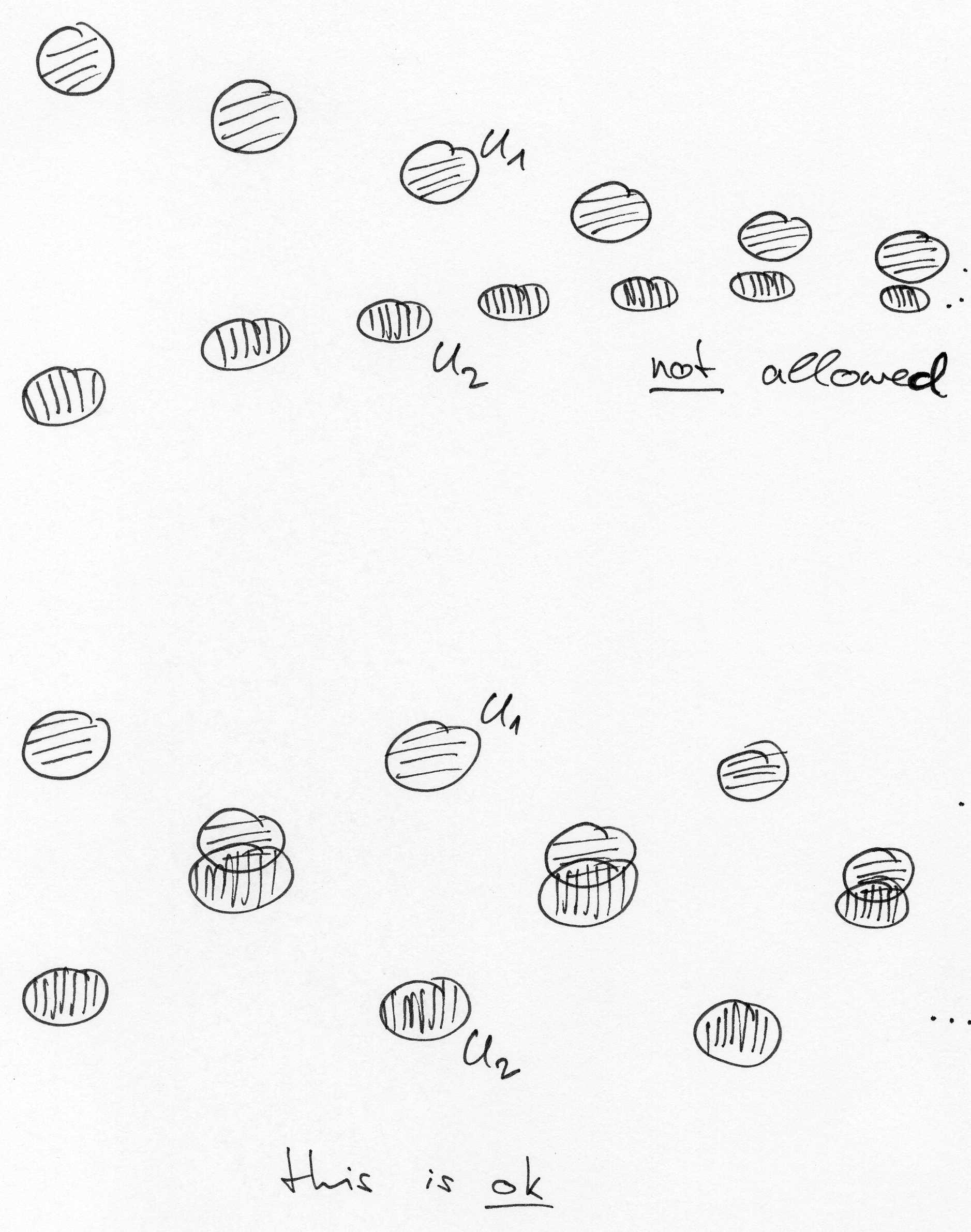}
\caption{Illustration for Lemma \ref{lem:suitable_coloring_cover_M}.}
\label{fig:not_allowed_cover}
\end{figure}

\begin{proof}
We triangulate $M$ via the above Theorem \ref{thm:triangulation_bounded_geometry}. Then we may take the vertices of this triangulation as our collection of points $\{x_i\}$ and set $\varepsilon$ to $2/3$ of the length of an edge multiplied with the constant $C$ which we get since the metric derived from barycentric coordinates is bi-Lipschitz equivalent to the metric derived from the Riemannian structure.

Two balls $B_\varepsilon(x_i)$ and $B_\varepsilon(x_j)$ for $x_i \not= x_j$ intersect if and only if $x_i$ and $x_j$ are adjacent vertices, and in the case that they are not adjacent, these balls are a uniform distance apart from each other. Hence it is possible to find a coloring of all these balls $\{B_\varepsilon(x_i)\}$ with finitely many colors having the claimed property: apply Lemma \ref{lem:coloring_graph} to the covering $\{B_\varepsilon(x_i)\}$ which has finite multiplicity due to bounded geometry.
\end{proof}

Our proof of \Poincare duality is a Mayer--Vietoris induction which will have only finitely many steps. So we first have to discuss the corresponding Mayer--Vietoris sequences.

We will start with the Mayer--Vietoris sequence for uniform $K$-theory. Let $O \subset M$ be an open subset, not necessarily connected. We denote by $(M, d)$ the metric space $M$ endowed with the metric induced from the Riemannian metric $g$ on $M$, and by $C_u(O, d)$ we denote the $C^\ast$-algebra of all bounded, uniformly continuous functions on $O$, where we regard $O$ as a metric space equipped with the subset metric induced from $d$ (i.e., we do not equip $O$ with the induced Riemannian metric and consider then the corresponding induced metric structure).

\begin{defn}\label{defn:uniform_k_th_subset}
Let $O \subset M$ be an open subset, not necessarily connected. We define $K^p_{u}(O \subset M) := K_{-p}(C_u(O,d))$.
\end{defn}

We will also need the following technical theorem:

\begin{lem}\label{lem:extension_samuel_compactification}
Let $O \subset M$ be open, not necessarily connected. Then every function $f \in C_u(O, d)$ has an extension to an $F \in C_u(M, d)$.
\end{lem}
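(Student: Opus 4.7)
The plan is to prove this as an instance of the classical Katetov extension theorem, which states that any bounded uniformly continuous real-valued function on a metric subspace extends to a bounded uniformly continuous function on the ambient space. Since $C_u(O,d)$ is defined via the subset metric $d$ inherited from $(M,d)$ rather than the intrinsic metric of $O$, the openness and the topology of $O$ inside $M$ play no role here; all that matters is $O \subset M$ as a metric subspace.

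First I would reduce to the real-valued case by splitting $f = u + iv$ and extending $u$ and $v$ separately. So assume $f \in C_u(O,d)$ is real-valued. The next step is to construct a suitable modulus of continuity. Let
\[
\omega_f(t) := \sup\{|f(x)-f(y)| : x, y \in O,\ d(x,y) \le t\},
\]
which is nondecreasing, bounded by $2\|f\|_\infty$, and tends to $0$ as $t \to 0^+$ by uniform continuity of $f$. I would then pass to a nondecreasing, bounded $\omega \colon [0,\infty) \to [0,\infty)$ with $\omega(0)=0$, $\omega(t) \to 0$ as $t \to 0^+$, $\omega \ge \omega_f$, and which is moreover subadditive; a standard choice is the least concave majorant of $\omega_f$, truncated at $2\|f\|_\infty$, and concavity together with $\omega(0)=0$ forces subadditivity via the two inequalities $\omega(s) \ge \tfrac{s}{s+t}\omega(s+t)$ and $\omega(t) \ge \tfrac{t}{s+t}\omega(s+t)$.

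With $\omega$ in hand, the extension is given by the McShane--Katetov formula
\[
F(x) := \inf_{a \in O}\bigl(f(a) + \omega(d(x,a))\bigr), \qquad x \in M.
\]
Three routine verifications remain: (i) $F|_O = f$, because taking $a = x$ gives $F(x) \le f(x)$, while $f(a) + \omega(d(x,a)) \ge f(a) + |f(x)-f(a)| \ge f(x)$ for every $a \in O$ when $x \in O$; (ii) $F$ is bounded, since $f$ and $\omega$ are; and (iii) $F$ is uniformly continuous on $(M,d)$, as subadditivity of $\omega$ and the triangle inequality give
\[
F(x) \le \inf_{a \in O}\bigl(f(a) + \omega(d(y,a)) + \omega(d(x,y))\bigr) = F(y) + \omega(d(x,y)),
\]
and symmetrically. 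Hence $F$ admits $\omega$ as a modulus of continuity and lies in $C_u(M,d)$.

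The only non-automatic step is the construction of the subadditive, continuous-at-$0$, bounded majorant $\omega$; this is a classical fact about moduli of continuity on metric spaces but is the place where some care is required. Everything else is a direct verification of the Katetov formula.
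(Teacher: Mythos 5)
Your proof is correct, and it takes a genuinely different route from the paper. The paper passes to the Samuel compactification $uX$ (the Gelfand spectrum of $C_u(X)$), cites the fact that $\closure_{uM}(O) = uO$, extends $f$ to a continuous function on $uO$, applies Tietze to extend to $uM$, and restricts back to $M$. You instead give a direct, explicit construction: reduce to real-valued $f$, replace the raw modulus $\omega_f$ by a bounded concave (hence subadditive) majorant $\omega$ that still vanishes at $0^+$, and then use the McShane--Kat\v{e}tov infimal convolution $F(x) = \inf_{a \in O}\bigl(f(a) + \omega(d(x,a))\bigr)$. Both proofs exploit that only the metric structure of $O \subset M$ matters, not the openness of $O$. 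What your approach buys is elementarity and a quantitative output (the extension $F$ inherits the explicit modulus $\omega$, and $\|F\|_\infty$ is controlled by $\|f\|_\infty$), at the cost of having to justify the existence of the subadditive majorant; you do flag this as the one nontrivial step and correctly derive subadditivity from concavity plus $\omega(0)=0$, though it would be worth noting explicitly that the least concave majorant of a bounded nondecreasing function vanishing at $0^+$ is itself bounded and vanishes at $0^+$ (compare with the affine competitor $t \mapsto \varepsilon + (2\|f\|_\infty/\delta)\,t$ where $\omega_f \le \varepsilon$ on $[0,\delta]$). What the paper's approach buys is brevity and conceptual clarity once the Samuel compactification machinery and the cited closure fact are accepted as black boxes.
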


\begin{proof}
For a metric space $X$ let $uX$ denote the Gelfand space of $C_u(X)$, i.e., this is a compactification of $X$ (the \emph{Samuel compactification}) with the following universal property: a bounded, continuous function $f$ on $X$ has an extension to a continuous function on $uX$ if and only if $f$ is uniformly continuous. We will use the following property of Samuel compactifications (see \cite[Theorem 2.9]{woods}): if $S \subset X \subset uX$, then the closure $\closure_{uX}(S)$ of $S$ in $uX$ is the Samuel compactification $uS$ of $S$.

So given $f \in C_u(O, d)$, we can extend it to a continuous function $\tilde{f} \in C(uO)$. Since $uO = \closure_{uM}(O)$, i.e., a closed subset of a compact Hausdorff space, we can extend $\tilde{f}$ by the Tietze extension theorem to a bounded, continuous function $\tilde{F}$ on $uM$. Its restriction $F := \tilde{F}|_M$ to $M$ is then a bounded, uniformly continuous function of $M$ extending $f$.
\end{proof}

\begin{lem}\label{lem:MV_uniform_K}
Let the subsets $U_j$, $U_K$ of $M$ for $1 \le j,K \le N$ be as in Lemma \ref{lem:suitable_coloring_cover_M}. Then we have Mayer--Vietoris sequences
\[\xymatrix{
K^0_{u}(U_K \cup U_{k+1} \subset M) \ar[r] & K^0_{u}(U_K \subset M) \oplus K^0_{u}(U_{k+1} \subset M) \ar[r] & K^0_{u}(U_K \cap U_{k+1} \subset M) \ar[d]\\
K^1_{u}(U_K \cap U_{k+1} \subset M) \ar[u] & K^1_{u}(U_K \subset M) \oplus K^1_{u}(U_{k+1} \subset M) \ar[l] & K^1_{u}(U_K \cup U_{k+1} \subset M) \ar[l]}\]
where the horizontal arrows are induced from the corresponding restriction maps.
\end{lem}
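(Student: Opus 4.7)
The plan is to derive the Mayer--Vietoris sequence from the standard six-term exact sequence for a pullback of $C^\ast$-algebras. Concretely, I would verify that the commutative square of restriction maps
\[\xymatrix{
C_u(U_K \cup U_{k+1},d) \ar[r] \ar[d] & C_u(U_K,d) \ar[d]^{s_1} \\
C_u(U_{k+1},d) \ar[r] & C_u(U_K \cap U_{k+1},d)
}\]
is a pullback diagram of $C^\ast$-algebras in which $s_1$ is surjective; the standard Mayer--Vietoris $K$-theory exact sequence attached to such a pullback (together with the identification $K^p_u(\,\cdot\, \subset M) := K_{-p}(C_u(\,\cdot\,,d))$ and Bott periodicity) then gives precisely the sequence asserted in the lemma, with the horizontal arrows induced by the restriction maps.

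Surjectivity of $s_1$ follows immediately from Lemma~\ref{lem:extension_samuel_compactification}: since $U_K \cap U_{k+1}$ is open in $M$, any $f \in C_u(U_K \cap U_{k+1},d)$ extends to an $F \in C_u(M,d)$, and $F|_{U_K}$ is a preimage of $f$. Verifying the pullback property amounts to showing that the canonical map
\[C_u(U_K \cup U_{k+1},d) \longrightarrow \{(g,h) \in C_u(U_K,d) \oplus C_u(U_{k+1},d) \mid g|_{U_K \cap U_{k+1}} = h|_{U_K \cap U_{k+1}} \}\]
is an isomorphism. Injectivity is immediate. For surjectivity (the ``gluing step''), given a compatible pair $(g,h)$, define $f$ by $f|_{U_K} = g$ and $f|_{U_{k+1}} = h$; the pasting lemma shows $f$ is continuous on $U_K \cup U_{k+1}$, and I would promote it to a uniformly continuous function as follows: extend $g$ and $h$ to $G, H \in C_u(M,d)$ by Lemma~\ref{lem:extension_samuel_compactification}, take a Lipschitz partition of unity $\{\chi_K, \chi_{k+1}\}$ on $U_K \cup U_{k+1}$ subordinate to $\{U_K, U_{k+1}\}$ with uniform Lipschitz constant, and set $F := \chi_K G + \chi_{k+1} H$. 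Since $G$ and $H$ agree on $U_K \cap U_{k+1}$ (both equal $g = h$ there), $\chi_K + \chi_{k+1} = 1$ on $U_K \cup U_{k+1}$, and $\supp \chi_j \subset U_j$, a direct check yields $F|_{U_K \cup U_{k+1}} = f$, so uniform continuity of $F$ passes to $f$.

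The main obstacle is the construction of the Lipschitz partition of unity $\{\chi_K, \chi_{k+1}\}$ with a uniform Lipschitz constant. This is the step where the combinatorial structure of the cover from Lemma~\ref{lem:suitable_coloring_cover_M} is essential: the uniform separation of the connected components of $U_K$ and of $U_{k+1}$, combined with the uniform ball radius $\varepsilon$ and the bounded geometry of $M$, reduces the construction to a local one with uniform bounds. One workable route is to start from a uniformly Lipschitz partition of unity $\{\psi_i\}$ subordinate to a sufficiently fine refinement of the ball cover $\{B_\varepsilon(x_i)\}$ (provided by bounded geometry, cf.\ Lemma~\ref{lem:nice_coverings_partitions_of_unity}), set $\tilde\chi_K := \sum_{\operatorname{col}(i) \le k}\psi_i$ and $\tilde\chi_{k+1} := \sum_{\operatorname{col}(i) = k+1}\psi_i$, and normalize by $\tilde\chi_K + \tilde\chi_{k+1}$ on $U_K \cup U_{k+1}$; the uniform positive lower bound on this normalizing factor, needed for uniform Lipschitz bounds after normalization, comes from bounded geometry and the local finiteness of the cover. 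Once this ingredient is in hand, the pullback argument described above produces the Mayer--Vietoris sequence.
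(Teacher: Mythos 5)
Your proof takes the same route as the paper's: both derive the sequence from the $C^\ast$-algebraic Mayer--Vietoris exact sequence attached to a pullback square of restriction maps, with surjectivity supplied by Lemma~\ref{lem:extension_samuel_compactification} (the paper cites Blackadar's Theorem~21.2.2 for the $K$-theory part). The paper handles the pullback verification, $C_u(U_K \cup U_{k+1},d) \cong P$, in one sentence; you try to fill it in, which is where the problem lies.

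The Lipschitz partition-of-unity route does not work as stated, and the normalizing factor $\tilde\chi_K + \tilde\chi_{k+1}$ is in fact \emph{not} uniformly bounded below on $U_K \cup U_{k+1}$. Consider a triple of mutually adjacent vertices $v_0,v_1,v_2$ whose colors are, say, $1, k+1, l$ with $l > k+1$, and a point $p \in \partial B_\varepsilon(v_0) \cap \partial B_\varepsilon(v_1)$ lying only in balls of color $> k+1$. Since $\supp\psi_i \subset B_\varepsilon(x_i)$, every $\psi_i$ with $\operatorname{col}(i) \le k+1$ vanishes at $p$, so $\tilde\chi_K + \tilde\chi_{k+1} \to 0$ along points of $U_K \cup U_{k+1}$ approaching $p$. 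Worse, near such $p$ one finds points $x_n \in U_K \setminus U_{k+1}$ and $y_n \in U_{k+1} \setminus U_K$ with $d(x_n,y_n) \to 0$; any function $\chi_K$ with $\supp\chi_K \subset U_K$, $\supp\chi_{k+1} \subset U_{k+1}$ and $\chi_K + \chi_{k+1} \equiv 1$ on $U_K \cup U_{k+1}$ must jump from $1$ at $x_n$ to $0$ at $y_n$, so \emph{no} partition of unity subordinate to $\{U_K, U_{k+1}\}$ can have a uniform Lipschitz constant. The obstruction is genuine, not just a missing estimate.

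The pullback property is nevertheless true, but it must be shown directly rather than via a global partition of unity: one shows that for every $\eta > 0$ there is a $\delta > 0$ such that whenever $x \in U_K$, $y \in U_{k+1}$ and $d(x,y) < \delta$, there exists $z \in U_K \cap U_{k+1}$ with $d(x,z), d(y,z) < \eta$. Taking $\delta$ smaller than the uniform gap between non-intersecting balls (this is exactly what the proof of Lemma~\ref{lem:suitable_coloring_cover_M} provides), the two balls $B_\varepsilon(x_i) \ni x$, $B_\varepsilon(x_j) \ni y$ must intersect, their midpoint-of-centers lies in $B_\varepsilon(x_i) \cap B_\varepsilon(x_j)$, and sliding $x$ a controlled distance $O(\delta)$ along the geodesic towards that midpoint lands in the intersection (use geodesic convexity of small balls in bounded geometry). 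Then uniform continuity of $g$ and $h$ plus $g = h$ on $U_K \cap U_{k+1}$ gives $|g(x) - h(y)| \le |g(x)-g(z)| + |h(z)-h(y)|$ small, so the glued $f$ is uniformly continuous and the canonical map $C_u(U_K \cup U_{k+1},d) \to P$ is onto.
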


\begin{proof}
Recall the Mayer--Vietoris sequence for operator $K$-theory of $C^\ast$-algebras (see, e.g., \cite[Theorem 21.2.2]{blackadar}): given a commutative diagram of $C^\ast$-algebras
\[\xymatrix{P \ar[r]^{\sigma_1} \ar[d]_{\sigma_2} & A_1 \ar[d]^{\varphi_1}\\ A_2 \ar[r]^{\varphi_2} & B}\]
with $P = \{(a_1, a_2) \ | \ \varphi_1(a_1) = \varphi_2(a_2)\} \subset A_1 \oplus A_2$ and $\varphi_1$ and $\varphi_2$ surjective, then there is a long exact sequence (via Bott periodicity we get the $6$-term exact sequence)
\[\ldots \to K_n(P) \stackrel{({\sigma_1}_\ast,{\sigma_2}_\ast)}\longrightarrow K_n(A_1) \oplus K_n(A_2) \stackrel{{\varphi_2}_\ast - {\varphi_1}_\ast}\longrightarrow K_n(B) \to K_{n-1}(P) \to \ldots\]

We set $A_1 := C_u(U_K, d)$, $A_2 := C_u(U_{k+1}, d)$, $B := C_u(U_K \cap U_{k+1}, d)$ and $\varphi_1$, $\varphi_2$ the corresponding restriction maps. Due to the property of the sets $U_K$ as stated in the Lemma \ref{lem:suitable_coloring_cover_M} we get $P = C_u(U_K \cup U_{k+1}, d)$ and $\sigma_1$, $\sigma_2$ again just the restriction maps. To show that the maps $\varphi_1$ and $\varphi_2$ are surjective we  have to use the above Lemma \ref{lem:extension_samuel_compactification}.
\end{proof}

We will also need corresponding Mayer--Vietoris sequences for uniform $K$-homology. As for uniform $K$-theory we use here also the induced subspace metric (and not the metric derived from the induced Riemannian metric): let a not necessarily connected subset $O \subset M$ be given. We define $K_\ast^{u}(O \subset M)$ to be the uniform $K$-homology of $O$, where $O$ is equipped with the subspace metric from $M$, where we view $M$ as a metric space. The inclusion $O \hookrightarrow M$ is in general not a proper map (e.g., if $O$ is an open ball in a manifold) but this is no problem to us since we will have to use the wrong-way maps that exist for open subsets $O \subset M$: they are given by the inclusions $\LLip_R(O) \subset \LLip_R(M)$ for all $R, L > 0$. So we get a map $K_\ast^{u}(M) \to K_\ast^{u}(O \subset M)$ for every open subset $O \subset M$.

Existence of Mayer--Vietoris sequences for uniform $K$-homology of the subsets in the cover $\{U_K, U_{k+1}\}$ of $U_K \cup U_{k+1}$ (recall that we used Lemma \ref{lem:suitable_coloring_cover_M} to get these subsets) incorporating the wrong-way maps may be similarly shown as \cite[Section 8.5]{higson_roe}. The crucial excision isomorphism from that section may be constructed analogously as described in \cite[Footnote 73]{higson_roe}: for that construction Kasparov's Technical Theorem is used, and we have to use here in our uniform case the corresponding uniform construction which is as used in our construction of the external product for uniform $K$-homology.

Note that \Spakula constructed a Mayer--Vietoris sequence for uniform $K$-homology in \cite[Section 5]{spakula_uniform_k_homology}, but for closed subsets of a proper metric space. His arrows also go in the other direction as ours (since his arrows are induced by the usual functoriality of uniform $K$-homology).

We denote by $[M]|_O \in K_m^{u}(O \subset M)$ the class of the Dirac operator associated to the restriction to a neighbourhood of $O$ of the complex spinor bundle of bounded geometry defining the spin$^c$-structure of $M$ (i.e., we equip the neighbourhood with the induced spin$^c$-structure).

The cap product of $K^\ast_{u}(O \subset M)$ with $[M]|_O$ is analogously defined as the usual one, i.e., we get maps $- \cap [M]|_O \colon K^\ast_{u}(O \subset M) \to K_{m - \ast}^{u}(O \subset M)$. Now we have to argue why we get commutative squares between the Mayer--Vietoris sequences of uniform $K$-theory and uniform $K$-homology using the cap product. This is known for usual $K$-theory and $K$-homology; see, e.g., \cite[Exercise 11.8.11(c)]{higson_roe}. Since the cap product is in our uniform case completely analogously defined (see the second-to-last display before Proposition \ref{prop:properties_general_cap_product}), we may analogously conclude that we get commutative squares between our uniform Mayer--Vietoris sequences.

Let us summarize the above results:

\begin{lem}\label{lem:cap_prod_commutes_MV}
Let the subsets $U_j$, $U_K$ of $M$ for $1 \le j,K \le N$ be as in Lemma \ref{lem:suitable_coloring_cover_M}. Then we have corresponding Mayer--Vietoris sequences
\[\xymatrix{
K_0^{u}(U_K \cup U_{k+1} \subset M) \ar[r] & K_0^{u}(U_K \subset M) \oplus K_0^{u}(U_{k+1} \subset M) \ar[r] & K_0^{u}(U_K \cap U_{k+1} \subset M) \ar[d]\\
K_1^{u}(U_K \cap U_{k+1} \subset M) \ar[u] & K_1^{u}(U_K \subset M) \oplus K_1^{u}(U_{k+1} \subset M) \ar[l] & K_1^{u}(U_K \cup U_{k+1} \subset M) \ar[l]}\]
and the cap product gives the following commutative diagram:\vspace{\baselineskip}
\[\mathclap{\xymatrix{
K_{u}^\ast(U_{K} \cap U_{k+1} \subset M) \ar[r] \ar[d] & K_{u}^\ast(U_{K} \cup U_{k+1} \subset M) \ar[r] \ar[d] & K_{u}^\ast(U_K \subset M) \oplus K_{u}^\ast(U_{k+1} \subset M) \ar[d] \ar@/_1.5pc/[ll] \\
K^{u}_{m-\ast}(U_{K} \cap U_{k+1} \subset M) \ar[r] & K^{u}_{m-\ast}(U_{K} \cup U_{k+1} \subset M) \ar[r] & K^{u}_{m-\ast}(U_K \subset M) \oplus K^{u}_{m-\ast}(U_{k+1} \subset M) \ar@/^1.5pc/[ll]
}}\vspace{\baselineskip}\]
(We have suppressed the index shift due to the boundary maps in the latter diagram.)
\end{lem}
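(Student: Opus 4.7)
The plan is to realise both Mayer--Vietoris sequences as $6$-term exact sequences attached to pullback squares of $C^*$-algebras, and then to produce a morphism of pullback squares that induces the required commutative ladder. The $K$-theory row is already handled in Lemma~\ref{lem:MV_uniform_K} via the pullback square of the $C_u(-,d)$'s, so the work concentrates on the $K$-homology side and on matching the two rows through the cap product.

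For the $K$-homology row, I would first define, for each open $O \subset M$ and a fixed ample representation $\rho$ of $C_0(M)$, relative dual algebras $\frakD^u_{\rho\oplus 0}(O \subset M)$ and $\frakC^u_{\rho\oplus 0}(O \subset M)$ by testing uniform pseudolocality and uniform local compactness only against $\LLip_R(O)$. Paschke duality (Proposition~\ref{prop:paschke_duality} combined with Theorem~\ref{thm:paschke_universal}) then identifies $K_{1+q}(\frakD^u_{\rho\oplus 0}(O \subset M))$ with $K^u_q(O \subset M)$, and the wrong-way maps $K^u_q(M) \to K^u_q(O \subset M)$ correspond to the inclusions $\frakD^u_{\rho\oplus 0}(M) \hookrightarrow \frakD^u_{\rho\oplus 0}(O \subset M)$ of dual algebras. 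The decisive technical step is an excision statement: for the uniformly separated union $U_K \cup U_{k+1}$ from Lemma~\ref{lem:suitable_coloring_cover_M}, I would show that the quotient $\frakD^u_{\rho\oplus 0}(U_K \cup U_{k+1} \subset M)/\frakC^u_{\rho\oplus 0}$ splits as a direct sum of the corresponding quotients for $U_K$ and $U_{k+1}$, and similarly for $U_K \cap U_{k+1}$. The splitting is implemented by a uniformly bounded Lipschitz partition of unity separating the components (whose existence uses precisely the uniform positive distance asserted in Lemma~\ref{lem:suitable_coloring_cover_M}), and by invoking our uniform Kasparov Technical Lemma~\ref{lem:construction_partition_unity} to absorb the resulting cross terms into $\frakC^u_{\rho\oplus 0}$. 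Once excision is in hand, the usual mapping-cone argument (as sketched in \cite[Section~8.5, Footnote~73]{higson_roe}) produces the Mayer--Vietoris row.

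For the commutative ladder, recall that the cap product is built from the $^*$-homomorphism $\sigma\colon C_u(X) \otimes \frakD^u_{\rho\oplus 0}(X) \to \frakD^u_{\rho\oplus 0}(X)/\frakC^u_{\rho\oplus 0}(X)$ (see the second-to-last display before Proposition~\ref{prop:properties_general_cap_product}). The map $\sigma$ is natural in the open set $X$: restriction of a function on the left factor intertwines with the corresponding inclusion of dual algebras on the right factor, so that the four copies of $\sigma$ for $U_K \cup U_{k+1}$, $U_K$, $U_{k+1}$, and $U_K \cap U_{k+1}$ assemble into a morphism between the pullback square of Lemma~\ref{lem:MV_uniform_K} tensored with the pullback square of dual algebras, and the pullback square of quotient dual algebras. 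Applying $K_\ast$ and using Lemma~\ref{lem:K_theory_frakC_zero} to replace $\frakD^u_{\rho\oplus 0}/\frakC^u_{\rho\oplus 0}$ by $\frakD^u_{\rho\oplus 0}$, the induced morphism of $6$-term exact sequences is exactly the claimed cap-product ladder, with the index shift arising from the connecting homomorphism as indicated. The main obstacle in the plan is the excision isomorphism for the relative dual algebras; the commutativity of the ladder itself is then formal from the naturality of the $K$-theoretic connecting map.
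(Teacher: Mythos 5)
Your plan runs along the same lines the paper itself invokes by reference to Higson--Roe \S8.5 and its Footnote~73: relative dual algebras $\frakD^u_{\rho\oplus 0}(O\subset M)$, Paschke duality to translate the wrong-way maps into inclusions of dual algebras, the uniform Kasparov Technical Lemma~\ref{lem:construction_partition_unity} to carry out the excision, and naturality of the multiplication map $\sigma$ together with Lemma~\ref{lem:K_theory_frakC_zero} to produce the cap-product ladder. That overall architecture is correct, and your account of why the ladder commutes is a reasonable rendering of what the paper defers to \cite[Exercise~11.8.11(c)]{higson_roe}.

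The gap is in the excision step, which you yourself flag as the decisive one. You assert that $\frakD^u_{\rho\oplus 0}(U_K\cup U_{k+1}\subset M)/\frakC^u_{\rho\oplus 0}$ \emph{splits as a direct sum} of the corresponding quotients for $U_K$ and for $U_{k+1}$. That cannot be the right statement: $U_K$ and $U_{k+1}$ overlap (their intersection is the third corner of the Mayer--Vietoris square), and a genuine direct-sum decomposition in this sense would force the connecting homomorphisms in the six-term sequence to vanish, collapsing Mayer--Vietoris to something trivial. You are also misapplying the separation hypothesis: the uniform positive distance in Lemma~\ref{lem:suitable_coloring_cover_M} separates the connected components \emph{within} each family $U_j$ (and within $U_K=U_1\cup\dots\cup U_k$); it does not and cannot separate $U_K$ from $U_{k+1}$. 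What excision must deliver is that the square of quotient algebras $\frakD^u_{\rho\oplus 0}(O\subset M)/\frakC^u_{\rho\oplus 0}$, $O\in\{U_K\cup U_{k+1},\,U_K,\,U_{k+1},\,U_K\cap U_{k+1}\}$, is a pullback square in the $K$-theoretic sense --- equivalently, that the two mapping cones $\operatorname{Cone}\bigl(\frakD^u(U_K\cup U_{k+1})/\frakC^u\to\frakD^u(U_{k+1})/\frakC^u\bigr)$ and $\operatorname{Cone}\bigl(\frakD^u(U_K)/\frakC^u\to\frakD^u(U_K\cap U_{k+1})/\frakC^u\bigr)$ have the same $K$-theory. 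It is this comparison, not any direct-sum splitting, that requires the uniformly Lipschitz partition of unity and Lemma~\ref{lem:construction_partition_unity} (mirroring Footnote~73 of \cite{higson_roe}); and the uniform separation of components plays the same auxiliary role it plays in the proof of Lemma~\ref{lem:MV_uniform_K} --- namely making the square a pullback of \emph{uniform} objects --- rather than disassembling $U_K\cup U_{k+1}$ into disjoint pieces.
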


The last lemma that we will need before we will start to assemble everything together into a proof of uniform $K$-\Poincare duality is the following:

\begin{lem}\label{lem:uniform_k_hom_balls}
Let $M$ be an $m$-dimensional manifold of bounded geometry and let $U \subset M$ be a subset consisting of uniformly discretely distributed geodesic balls in $M$ having radius less than the injectivity radius of $M$ (i.e., each geodesic ball is diffeomorphic to the standard ball in Euclidean space $\IR^m$). Let the balls be indexed by a set $Y$.

Then we have $K_m^u(U \subset M) \cong \ell^\infty_\IZ(Y)$, the group of all bounded, integer-valued sequences indexed by $Y$, and $K_p^u(U \subset M) = 0$ for $p \not= m$.
\end{lem}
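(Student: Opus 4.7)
The plan is to mimic the proof of Lemma~\ref{lem:uniform_k_hom_discrete_space}, with geodesic balls playing the role of points. Let $\delta > 0$ be a uniform lower bound on the pairwise distances between distinct balls of $U$. I normalize uniform $K$-homology classes on $U$ via Proposition~\ref{prop:normalization_finite_prop_speed} to representatives $(H,\rho,T)$ of finite propagation strictly less than $\delta$ (by re-running the proof of that proposition, exactly as in Lemma~\ref{lem:uniform_k_hom_discrete_space}), and likewise for homotopies. Since each ball $B_y$ is clopen in $U$, its characteristic function $\chi_{B_y}$ is bounded and uniformly continuous, and the propagation bound forces $(H,\rho,T)=\bigoplus_{y\in Y}(H_y,\rho_y,T_y)$, where $H_y:=\rho(\chi_{B_y})H$ and $(H_y,\rho_y,T_y)$ is a uniform Fredholm module over $B_y$.

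Each $B_y$ is totally bounded, so by Proposition~\ref{prop:compact_space_every_module_uniform} we have $K_p^u(B_y)=K_p(B_y)$, and since $B_y$ is diffeomorphic to an open $m$-ball, this $K$-homology is $\IZ$ for $p=m$ and $0$ otherwise. Assembling over $y$ yields a homomorphism
\[\Phi\colon K_p^u(U\subset M)\longrightarrow \prod_{y\in Y}K_p(B_y),\]
which is identically zero for $p\neq m$ and takes values in $\prod_y\IZ$ for $p=m$. Uniformity of the original module forces the image of $\Phi$ in the case $p=m$ to lie in $\ell^\infty_\IZ(Y)$: the uniform approximability of $T^2-1$ (tested against $\chi_{B_y}$, which belongs to a uniformly Lipschitz, uniformly supported family because the balls have uniformly bounded diameter) bounds the ranks of the finite-rank approximants of $T_y^2-1$ uniformly in $y$, and this controls the integer indices $\Phi([H,\rho,T])_y$.

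Surjectivity onto $\ell^\infty_\IZ(Y)$ uses that bounded geometry of $M$ makes all $B_y$ mutually uniformly bi-Lipschitz diffeomorphic to a single model open $m$-ball; a generator of $K_m$ of the model (e.g.\ represented by a Dirac-type operator) therefore transports to a uniformly chosen family $[D_y]\in K_m^u(B_y)$, and the direct sum $\bigoplus_y c_y\cdot[D_y]$ is a genuine uniform Fredholm module for any $(c_y)\in\ell^\infty_\IZ(Y)$, mapping to the given sequence under $\Phi$. Injectivity at $p=m$ and the vanishing statement for $p\neq m$ reduce, via the direct-sum decomposition of the first paragraph, to showing that if each $(H_y,\rho_y,T_y)$ is individually null in $K_p(B_y)$, then the trivializing operator homotopies can be assembled into a single uniform operator homotopy on all of $U$. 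This last patching step is the main obstacle: the resolution is to transport a universal trivialization on the model ball through the uniform diffeomorphisms $B_y\cong\text{model}$, so that the uniform bounds on the norms and on the ranks of finite-rank approximants inherited from the hypothesis on $(H,\rho,T)$ automatically pass from the model to each $B_y$ and produce a uniform operator homotopy.
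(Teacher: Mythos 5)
Your proposal follows exactly the same strategy the paper uses: normalize to finite propagation smaller than the separation between the balls via Proposition~\ref{prop:normalization_finite_prop_speed}, decompose $(H,\rho,T)=\bigoplus_y(H_y,\rho_y,T_y)$, identify $K_p^u(B_y)$ with the locally finite $K$-homology of an open $m$-ball via Proposition~\ref{prop:compact_space_every_module_uniform}, and use the uniformity hypothesis to land in $\ell^\infty_\IZ(Y)$. This is precisely what the paper means by ``analogous to the proof of Lemma~\ref{lem:uniform_k_hom_discrete_space}, using $K_p(O\subset\IR^m)$.'' So the overall route is the intended one and the surjectivity sketch via uniformly bi-Lipschitz model balls and Dirac generators is sound.

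However, the injectivity step as you have written it is not quite an argument. The summands $(H_y,\rho_y,T_y)$ are arbitrary Fredholm modules handed to you by the decomposition; they are not pulled back from a model through the diffeomorphisms $B_y\cong\text{model}$, so there is no ``universal trivialization'' sitting over the model ball that one can transport to trivialize them. What one must actually show is that the \emph{separately chosen} nullhomotopies (one for each $y$, after stabilizing by degenerates) can all be taken with operator norms and ranks of finite-rank approximants bounded independently of $y$, which then makes the resulting block-diagonal homotopy a genuine uniform operator homotopy on $U$. This does follow from the uniform bounds the $T_y$ inherit from the uniformity of $(H,\rho,T)$ — the same mechanism that gives the $\ell^\infty_\IZ$ bound — but the way to see it is to re-run the standard trivialization/absorption construction for a single ball and observe that every estimate in it is controlled by the quantities that are uniform in $y$; ``transporting'' the trivialization is the wrong picture and obscures why uniformity is preserved. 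To be fair, the paper's own proof of Lemma~\ref{lem:uniform_k_hom_discrete_space} also states injectivity without spelling this out, so you are flagging the genuine subtlety; you just need to replace the transport argument by the bound-propagation argument.
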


\begin{proof}
The proof is analogous to the proof of Lemma \ref{lem:uniform_k_hom_discrete_space}. It uses the fact that for an open ball $O \subset \IR^m$ we have $K_m(O \subset \IR^m) \cong \IZ$, and $K_p(O \subset \IR^m) = 0$ for $p \not= m$.
\end{proof}

\begin{proof}[Proof of uniform $K$-\Poincare duality]
\label{page:proof_Poincare_duality}
First we invoke Lemma \ref{lem:suitable_coloring_cover_M} to get subsets $U_j$ for $1 \le j \le N$.

The induction starts with the subsets $U_1$, $U_2$ and $U_1 \cap U_2$, which are collections of uniformly discretely distributed open balls, resp., in the case of $U_1 \cap U_2$ it is a collection of intersections of open balls, which is homotopy equivalent to a collection of uniformly discretely distributed open balls by a uniformly cobounded, proper and Lipschitz homotopy. Now uniform $K$-theory of a space coincides with the uniform $K$-theory of its completion, and furthermore, uniform $K$-theory is homotopy invariant with respect to Lipschitz homotopies. So the uniform $K$-theory of a collection of open balls is the same as the uniform $K$-theory of a collection of points. This groups we have already computed in Lemma \ref{lem:uniform_k_th_discrete_space}.

Uniform $K$-homology is homotopy invariant with respect to uniformly cobounded, proper and Lipschitz homotopies (see Theorem \ref{thm:homotopy_equivalence_k_hom}), and for totally bounded spaces it coincides with usual $K$-homology (see Proposition \ref{prop:compact_space_every_module_uniform}). So we have to compute uniform $K$-homology of a collection of uniformly discretely distributed open balls. This we have done in the above Lemma \ref{lem:uniform_k_hom_balls}.

Now we can argue that cap product is an isomorphism $K_u^\ast(U \subset M) \cong K_{m-\ast}^u(U \subset M)$, where $U$ is as in the above lemma. For this we have to note that if $M$ is a \spinc manifold, then the restriction of its complex spinor bundle to any ball of $U$ is isomorphic to the complex spinor bundle on the open ball $O \subset \IR^m$. This means that the cap product on $U$ coincides on each open ball of $U$ with the usual cap product on the open ball $O \subset \IR^m$. This all shows that we have \Poincare duality for the subsets $U_1$, $U_2$ and $U_1 \cap U_2$ (note that $U_1 \cap U_2$ is homotopic to a collection of open balls).

With the above Lemma \ref{lem:cap_prod_commutes_MV} we therefore get with the five lemma that the cap product is also an isomorphism for $U_1 \cup U_2$. The rest of the proof proceeds by induction over $k$ (there are only finitely many steps since we only go up to $k = N-1$), invoking every time the above Lemma \ref{lem:cap_prod_commutes_MV} and the five lemma. Note that in order to see that the cap product is an isomorphism on $U_K \cap U_{k+1}$, we have to write $U_K \cap U_{k+1} = (U_1 \cap U_{k+1}) \cup \ldots \cup (U_k \cap U_{k+1})$. This is a union of $k$ geodesically convex open sets and we have to do a separate induction on this one.
\end{proof}

\section{Final remarks and open questions}\label{sec_final_remarks}

In this paper we have defined and investigated uniform $K$-theory groups and uniform $K$-homology groups. But homotopy theory nowadays is practiced using spectra. So the question is whether one can refine our constructions here to the spectrum level such that the homotopy groups of the spectra coincide with the uniform $K$-theory and uniform $K$-homology groups.

One approach might be to consider something like uniform (co-)homology theories: one could try to put a model structure on the category of uniform spaces modeling uniform homotopy theory and then one could try to show that, e.g., uniform $K$-theory is nothing more but uniform homotopy classes of uniform maps into some uniform version of the $K$-theory spectrum.

Another approach might be to use $\infty$-categories and a motivic approach, similar as it was carried out in the case of coarse homology theories \cite{bunke_engel}.

\begin{question}\label{ques:uniform_theory}
Does there exist a reasonable uniform homotopy theory that recovers the uniform theories that we have considered in this article?
\end{question}

Baum and Douglas \cite{baum_douglas} defined a geometric version of $K$-homology, in which the cycles are \spinc manifolds with a vector bundle over them together a map into the space. This geometric picture is quite important for the understanding of index theory and so the question is whether we also have something similar for uniform $K$-homology.

\begin{question}\label{ques:geom_pic}
Is there a geometric picture of uniform $K$-homology?
\end{question}

A complete proof that geometric $K$-homology coincides on finite CW-complexes with analytic $K$-homology was given by Baum--Higson--Schick \cite{baum_higson_schick}. But this proof relies on a comparison of these theories with topological $K$-homology, i.e., with the homology theory defined by the $K$-theory spectrum. And this is now exactly the connection of Question \ref{ques:geom_pic} to Question \ref{ques:uniform_theory}.

\bibliography{./Bibliography_Uniform_K-th_and_PD}

\providecommand{\bysame}{\leavevmode\hbox to3em{\hrulefill}\thinspace}
\providecommand{\MR}{\relax\ifhmode\unskip\space\fi MR }
\providecommand{\MRhref}[2]{%
  \href{http://www.ams.org/mathscinet-getitem?mr=#1}{#2}
}
\providecommand{\href}[2]{#2}
\begin{thebibliography}{BCH94}

\bibitem[Att94]{attie_classification}
O.~Attie, \emph{{Quasi-isometry classification of some manifolds of bounded
  geometry}}, Math. Z. \textbf{216} (1994), 501--527.

\bibitem[BCH94]{baum_connes_higson}
P.~Baum, A.~Connes, and N.~Higson, \emph{{Classifying space for proper actions
  and $K$-theory of group $C^\ast$-algebras}}, Contemporary Math. \textbf{167}
  (1994), 241--291.

\bibitem[BD82]{baum_douglas}
P.~Baum and R.~G. Douglas, \emph{{$K$-homology and index theory}}, {Operator
  Algebras and Applications} (R.~Kadison, ed.), Proc. Symp. Pure Math.,
  vol.~38, Amer. Math. Soc., 1982, pp.~117--173.

\bibitem[BE16]{bunke_engel}
U.~Bunke and A.~Engel, \emph{{Homotopy theory with bornological coarse
  spaces}}, \href{https://arxiv.org/abs/1607.03657}{arXiv:1607.03657}, 2016.

\bibitem[BHS07]{baum_higson_schick}
P.~Baum, N.~Higson, and T.~Schick, \emph{{On the Equivalence of Geometric and
  Analytic $K$-Homology}}, Pure and Appl. Math. Quarterly \textbf{3} (2007),
  no.~1, 1--24.

\bibitem[Bla98]{blackadar}
B.~Blackadar, \emph{{$K$}-{T}heory for {O}perator {A}lgebras}, 2nd ed.,
  Cambridge University Press, 1998.

\bibitem[BW92]{block_weinberger_1}
J.~Block and S.~Weinberger, \emph{{A}periodic {T}ilings, {P}ositive {S}calar
  {C}urvature, and {A}menability of {S}paces}, J. Amer. Math. Soc. \textbf{5}
  (1992), no.~4, 907--918.

\bibitem[BW97]{block_weinberger_large_scale}
\bysame, \emph{{Large scale homology theories and geometry}}, AMS/IP Studies in
  Advanced Mathematics \textbf{2} (1997), 522--569.

\bibitem[Eng14]{engel_phd}
A.~Engel, \emph{{Indices of pseudodifferential operators on open manifolds}},
  Ph.D. thesis, University of Augsburg, 2014,
  \href{http://nbn-resolving.de/urn/resolver.pl?urn:nbn:de:bvb:384-opus4-28850}{urn:nbn:de:bvb:384-opus4-28850}.

\bibitem[Eng15]{engel_indices_UPDO}
\bysame, \emph{{Index theory of uniform pseudodifferential operators}},
  preprint online \href{http://arxiv.org/abs/1502.00494}{arXiv:1502.00494},
  2015.

\bibitem[Gre78]{greene}
R.~E. Greene, \emph{{C}omplete metrics of bounded curvature on noncompact
  manifolds}, Archiv der Mathematik \textbf{31} (1978), no.~1, 89--95.

\bibitem[Gro81]{gromov_curvature_diameter_betti_numbers}
M.~Gromov, \emph{{Curvature, diameter and Betti numbers}}, Comment. Math.
  Helvetici \textbf{56} (1981), 179--195.

\bibitem[Hat09]{hatcher_VB}
A.~Hatcher, \emph{{Vector Bundles and $K$-Theory}}, available online on his web
  site \url{http://www.math.cornell.edu/~hatcher/VBKT/VBpage.html}, 2009.

\bibitem[HR00]{higson_roe}
N.~Higson and J.~Roe, \emph{{A}nalytic {K}-{H}omology}, Oxford University
  Press, New York, 2000.

\bibitem[Kaa13]{kaad}
J.~Kaad, \emph{{A Serre--Swan theorem for bundles of bounded geometry}}, J.
  Funct. Anal. \textbf{265} (2013), no.~10, 2465--2499.

\bibitem[Kas81]{kasparov_KK}
G.~G. Kasparov, \emph{{The Operator $K$-Functor and Extensions of
  $C^\ast$-Algebras}}, Math. USSR Izvestija \textbf{16} (1981), no.~3,
  513--572.

\bibitem[Mor13]{morye}
A.~S. Morye, \emph{{Note on the Serre--Swan theorem}}, Math. Nachr.
  \textbf{286} (2013), no.~2--3, 272--278.

\bibitem[Pet11]{MO_iso_embedding_bounded_second_form}
A.~Petrunin, \emph{{Answer to ``Existence of an isometric embedding into
  Euclidean space with bounded second fundamental form''}}, MathOverflow, 2011,
  online at \url{http://mathoverflow.net/a/57487/13356/}.

\bibitem[Phi91]{phillips}
N.~C. Phillips, \emph{{$K$-Theory for Fr\'{e}chet Algebras}}, Int. J. of Math.
  \textbf{2} (1991), no.~1, 77--129.

\bibitem[Roe88]{roe_index_1}
J.~Roe, \emph{{A}n {I}ndex {T}heorem on {O}pen {M}anifolds, {I}}, J.
  Differential Geom. \textbf{27} (1988), 87--113.

\bibitem[Sar01]{sardanashvily}
G.~Sardanashvily, \emph{{Remark on the Serre--Swan theorem for non-compact
  manifolds}}, \url{http://arxiv.org/abs/math-ph/0102016/}, 2001.

\bibitem[Sch92]{schweitzer}
L.~B. Schweitzer, \emph{{A short proof that $M_n(A)$ is local if $A$ is local
  and Fr\'{e}chet}}, Int. J. Math. (1992), 581--589.

\bibitem[Shu92]{shubin}
M.~A. Shubin, \emph{{S}pectral {T}heory of {E}lliptic {O}perators on
  {N}on-{C}ompact {M}anifolds}, Ast\'{e}risque \textbf{207} (1992), 35--108.

\bibitem[{\v{S}}pa08]{spakula_thesis}
J.~{\v{S}}pakula, \emph{{$K$-Theory of Uniform Roe Algebras}}, Ph.D. thesis,
  Vanderbilt University, Nashville, TN (USA), 2008.

\bibitem[{\v{S}}pa09]{spakula_uniform_k_homology}
\bysame, \emph{{Uniform $K$-homology theory}}, J. Funct. Anal. \textbf{257}
  (2009), 88--121.

\bibitem[{\v{S}}pa10]{spakula_universal_rep}
\bysame, \emph{{Uniform version of Weyl--von Neumann theorem}}, Archiv der
  Mathematik \textbf{95} (2010), 171--178.

\bibitem[{\v{S}}W13]{spakula_willett_2}
J.~{\v{S}}pakula and R.~Willett, \emph{{Maximal and reduced Roe algebras of
  coarsely embeddable spaces}}, J. reine angew. Math. (Crelles Journal)
  \textbf{678} (2013), 35--68.

\bibitem[Woo95]{woods}
R.~G. Woods, \emph{{The Minimum Uniform Compactification of a Metric Space}},
  Fund. Math. \textbf{147} (1995), 39--59.

\bibitem[Yu95a]{yu_baum_connes_conj_coarse_geom}
G.~Yu, \emph{{Baum--Connes Conjecture and Coarse Geometry}}, $K$-Theory
  \textbf{9} (1995), 223--231.

\bibitem[Yu95b]{yu_coarse_baum_connes_conj}
\bysame, \emph{{Coarse Baum--Connes Conjecture}}, $K$-Theory \textbf{9} (1995),
  199--221.

\end{thebibliography}
\bibliographystyle{amsalpha}

\end{document}